\documentclass[9pt, a4paper, centertags, openright,
]{amsart}


\usepackage{etex}

\usepackage{amsmath, amsfonts, amsthm, 
amssymb, amsxtra, bbm, enumerate, stmaryrd}

\usepackage{mathtools}

\usepackage[dvipsnames]{xcolor}
\usepackage[retainorgcmds]{IEEEtrantools}
\usepackage[mathscr]{euscript}
\usepackage{graphicx}
\usepackage[all, cmtip]{xy}
\xyoption{curve}

\usepackage{cite}

\usepackage{wasysym}
\usepackage[english]{babel}
\usepackage[utf8]{inputenc}
\usepackage{paralist}
\usepackage{textcomp}
\usepackage[colorinlistoftodos]{todonotes}
\usepackage{caption}

\usepackage{fancyhdr}
\usepackage[bottom]{footmisc}
\usepackage[hyphens]{url}

\definecolor{Myblue}{rgb}{0,0,0.6}
\usepackage[a4paper, colorlinks=true,
linkcolor=red,
citecolor=blue,
]{hyperref}

\usepackage{multicol}
\usepackage{breakurl}



\theoremstyle{theorem}
\newtheorem{lem}{Lemma}[section]
\newtheorem{prop}[lem]{Proposition}
\newtheorem{cor}[lem]{Corollary}
\newtheorem{thm}[lem]{Theorem}

\newtheorem*{Thm1}{Theorem~1}
\newtheorem*{Thm2}{Theorem~2}
\newtheorem*{Thm3}{Theorem~3}
\newtheorem*{Thm4}{Theorem~4}

\theoremstyle{remark}
\newtheorem{rem}[lem]{Remark}

\theoremstyle{definition}
\newtheorem{exa}[lem]{Example}
\newtheorem{exas}[lem]{Examples}
\newtheorem{defn}[lem]{Definition}
\newtheorem{quest}[lem]{Question}
\newtheorem{nn}[lem]{}

\newtheorem*{Quest}{Question}
\numberwithin{equation}{section}


\newcommand{\Proj}{\operatorname{\mathsf{Proj}}\nolimits}

\newcommand{\Der}{\operatorname{Der}\nolimits}
\newcommand{\Inn}{\operatorname{Inn}\nolimits}

\newcommand{\pd}{\operatorname{pd}\nolimits}
\newcommand{\id}{\operatorname{id}\nolimits}
\newcommand{\Id}{\operatorname{Id}\nolimits}

\newcommand{\Mod}{\operatorname{\mathsf{Mod}}\nolimits}

\newcommand{\Aut}{\operatorname{Aut}\nolimits}

\newcommand{\End}{\operatorname{End}\nolimits}
\newcommand{\Hom}{\operatorname{Hom}\nolimits}

\newcommand{\grade}{\operatorname{grade}\nolimits}

\renewcommand{\Im}{\operatorname{Im}\nolimits}
\newcommand{\Ker}{\operatorname{Ker}\nolimits}

\newcommand{\Coker}{\operatorname{Coker}\nolimits}
\newcommand{\coker}{\operatorname{coker}\nolimits}

\newcommand{\Ext}{\operatorname{Ext}\nolimits}

\newcommand{\Tor}{\operatorname{Tor}\nolimits}

\newcommand{\ev}{\mathrm{ev}}

\newcommand{\HH}{\operatorname{HH}\nolimits}

\newcommand{\Mor}{\operatorname{Mor}\nolimits}
\newcommand{\Ob}{\operatorname{Ob}\nolimits}

\newcommand{\Out}{\operatorname{Out}\nolimits}

\newcommand{\abs}[1]{\ensuremath{\left\vert#1\right\vert}}


\def\A{{\mathsf A}}
\def\B{{\mathsf B}}

\def\D{{\mathsf D}}

\def\E{{\mathsf E}}
\def\F{{\mathsf F}}

\def\P{{\mathsf P}}

\def\U{{\mathsf U}}

\def\Z{{\mathbb Z}}

\def\op{\mathrm{op}}




\def\C{{\mathsf C}}



\def\bbm1{{\mathbbm 1}}



\newdir{ >}{{}*!/-5pt/@{>}}
\newdir{> }{{}*!/+5pt/@{>}}
\newdir{ <}{{}*!/-5pt/@{<}}
\newdir{>>> }{{}*!/+10pt/@{>}}

\entrymodifiers={+!!<0pt,\fontdimen22\textfont2>}

\makeatletter
\DeclareRobustCommand{\tvdots}{%
  \vbox{\baselineskip4\p@\lineskiplimit\z@\kern0\p@\hbox{.}\hbox{.}\hbox{.}}}
\makeatother

\newcounter{saveenumerate}
\makeatletter
\newcommand{\enumeratetext}[1]{%
\setcounter{saveenumerate}{\value{enum\romannumeral\the\@enumdepth}}
\end{enumerate}
#1
\begin{enumerate}
\setcounter{enum\romannumeral\the\@enumdepth}{\value{saveenumerate}}%
}
\makeatother


\begin{document}

\title[Homological epimorphisms, recollements and Hochschild cohomology]{Homological epimorphisms, recollements and Hochschild cohomology -- with a conjecture by Snashall--Solberg in view}
\author{Reiner Hermann}
\address{Reiner Hermann\\ Institutt for matematiske fag\\
NTNU\\ 7491 Trondheim\\ Norway}
\email{reiner.hermann@math.ntnu.no}
\thanks{}
\keywords{Gerstenhaber algebras; Hochschild cohomology; Homological epimorphisms; Long exact sequences; One-point-extensions; Recollements.}
\subjclass[2010]{Primary 16E40; Secondary 14F35, 16T05, 18D10, 18E10, 18G15.}
\begin{abstract}
We show that recollements of module categories give rise to  homomorphisms between the associated Hochschild cohomology algebras which preserve the strict Gerstenhaber structure, i.e., the cup product, the graded Lie bracket and the squaring map. We review various long exact sequences in Hochschild cohomology and apply our results in order to realise that the occurring maps preserve the strict Gerstenhaber structure as well. As a byproduct, we generalise a known long exact cohomology sequence of Koenig--Nagase to arbitrary surjective homological epimorphisms. We use our observations to motivate and formulate a variation of the finite generation conjecture by Snashall--Solberg.
\end{abstract}

\maketitle
\tableofcontents


\section{Introduction}

Let $K$ be a commutative ring and $A$ an algebra over $K$ which we assume to be $K$-projective. The \textit{Hochschild cohomology algebra of $A$} (\textit{over} $K$) can be expressed as $\HH^\ast(A) = \Ext^\ast_{A^\ev}(A,A)$, where $A^\ev = A \otimes_K A^\op$ is the \textit{enveloping algebra of $A$ over $K$}, over which $A$ naturally becomes a left module. The cup product $\smallsmile$ on Yoneda extensions, see \cite{CaEi56} and \cite{Yo54}, turns $\HH^\ast(A)$ into a graded commutative algebra; that is, a $\mathbb Z$-graded algebra such that $\alpha \smallsmile \beta = (-1)^{mn} \beta \smallsmile \alpha$ for homogeneous elements $\alpha$ and $\beta$ of degrees $\abs{\alpha} = m$ and $\abs{\beta} = n$. The multiplicative structure on $\HH^\ast(\Lambda)$ has been used in \cite{EHTSS04} and \cite{SnSo04} (see also \cite{Sn09} and \cite{PSS14}) to develop a support variety theory for Artin algebras $\Lambda$, analogous to the classical ones for group (see \cite{Ev91}) and cocommutative Hopf algebras. 

However, the graded commutative product is just part of richer structure on $\HH^\ast(A)$, for, in 1963, Gerstenhaber discovered a graded Lie bracket,
$$
\{-,-\}_A \colon \HH^{m}(A) \times \HH^n(A) \longrightarrow \HH^{m+n-1}(A),
$$
lowering the degree by $1$ and a (divided) squaring map,
$$
sq_A \colon \HH^{2n}(A) \longrightarrow \HH^{4n-1}(A),
$$
which takes over the role of the map $\alpha \mapsto 2^{-1}\{\alpha,\alpha\}$ if $2 \notin K^\times$. The bracket $\{-,-\}_A$ is compatible with $\smallsmile$ in that it acts on $\HH^\ast(A)$ through graded derivations, and a derived invariant as shown in \cite{Ke04} (compare with \cite{He14b} as well). In the initial article \cite{Ge63}, see also \cite{Ge68}, Gerstenhaber presented $\{-,-\}_A$ as a tool to study and classify deformations of $A$. However, by theorems such as the one of Hochschild-Kostant-Rosenberg (see \cite{HKR62}), Gerstenhaber's bracket found its way into the realm of differential and Poisson geometry, and other branches of mathematics. See \cite{Ko03}, \cite{KoSo09} and \cite{Xu94} for additional applications.
\medskip

It is, of course, a fundamental question whether, or when, an exact functor $\Mod(A^\ev) \rightarrow \Mod(B^\ev)$ gives rise to a graded homomorphism $\HH^\ast(A) \rightarrow \HH^\ast(B)$ taking the strict Gerstenhaber structure on the source to the one on the target. Significant progress in this re\-gard was initiated in \cite{NeRe96}, \cite{Re86} and \cite{Sch98} by taking homotopy theoretical properties of \textit{extension categories} $\mathcal Ext^n_{A^\ev}(A,A)$ into account, leading to a description of the bracket $\{-,-\}_A$ in terms of Yoneda extensions; see also \cite{Bu11} for some adaptations of those methods in the setting of triangulated categories. In \cite{He14b}, we formulated and investigated a generalisation of the construction presented in \cite{Sch98}, leading to the insight, that many monoidal functors $(\Mod(A^\ev), \otimes_A, A) \rightarrow (\Mod(B^\ev), \otimes_B, B)$ give rise to very well-behaved maps between Hochschild cohomology algebras. This is a powerful tool that we will instrument at various stages of this article, as it will allow us to study the Lie bracket in Hochschild cohomology by means of homological algebra.
\medskip

Our first main result involves \textit{recollements of abelian categories}, or more specifically, \textit{recollements of module categories} which are the analogue of split short exact sequences taken in the ``category'' of abelian categories. As such, a recollement $\mathcal R(\A,\B,\C)$ between abelian categories $\A$, $\B$ and $\C$ thus comes with exact functors $\A \rightarrow \B \rightarrow \C$, which admit ``nice'' left and right adjoints. Recollements of abelian categories first appeared in the famous article of Be{\u\i}linson--Bernstein--Deligne on perverse sheaves, see \cite{BBD82}, wherein they were obtained from their triangulated analogues, i.e., recollements of triangulated categories. Since then, recollements have been studied by various authors, with different backgrounds; see e.g. \cite{ClPaSc88} and \cite{Ku94}. Note that frequently a recollement of abelian categories induces such of triangulated categories, by deriving the whole diagram.

If the categories $\A$, $\B$ and $\C$ in $\mathcal R(\A,\B,\C)$ are module categories over $K$-algebras $A$, $B$ and $C$, with $K$-linear functors in between them, we call $\mathcal R(\A,\B,\C)$ a \textit{$K$-linear recollement of module categories}, and denote it by $\mathcal R(A,B,C)$. The theorem in this context predicts that -- under mild assumptions -- a recollement of module categories gives rise to a map between Hochschild cohomology algebras that preserve all of the rich structure that was mentioned above. It makes use of recent results by Psaroudakis--Vit{\'o}ria; see \cite{PsVi14}.

\begin{Thm1}[$=$ Proposition \ref{prop:envrecoll} \& Theorem \ref{thm:mainthm}] Let $\mathcal R = \mathcal R(A,B,C)$ be a $K$-linear recollement of module categories. Assume that $B$ is projective as a $K$-module. Further, denote by $j$ the functor $\Mod(B) \rightarrow \Mod(C)$ in $\mathcal R$ and by $(-)^\vee$ the functor $\Hom_{B^\op}(-,B)$. 

\begin{enumerate}[\rm(1)]
\item The functor $j^\ev(-) = j(B) \otimes_B (-) \otimes_B j(B)^\vee$ determines a $K$-linear recollement $\mathcal R^\ev = \mathcal R(\Ker(j^\ev), \Mod(B^\ev), \Mod(C^\ev))$. Moreover, the image of $B$ under $j^\ev \colon \Mod(B^\ev) \rightarrow \Mod(C^\ev)$ is $K$-projective and canonically isomorphic to $C$ $($as a $C^\ev$-module$)$. 
\item Under the additional assumption that 
$$
\Tor^C_1(j(B)^\vee, j(B)) = 0\,,
$$
the graded $K$-algebra homomorphism
$$
j^\ev_\ast \colon \HH^\ast(B) \xrightarrow{\ j^\ev \ } \Ext^\ast_{C^\ev}(j^\ev(B), j^\ev(B)) \xrightarrow{\ \sim \ } \HH^\ast(C)
$$
is a homomorphism of strict Gerstenhaber algebras, i.e., the diagrams
$$
\xymatrix@C=35pt{
\HH^m(B) \times \HH^n(B) \ar[r]^-{\{-,-\}_B} \ar[d]_-{j^\ev_m \times j^\ev_n} & \HH^{m+n-1}(B) \ar[d]^-{j^\ev_{m + n -1}} \\
\HH^{m}(C) \times \HH^n(C) \ar[r]^-{\{-,-\}_C} & \HH^{m+n-1}(C)
}
\ \quad\quad \
\xymatrix@C=35pt{
\HH^{2n}(B) \ar[r]^{sq_B} \ar[d]_-{j^\ev_{2n}} & \HH^{4n-1}(B) \ar[d]^-{j^\ev_{4n-1}}\\
\HH^{2n}(C) \ar[r]^{sq_C} & \HH^{4n-1}(C)
}
$$
commute.
\end{enumerate}
\end{Thm1}

If $\Tor^C_1(j(B)^\vee, j(B)) = 0$, one may want to call the functor $j: \Mod(B) \rightarrow \Mod(C)$, or the recollement $\mathcal R(A,B,C)$ that it belongs to, \textit{pseudoflat}, a terminology that has been established in \cite{Bu98} for ring homomorphisms $R \rightarrow S$ with $\Tor^R_1(S,S) = 0$.
\medskip

Frequently, the functor $\Mod(A) \rightarrow \Mod(B)$ in a recollement $\mathcal R(A,B,C)$ of module categories is given by the restriction along a \textit{homological epimorphism} $\pi\colon B \rightarrow A$, as introduced and studied by Geigle--Lenzing in \cite{GeLe91}. Such a homological epimorphism $\pi$ gives rise to a homological epimorphism $\pi^\ev \colon B^\ev \rightarrow A^\ev$, and the left adjoint to the associated restriction functor $\pi^\ev_\star\colon \Mod(A^\ev) \rightarrow \Mod(B^\ev)$ has notable properties.

\begin{Thm2}[= Theorem \ref{thm:homepi}]
Let $\pi\colon B \rightarrow A$ be a $K$-linear homological epimorphism between $K$-projective $K$-algebras. Then the functor
$$
A \otimes_B (-) \otimes_B A \colon \Mod(B^\ev) \longrightarrow \Mod(A^\ev)
$$
maps $B$ to $A$ and, moreover, induces a homomorphism
$$
\pi_\ast \colon \HH^\ast(B) = \Ext^\ast_{B^\ev}(B,B) \longrightarrow \Ext^\ast_{A^\ev}(A,A) = \HH^\ast(A)
$$
of strict Gerstenhaber algebras, that is, $\pi_\ast$ is a graded $K$-algebra homomorphism and the diagrams
$$
\xymatrix@C=35pt{
\HH^m(B) \times \HH^n(B) \ar[r]^-{\{-,-\}_B} \ar[d]_-{\pi_m \times \pi_n} & \HH^{m+n-1}(B) \ar[d]^-{\pi_{m + n -1}} \\
\HH^{m}(A) \times \HH^n(A) \ar[r]^-{\{-,-\}_A} & \HH^{m+n-1}(A)
}
\ \quad\quad \
\xymatrix@C=35pt{
\HH^{2n}(B) \ar[r]^{sq_B} \ar[d]_-{\pi_{2n}} & \HH^{4n-1}(B) \ar[d]^-{\pi_{4n-1}}\\
\HH^{2n}(A) \ar[r]^{sq_A} & \HH^{4n-1}(A)
}
$$
commute.
\end{Thm2}

As the functor $A \otimes_B (-) \otimes_B A \colon \Mod(B^\ev) \rightarrow \Mod(A^\ev)$ is, in general, not exact, the map $\pi_\ast$ will not be given by applying $A \otimes_B (-) \otimes_B A$ to extensions directly. However, one may pass to a suitable exact subcategory $\C$ of $\Mod(B^\ev)$ on which $A \otimes_B (-) \otimes_B A$ is exact, and which is, in the sense of Paragraph \ref{nn:extclosed}, entirely extension closed in that extension groups in $\Mod(B^\ev)$ may be computed by extensions in $\C$. This is indeed some sort of a derived functor, as we will notice along the way.
\medskip

In the context of triangular algebras, one-point-extensions and stratifying ideals, various long exact sequences for Hochschild cohomology algebras have been established; see, for instance, \cite{BeGu04, Bu03, Ci00, GoGo01, GrMaSn03, GrSo02, Ha14, Ha89, KoNa09, MiPl00}. The graded maps within those sequences are known to respect the multiplicative structure. Theorems 1 and 2 may be applied, in oder to realise that even more structure is preserved. We take care of a long exact sequence of Green--Solberg first; see \cite{GrSo02}. Let $B$ be a $K$-algebra.

\begin{Thm3}[= Theorem \ref{thm:greensol}]
Let $e \in B$ be an idempotent, $e' = 1 - e$ its complementary idempotent and $C = eBe$, $C' = e'Be'$. If
$$
\Tor^{C}_i(Be,eB) = 0 = \Tor^{C'}_i(Be',e'B) \quad \text{$($for all $i > 0)$},
$$
then there is a long exact sequence
$$
\xymatrix@C=14.5pt{
\cdots \ar[r] & \Ext^{n}_{B^\ev}(B,B) \ar[r]^-{g_n} & \Ext^n_{C^\ev}(C,C) \oplus \Ext^n_{(C')^\ev}(C', C') \ar[r] & \Ext^{n}_{B^\ev}(\Omega^1_B,B) \ar[r] & \cdots
}
$$
induced from the canonical short exact sequence
$$
\xymatrix@C=14pt{
0 \ar[r] & {\Omega}^1_B \ar[r] & \displaystyle (Be \otimes_{C} eB) \oplus (Be' \otimes_{C'} e'B) \ar[r] & B \ar[r] & 0
}.
$$
If, moreover, $B$ is $K$-projective, then the map $g_\ast\colon \HH^\ast(B) \rightarrow \HH^\ast(eBe) \times \HH^\ast(e'Be')$ is a homomorphism of strict Gerstenhaber algebras.
\end{Thm3}

Note that by specialising to $B = \left(\begin{smallmatrix} R & M\\ 0 & K\end{smallmatrix}\right)$ for a $K$-algebra $R$ and an $R$-module $M$, Theorem 3 recovers the long exact sequence for one-point-extensions introduced by Happel in \cite{Ha89}. Thus the maps in Happel's sequence preserve the bracket as well. The exact sequences in the upcoming statement are due to Koenig--Nagase; see \cite{KoNa09}. Recall that an ideal $I \subseteq B$ is called \textit{stratifying}, if there is an idempotent element $e \in B$ with $I = BeB$, such that $\Tor^{eBe}_i(Be,eB) = 0$ for $i > 0$ and the multiplication map $Be \otimes_{eBe} eB \rightarrow BeB = I$ is an isomorphism. In this situation, the canonical surjection $\pi\colon B \rightarrow B/I$ is a homological epimorphism (cf.\,Lemma \ref{lem:strattorvan}).

\begin{Thm4}[$\subseteq$ Theorem \ref{thm:koenignagase}]
Let $I \subseteq B$ be a \emph{stratifying ideal}, and $A = B/I$. Assume further that $B$ and $A$ are $K$-projective. Then the following hold true.
\begin{enumerate}[\rm(1)]
\item The short exact sequence $0 \rightarrow I \rightarrow B \rightarrow A \rightarrow 0$ gives rise to a long exact sequence
$$
\xymatrix@C=18pt{
\cdots \ar[r] & \Ext^n_{B^\ev}(B,I) \ar[r] & \HH^n(B) \ar[r]^-{k_n} & \HH^n(A) \ar[r] & \Ext^{n+1}_{B^\ev}(B,I) \ar[r] & \cdots
}
$$
in Hochschild cohomology. The map $k_\ast \colon \HH^\ast(B) \rightarrow \HH^\ast(A)$ is a homomorphism of strict Gerstenhaber algebras.

\item The short exact sequence $0 \rightarrow I \rightarrow B \rightarrow A \rightarrow 0$ gives rise to a long exact sequence
$$
\xymatrix@C=18pt{
\cdots \ar[r] & \Ext_{B^\ev}^n(A,B) \ar[r] & \HH^n(B) \ar[r]^-{l_n} & \HH^n(eBe) \ar[r] & \Ext_{B^\ev}^{n+1}(A,B) \ar[r] & \cdots
}
$$
in Hochschild cohomology. The map $l_\ast \colon \HH^\ast(B) \rightarrow \HH^\ast(eBe)$ is a homomorphism of strict Gerstenhaber algebras. 
\end{enumerate}
\end{Thm4}

Recall that for a subset $S \subseteq \HH^\ast(A)$ of homogeneous elements, the (\textit{weak}) \textit{Gerstenhaber ideal} generated by $S$ is the smallest homogeneous ideal $G(S)$ of $\HH^\ast(A)$ that contains $S$ and which satisfies
$$
\{ \gamma, \gamma' \}_A \in G(S) \quad \text{(for all $\gamma, \gamma' \in G(S)$).}
$$
If $\mathrm{char}(K) = 2$ or $2 \in K^\times$, and if $S = N$ is the set of all homogeneous nilpotent elements, then the quotient (Gerstenhaber) algebra $\HH^\ast_G(A) = \HH^\ast(A) / G(N)$ is commutative. In light of Theorems 3 and 4, we end by asking the following question.

\begin{Quest}[= Question \ref{conj:snashsol}]
Let $A$ be a finite dimensional algebra over a field $K$. If $N$ denotes the set of all homogeneous nilpotent elements in $\HH^\ast(A)$, is the commutative $K$-algebra
$$
\HH^\ast_G(A) = \HH^\ast(A) / G(N)
$$
finitely generated? More specifically, can we use (any of) the long exact sequences to deduce that for the one-point-extension $B = \Gamma[M] = \left( \begin{smallmatrix} K (\mathbb Z_2 \times \mathbb Z_2) & K\mathbb Z_2\\ 0 & K \end{smallmatrix} \right)$ of $\Gamma = K(\mathbb Z_2 \times \mathbb Z_2)$ by $M = K\mathbb Z_2$, the quotient algebra
$$
\HH^\ast_G(B) = \HH^\ast(B) / G(N)
$$
is finitely generated (if $\mathrm{char}(K) = 2$)?
\end{Quest}

Note that $B = \Gamma[M]$ is the initial counterexample to the finite generation conjecture of Snashall--Solberg, see \cite{SnSo04}, which was first mention in \cite{Xu08}, and studied in further detail in \cite{Sn09} thereafter. It remains to be checked whether, or when, the algebra $\HH^\ast_G(A)$ may be used to establish a fruitful support variety theory on a sufficiently broad class of $A$-modules. This question shall be studied in future work.
\medskip

This article is organised as follows. In Sections \ref{sec:prereq} we will built up the necessary foundations on exact and monoidal categories. Afterwards, in Section \ref{sec:gerstHH}, we will give a recap on the theory of Hochschild cohomology, and the definition of the Lie bracket associated with it. We are also going to briefly discuss Schwede's exact sequence interpretation of the Lie bracket. Section \ref{sec:recoll} concerns itself with recollements of module categories and provides all the detail to state and prove Theorem 1. In Sections \ref{sec:buchweitz} and \ref{sec:greensol} we apply Theorem 1 to long exact sequences due to Buchweitz and Green--Solberg, thus covering a proof of Theorem 3. We make further observations in the context of one-point-extensions and Happel's long exact sequence. Section \ref{sec:homepi} sets up the necessary peripherals on homological epimorphisms in oder to formulate and prove Theorem 2. The latter theorem, along with Theorem 1, will be used in Section \ref{sec:koenagase} to improve results relating the long exact sequences due to Koenig--Nagase, i.e., to obtain Theorem 4. Finally, in Section \ref{sec:snashsol}, we try to motivate how our results may be used to formulate a refined version of the finite generation conjecture of Snashall--Solberg.
\medskip

The present paper is a vastly extended version of a talk given during the \textit{International Conference on Representations of Algebras 2014} in Sanya City, China. I would like to thank L.\,Angeleri-H\"ugel, \O.\,Solberg and J.\,Vit\'oria for awakening my interest in the theory of recollements and homological epimorphisms through most intriguing presentations at various occasions throughout this year. Many thanks also go to C.\,Psaroudakis for valuable discussions on the topic. Finally, I would like to thank the referees for their careful reading and numerous comments. The research that led to this paper was supported by the project ``Triangulated categories in algebra'' (Norwegian Research Council project NFR 221893).


\section{Prerequisites on exact and monoidal categories}\label{sec:prereq}

\begin{nn}
Let us recall the notions of exact and monoidal categories and structure preserving functors (exact and monoidal functors) between them. For further details on exact categories, we refer to \cite{Ke90} and \cite{Qu72}, whereas the textbooks \cite{AgMa10} and \cite{MaL98} provide background material on monoidal categories. In the following section, and in fact for the entire article, we fix a commutative ring $K$.
\end{nn}

\begin{nn}
A full subcategory $\mathsf U$ of an abelian category $\A$ is \textit{extension closed in $\A$} if $0 \rightarrow U'' \rightarrow U \rightarrow U' \rightarrow 0$ is an exact sequence in $\A$ with $U', U'' \in \U$, then also $U \in \mathsf U$. An \textit{exact} $K$-category is a pair $(\C, i_\C)$ consisting of an additive $K$-category $\C$ and a full and faithful embedding $i_\C\colon \C \rightarrow \A_\C$ into an abelian $K$-category $\A_\C$, such that the essential image
$$
\Im(i_\C) = i_\C \C = \{ A \in \A_\C \mid i_\C(C) \cong A \text{ for some $C \in \C$}\}
$$
of $i_\C$ is an extension closed subcategory of $\A_\C$. Exact categories admit a sensible notion of exact sequences. Let $\C = (\C, i_\C)$ be an exact $K$-category. A sequence $0 \rightarrow C'' \rightarrow C \rightarrow C' \rightarrow 0$ is an \textit{admissible short exact sequence in $\C$}, if its image $0 \rightarrow i_\C(C'') \rightarrow i_\C(C) \rightarrow i_\C(C') \rightarrow 0$ under $i_\C$ is exact in $\A_\C$. Given such an admissible short exact sequence $0 \rightarrow C'' \rightarrow C \rightarrow C' \rightarrow 0$, the morphism $C'' \rightarrow C$ is called an \textit{admissible monomorphism}, whereas $C \rightarrow C'$ is an \textit{admissible epimorphism}. The class of admissible short exact sequences is closed under taking isomorphisms and direct sums (in the category of chain complexes over $\C$).
\end{nn}

\begin{nn}
Let $(\C,i_\C)$ be an exact $K$-category. We say that $\C$ is
\begin{enumerate}[\rm(1)]
\item \textit{closed under kernels of epimorphisms} if the functor $i_\C \colon \C \rightarrow \A_\C$ detects admissible epimorphisms, that is, if
$$
\text{$f \in \C$ is an admissible epimorphism} \quad \Longleftrightarrow \quad \text{$i_\C(f)$ is an epimorphism in $\A_\C$}.
$$
\item \textit{closed under cokernels of monomorphisms} if the functor $i_\C \colon \C \rightarrow \A_\C$ detects admissible monomorphisms, that is, if
$$
\text{$f \in \C$ is an admissible monomorphism} \quad \Longleftrightarrow \quad \text{$i_\C(f)$ is a monomorphism in $\A_\C$}.
$$
\end{enumerate}
\end{nn}

\begin{nn}
Each exact $K$-category $(\C, i_\C)$ is closed under taking pushouts along admissible monomorphisms and pullbacks along admissible epimorphisms. Assume that $(\D, i_\D)$ is another exact $K$-category, and let $\mathfrak X\colon \C \rightarrow \D$ be an \textit{exact} $K$-linear functor, that is, it takes admissible short exact sequences in $\C$ to admissible short exact sequences in $\D$. Each such functor preserves pushouts along admissible monomorphisms and pullbacks along admissible epimorphisms.
\end{nn}

\begin{lem}\label{lem:smallexactfunc}
Let $\C$ and $\D$ be exact $K$-categories, with corresponding abelian categories $\A_\C$ and $\A_\D$. Let $\mathfrak X \colon \C \rightarrow \A_\D$ be an exact and $K$-linear functor. Then the full subcategory $\C' \subseteq \C$ defined by 
$$
X \in \C \text{ belongs to $\C'$} \quad \Longleftrightarrow \quad \mathfrak X(X) \text{ belongs to $i_\D \D$}
$$
$($that is, $\C' = \mathfrak X^{-1}(i_\D \D))$ is an exact $K$-category $($along with the obvious inclusion $\C' \hookrightarrow \C \hookrightarrow \A_\C)$ and $\mathfrak X$ restricts to an exact functor $\mathfrak X \mathord{\upharpoonright}_{\C'}\colon \C' \rightarrow i_\D \D$.
\end{lem}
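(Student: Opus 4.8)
\emph{Proof strategy.} The plan is to verify the two defining properties of an exact $K$-category for $(\C', i_{\C'})$, where $i_{\C'}\colon \C' \hookrightarrow \C \xrightarrow{i_\C} \A_\C$ — namely, that $\C'$ is an additive $K$-category and that its essential image in $\A_\C$ is extension closed — and then the exactness of the corestricted functor. Additivity is immediate: $\C'$ is full and $K$-linear by construction; it contains a zero object because $\mathfrak X(0) = 0 = i_\D(0)$ lies in $i_\D\D$; and it is closed under finite biproducts, since for $X, Y \in \C'$ one has $\mathfrak X(X \oplus Y) \cong \mathfrak X(X) \oplus \mathfrak X(Y)$ ($\mathfrak X$ being additive), while $i_\D\D$ is closed under finite direct sums because it is extension closed (apply extension-closedness to the split sequence $0 \to \mathfrak X(X) \to \mathfrak X(X) \oplus \mathfrak X(Y) \to \mathfrak X(Y) \to 0$), whence $X \oplus Y \in \C'$. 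So $i_{\C'}$ is a full and faithful $K$-linear embedding of an additive $K$-category into an abelian $K$-category, and one may take $\A_{\C'} = \A_\C$.

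The main point is extension-closedness of the essential image $i_\C\C'$ of $\C'$ in $\A_\C$. Let $0 \to A'' \to A \to A' \to 0$ be exact in $\A_\C$ with $A'', A' \in i_\C\C'$. Since $i_\C\C' \subseteq i_\C\C$ and $i_\C\C$ is extension closed in $\A_\C$, the middle term $A$ already lies in $i_\C\C$, so we may write $A \cong i_\C(X)$, $A'' \cong i_\C(X'')$, $A' \cong i_\C(X')$ with $X \in \C$ and $X'', X' \in \C'$. Transporting the sequence along these isomorphisms and using that $i_\C$ is full and faithful, we obtain morphisms in $\C$ assembling into an admissible short exact sequence $0 \to X'' \to X \to X' \to 0$ in $\C$. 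Applying $\mathfrak X$, viewed as an exact functor into $\A_\D$ endowed with its canonical (abelian) exact structure, yields a short exact sequence $0 \to \mathfrak X(X'') \to \mathfrak X(X) \to \mathfrak X(X') \to 0$ in $\A_\D$ whose outer terms lie in $i_\D\D$; extension-closedness of $i_\D\D$ in $\A_\D$ forces $\mathfrak X(X) \in i_\D\D$, i.e. $X \in \C'$, hence $A \cong i_\C(X) \in i_\C\C'$, as required. This argument also identifies the admissible short exact sequences of $(\C', i_{\C'})$ with the short exact sequences of $\A_\C$ all of whose terms lie in $i_\C\C'$.

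For the last assertion, the corestriction $\mathfrak X\mathord{\upharpoonright}_{\C'}\colon \C' \to i_\D\D$ is well defined by the very definition of $\C'$, where $i_\D\D$ carries the exact structure induced by its inclusion into $\A_\D$ (equivalently, transported from $\D$ along $i_\D$), its admissible short exact sequences being the short exact sequences of $\A_\D$ with all terms in $i_\D\D$. Given an admissible short exact sequence in $\C'$, it is in particular an admissible short exact sequence in $\C$, so $\mathfrak X$ sends it to a short exact sequence of $\A_\D$; its terms are values of $\mathfrak X$ at objects of $\C'$ and thus lie in $i_\D\D$, so the image is admissible in $i_\D\D$. Hence $\mathfrak X\mathord{\upharpoonright}_{\C'}$ is exact.

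I do not expect a genuine obstacle; the one place needing care is the bookkeeping around the word ``exact'' for $\mathfrak X$ — reading $\mathfrak X\colon \C \to \A_\D$ as an exact functor with respect to the canonical exact structure on the abelian category $\A_\D$, and, in tandem, identifying admissible short exact sequences in the full subcategories $\C'$ and $i_\D\D$ with honest short exact sequences in the ambient abelian categories that have their terms in the respective subcategory. Once this dictionary is fixed, every step reduces to extension-closedness of $i_\C\C$ and $i_\D\D$ together with fullness and faithfulness of $i_\C$.
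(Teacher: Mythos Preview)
Your proposal is correct and follows essentially the same approach as the paper: first observe additivity, then show extension-closedness of $i_\C\C'$ by using extension-closedness of $i_\C\C$ to land the middle term in $\C$, and then extension-closedness of $i_\D\D$ (after applying $\mathfrak X$) to land it in $\C'$. Your version is in fact more careful---the paper's proof dismisses additivity as ``obvious'' and does not explicitly verify the exactness of the corestricted functor, whereas you spell out both.
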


\begin{proof}
Additivity and $K$-linearity are obvious. If $0 \rightarrow i_\C (X'') \rightarrow X \rightarrow i_\C (X') \rightarrow 0$ is an exact sequence in $\A_\C$ with $X', X'' \in \C'$, it follows that $X \in i_\C \C$ by definition; say $X \cong i_\C(Y)$ for some object $Y \in \C$. We have thus an admissible short exact sequence $0 \rightarrow X'' \rightarrow Y \rightarrow X' \rightarrow 0$ in $\C$, and hence, after applying $\mathfrak X$ to it, such in $\D$. As $\mathfrak X(X')$ and $\mathfrak X(X'')$ belong to $i_\D \D$, so must $\mathfrak X(Y)$. Therefore $Y$ belongs to $\C'$, as required.
\end{proof}

\begin{nn}
For an integer $n \geqslant 1$, a sequence
$$
S \quad \equiv \quad 0 \longrightarrow C'' \longrightarrow C_{n-1} \longrightarrow \cdots \longrightarrow C_0 \longrightarrow C' \longrightarrow 0
$$
of morphisms in an exact $K$-category $(\C, i_\C)$ is called an \textit{admissible $n$-extension} (\textit{of $C'$ by $C''$}) in case $i_\C S$ is exact in $\A_\C$, and $\Ker(i_\C(C_0 \rightarrow C'))$ and $\Ker(i_\C(C_k \rightarrow C_{k-1}))$ belong to $i_\C \C$ for all $k = 1, \dots, n-1$. As for abelian categories, we let $\Ext^n_\C(C',C'')$ be the set of $n$-extensions of $C'$ by $C''$ modulo the equivalence relation generated by \textit{morphisms} of admissible $n$-extensions.
\end{nn}

\begin{nn}\label{nn:extclosed}
Let $\C$ be an additive subcategory of an abelian $K$-category $\A$ and let $n \geqslant 1$ be an integer. The category $\C$ is \textit{$n$-extension closed in $\A$}, if the maps
$$
\Ext^i_\C(C,D) \longrightarrow \Ext^i_\A(C,D) \quad \text{(for all $0 \leqslant i \leqslant n$)}
$$
induced by the inclusion functor $\C \rightarrow \A$ are isomorphisms for every pair $C,D$ of objects in $\C$. If $\C$ is $n$-extension closed for all $n \in \mathbb N$, then we say that $\C$ is \textit{entirely extension closed in $\A$}. Note that $\C$ being $1$-extension closed is equivalent to $(\C, \C \hookrightarrow \A)$ being an exact category. One has the following criterion as to when $\C$ is entirely extension closed in $\A$.
\end{nn}

\begin{prop}[see {\cite[Prop.\,2.4.6]{He14b}}]\label{prop:entireextclo}
Assume that $\C$ is $1$-extension closed in $\A$ and that $\A$ has enough projective objects. If $\Proj(\A) \subseteq \C$ and $\C$ is, when viewed as an exact category, closed under kernels of epimorphisms, then $\C$ is entirely extension closed in $\A$.
\end{prop}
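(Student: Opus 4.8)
The plan is to prove that the comparison maps $\Ext^n_\C(C,D) \to \Ext^n_\A(C,D)$ are isomorphisms by induction on $n$. For $n=0$ this is the fullness of $\C$ in $\A$, and for $n=1$ it is precisely the standing assumption that $\C$ is $1$-extension closed; the inductive step will be a dimension shift along a single admissible short exact sequence, so the first task is to manufacture the right sequences inside $\C$.

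Here I would extract two consequences of the hypotheses. First, every $P\in\Proj(\A)$ is a projective object of the exact category $\C$: an admissible epimorphism of $\C$ is in particular an epimorphism of $\A$, a lift against it exists already in $\A$ because $P$ is $\A$-projective, and it is a morphism of $\C$ by fullness. Second, since $\C$ is closed under kernels of epimorphisms, a morphism of $\C$ is an admissible epimorphism exactly when it is an epimorphism in $\A$; as $\A$ has enough projectives, for each $C\in\C$ one may pick an epimorphism $P_0\twoheadrightarrow C$ in $\A$ with $P_0\in\Proj(\A)\subseteq\C$, and then this is an admissible epimorphism of $\C$ whose $\A$-kernel $\Omega C$ lies in $\C$, so that $0\to\Omega C\to P_0\to C\to 0$ is at once an admissible short exact sequence of $\C$ and an exact sequence of $\A$.

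Now fix $D\in\C$ and $n\ge 2$ and apply $\Hom(-,D)$ to $0\to\Omega C\to P_0\to C\to 0$ in both $\C$ and $\A$. An admissible short exact sequence in an exact category produces the usual long exact $\Ext$-sequence, and since $P_0$ is projective in both categories the connecting homomorphisms give isomorphisms $\Ext^{n-1}_\C(\Omega C,D)\xrightarrow{\sim}\Ext^n_\C(C,D)$ and $\Ext^{n-1}_\A(\Omega C,D)\xrightarrow{\sim}\Ext^n_\A(C,D)$; these are compatible with the comparison maps induced by $\C\hookrightarrow\A$, because in either category the connecting map is Yoneda splicing with the same sequence. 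As $\Omega C$ and $D$ lie in $\C$, the inductive hypothesis makes the left-hand map $\Ext^{n-1}_\C(\Omega C,D)\to\Ext^{n-1}_\A(\Omega C,D)$ an isomorphism, hence so is $\Ext^n_\C(C,D)\to\Ext^n_\A(C,D)$, which closes the induction.

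I do not anticipate a real obstacle: the only delicate points are that $\A$-projectives lying in $\C$ are projective for the exact structure of $\C$ and that syzygies formed in $\A$ stay inside $\C$ — and these are exactly what the assumptions $\Proj(\A)\subseteq\C$ and ``closed under kernels of epimorphisms'' are there to guarantee — together with the routine fact that the long exact $\Ext$-sequence exists over exact categories (built by splicing, just as over abelian categories). One could also bypass the induction entirely: since $\C$ now has enough projectives of the form above, a resolution $P_\bullet\to C$ by objects of $\Proj(\A)\subseteq\C$ computes both $\Ext^\ast_\C(C,D)$ and $\Ext^\ast_\A(C,D)$, and the complexes $\Hom_\C(P_\bullet,D)$ and $\Hom_\A(P_\bullet,D)$ coincide by fullness.
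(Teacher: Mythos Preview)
The paper does not actually prove this proposition; it merely records it with a pointer to \cite[Prop.\,2.4.6]{He14b}. Your argument is correct and is precisely the standard dimension-shifting proof one would expect to find at that reference: build syzygies inside $\C$ using $\Proj(\A)\subseteq\C$ together with closure under kernels of epimorphisms, then shift via the long exact $\Ext$-sequence (or, equivalently, observe that a single $\Proj(\A)$-resolution computes both sides). Nothing to add.
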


\begin{rem}
Independently, Coulembier--Mazorchuk and Psaroudakis introduced and studied entirely extension closed (abelian) subcategories in the context of the representation theory of algebraic groups and associative algebras (see \cite{CoMa14a, CoMa14b} and \cite{Ps14}). However, their terminology differs from ours. Coulembier--Mazorchuk refer to those subcategories as \textit{extension full subcategories}, while Psaroudakis investigated \textit{homological embeddings}, i.e., exact functors between abelian categories giving rise to isomorphisms on $\Ext$-groups.
\end{rem}

Let us turn to monoidal categories.

\begin{nn}
Recall that a \textit{monoidal category} is a 6-tuple $(\mathsf C, \otimes, \mathbbm 1, \alpha, \lambda, \varrho)$, where $\mathsf \C$ is a category, $\otimes\colon \C \times \C \rightarrow \C$ is a functor, $\mathbbm 1$ is an object in $\C$, and
\begin{gather*}
\alpha\colon - \otimes (- \otimes-) \longrightarrow (- \otimes -) \otimes - \ , \\
\lambda\colon \mathbbm 1 \otimes - \longrightarrow \Id_{\C} \ , \\
\varrho\colon - \otimes \mathbbm 1 \longrightarrow \Id_{\C}
\end{gather*}
are isomorphisms of functors such that, for all objects $W, X,Y,Z$ in $\C$,
$$
\xymatrix@C=40pt{
W \otimes (X \otimes (Y \otimes Z)) \ar[r]^-{\alpha_{W,X,Y \otimes Z}} \ar[d]_-{W \otimes \alpha_{X,Y,Z}} & (W\otimes X) \otimes (Y \otimes Z) \ar[r]^{\alpha_{W \otimes X, Y, Z}} & ((W \otimes X) \otimes Y) \otimes Z \ \ \\
W \otimes ((X \otimes Y) \otimes Z) \ar[rr]^-{\alpha_{W,X \otimes Y,Z}} & & (W \otimes (X \otimes Y)) \otimes Z \ar[u]_{\alpha_{W,X,Y} \otimes Z} \ ,
}
$$
commutes and $(\varrho_X \otimes Y) \circ \alpha_{X,\mathbbm 1,Y} = X \otimes \lambda_Y$. In this situation, $\otimes$ is a \textit{monoidal} (or \textit{tensor}) \textit{product functor for $\C$} and $\mathbbm 1$ is the \textit{$($tensor$)$ unit} of $\otimes$.
\end{nn}
\begin{rem}\label{rem:opposite} Let $(\C, \otimes, \mathbbm 1, \alpha, \lambda, \varrho)$ be a monoidal category.
\begin{enumerate}[\rm(1)]
\item We will often suppress a huge part of the structure morphisms and simply write $(\C, \otimes, \mathbbm 1)$ instead of $(\C, \otimes, \mathbbm 1, \alpha, \lambda, \varrho)$; if they are needed without priorly having been mentioned, we will refer to them as $\alpha_\C$, $\lambda_\C$ and $\varrho_\C$.
\item It follows from the axioms (cf.\,\cite[Prop.\,1.1]{JoSt93}) that the following equations hold true for all $X, Y, Z \in \Ob \C$:
$$
\lambda_\bbm1 = \varrho_\bbm1, \quad \varrho_{X \otimes Y} \circ \alpha_{X,Y,\bbm1} = X \otimes \varrho_Y, \quad (\lambda_X \otimes Y) \circ \alpha_{\bbm1, X, Y} = \lambda_{X \otimes Y} \, .
$$
\item Note that if $(\mathsf C, \otimes, \mathbbm 1, \alpha, \lambda, \varrho)$ is a monoidal category, then so is $\mathsf C^\op$ together with the structure morphisms $\alpha^{-1}$, $\lambda^{-1}$ and $\varrho^{-1}$.
\end{enumerate}
\end{rem}
\begin{nn}
We say that a monoidal category $(\C, \otimes, \mathbbm 1)$ is a \textit{tensor $K$-category}, if $\C$ is $K$-linear and the tensor product functor $\otimes\colon \C \times \C \rightarrow \C$ is $K$-bilinear on morphisms, that is, it factors through the \textit{tensor product category} $\C \otimes_K \C$ which is defined as follows:
\begin{align*}
\Ob(\C \otimes_K \C) &{\coloneqq} \Ob(\C \times \C),\\
\Hom_{\C \otimes_K \C}(\underline{X}, \underline{Y}) &{\coloneqq} \Hom_\C(X_1,Y_1) \otimes_K \Hom_\C(X_2, Y_2)
\end{align*}
for objects $\underline{X} = (X_1, X_2)$ and $\underline{Y} = (Y_1, Y_2)$ in $\C \times \C$. 
\end{nn}
\begin{nn}\label{nn:monoclosure}
The triple $(\C, \otimes, \bbm1)$ is \textit{weak exact monoidal $K$-category} if $\C$ is an exact $K$-category and $(\C, \otimes, \bbm1)$ is a tensor $K$-category such that every object in $\C$ is either \textit{flat} (that is, the functors $X \otimes - \colon \C \rightarrow \C$ are exact for all $X \in \Ob \C$) or \textit{coflat} (that is, the functors $- \otimes X \colon \C \rightarrow \C$ are exact for all $X \in \Ob \C$). A \textit{strong exact monoidal category} is a weak exact monoidal $K$-category whose underlying exact category is closed under kernels of epimorphisms or under cokernels of monomorphisms. Note that these definitions are not quite the same as introduced in \cite[Sec.\,1.2]{He14b} and \cite[Sec.\,3.3]{He14b}, however weak/strong exact monoidal categories in the above sense are weak/strong exact monoidal categories in the sense of \cite[Def.\,1.2.6]{He14b}/\cite[Def.\,3.3.3]{He14b}. For the remainder of this article, the term exact monoidal category will mostly stand for \textit{strong} exact monoidal category, unless explicitly stated otherwise.

Let $\U$ be a full and extension closed subcategory of $\C$ that contains $\bbm1$. Define $\overline \U$ to be the full subcategory of $\U$ consisting of all objects $X \in \U$ such that $X \otimes Y \in \U$ for all objects $Y \in \U$.
\end{nn}
\begin{lem}\label{lem:monoclosure}
Keeping the notations from above, the triple $(\overline \U, \otimes, \bbm1)$ is a full, additive, extension closed and monoidal subcategory of $(\C,\otimes, \bbm1)$. It is closed under kernels of epimorphisms $($cokernels of monomorphisms$)$ if $\U$ is closed under kernels of epimorphisms $($cokernels of monomorphisms$)$.
\end{lem}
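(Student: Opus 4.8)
The plan is to verify, in turn, the four asserted properties of $(\overline\U,\otimes,\bbm1)$ — full, additive, extension closed, monoidal — and then the stability statement; apart from one genuinely structural input (identified at the end), every step reduces to bookkeeping with the structure isomorphisms $\alpha,\lambda,\varrho$ together with $K$-bilinearity of $\otimes$ and the standing facts that $\U$ is a full, additive, replete subcategory closed under extensions. Fullness is immediate from the definition $\overline\U\subseteq\U\subseteq\C$.

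For additivity I would note $0\otimes Y\cong 0$ and $(X_1\oplus X_2)\otimes Y\cong(X_1\otimes Y)\oplus(X_2\otimes Y)$ for every $Y$, so that $0\in\overline\U$ and $\overline\U$ is closed under finite direct sums because $\U$ is. For the monoidal structure I would first record $\bbm1\in\overline\U$: indeed $\bbm1\in\U$ by hypothesis, and $\bbm1\otimes Y\cong Y\in\U$ for every $Y\in\U$ via $\lambda_Y$. Next, for $X_1,X_2\in\overline\U$ and $Y\in\U$, the associativity constraint gives $(X_1\otimes X_2)\otimes Y\cong X_1\otimes(X_2\otimes Y)$; since $X_2\otimes Y\in\U$ (as $X_2\in\overline\U$) we get $X_1\otimes(X_2\otimes Y)\in\U$ (as $X_1\in\overline\U$), and $X_1\otimes X_2\in\U$ (as $X_1\in\overline\U$, $X_2\in\U$), whence $X_1\otimes X_2\in\overline\U$. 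So $\otimes$ restricts to $\overline\U\times\overline\U\to\overline\U$, $\bbm1\in\overline\U$, and since the components of $\alpha,\lambda,\varrho$ at objects of $\overline\U$ are isomorphisms lying in $\overline\U$, the constraints restrict as well; hence $(\overline\U,\otimes,\bbm1)$ is a monoidal subcategory of $(\C,\otimes,\bbm1)$.

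The remaining two points rest on the same mechanism. For extension closedness, let $0\to X''\to X\to X'\to 0$ be an admissible short exact sequence with $X',X''\in\overline\U$; then $X\in\U$ since $\U$ is extension closed. To see $X\in\overline\U$, fix $Y\in\U$ and tensor the sequence with $Y$: if $Y$ is coflat this is at once again exact, and in general the fact that tensoring an admissible short exact sequence with an arbitrary object stays admissible exact is exactly what the weak exact monoidal structure ("every object of $\C$ is flat or coflat") secures (cf.\ \cite[Sec.\,1.2 and 3.3]{He14b}). Either way one gets an exact sequence $0\to X''\otimes Y\to X\otimes Y\to X'\otimes Y\to 0$ whose outer terms lie in $\U$, hence $X\otimes Y\in\U$; as $Y$ was arbitrary, $X\in\overline\U$. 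For the final assertion, assume $\U$ is closed under kernels of epimorphisms, let $f\colon X\to X'$ be a morphism in $\overline\U$ that is an admissible epimorphism, and set $N=\Ker f$; then $N\in\U$ and $0\to N\to X\to X'\to 0$ is admissible exact. Tensoring with an arbitrary $Y\in\U$ and arguing as before yields an exact sequence $0\to N\otimes Y\to X\otimes Y\to X'\otimes Y\to 0$ in which $X\otimes Y\to X'\otimes Y$ is an epimorphism between objects of $\U$; hence $N\otimes Y=\Ker(X\otimes Y\to X'\otimes Y)\in\U$, so $N\in\overline\U$. The case of cokernels of monomorphisms follows dually, using the admissible short exact sequence $0\to X''\to X\to\Coker f\to 0$ attached to an admissible monomorphism $f$ in $\overline\U$.

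The step I expect to be the main obstacle — in fact the only non-formal one — is precisely the exactness of $-\otimes Y$ on admissible short exact sequences for $Y\in\U\subseteq\C$: this is where one must invoke that $Y$ is flat or coflat and that $(\C,\otimes,\bbm1)$ is a weak exact monoidal category, as set up in \cite{He14b}. Once that is granted, everything else is straightforward manipulation with the associativity and unit constraints $\alpha$ and $\lambda$.
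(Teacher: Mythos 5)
Your proof is correct and follows essentially the same route as the paper's: additivity and fullness are formal, $\bbm1\in\overline\U$ and closure under $\otimes$ come from the unit and associativity constraints, and extension closure comes from tensoring an admissible short exact sequence with $Y\in\U$ and using that $\U$ is extension closed. You also spell out the closure under kernels of epimorphisms (resp.\ cokernels of monomorphisms), which the paper's proof leaves implicit, and the one structural input you isolate --- exactness of $-\otimes Y$ on admissible short exact sequences, coming from the weak exact monoidal structure --- is precisely the point the paper's proof also takes for granted.
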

\begin{proof}
Additivity follows from the additivity of $\otimes$. If $0 \rightarrow X'' \rightarrow X \rightarrow X' \rightarrow 0$ is an admissible exact sequence, then so is
$$
0 \longrightarrow X'' \otimes Y \longrightarrow X \otimes Y \longrightarrow X' \otimes Y \longrightarrow 0
$$
for all $Y \in \U$. Therefore, if $X', X''$ are in $\overline \U$, it follows that $X \otimes Y$ belongs tu $\U$, thus, that $X$ belongs to $\overline \U$. Let $X', X'' \in \overline \U$ and $Y \in \U$. Then, by the associativity isomorphisms,
$$
(X' \otimes X'') \otimes Y \cong X' \otimes (X'' \otimes Y) \ \in \ \U,
$$
i.e., $X' \otimes X''$ is a member of $\overline \U$. Of course, $\bbm1 \in \overline \U$ by the unit isomorphisms.
\end{proof}
\begin{nn}\label{def:monoidalfunc}
We are going to recall the definition of certain structure preserving functors between monoidal categories. Let $(\mathsf C, \otimes_{\mathsf C}, \mathbbm 1_{\mathsf C})$ and $(\mathsf D, \otimes_{\mathsf D}, \mathbbm 1_{\mathsf D})$ be monoidal categories. Let $\mathfrak A\colon \mathsf C \rightarrow \mathsf D$ be a functor, and
\begin{align*}
& \phi_{X,Y}\colon \mathfrak A X \otimes_{\mathsf D} \mathfrak A Y \longrightarrow \mathfrak A (X \otimes_{\mathsf C} Y),\\
& \psi_{X,Y}\colon \mathfrak A (X \otimes_{\mathsf C} Y) \longrightarrow \mathfrak A X \otimes_{\mathsf D} \mathfrak A Y,
\end{align*}
be natural morphisms in $\D$ (for $X,Y \in \Ob\mathsf C$). Further, let $\phi_0\colon \mathbbm 1_{\mathsf D} \rightarrow \mathfrak A \mathbbm 1_{\mathsf C}$ and $\psi_0\colon \mathfrak A \mathbbm 1_{\mathsf C} \rightarrow \mathbbm 1_{\mathsf D}$ be morphisms in $\mathsf D$.

The triple $(\mathfrak A, \phi, \phi_0)$ is called an \textit{almost strong monoidal functor} if $\phi_0$ is invertible and the following diagrams commute for all $X,Y,Z \in \Ob \mathsf C$.

\begin{equation*}
\xymatrix@C=40pt{
\mathfrak A X \otimes_\D (\mathfrak A Y \otimes_\D \mathfrak A Z) \ar[r]^{\mathfrak A X \otimes_\D \phi_{Y,Z}} \ar[d]_{\alpha_\D\mathfrak A} & \mathfrak A X
\otimes_\D \mathfrak A(Y \otimes_\C Z) \ar[r]^{\phi_{X,Y \otimes_\C Z}} & \mathfrak A(X \otimes_\C (Y \otimes_\C Z)) \ar[d]^{\mathfrak A \alpha_\C}\\
(\mathfrak A X \otimes_\D \mathfrak A Y) \otimes_\D \mathfrak A Z \ar[r]^-{\phi_{X,Y} \otimes_\D \mathfrak A Z} & \mathfrak A (X \otimes_\C Y) \otimes_\D \mathfrak A Z
\ar[r]^-{\phi_{X \otimes_\C Y,Z}} & \mathfrak A ((X \otimes_\C Y) \otimes_\C Z)
}
\end{equation*}
\begin{equation*}
\xymatrix{
\mathbbm 1_\D \otimes_\C \mathfrak A X  \ar[d]_{\phi_0 \otimes_\D \mathfrak A X} \ar[r]^-{\lambda_{\D}\mathfrak A} & \mathfrak A X  \\
\mathfrak A \mathbbm 1_\C \otimes_\D \mathfrak A X \ar[r]^-{\phi_{\mathbbm 1_\C, X}} & \mathfrak A(\mathbbm 1_\C \otimes_\C X) \ar[u]_{\mathfrak A \lambda_{\C}}
}
\quad
\xymatrix{
\mathfrak A X \otimes_\D  \mathbbm 1_\D \ar[d]_{\mathfrak A X \otimes_\D \phi_0} \ar[r]^-{\varrho_{\D}\mathfrak A} & \mathfrak A X  \\
\mathfrak A X \otimes_\D \mathfrak A \mathbbm 1_\C \ar[r]^-{\phi_{X, \mathbbm 1_\C}} & \mathfrak A(X \otimes_\C \mathbbm 1_\C)
\ar[u]_{\mathfrak A \varrho_{\C}}
}
\end{equation*}
The triple $(\mathfrak A, \psi, \psi_0)$ is called an \textit{almost costrong monoidal functor} if $(\mathfrak A^\op, \psi, \psi_0)$ is an almost strong monoidal functor. The triple $(\mathfrak A, \phi, \phi_0)$ is called a \textit{strong monoidal functor} if it is an almost strong monoidal functor and $\phi$ is invertible, whereas $(\mathfrak A, \psi, \psi_0)$ is called a \textit{costrong monoidal functor} if it is an almost costrong monoidal functor and $\psi$ is invertible.
\end{nn}

The following two examples will reappear at later stages of this article.

\begin{exa}
Let $A$ be a $K$-algebra which is projective when considered as a $K$-module and $A^\ev = A \otimes_K A^\op$ the \textit{enveloping algebra} of $A$ over $K$, with factorwise multiplication. Note that there are forgetful functors $\Mod(A^\ev) \rightarrow \Mod(A)$ and $\Mod(A^\ev) \rightarrow \Mod(A^\op)$. Let $\P_\lambda$ and $\P_\varrho(A)$ be the following full subcategories of $\Mod(A^\ev)$. A module $M \in \Mod(A^\ev)$ belongs to
\begin{enumerate}
\item $\P_\lambda(A)$ if, and only if, $M$ is projective as a left $A$-module.
\item $\P_\varrho(A)$ if, and only if, $M$ is projective as a right $A$-module.
\end{enumerate}
Both categories are non-empty, as they contain $A$ and $A \otimes_K A$. As short exact sequences ending in a projective module split, $\P_\lambda(A)$ is closed under extensions and kernels of admissible epimorphisms. Summands and arbitrary direct sums of projectives are projective, so that $\P_\lambda(A)$ is closed under taking direct summands and arbitrary direct sums in $\Mod(A^\ev)$. It follows that $\P_\lambda(A)$ is entirely extension closed, due to Proposition \ref{prop:entireextclo}. Moreover, if $M$ and $N$ belong to $\P_\lambda(A)$, then so does $M \otimes_A N$, since $\Hom_A(M \otimes_A N, -) \cong \Hom_A(N, \Hom_A(M,-))$ is an exact functor. Thus $\P_\lambda(A)$ is monoidal, and the functors $- \otimes_A M \colon \P_\lambda(A) \rightarrow \P_\lambda(A)$ are exact for all $M \in \P_\lambda(A)$, thus $\P_\lambda(A)$ is a strong exact monoidal subcategory of $\Mod(A^\ev)$. Likewise, the analogue statements hold true for $\P_\varrho(A)$. It follows that the category $\P(A) = \P_\lambda(A) \cap \P_\varrho(A)$ of $A^\ev$-modules which are projective on either side is entirely extension closed and strong exact monoidal as well.
\end{exa}

\begin{exa}
Let $A$ be a $K$-algebra which is flat as a $K$-module. Replacing the term $A$-projective (on the left or right) by $A$-flat in the previous example, we obtain three full subcategories $\F_\lambda(A)$, $\F_\varrho(A)$ and $\F(A) = \F_\lambda(A) \cap \F_\varrho(A)$ of $\Mod(A^\ev)$. They contain $A$ and $A \otimes_K A$, as $A$ is $K$-flat. Moreover, by the Snake Lemma, they are closed under taking extensions and tensor products over $A$, thus are weak exact monoidal subcategories of $(\Mod(A^\ev), \otimes_A, A)$. However, neither the kernel of a map $M \twoheadrightarrow N$ in $\F(A)$ nor the cokernel of a map $M \hookrightarrow N$ in $\F(A)$ needs to be $A$-flat on any side. Therefore one cannot expect those subcategories to be entirely extension closed in general.
\end{exa}


\section{Gerstenhaber algebras and Hochschild cohomology}\label{sec:gerstHH}

Fix a unital and associative $K$-algebra $A$. The symbol $\otimes$ will always stand for $\otimes_K$, i.e., the tensor product over the base ring $K$. Let $A^\ev = A \otimes A^\op$ be the enveloping algebra of $A$ over $K$. It is a very well-known fact, that $A$-bimodules with symmetric $K$-action bijectively correspond to left modules over $A^\ev$. In this section, we will recall the definition of Hochschild cohomology, and its (higher) structures, as introduced in \cite{CaEi56}, \cite{Ge63} and \cite{Ho45}.

\subsection{Reminder on Hochschild cohomology}
Let $M$ be a $A^\ev$-module. The \textit{Hochschild cocomplex} $\mathbb C(A,M) = (C^\ast(A,M), \partial_M)$ with coefficients in $M$ is the cocomplex conentrated in non-negative degrees which is given by
$$
C^n(A,M) = \Hom_K(A^{\otimes n},M) \quad \text{(for $n \geqslant 0$)}
$$
and $\partial_M^n \colon C^n(A,M) \rightarrow C^{n+1}(A,M)$,
\begin{align*}
\partial_M^n(f)(a_1 \otimes \cdots a_{n+1}) = & a_1 f(a_2 \otimes \cdots \otimes a_{n+1})\\ &+ \sum_{i=1}^{n}(-1)^n f(a_1 \otimes \cdots \otimes a_{i-1} \otimes a_i a_{i+1} \otimes a_{i+1} \otimes \cdots \otimes a_{n+1})\\
&+ (-1)^{n+1}f(a_1 \otimes \cdots \otimes a_{n})a_{n+1}.
\end{align*}
The Hochschild cocomplex admits an exterior pairing, in that there is a map
$$
\smallsmile \colon C^m(A,M) \times C^n(A,N) \longrightarrow C^{m+n}(A,M \otimes_A N)
$$
for any pair $M, N \in \Mod(A^\ev)$. Namely,
$$
(f \smallsmile g)(a_1 \otimes \cdots a_{m + n}) = f(a_1 \otimes \cdots \otimes a_{m}) \otimes g(a_{m+1} \otimes \cdots \otimes a_{m+n}).
$$
In fact, through $\smallsmile$, and after identifying $A \otimes_A A \cong A$, $M \otimes_A A \cong M \cong A \otimes_A M$, the DG module $\mathbb C(A,A)$ is a DG $K$-algebra, and $\mathbb C(A,M)$ will be a left and a right DG module over it. In particular, the cohomology of $\mathbb C(A,A)$ will be a graded $K$-algebra. We refer to the map $\smallsmile$ on $\mathbb C(A,A)$ (and also to the induced map in cohomology) as the \textit{cup product}.

\begin{defn}
For an integer $n \geqslant 0$, the \textit{$n$-th Hochschild cohomology module} with coefficients in $M$ is given by
$$
\HH^n(A,M) = H^n \mathbb C(A,M).
$$
The graded module
$$
\HH^\ast(A,M) = \bigoplus_{n \geqslant 0}\HH^n(A,M)
$$
is the \textit{Hochschild cohomology module} of $A$ with coefficients in $M$. We abbriviate $\HH^n(A) = \HH^n(A,A)$ for $n \geqslant 0$ and call $\HH^\ast(A)$ the \textit{Hochschild cohomology algebra} of $A$.
\end{defn}

\begin{nn}
There are two immediate observations.
\begin{enumerate}[\rm(1)]
\item The module $\HH^0(A,M)$ coincides with $M^A$, where
$$
M^A = \{m \in M \mid am = ma \text{ for all $a \in A$}\}.
$$
In particular, $\HH^0(A) = Z(A)$ is the center of $A$.
\item The module $\HH^1(A,M)$ coincides with the module $\Out_K(A,M)$ of \textit{outer derivations} which is given by
$$
\Out_K(A,M) = \frac{\Der_K(A,M)}{\Inn_K(A,M)},
$$
where $\Der_K(A,M)$ denotes the derivations of $M$, and $\Inn_K(A,M)$ the submodule of inner derivations.
\end{enumerate}
\end{nn}

\begin{nn}
The \textit{bar resolution} $\mathbb BA = (B_\ast, \beta_\ast)$ of $A$, is the following exact resolution of $A$ by $A^\ev$-modules. To begin with, $B_n = A^{\otimes(n+2)}$ is the $(n+2)$-fold tensor product of $A$ with itself (over $K$ and for $n \geqslant 0$). The $A^\ev$-linear map $\beta_{n+1}\colon B_{n+1} \rightarrow B_n$,
$$
\beta_{n+1}(a_0 \otimes \cdots \otimes a_{n+2}) = \sum_{i=0}^{n+1}(-1)^i a_0 \otimes \cdots \otimes a_i a_{i+1} \otimes \cdots \otimes a_{n+2},
$$
turns $\mathbb BA = (B_\ast, \beta_\ast)$ into a complex, which is acyclic in all degrees but in degree $0$, wherein its homology is isomorphic to $A$. The multiplication map $\mu\colon A \otimes A \rightarrow A$ provides a suitable augmentation $\mathbb BA \rightarrow A \rightarrow 0$.

Now, the adjunction isomorphism
$$
\Hom_{A^\ev}(A^{\otimes(n+2)}, M) \longrightarrow \Hom_K(A^{\otimes n}, M), \ \varphi \mapsto \varphi(1 \otimes - \otimes \cdots \otimes - \otimes 1)
$$
is compatible with the differentials $\partial_M$ and $\Hom_{A^\ev}(\mathbb \beta_\ast, M)$, that is, the cocomplexes $\mathbb C(A,M)$ and $\Hom_{A^\ev}(\mathbb BA, M)$ are isomorphic. Thus $\HH^\ast(A,M) \cong H^\ast \Hom_{A^\ev}(\mathbb BA, M)$ and, since $\mathbb BA \rightarrow A \rightarrow 0$ is exact, there is a graded map
$$
\chi_{M}^\ast \colon \HH^\ast(A,M) \longrightarrow \Ext^\ast_{A^\ev}(A,M).
$$
It is given by sending a cocycle $\varphi \in \Ker \Hom_{A^\ev}(\beta_{n+1}, M)$ to the equivalence class defined by the lower sequence in the pushout diagram below.
$$
\xymatrix@C=20pt{
\cdots \ar[r]^-{\beta_{n+1}} & A^{\otimes (n + 2)} \ar[r]^-{\beta_{n}} \ar[d]_-\varphi & A^{\otimes (n + 1)} \ar[r]^-{\beta_{n-1}} \ar[d] & \cdots \ar[r]^-{\beta_2} & A^{\otimes 3} \ar[r]^-{\beta_1} \ar@{=}[d] & A \otimes A \ar[r]^-{\beta_0} \ar@{=}[d] & A \ar[r] \ar@{=}[d] & 0\\
0 \ar[r] & M \ar[r] & Q \ar[r] & \cdots \ar[r]^-{\beta_2} & A^{\otimes 3} \ar[r]^-{\beta_1} & A \otimes A \ar[r]^-{\beta_0} & A \ar[r] & 0
}
$$
This map respects the graded ring structures if $M = A$, but will in general not be a bijection (as we will remark later, $\HH^\ast(A)$ is always \textit{graded commutative}, whereas a sufficient criterion for $\Ext^\ast_{A^\ev}(A,A)$ being graded commutative is $\Tor_i^K(A,A) = 0$ for all $i > 0$; see \cite[Sec.\,2.2]{BuFl08} and \cite{SnSo04}). However, one easily checks that $\mathbb BA$ will be a resolution by projective $A^\ev$-modules, if $A$ is projective over $K$. Thus the above map is going to be an ismorphism,
$$
\chi^\ast_M \colon \HH^\ast(A,M) \xrightarrow{\ \sim \ } \Ext^\ast_{A^\ev}(A,M),
$$
if $A$ is $K$-projective.
\end{nn}

\subsection{The Gerstenhaber bracket in Hochschild cohomology} Let us first recall the definition of a Gerstenhaber algebra over $K$ (following \cite{Ge63}, \cite{GeSch86} and \cite{Lei80}).

\begin{defn}\label{def:galgebra}
Let $G = \bigoplus_{n \in \Z}{G^n}$ be a graded $K$-algebra. Further, let $\{-,-\}\colon G \times G \rightarrow G$ be a $K$-bilinear map of degree $-1$ (that is, $\{a,b\} \in G^{\abs{a}+\abs{b}-1}$ for all homogeneous $a,b \in G$). The pair $(G,\{-,-\})$ is a \textit{Gerstenhaber algebra} over $K$ if
\begin{enumerate}[\rm(G1)]
\item $G$ is graded commutative, i.e., $ab = (-1)^{\abs{a}\abs{b}}ba$ for all homogeneous $a,b \in G$;
\item $\{a,b\} = -(-1)^{(\abs{a}-1)(\abs{b}-1)}\{b,a\}$ for all homogeneous $a,b \in G$;
\item $\{a,a\} = 0$ for all homogeneous $a \in G$ of odd degree;
\item $\{\{a,a\},a\} = 0$ for all homogeneous $a \in G$ of even degree;
\item\label{def:galgebra:5} the graded Jacobi identity holds:
$$
\{a,\{b,c\}\} = \{\{a,b\},c\} + (-1)^{(\abs{a}-1)(\abs{b}-1)}\{b,\{a,c\}\}
$$
for all homogeneous $a,b,c \in G$;
\item\label{def:galgebra:6} the graded Poisson identity holds:
$$
\{a,bc\} = \{a,b\}c + (-1)^{(\abs{a}-1)\abs{b}} b\{a,c\}
$$
for all homogeneous $a,b,c \in G$.
\end{enumerate}
Assume that $(G,\{-,-\})$ is a Gerstenhaber algebra over $K$. We call $(G,\{-,-\})$ a \textit{strict} Gerstenhaber algebra over $K$ if there is a map $sq\colon G^{2\ast} \rightarrow G^{4\ast-1}$ of degree $0$ such that
\begin{enumerate}[\rm(G1)]
\setcounter{enumi}{6}
\item\label{def:galgebra:7} $sq(ra) = r^2 sq(a)$ for all $r \in K$ and all homogeneous $a \in G^{2\ast}$;
\item\label{def:galgebra:8} $sq(a+b) = sq(a) + sq(b) + \{a,b\}$ for all homogeneous $a,b \in G^{2\ast}$;
\item $\{a,sq(b)\} = \{\{a,b\},b\}$ for all homogeneous $a,b \in G^{2\ast}$;
\item $sq(ab) = a^2sq(b) + sq(a)b^2 + a\{a,b\}b$ for all homogeneous $a,b \in G^{2\ast}$.
\end{enumerate}
\end{defn}

\begin{nn}
The map $\{-,-\}$ is called a \textit{Gerstenhaber bracket} for $G$, whereas $sq$ is a \textit{$($devided$)$ squaring map} for the Gerstenhaber algebra $(G, \{-,-\})$. Note that any graded commutative $K$-algebra can be viewed as (strict) a Gerstenhaber algebra over $K$ with trivial bracket (and trivial squaring map).
\end{nn}

\begin{nn}
The axioms (G\ref{def:galgebra:5}) and (G\ref{def:galgebra:6}) may be read as follows: The graded Jacobi identity measures the failure of $\{-,-\}$ from being associative, whereas the graded Poisson identity translates to $\{a,-\}$ being a graded derivation of $G$ of degree $\abs{a} - 1$ (for $a \in G$ homogeneous). Also note that by (G\ref{def:galgebra:7}) and (G\ref{def:galgebra:8}), there is precisely one squaring map $sq$ for a Gerstenhaber algebra $(G,\{-,-\})$ if $2 \in K^\times$, in which case it is given by $sq(a) = 2^{-1}\{a, a\}$.
\end{nn}

\begin{nn}\label{nn:Gbracket}
Let $M$ be a fixed $A^\ev$-module and $f \in C^m(A,M)$, $g \in C^n(A,A)$ for integers $m, n \geqslant 0$. For $i=1, \dots, m$, let $f \bullet_i g \in C^{m + n -1}(A,M)$
\begin{align*}
&\qquad(f \bullet_i g)(a_1 \otimes \cdots \otimes a_{m+n-1})\\ = f(a_1 \otimes \cdots & \otimes a_{i-1} \otimes g(a_i \otimes \cdots \otimes a_{i+n-1}) \otimes a_{i+n} \otimes \cdots \otimes a_{m+n-1}).
\end{align*}
Denote by $f \bullet g \in C^{m+n-1}(A,M)$ the alternating sum of the $f \bullet_i g$:
\begin{align*}
f \bullet g = \sum_{i=1}^m (-1)^{(i-1)(n-1)}f \bullet_i g.
\end{align*}
The product $\bullet$ is, in general, non-unital and highly non-associative. However, the external pairing $\smallsmile$ and $\bullet$ are related by the following fundamental formula:
\begin{equation}\tag{$\dagger$}\label{eq:fundform}
\partial_M(f \bullet g) + (-1)^n \partial_M(f) \bullet g = f \bullet \partial_A(g) + (-1)^{n}[g \smallsmile f - (-1)^{mn}f \smallsmile g].
\end{equation}
See \cite[Thm.\,3]{Ge63} for a proof.
\end{nn}

\begin{nn}
The fundamental formula (\ref{eq:fundform}) yields three important insights.
\begin{enumerate}[\rm(1)]
\item If $M = A$, and $f$ and $g$ are \textit{cocycles} (i.e., $\partial_A(f) = 0 = \partial_A(g)$), then $f \smallsmile g = (-1)^{mn} g \smallsmile f$. It follows that $\HH^\ast(A)$ is a graded commutative $K$-algebra.
\item The map
\begin{align*}
\{-,-&\} \colon C^m(A,A) \times C^n(A,A) \longrightarrow C^{m+n-1}(A,A)\\
&\{f,g\} = f \bullet g - (-1)^{(m-1)(n-1)}g \bullet f
\end{align*}
induces a well-defined map
$$
\{-,-\} = \{-,-\}_A \colon \HH^m(A) \times \HH^n(A) \longrightarrow \HH^{m+n-1}(A).
$$
\item The map
\begin{align*}
sq \colon C^{2n}(A,A) \longrightarrow C^{4n-1}(A,A), \ sq(f) = f \bullet f
\end{align*}
induces a well-defined map
$$
sq = sq_A \colon \HH^{2n}(A) \longrightarrow \HH^{4n-1}(A).
$$
\end{enumerate}
It is the main observation of \cite{Ge63} that the hereby obtained quadruple $(\HH^\ast(A), \smallsmile, \{-,-\}, sq)$ is a strict Gerstenhaber algebra over $K$, in the sense of Definition \ref{def:galgebra}.
\end{nn}

\subsection{Schwede's loop bracket} Let us briefly recall Schwede's exact sequence interpretation of the Gerstenhaber bracket in Hochschild cohomology; we will give a rough outline, and refer to \cite{He16a}, \cite{He14b} and \cite{Sch98} for details. Let $A$ be a $K$-algebra which is projective as a $K$-module. For an integer $n \geqslant 1$, we let $\mathcal Ext^n_{A^\ev}(A,A)$ be the \textit{category of $n$-self extensions of $A$ over $A^\ev$}. Its objects are $n$-extensions $S$ of the form $0 \rightarrow A \rightarrow E_{n-1} \rightarrow \cdots \rightarrow E_0 \rightarrow A \rightarrow 0$ in $\Mod(A^\ev)$, whereas the morphisms are given by \textit{morphisms of extensions}, i.e., morphisms $(f_n)_{n \in \mathbb Z} \colon S \rightarrow S'$ of complexes with $f_{-1} = f_n = \id_A$. The $0$-th homotopy group $\pi_0 \mathcal Ext^n_{A^\ev}(A,A)$ of $\mathcal Ext^n_{A^\ev}(A,A)$ coincides with $\Ext^n_{A^\ev}(A,A)$. Moreover, by a theorem of Retakh, see \cite{Re86}, one has isomorphisms
$$
\Ext^n_{A^\ev}(A,A) = \pi_0 \mathcal Ext^{n-1}_{A^\ev}(A,A) \xrightarrow{\, \sim \, } \pi_1(\mathcal Ext^n_{A^\ev}(A,A), S)
$$
for each base point $S \in \Ob \mathcal Ext^n_{A^\ev}(A,A)$. Given $\alpha \in \HH^m(A)$ and $\beta \in \HH^n(A)$, one may thus think of $\{\alpha, \beta\}_A$ as an element of $\pi_1 \mathcal Ext^{m+n}_{A^\ev}(A,A)$ (at some basepoint $S'$), that is, a \textit{loop} (see \cite{Qu72}, \cite{Sch98}) in the category $\mathcal Ext^{m+n}_{A^\ev}(A,A)$ based at $S'$. In fact, if $\alpha$ and $\beta$ are the equivalence classes of extensions $S = S(\alpha)$ and $T = T(\beta)$, the loop $\Omega_A(S,T)$ that corresponds to $\{\alpha, \beta\}_A$ may be realised as
$$
\xymatrix@!C=15pt@R=15pt{
& S \boxtimes_A T \ar[dr] \ar[dl] & \\
S \# T & & (-1)^{mn} T \# S\\
& (-1)^{mn} T \boxtimes_A S \ar[ur] \ar[ul] &
}
$$
by \cite[Thm.\,3.1]{Sch98}, with base point $S \#T$, where $S \# T$ denotes the Yoneda composite of $S$ and $T$. The $(m + n)$-extension $S \boxtimes_A T$ is obtained by taking the tensor product of the truncated complexes $0  \rightarrow A \rightarrow E_{m-1} \rightarrow \cdots \rightarrow E_0$ and  $0 \rightarrow A \rightarrow F_{n-1} \rightarrow \cdots \rightarrow F_0$, denoted by $S^\natural$ and $T^\natural$ for short, followed by the canonical augmentation $E_0 \otimes_A F_0 \twoheadrightarrow A \otimes_A A \cong A$. Caution is advised, as the complex $S \boxtimes_A T$ a priori does not need to be acyclic, which however may be bypassed by, for instance, considering the categories $\mathcal Ext^n_{\P(A)}(A,A)$ of admissible extensions over $\P(A)$ instead of $\mathcal Ext^n_{A^\ev}(A,A)$ (recall that $\Ext^n_{\P(A)}(A,A) \cong \Ext^n_{A^\ev}(A,A)$). Schwede's construction also covers an interpretation of the squaring map; the loop corresponding to $sq_A(\alpha)$ for some $\alpha \in \HH^{2n}(A)$ is given by the northern hemisphere of the loop $\Omega_A(S,S)$.

Note that the loop above only covers the cases $m, n \geqslant 1$; see \cite{He16a} for the remaining cases $m \geqslant 0$, $n = 0$.

\begin{lem}\label{lem:loopcoinc}
Assume that $S \in \mathcal Ext^m_{A^\ev}(A,A)$ and $T \in \mathcal Ext^n_{A^\ev}(A,A)$ such that $S \boxtimes_A T$ is exact. Let $\mathsf C$ be an exact monoidal subcategory of $(\Mod(A^\ev, \otimes_A, A)$ which is entirely extension closed. Assume further, that there are morphisms $S' \rightarrow S$ and $T' \rightarrow T$ of extensions with $S' \in \mathcal Ext^m_{\C}(A,A)$ and $T' \in \mathcal Ext^m_{\C}(A,A)$. Then there exists a morphism $v \colon S' \# T' \rightarrow S \# T$ and the conjugate of the loop $\Omega_A(S,T)$ by $f$ is homotopically equivalent to $\Omega_A(S',T')$.
\end{lem}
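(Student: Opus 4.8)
The plan is to realise both $\Omega_A(S,T)$ and $\Omega_A(S',T')$ as diagrams over the (finite) ``diamond'' index category and, out of the extension morphisms $S'\to S$ and $T'\to T$, to produce a natural transformation between these two diagrams; the claim then follows from the general principle that a natural transformation of diagrams induces a homotopy of the associated loops, the base point being transported along the component of the transformation at the splice vertex. First, though, one should check that $\Omega_A(S',T')$ makes sense: since $\mathsf C$ is exact monoidal (so each of its objects is flat or coflat) and entirely extension closed, a Künneth/spectral-sequence argument as in \cite{Sch98} and \cite{He14b} shows that ${S'}^\natural\otimes_A {T'}^\natural$ and ${T'}^\natural\otimes_A {S'}^\natural$ are acyclic with all terms in $\mathsf C$, so that their augmentations $S'\boxtimes_A T'$ and $T'\boxtimes_A S'$ are admissible $(m+n)$-extensions in $\mathsf C$. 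Because $\mathsf C$ is entirely extension closed, the inclusion $\mathcal Ext^{m+n}_{\mathsf C}(A,A)\hookrightarrow\mathcal Ext^{m+n}_{A^\ev}(A,A)$ respects the $\Ext$-theoretic descriptions of $\pi_0$ and $\pi_1$ recalled above, so it is harmless to perform the entire comparison inside $\mathcal Ext^{m+n}_{A^\ev}(A,A)$.

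Next I would construct the four components of the transformation. Write $s\colon S'\to S$ and $t\colon T'\to T$ for the given morphisms of extensions; each is the identity on the two copies of $A$. Restricting to the truncated complexes gives chain maps ${S'}^\natural\to S^\natural$ and ${T'}^\natural\to T^\natural$; tensoring over $A$, and using that $s$ and $t$ are the identity on the left-most $A$ so that they are compatible with the augmentations through $A\otimes_A A\cong A$, one obtains morphisms of $(m+n)$-extensions
$$
S'\boxtimes_A T'\longrightarrow S\boxtimes_A T,\qquad T'\boxtimes_A S'\longrightarrow T\boxtimes_A S,
$$
while functoriality of the Yoneda splice yields $s\#t\colon S'\#T'\to S\#T$ and $t\#s\colon T'\#S'\to T\#S$. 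I put $v:=s\#t$; this is the asserted morphism $S'\#T'\to S\#T$ (the sign $(-1)^{mn}$ attached to the ``$T$-first'' vertices is merely carried along).

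I would then verify that these four morphisms are the components of a natural transformation from the diamond defining $\Omega_A(S',T')$ to the one defining $\Omega_A(S,T)$ --- equivalently, that the four squares obtained by applying $s$ and $t$ to the four edges of the diamond commute. Each of these commutes because the edges in Schwede's construction, namely the comparison morphisms $S\boxtimes_A T\to S\#T$ and $S\boxtimes_A T\to(-1)^{mn}\,T\#S$ and the interchange isomorphism $S\boxtimes_A T\cong(-1)^{mn}\,T\boxtimes_A S$, are natural in $S$ and in $T$; see \cite[\S 3]{Sch98} and \cite{He14b}. Granting this, the natural transformation induces a homotopy between the two induced maps from the classifying space of the diamond category into that of $\mathcal Ext^{m+n}_{A^\ev}(A,A)$; restricting the homotopy to the loop that produces $\Omega_A$ and observing that it drags the base point along the edge $v$, the standard description of how a homotopy changes a loop's base point shows that $\Omega_A(S',T')$ is homotopic, rel base point, to the conjugate $v\cdot\Omega_A(S,T)\cdot v^{-1}$ of $\Omega_A(S,T)$ by $v$.

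I expect the two non-formal points to be: the verification that $\Omega_A(S',T')$ is defined, i.e.\ the acyclicity of the box products formed inside $\mathsf C$ (this is exactly where the exact-monoidal and entire-extension-closedness hypotheses enter), and making Schwede's comparison and interchange morphisms explicit enough to see that they are bifunctorial in $(S,T)$. Once that bifunctoriality is available, the passage from a natural transformation of diamond diagrams to conjugation of the corresponding loops is a formal consequence of the homotopy theory of classifying spaces, so it should not cause trouble.
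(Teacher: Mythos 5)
Your proposal is correct and follows essentially the same route as the paper: you build the morphisms $u\colon S'\boxtimes_A T'\to S\boxtimes_A T$ and $v\colon S'\#T'\to S\#T$ from the given extension morphisms, check that the four face squares of the diamond commute by naturality of Schwede's comparison maps, and conclude by the standard fact that such a transformation of diagrams yields the conjugation of loops. The only difference is that you additionally spell out why $S'\boxtimes_A T'$ is acyclic (so that $\Omega_A(S',T')$ is defined), a point the paper defers to the surrounding discussion and to \cite[Lem.\,3.3.4]{He14b}.
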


\begin{proof}
Of course, the morphisms $S' \rightarrow S$ and $T' \rightarrow T$ may be combined to obtain a morphism $v \colon S' \# T' \rightarrow S \# T$ as claimed. They also yield a morphism $u \colon S' \boxtimes_A T' \rightarrow S \boxtimes_A T$. These morphisms render the square
$$
\xymatrix@!C=35pt@R=15pt{
& S' \boxtimes_A T' \ar[r]^-u \ar[dl] & S \boxtimes_A T \ar[dl]\\
S' \# T' \ar[r]^-{\, v} & S \# T &
}
$$
commutative, where $S' \boxtimes_A T' \rightarrow S' \# T'$ is the northwestern face morphism in the loop $\Omega_A(S',T')$. By similar arguments, we obtain commutative diagrams for the remaining face morphisms as well, whence the claim follows.
\end{proof}

\begin{thm}\label{thm:gersthochcomp}
Let $A$ and $B$ be $K$-algebras which are projective when viewed as $K$-modules. Let $\mathsf C$ be an exact monoidal subcategory of $(\Mod(A^\ev), \otimes_A, A)$ which is entirely extension closed, and let $\mathfrak X \colon (\mathsf C, \otimes_A, A) \rightarrow (\Mod(B^\ev), \otimes_B, B)$ be an exact and almost strong $($or almost costrong$)$ monoidal functor such that one of the following conditions holds true for all objects $X \in \C$.
\begin{equation}\label{eq:exactnesscond}\tag{\textbf E}
\begin{aligned}
0 \longrightarrow L \longrightarrow &M \longrightarrow N \longrightarrow 0 \ \text{admissible exact in $\C$}\\
\Longrightarrow \quad &  0 \longrightarrow \mathfrak X(L) \otimes_B \mathfrak X(X) \longrightarrow \mathfrak X(M) \otimes_B \mathfrak X(X) \longrightarrow \mathfrak X(N) \otimes_B \mathfrak X(X) \longrightarrow 0\\
&\text{exact in $\Mod(B^\ev)$.}
\end{aligned}
\end{equation}
\begin{equation}\label{eq:exactnesscondprime}\tag{\textbf E'}
\begin{aligned}
0 \longrightarrow L \longrightarrow &M \longrightarrow N \longrightarrow 0 \ \text{admissible exact in $\C$}\\
\Longrightarrow \quad &  0 \longrightarrow \mathfrak X(X) \otimes_B \mathfrak X(L) \longrightarrow \mathfrak X(X) \otimes_B \mathfrak X(M) \longrightarrow \mathfrak X(X) \otimes_B \mathfrak X(N) \longrightarrow 0\\
&\text{exact in $\Mod(B^\ev)$.}
\end{aligned}
\end{equation}
Then the inclusion functor induces an isomophism $\Ext^\ast_{\C}(A,A) \xrightarrow{\sim } \HH^\ast(A)$ and the map
$$
\mathfrak X_\ast \colon \HH^\ast(A) \xrightarrow{\, \sim \, } \Ext^\ast_\C(A,A) \xrightarrow{\, \ \, } \Ext^\ast_{B^\ev}(B,B) \xrightarrow{\, \sim \, } \HH^\ast(B)
$$
that sends the equivalence class of an $n$-extension $0 \rightarrow A \rightarrow E_{n-1} \rightarrow \cdots \rightarrow E_0 \rightarrow A \rightarrow 0$ to the equivalence class of
$$0 \longrightarrow B \cong \mathfrak X(A) \longrightarrow \mathfrak X (E_{n-1}) \longrightarrow \cdots \longrightarrow \mathfrak X (E_0) \longrightarrow \mathfrak X(A) \cong B \longrightarrow 0$$
renders the following diagrams commutative for all $m, n \geqslant 0$.
$$
\xymatrix@C=35pt{
\HH^m(A) \times \HH^n(A) \ar[r]^-{\{-,-\}_A} \ar[d]_-{\mathfrak X_m \times \mathfrak X_n} & \HH^{m+n-1}(A) \ar[d]^-{\mathfrak X_{m + n -1}} \\
\HH^{m}(B) \times \HH^n(B) \ar[r]^-{\{-,-\}_B} & \HH^{m+n-1}(B)
}
\ \quad\quad \
\xymatrix@C=35pt{
\HH^{2n}(A) \ar[r]^{sq_A} \ar[d]_-{\mathfrak X_{2n}} & \HH^{4n-1}(A) \ar[d]^-{\mathfrak X_{4n-1}}\\
\HH^{2n}(B) \ar[r]^{sq_B} & \HH^{4n-1}(B)
}
$$
Here $\{-,-\}_A$ and $sq_A$ denote the Gerstenhaber bracket and the squaring map on $\HH^\ast(A)$, whereas $\{-,-\}_B$ and $sq_B$ denote the Gerstehaber bracket and the squaring map on $\HH^\ast(B)$.
\end{thm}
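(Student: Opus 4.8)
\emph{Proof proposal.} The argument has a purely formal part and then a genuine core, which is an application of Schwede's loop model for the bracket.

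First the formal assertions. Since $\mathsf C$ is entirely extension closed in $\Mod(A^\ev)$, the inclusion induces isomorphisms $\Ext^i_{\mathsf C}(A,A) \xrightarrow{\sim} \Ext^i_{A^\ev}(A,A)$ for every $i$; as $A$ is $K$-projective the comparison map $\chi^\ast_A$ is an isomorphism $\HH^\ast(A) \xrightarrow{\sim} \Ext^\ast_{A^\ev}(A,A)$, and composing gives the claimed identification $\Ext^\ast_{\mathsf C}(A,A) \cong \HH^\ast(A)$. For $\mathfrak X_\ast$: exactness of $\mathfrak X$ turns an admissible $n$-extension $0 \to A \to E_{n-1} \to \cdots \to E_0 \to A \to 0$ in $\mathsf C$ into an $n$-extension in $\Mod(B^\ev)$, and the invertible structure morphism $\phi_0$ (resp.\ $\psi_0$ in the almost costrong case) identifies $\mathfrak X(A) = \mathfrak X(\mathbbm 1_{\mathsf C})$ with $B = \mathbbm 1_{\Mod(B^\ev)}$; since morphisms of extensions are sent to morphisms of extensions, $\mathfrak X_\ast$ is a well-defined graded map, and it is multiplicative because the functor $\mathfrak X$ commutes with the Yoneda splice (the splice maps factor through the middle copy of $A$, and the two occurrences of $\phi_0^{\pm 1}$ cancel).

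Now fix $\alpha \in \HH^m(A)$ and $\beta \in \HH^n(A)$ with $m,n \geqslant 1$; the remaining cases with $n = 0$ are handled exactly as in \cite{He16a}. Using the isomorphism just established, together with Lemma \ref{lem:loopcoinc} to compare with the standard Schwede loop, pick representatives $S \in \mathcal Ext^m_{\mathsf C}(A,A)$ and $T \in \mathcal Ext^n_{\mathsf C}(A,A)$ of $\alpha$ and $\beta$. Because $\mathsf C$ is an entirely extension closed strong exact monoidal category, the Schwede product $S \boxtimes_A T$ is acyclic in $\mathsf C$ (this is precisely the setting in which Schwede's construction applies, cf.\ \cite{He14b}), so that \cite[Thm.\,3.1]{Sch98} presents $\{\alpha,\beta\}_A$ as the homotopy class of the loop $\Omega_A(S,T)$ in $\mathcal Ext^{m+n}_{\mathsf C}(A,A)$ based at $S \# T$, and $sq_A(\alpha)$ (for $\beta = \alpha$) as the class of the northern hemisphere of $\Omega_A(S,S)$. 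Applying $\mathfrak X$ termwise to the four extensions and four morphisms comprising $\Omega_A(S,T)$ produces a commutative ``sphere'' diagram of $(m+n)$-extensions of $B$ by $B$, based at $\mathfrak X(S \# T) = \mathfrak X(S) \# \mathfrak X(T)$.

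It remains to identify this image with the Schwede loop $\Omega_B(\mathfrak X(S), \mathfrak X(T))$. Writing $\mathfrak X(S) \boxtimes_B \mathfrak X(T)$ as the totalisation of $(\mathfrak X S)^\natural \otimes_B (\mathfrak X T)^\natural$ and filtering by rows (resp.\ columns), condition \eqref{eq:exactnesscond} or \eqref{eq:exactnesscondprime} — applied to the admissible short exact cycle sequences of $S^\natural$ and $T^\natural$, all of whose terms lie in $\mathsf C$ — forces this totalisation (and likewise $\mathfrak X(T)\boxtimes_B\mathfrak X(S)$) to be acyclic, so $\Omega_B(\mathfrak X(S),\mathfrak X(T))$ is a genuine loop in $\mathcal Ext^{m+n}_{B^\ev}(B,B)$. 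The monoidal comparison morphisms $\phi_{X,Y}\colon \mathfrak X X \otimes_B \mathfrak X Y \to \mathfrak X(X \otimes_A Y)$ (resp.\ $\psi_{X,Y}$ in the opposite direction in the costrong case) are natural and obey the coherence axioms of Paragraph \ref{def:monoidalfunc} and Remark \ref{rem:opposite}(2), hence assemble, exactly as in the proof of Lemma \ref{lem:loopcoinc}, into a morphism of $(m+n)$-extensions between $\mathfrak X(S) \boxtimes_B \mathfrak X(T)$ and $\mathfrak X(S \boxtimes_A T)$ that intertwines all the face maps of the two sphere diagrams (the two $\boxtimes$-to-$\#$ augmentations $E_0 \otimes F_0 \to A \otimes_A A \cong A$ and the braiding edges). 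A morphism of self-extensions of $B$ induces the identity on $\pi_0\mathcal Ext = \Ext$, so by Retakh's isomorphism $\Ext^{m+n}_{B^\ev}(B,B) \cong \pi_1(\mathcal Ext^{m+n}_{B^\ev}(B,B),-)$ it yields a compatible identification of fundamental groups at the two basepoints; therefore $\mathfrak X_\ast$ sends the class of $\Omega_A(S,T)$ to that of $\Omega_B(\mathfrak X(S),\mathfrak X(T))$, i.e.\ $\mathfrak X_\ast\{\alpha,\beta\}_A = \{\mathfrak X_\ast\alpha, \mathfrak X_\ast\beta\}_B$, and restricting the same comparison to the northern hemisphere gives $\mathfrak X_\ast sq_A(\alpha) = sq_B(\mathfrak X_\ast\alpha)$. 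The main obstacle is precisely the coherence bookkeeping in this last step: one must verify that the single family $\{\phi_{X,Y}\}$ (or $\{\psi_{X,Y}\}$) simultaneously intertwines \emph{every} edge of Schwede's sphere against its $\phi$-/$\psi$-counterpart, and the acyclicity inputs from \eqref{eq:exactnesscond}/\eqref{eq:exactnesscondprime} are what guarantee that all the diagrams in play actually live in the extension categories where $\pi_1$ is computed, so that ``morphism of extensions $\Rightarrow$ trivial on $\pi_0$'' is applicable.
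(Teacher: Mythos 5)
Your argument is correct and follows essentially the same route as the paper's own proof, which simply assembles \cite[Thm.\,3.1]{Sch98}, \cite[Lem.\,5.3.3]{He14b}, \cite[Thm.\,5.2.12]{He14b} and \cite[Thm.\,5.1]{He16a}: the exactness conditions (\textbf{E})/(\textbf{E'}) are used precisely to guarantee acyclicity of $\mathfrak X(S) \boxtimes_B \mathfrak X(T)$, and the monoidal comparison morphisms together with Lemma \ref{lem:loopcoinc} identify the image of Schwede's loop with $\Omega_B(\mathfrak X(S),\mathfrak X(T))$ up to conjugation. Your sketch is an unfolding of those citations, including the deferral of the degree-zero cases to \cite{He16a}.
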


\begin{proof}
The theorem summarises some results relating the exact sequence interpretation of the Gerstenhaber bracket in Hochschild cohomology that we mentioned above, and its generalisation to exact monoidal categories, see \cite{He14b}, which incorporates the statement that, under appropriate assumptions, the (generalised) bracket commutes with almost strong (or almost costrong) exact monoidal functors.

To be more precise, the theorem is obtained by puzzling together \cite[Thm.\,3.1]{Sch98}, \cite[Lem.\,5.3.3]{He14b}, \cite[Thm.\,5.2.12]{He14b} and \cite[Thm.\,5.1]{He16a}; note that the exactness conditions (\ref{eq:exactnesscond}) and (\ref{eq:exactnesscondprime}) ensure that $\mathfrak X(S) \boxtimes_A \mathfrak X(T)$ is exact in $\Mod(B^\ev)$ for any choice of admissible extensions $S \in \mathcal Ext^m_{\C}(A,A)$ and $T \in \mathcal Ext^n_\C(A,A)$ by (the proof of) the statement \cite[Lem.\,3.3.4]{He14b}. Thus the homotopy class of the corresponding loop $\Omega_B(\mathfrak X(S),\mathfrak X(T))$ coincides (up to conjugation) with the homotopy class of the loop for the corresponding admissible exact sequences $S'$ and $T'$, with morphisms $S' \rightarrow \mathfrak X(S)$, $T' \rightarrow \mathfrak X(T)$, taken in, for instance, $\P(B)$, by Lemma \ref{lem:loopcoinc}.
\end{proof}

\begin{rem}
Let $\mathfrak X \colon (\Mod(A^\ev), \otimes_A, A) \rightarrow (\Mod(B^\ev), \otimes_B, B)$ be an exact and almost strong/costrong monoidal functor. The exactness condition on $\mathfrak X$ stated in the above theorem is automatically satisfied, if $\mathfrak X$, for instance, restricts as
$$
\mathfrak X \colon (\P(A), \otimes_A, A) \longrightarrow (\P(B), \otimes_B, B)\,.
$$
Similar situations will appear frequently hereinafter. 
\end{rem}


\section{Recollements of module categories}\label{sec:recoll}

\begin{nn}
Let $\A$, $\B$ and $\C$ be abelian categories. Recall that a diagram
\begin{equation}\tag{$\diamond$}\label{eq:recollement}
\begin{aligned}
\xymatrix{
\\\mathcal R(\A, \B, \C) \qquad \equiv \qquad\\
}
\xymatrix@!C=50pt{
\\ \A \ar[r]^-{i} & \B \ar[r]^-{j} \ar@/^2pc/[l]^-{i_\varrho} \ar@/_2pc/[l]_-{i_\lambda} & \C \ar@/^2pc/[l]^-{j_\varrho} \ar@/_2pc/[l]_-{j_\lambda}\\
}
\end{aligned}
\end{equation}
of additive functors is called a \textit{recollement} (\textit{of abelian categories}) if $(i_\lambda, i , i_\varrho)$ and $(j_\lambda, j , j_\varrho)$ are adjoint triples, the functors $i$, $j_\lambda$, $j_\varrho$ are fully faithful, and $\Im(i) = \Ker(j)$. The functor $j : \B \rightarrow \C$ is referred to as the \textit{quotient functor} of $\mathcal R(\A, \B, \C)$. We call a recollement (\ref{eq:recollement}) \textit{$K$-linear} if all the occurring categories and functors are $K$-linear. It is well-known, see \cite{FrPi04}, that in any recollement situation
\begin{enumerate}[\rm(1)]
\item the functors $i\colon \A \rightarrow \B$ and $j\colon \B \rightarrow \C$ are exact;
\item the functor $j\colon \B \rightarrow \C$ is essentially surjective;
\item the units of the adjoint pairs $(i, i_\varrho)$, $(j_\lambda, j)$ and the counits of the adjoint pairs $(i_\lambda, i)$ and $(j,j_\varrho)$ are isomorphisms;
\item for each object $B \in \B$, there are objects $A, A' \in \A$ and exact sequences
$$
0 \longrightarrow i(A) \longrightarrow (j_\lambda \circ j)(B) \longrightarrow B \longrightarrow (i \circ i_\lambda)(B) \longrightarrow 0 \, ,
$$
$$
0 \longrightarrow (i \circ i_\varrho)(B) \longrightarrow B \longrightarrow (j_\varrho \circ j)(B) \longrightarrow i(A') \longrightarrow 0 \,\,
$$
induced by the units and counits of the adjunctions.
\end{enumerate}
In particular, for all $X \in \Ob \A$, the functor $j\colon \B \rightarrow \C$ gives rise to a homomorphism
$$
j_\ast^{X} \colon \Ext^\ast_\B(X,X) \longrightarrow \Ext^\ast_\C(j(X),j(X)), \ j_\ast(\alpha) = [j(S)] \quad \text{(for $\alpha = [S] \in \Ext^n_\B(X,X)$)}
$$
of graded rings (for $j$ preserves pullbacks and pushouts). We make the following definition.
\end{nn}

\begin{defn}
Let $\B$ and $\C$ be abelian categories and let $f_\ast \colon \Ext^\ast_\B(B,B) \rightarrow \Ext^\ast_\C(C,C)$ be a homomorphism of graded rings for some objects $B \in \Ob \B$ and $C \in \Ob\C$. 

\begin{enumerate}[\rm(1)] 
\item Let $\mathfrak X \colon \B \rightarrow \C$ be an exact functor. The map $f_\ast$ is \textit{induced by the functor} $\mathfrak X : \B \rightarrow \C$ if there is an isomorphism $C \cong \mathfrak X(B)$ in $\C$ such that the diagram 
$$
\xymatrix@!C=20pt@!R=20pt{
& \Ext^\ast_\B(B,B) \ar[dr]^-{\mathfrak X^B_\ast} \ar[dl]_-{f_\ast} &\\
\Ext^\ast_\C(C,C) \ar[rr]^-\cong & & \Ext^\ast_\C(j(B),j(B))
}
$$
commutes.
\item Let $\mathcal R(\A,\B,\C)$ be a recollement of abelian categories. The map $f_\ast$ is \textit{induced by the recollement} $\mathcal R(\A,\B,\C)$ if $f_\ast$ is induced by the quotient functor $j \colon \B \rightarrow \C$ inside $\mathcal R(\A,\B,\C)$.
\end{enumerate}
\end{defn}

Note that for any given recollement $\mathcal R(\A,\B,\C)$ of abelian categories, and any isomorphism $\alpha \colon C \xrightarrow{\sim} j(B)$ in $\C$, the graded ring homomorphism
$$
j_\ast = j_\ast^{B,\alpha} \colon \Ext^\ast_\B(B,B) \xrightarrow{\ j_\ast^B \ } \Ext^\ast_\C(j(B),j(B)) \xrightarrow{ \ \sim \ } \Ext^\ast_\C(C,C)
$$
is (by definition) induced by the recollement $\mathcal R(\A,\B,\C)$.

\begin{nn}\label{nn:morrec}
Let $\mathcal R = \mathcal R(\A,\B,\C)$ and $\mathcal R' = \mathcal R(\A',\B',\C')$ be recollements of abelian categories. A triple $(F,G,H)$ of functors $F\colon \A \rightarrow \A'$, $G\colon \B \rightarrow \B'$ and $H\colon \C \rightarrow \C'$ is a \textit{morphism of recollements}, if the following functors agree up to natural isomorphisms.
\begin{multicols}{2}
\begin{enumerate}[\rm(1)]
\item\label{nn:morrec:1} $i' \circ F$ and $G \circ i$;
\item\label{nn:morrec:2} $F \circ i'_\lambda$ and $i_\lambda \circ G$;
\item\label{nn:morrec:3} $F \circ i'_\varrho$ and $i_\varrho \circ G$;
\item\label{nn:morrec:4} $j' \circ G$ and $H \circ j$;
\item\label{nn:morrec:5} $G \circ j'_\lambda$ and $j_\lambda \circ H$;
\item\label{nn:morrec:6} $G \circ j'_\varrho$ and $j_\varrho \circ G$.
\end{enumerate}
\end{multicols}
A morphism $(F,G,H)\colon \mathcal R \rightarrow \mathcal R'$ is an \textit{equivalence}, if $F$, $G$ and $H$ are equivalences of categories. Note that in this case, $j' \circ G$ and $H \circ j$ being naturally isomorphic already implies the same for the composites with the respective adjoint functors of $i$ and $i'$, and $j$ and $j'$.

Every pair $(G,H)$ of functors satisfying \ref{nn:morrec}(\ref{nn:morrec:4})--(\ref{nn:morrec:6}) gives rise to a (up to natural isomorphism) unique functor $F\colon \A \rightarrow \A'$ such that $i' \circ F$ and $G \circ i$ are naturally isomorphic. The functor $F$ is an equivalence if $G$ and $H$ were and in that case, it also is compatible with the adjoint functors of $i'$ (in the sense of \ref{nn:morrec}(\ref{nn:morrec:1})--(\ref{nn:morrec:3})). Thus naming an equivalence $(F,G,H) \colon \mathcal R \rightarrow \mathcal R'$ of recollements is the same as naming two equivalences $G\colon \B \rightarrow \B'$ and $H\colon \C \rightarrow \C'$ of categories such that $j' \circ G$ and $H \circ j$ are naturally isomorphic.
\end{nn}

\begin{defn}[see {\cite[Def.\,4.1]{PsVi14}}]\label{defn:recequ}
Let $\mathcal R = \mathcal R(\A,\B,\C)$ and $\mathcal R' = \mathcal R(\A',\B',\C')$ be recollements of abelian categories. The recollements $\mathcal R(\A,\B,\C)$ and $\mathcal R(\A',\B',\C')$ are \textit{equivalent} in case there are equivalences $G \colon \B \rightarrow \B'$ and $H\colon \C \rightarrow \C'$ of categories such that the functors $j' \circ G$ and $H \circ j$ are naturally isomorphic. In other words, $\mathcal R$ and $\mathcal R'$ are equivalent if there is an equivalence $(F,G,H) \colon \mathcal R \rightarrow \mathcal R'$ of recollements.
\end{defn}

\begin{nn}
A recollement $\mathcal R(\A,\B, \C)$ is a \textit{recollement of module categories}, if the categories $\A$, $\B$ and $\C$ are module categories over some rings $A$, $B$ and $C$. In that case, we write $\mathcal R(A,B,C)$ instead of $\mathcal R(\Mod (A), \Mod (B), \Mod(C))$. Recall from, for instance, \cite{AnFu92} that an equivalence $(G,H) \colon \mathcal R(A,B,C) \rightarrow \mathcal R(A',B',C')$ between recollements of module categories is determined by progenerators $P$ for $B$ and $Q$ for $C$ such that $B' \cong \End_B(P)^\op$ and $C' \cong \End_C(Q)^\op$, with the functors $G$ and $H$ thus being equivalent to $\Hom_B(P,-) \cong \Hom_B(P,B) \otimes_B -$ and $\Hom_C(Q,-) \cong \Hom_C(Q,C) \otimes_C -$ respectively. A choice of quasi-inverse functors is given by $P \otimes_{B'} -$ and $Q \otimes_{C'} -$.

Any ring $R$ along with any choice of an idempotent $e \in R$ gives rise to a recollement of module categories, which we will refer to as the \textit{standard recollement} associated to $(R,e)$.

\begin{exa}[Standard recollement]
Let $R$ be a ring and let $e \in R$ be idempotent. Let us put $\overline R = R/ReR$ and let $\pi \colon R \rightarrow \overline{R}$ be the natural projection. If we write $\pi_\star \colon \Mod(\overline R) \rightarrow \Mod(R)$ for the restriction along $\pi$, then the diagram
\begin{equation*}
\begin{aligned}
\xymatrix{\\\mathcal R(\overline R, R, eRe) \qquad \equiv \qquad\\}
\xymatrix@!C=50pt{\\ \Mod(\overline{R}) \ar[r]^-{\pi_\star} & \Mod(R) \ar[r]^-{e(-)} \ar@/^2pc/[l]^-{\Hom_{R}(\overline R,-)} \ar@/_2pc/[l]_-{\overline R \otimes_{R} -} & \Mod(eRe) \ar@/^2pc/[l]^-{\Hom_{eRe}(eR,-)} \ar@/_2pc/[l]_-{Re \otimes_{eRe} -}\\}
\end{aligned}
\end{equation*}
of functors defines a recollement which we will denote by $\mathcal R (R,e)$. This recollement will be $K$-linear, in case $R$ is an algebra over $K$.
\end{exa}
\end{nn}

\begin{thm}[see {\cite[Thm.\,5.3]{PsVi14}}]\label{thm:recmodeq}
Let $\mathcal R(A,B,C)$ be a recollement of module categories. Then there is a ring $R$ and an idepotent $e \in R$ such that $\mathcal R(A,B,C)$ is equivalent to $\mathcal R(R,e)$ in the sense of Definition $\ref{defn:recequ}$.
\end{thm}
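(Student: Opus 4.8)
The plan is to realise the given recollement, up to equivalence, as the standard recollement attached to the Morita ring of a carefully chosen projective generator of $\Mod(B)$. Writing $C$ also for the regular module over $C$, I would set $P := j_\lambda(C) \in \Mod(B)$. The first step is to check that $P$ is a finitely generated projective $B$-module. Projectivity is immediate: $j_\lambda$ is a left adjoint of the exact functor $j$ and hence preserves projectives. For finiteness, the adjunction $(j_\lambda, j)$ yields a natural isomorphism $\Hom_B(P,-) \cong \Hom_C(C, j(-))$, and the right-hand functor commutes with filtered colimits because $j$ does — $j$ has the right adjoint $j_\varrho$ and therefore preserves all colimits — so $P$ is finitely presented, and a finitely presented projective module is finitely generated projective. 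Consequently $\tilde P := B \oplus P$ is a finitely generated projective generator of $\Mod(B)$ (it is a generator already because $B$ is one).

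Next I would pass to the Morita ring $R := \End_B(\tilde P)^{\op}$, so that $G := \Hom_B(\tilde P, -)$ is an equivalence $\Mod(B) \xrightarrow{\ \sim\ } \Mod(R)$ with quasi-inverse $\tilde P \otimes_R -$, and let $e \in R$ be the idempotent corresponding to the direct summand $P$ of $\tilde P$. Then $eRe \cong \End_B(P)^{\op}$, and since $j_\lambda$ is fully faithful one has $\End_B(P) = \End_B(j_\lambda C) \cong \End_C(C)$; hence $eRe \cong \End_C(C)^{\op} \cong C$, and this identification induces an equivalence $H \colon \Mod(C) \xrightarrow{\ \sim\ } \Mod(eRe)$. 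Note also that $G$ carries $P$ to (a module isomorphic to) the projective $R$-module $Re$.

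It then remains to verify that $G$ and $H$ intertwine the two quotient functors $j$ and $e(-)$. For $M \in \Mod(B)$ one has natural isomorphisms
$$
e\cdot G(M) \;\cong\; \Hom_R\!\bigl(Re,\, G(M)\bigr) \;\cong\; \Hom_B(P, M) \;\cong\; \Hom_C\!\bigl(C, j(M)\bigr) \;\cong\; H\bigl(j(M)\bigr),
$$
where the first is the standard formula $eN \cong \Hom_R(Re,N)$, the second uses that $G$ is an equivalence with $G(P) \cong Re$, the third is the adjunction $(j_\lambda, j)$, and the last is the identification $eRe \cong C$ together with $\Hom_C(C,-) \cong \Id$. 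Thus $e(-)\circ G \cong H\circ j$, so the pair $(G,H)$ places $\mathcal R(A,B,C)$ and the standard recollement $\mathcal R(R,e)$ in the situation of Definition \ref{defn:recequ}. By the discussion in Paragraph \ref{nn:morrec}, such a pair of category equivalences automatically extends to an equivalence of recollements — the remaining equivalence $\Mod(A) \simeq \Mod(R/ReR)$ being forced, since $G$ restricts to an equivalence $\Ker j \xrightarrow{\ \sim\ } \Ker e(-)$ between the essential images of $i$ and $\pi_\star$. Therefore $\mathcal R(A,B,C)$ is equivalent to $\mathcal R(R,e)$.

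The only genuinely non-formal point, and the place I expect to be the main obstacle, is the finiteness assertion in the first step: passing to the Morita ring $R$ forces us to know that $j_\lambda(C)$ is finitely generated, and it is exactly here that the full strength of the recollement hypotheses (exactness of $j$, existence of the right adjoint $j_\varrho$, full faithfulness of the left adjoint $j_\lambda$) is used. Everything afterwards is a formal consequence of Morita theory and of the standard structure of recollements.
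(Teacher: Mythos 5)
Your argument is correct. Note that the paper itself gives no proof of this statement — it is quoted from Psaroudakis--Vit\'oria \cite{PsVi14} — and your route (showing $P = j_\lambda(C)$ is a finitely generated projective $B$-module because $\Hom_B(P,-)\cong j(-)$ is exact and commutes with direct sums, passing to the progenerator $B\oplus P$ with endomorphism ring $R$ and the idempotent $e$ cutting out $P$, and then checking $e(-)\circ G\cong H\circ j$ via $eRe\cong\End_B(j_\lambda C)^{\op}\cong\End_C(C)^{\op}\cong C$) is essentially the proof given there, so there is nothing to add.
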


Let $\mathcal R(A,B,C)$ be as above. The theorem yields that if $B$ is a $K$-algebra, being $K$-projective, then so is $C$. This follows from the following two observations for a $K$-projective $K$-algebra $\Gamma$.

\begin{enumerate}[\rm(1)]
\item Let $P$ be a finitely generated projective module over $\Gamma$. Then $\End_\Gamma(P)$ is $K$-projective. Indeed, if $P \oplus P' \cong \Gamma^n$ for some $n \geqslant 0$, then the $K$-module $\End_\Gamma(P)$ is a direct summand of $\Gamma^{n^2}$.
\item Let $\varepsilon \in \Gamma$ be an idempotent. The \textit{Peirce decomposition} of $\Gamma$ with respect to $\varepsilon$ yields
$$
\Gamma = \varepsilon\Gamma \varepsilon \oplus \varepsilon\Gamma (1-\varepsilon) \oplus (1-\varepsilon)\Gamma \varepsilon \oplus (1-\varepsilon)\Gamma (1-\varepsilon)\,,
$$
implying that $\varepsilon \Gamma \varepsilon$ is $K$-projective.
\end{enumerate}

Since the functors $\Mod(R/ReR) \rightarrow \Mod(R)$, $e(-) = eR \otimes_R (-) \colon \Mod(R) \rightarrow \Mod(eRe)$, as well as their respective left adjoints $(R/ReR) \otimes_R (-) \colon \Mod(R) \rightarrow \Mod(R/ReR)$, $Re \otimes_{eRe}(-) \colon \Mod(eRe) \rightarrow \Mod(R)$ are right exact and preserve arbitrary direct sums, the following consequence is imminent.

\begin{cor}\label{cor:recolldirectsums}
Let $\mathcal R(A,B,C)$ be a recollement of module categories. Then the functors $i \colon \Mod(A) \rightarrow \Mod(B)$ and $j\colon \Mod(B) \rightarrow \Mod(C)$, along with their left adjoints $i_\lambda$ and $j_\lambda$ are right exact and preserve arbitrary direct sums.\qed
\end{cor}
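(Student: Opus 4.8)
The plan is to reduce the whole statement to the case of a \emph{standard} recollement, where the four functors in question are given explicitly by a restriction functor and by tensor-product functors, and then to transport the conclusion back along an equivalence of recollements.

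By Theorem~\ref{thm:recmodeq} there are a ring $R$ and an idempotent $e \in R$ together with an equivalence of recollements $(F,G,H) \colon \mathcal R(A,B,C) \rightarrow \mathcal R(R,e)$, where $F$, $G$ and $H$ are equivalences of module categories; fix quasi-inverses $F^{-1}$, $G^{-1}$, $H^{-1}$. Unravelling the defining compatibilities of a morphism of recollements -- conditions (1), (2), (4) and (5) of Paragraph~\ref{nn:morrec} -- and inserting the explicit functors occurring in $\mathcal R(R,e)$, one obtains natural isomorphisms
\[
i \cong G^{-1} \circ \pi_\star \circ F, \qquad i_\lambda \cong F \circ \big((R/ReR) \otimes_R (-)\big) \circ G^{-1},
\]
\[
j \cong H^{-1} \circ \big(eR \otimes_R (-)\big) \circ G, \qquad j_\lambda \cong G \circ \big(Re \otimes_{eRe} (-)\big) \circ H^{-1}.
\]
Now each building block on the right-hand sides is right exact and preserves arbitrary direct sums: the restriction functor $\pi_\star \colon \Mod(R/ReR) \rightarrow \Mod(R)$ is exact and obviously commutes with direct sums; each of the tensor functors $(R/ReR) \otimes_R (-)$, $eR \otimes_R (-)$ and $Re \otimes_{eRe} (-)$ is a left adjoint and hence right exact, and commutes with all colimits, in particular with arbitrary direct sums; and every equivalence of abelian categories (such as $F$, $G$, $H$ and their quasi-inverses) is exact and preserves arbitrary direct sums, being simultaneously a left and a right adjoint. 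Since both ``right exact'' and ``commutes with arbitrary direct sums'' are stable under composition of functors and under replacing a functor by a naturally isomorphic one, the displayed formulas immediately yield that $i$, $j$, $i_\lambda$ and $j_\lambda$ are right exact and preserve arbitrary direct sums.

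The only point deserving a little attention is to be sure that the equivalence of recollements genuinely provides the natural isomorphisms for the \emph{left adjoints} $i_\lambda$ and $j_\lambda$, and not merely for $i$ and $j$; but this is precisely the content of conditions (2) and (5) in Paragraph~\ref{nn:morrec}, and it is automatic as soon as $(F,G,H)$ is an equivalence, as recorded there. Apart from this bit of bookkeeping there is no real obstacle: the entire substance of the argument is contained in Theorem~\ref{thm:recmodeq}.
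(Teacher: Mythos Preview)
Your approach is exactly the one the paper intends: the sentence preceding the corollary already records that the four functors in the standard recollement $\mathcal R(R,e)$ are right exact and preserve direct sums, and the \qed indicates that transporting this along the equivalence of Theorem~\ref{thm:recmodeq} is all that is meant. So the strategy is correct and matches the paper.

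One cosmetic correction: in your displayed formulas for $i_\lambda$ and $j_\lambda$ you have interchanged some equivalences with their quasi-inverses, so that the compositions do not type-check. For instance, $i_\lambda$ goes $\Mod(B)\to\Mod(A)$, hence one needs
\[
i_\lambda \;\cong\; F^{-1}\circ\big((R/ReR)\otimes_R(-)\big)\circ G,
\qquad
j_\lambda \;\cong\; G^{-1}\circ\big(Re\otimes_{eRe}(-)\big)\circ H,
\]
rather than the versions you wrote. This does not affect the argument, since $F,G,H$ and their quasi-inverses are all equivalences and hence exact and colimit-preserving, but the formulas as written should be fixed.
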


Let $R$ and $S$ be $K$-algebras. Watt's theorem (see \cite{Wa60}) tells that every right exact functor $\mathfrak X \colon \Mod(R) \rightarrow \Mod(S)$ which commutes with arbitrary direct sums is of the form $M \otimes_R -$ for some $S \otimes R^\op$-module $M$. In fact, $M \cong \mathfrak X(R)$ necessarily is implied. Note that $\mathfrak X$ gives rise to a $K$-algebra homomorphism 
$$
R^\op \cong \End_R(R) \longrightarrow \End_S(\mathfrak X(R)) \subseteq \End_K(\mathfrak X(R))\,,
$$
that is, $\mathfrak X(R)$ is $S$-$R$-bimodule. Theorem \ref{thm:recmodeq} yields Watt's theorem for functors within a recollement.

\begin{cor}
Let $\mathcal R(A,B,C)$ be a recollement of module categories. Then $i(-) \cong i(A) \otimes_A -$, $j(-) \cong j(B) \otimes_B -$ and $i_\lambda(-) \cong i_\lambda(B) \otimes_B -$, $j_\lambda(-) \cong j_\lambda(C) \otimes_C -$.
\end{cor}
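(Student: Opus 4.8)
The plan is to deduce the statement directly from Watt's theorem, exactly in the form recalled in the paragraph immediately preceding the corollary. First I would observe that Corollary \ref{cor:recolldirectsums} already tells us that each of the four functors $i$, $j$, $i_\lambda$, $j_\lambda$ is right exact and commutes with arbitrary direct sums; since all of them are (additive, and in the $K$-linear case $K$-linear) functors between module categories over rings, Watt's theorem applies to each of them separately.

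Concretely, applying Watt's theorem to $i \colon \Mod(A) \rightarrow \Mod(B)$ yields a $B$-$A$-bimodule $M$ together with a natural isomorphism $i(-) \cong M \otimes_A -$, and the construction forces $M \cong i(A)$; the right $A$-module structure on $i(A)$ is the one induced by the ring homomorphism $A^\op \cong \End_A(A) \rightarrow \End_B(i(A))$ coming from functoriality of $i$, which is precisely the structure that makes the functor $i(A) \otimes_A -$ well-defined. The same argument applied to $j \colon \Mod(B) \rightarrow \Mod(C)$, to $i_\lambda \colon \Mod(B) \rightarrow \Mod(A)$ and to $j_\lambda \colon \Mod(C) \rightarrow \Mod(B)$ gives $j(-) \cong j(B) \otimes_B -$, $i_\lambda(-) \cong i_\lambda(B) \otimes_B -$ and $j_\lambda(-) \cong j_\lambda(C) \otimes_C -$, where in each case the relevant bimodule structure is the canonical one provided by functoriality.

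I do not expect any real obstacle here: the only two points requiring a remark are (a) that the functors in question are genuinely right exact and sum-preserving, which is the content of Corollary \ref{cor:recolldirectsums} (itself obtained from Theorem \ref{thm:recmodeq} and the explicit shape of the standard recollement $\mathcal R(R,e)$), and (b) that the bimodule produced by Watt's theorem is canonically the one named in the statement, which is immediate from the naturality built into Watt's construction. One may additionally note that $i$ and $j$ are in fact exact, so that $i(A)\otimes_A -$ and $j(B)\otimes_B -$ are exact functors, although this is not needed for the corollary itself.
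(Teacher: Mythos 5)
Your proposal is correct, but it takes a different route from the paper. You invoke Watts' theorem as a black box: Corollary \ref{cor:recolldirectsums} guarantees that $i$, $j$, $i_\lambda$, $j_\lambda$ are right exact and commute with arbitrary direct sums, so the general representability statement $\mathfrak X(-) \cong \mathfrak X(R) \otimes_R -$ applies to each of the four functors separately, with the bimodule structure on $\mathfrak X(R)$ coming from functoriality. That is a complete argument. The paper instead does not cite the general theorem at all in the proof; it uses Theorem \ref{thm:recmodeq} to replace $\mathcal R(A,B,C)$ by a standard recollement $\mathcal R(R,e)$ via progenerators $P$ and $Q$, and then computes directly, e.g.
$$
j(-) \cong Q \otimes_{eRe} eR \otimes_R \Hom_B(P,-) \cong Q \otimes_{eRe} eR \otimes_R \Hom_B(P,B) \otimes_B - \cong j(B) \otimes_B - \,,
$$
using that $\Hom_B(P,-) \cong \Hom_B(P,B) \otimes_B -$ for a progenerator. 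The two derivations rest on the same foundation (your Corollary \ref{cor:recolldirectsums} is itself a consequence of Theorem \ref{thm:recmodeq}), but they buy different things: yours is shorter and makes clear that the statement is nothing but Watts' theorem specialised to recollement functors, while the paper's computation exhibits the identification of $j(B)$ with $Q \otimes_{eRe} eR \otimes_R \Hom_B(P,B)$ explicitly, and it is precisely this explicit form that is reused in the proofs of Proposition \ref{prop:envrecoll} and Lemma \ref{lem:Tor1equ}. Your closing remark that $i$ and $j$ are moreover exact is accurate and harmless, though, as you say, not needed.
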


\begin{proof}
Assume that $\mathcal R(A,B,C)$ is equivalent to $\mathcal R(R,e)$, and let $P$ and $Q$ be progenerators for $B$ and $C$ with $R \cong \End_B(P)^\op$ and $eRe \cong \End_C(Q)^\op$ giving rise to the defining equivalences $\Mod(B) \xrightarrow \sim \Mod(R)$ and $\Mod(C) \xrightarrow \sim \Mod(eRe)$. Then
\begin{align*}
j(-) & \cong Q \otimes_{eRe} eR \otimes_R \Hom_B(P,-)\\
& \cong Q \otimes_{eRe} eR \otimes_R \Hom_B(P, B) \otimes_B -\\
& \cong j(B) \otimes_B - \, .
\end{align*}
Similarly, $i(-) \cong i(A) \otimes_A -$ follows, as well as the statements for the left adjoints.
\end{proof}

\begin{nn}\label{nn:Rev}
Let $R$ and $S$ be $K$-algebras and $\mathfrak X \colon \Mod(R) \rightarrow \Mod(S)$ be an additive functor. By taking the dual of $\mathfrak X(R)$ with respect to $R$ we get the $R$-$S$-bimodule $\mathfrak X(R)^\vee = \Hom_{R^\op}(\mathfrak X(R),R)$. The functor
$$
\mathfrak X^\ev \colon \Mod(R^\ev) \longrightarrow \Mod(S^\ev), \ \mathfrak X^\ev(M) = \mathfrak X(R) \otimes_R M \otimes_R \mathfrak X(R)^\vee
$$
will be referred to as the \textit{enveloping functor} of $\mathfrak X$.
\end{nn}

\begin{prop}\label{prop:envrecoll}
Let $A$, $B$ and $C$ be $K$-algebras, and let $\mathcal R(A,B,C)$ be a recollement which is equivalent to a standard recollement $\mathcal R(R,e)$. Let $S = R \otimes R^\op$, $f$ be the idempotent $e \otimes e^\op$ in $S$ and $\overline S = S/SfS$. Then the functor $j^\ev$ fits inside a recollement $\mathcal R(\overline S, B^\ev, C^\ev)$ of module categories which is equivalent to the standard recollement $\mathcal R(S, f)$.
\end{prop}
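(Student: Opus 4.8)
The plan is to identify $j^\ev$, up to obvious Morita equivalences, with the quotient functor of the \emph{standard} recollement $\mathcal R(S,f)$, and then transport $\mathcal R(S,f)$ along those equivalences. By Theorem~\ref{thm:recmodeq} we fix progenerators $P$ for $\Mod(B)$ and $Q$ for $\Mod(C)$ with $R\cong\End_B(P)^\op$ and $eRe\cong\End_C(Q)^\op$, giving an equivalence $(F,G,H)\colon\mathcal R(A,B,C)\to\mathcal R(R,e)$ with $G\cong\Hom_B(P,B)\otimes_B(-)$ and $H\cong\Hom_C(Q,C)\otimes_C(-)$; in particular $e(-)\circ G\cong H\circ j$. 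Put $T=\Hom_B(P,B)$. As $P$ is finitely generated projective on both sides, the double-dual isomorphism gives $G(B)^\vee=\Hom_{B^\op}(T,B)\cong P$, so the enveloping functor of $G$ is $G^\ev=T\otimes_B(-)\otimes_B P\colon\Mod(B^\ev)\to\Mod(R^\ev)$, and the Morita isomorphisms $P\otimes_R T\cong B$, $T\otimes_B P\cong R$ exhibit it as an equivalence with quasi-inverse $P\otimes_R(-)\otimes_R T$. Likewise $H^\ev\colon\Mod(C^\ev)\xrightarrow{\ \sim\ }\Mod((eRe)^\ev)$ is an equivalence.

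Next I would make $\mathcal R(S,f)$ explicit. A short computation in $S=R\otimes_K R^\op$ gives $fSf=eRe\otimes_K(eRe)^\op=(eRe)^\ev$ and $SfS=(ReR)\otimes_K(ReR)^\op$, so $\overline S=(R\otimes_K R^\op)/\bigl((ReR)\otimes_K(ReR)^\op\bigr)$, and the quotient functor $f(-)=fS\otimes_S(-)$ of $\mathcal R(S,f)$ sends an $R$-bimodule $M$ to $eMe$. On the other hand, $(eR)^\vee=\Hom_{R^\op}(eR,R)\cong Re$, so the enveloping functor of the quotient functor $e(-)$ of $\mathcal R(R,e)$ is $M\mapsto eR\otimes_R M\otimes_R Re\cong eMe$. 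Hence $(e(-))^\ev\cong f(-)$: the quotient functor of the standard recollement $\mathcal R(S,f)$ is, up to isomorphism, the enveloping functor of the quotient functor of $\mathcal R(R,e)$.

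The key step is to push the isomorphism $e(-)\circ G\cong H\circ j$ through the $(-)^\ev$ construction. Since isomorphic functors have isomorphic enveloping functors, $(e(-)\circ G)^\ev\cong(H\circ j)^\ev$. I would then use the composition rule $(\mathfrak Y\circ\mathfrak X)^\ev\cong\mathfrak Y^\ev\circ\mathfrak X^\ev$, valid whenever $\mathfrak X,\mathfrak Y$ are composable, right exact, preserve arbitrary direct sums, and the bimodule representing $\mathfrak Y$ is finitely generated projective on one side; by Corollary~\ref{cor:recolldirectsums}, and since $eR$ and $\Hom_C(Q,C)$ are finitely generated projective as right modules, this applies to the pairs $(\mathfrak X,\mathfrak Y)=(G,e(-))$ and $(\mathfrak X,\mathfrak Y)=(j,H)$, yielding $(e(-))^\ev\circ G^\ev\cong H^\ev\circ j^\ev$ and hence $f(-)\circ G^\ev\cong H^\ev\circ j^\ev$. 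Proving the composition rule is the one genuine computation: writing $X,Y$ for the bimodules representing $\mathfrak X,\mathfrak Y$, a right-exact direct-sum-preserving functor is a tensor functor, so $\mathfrak Y\circ\mathfrak X$ is represented by $Y\otimes X$, and it remains to see $(Y\otimes X)^\vee\cong X^\vee\otimes Y^\vee$, which is Hom--tensor adjunction once $Y$ is one-sided finitely generated projective. Keeping this bookkeeping straight across the four algebras $B^\ev$, $C^\ev$, $R^\ev$, $(eRe)^\ev$ is, I expect, the only real obstacle; everything else is formal.

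Finally, transporting $\mathcal R(S,f)$ along the quasi-inverses of $G^\ev$ and $H^\ev$ yields a recollement of module categories with middle term $\Mod(B^\ev)$, right-hand term $\Mod(C^\ev)$, and quotient functor $(H^\ev)^{-1}\circ f(-)\circ G^\ev\cong j^\ev$; its left-hand term is the image of $\Mod(\overline S)$ under $(G^\ev)^{-1}$, which equals $\Ker(j^\ev)$ and is equivalent to $\Mod(\overline S)$. This is the asserted recollement $\mathcal R(\overline S,B^\ev,C^\ev)$, and it is equivalent to $\mathcal R(S,f)$ in the sense of Definition~\ref{defn:recequ} via $(G^\ev,H^\ev)$.
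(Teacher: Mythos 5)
Your proof is correct and follows essentially the same strategy as the paper: both identify $j^\ev$, via the Morita equivalences that the progenerators $P$ and $Q$ induce on enveloping algebras, with the conjugate $H^{-1}\circ f(-)\circ G$ of the quotient functor of the standard recollement $\mathcal R(S,f)$, and then transport that recollement. The only difference is organisational: the paper verifies $j^\ev\cong H^{-1}\circ e(-)e\circ G$ by one direct bimodule computation, whereas you factor it through a general composition rule $(\mathfrak Y\circ\mathfrak X)^\ev\cong\mathfrak Y^\ev\circ\mathfrak X^\ev$, correctly noting that this requires the bimodule representing $\mathfrak Y$ to be finitely generated projective on one side.
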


\begin{proof}
Let $P$ be a progenerator for $B$ with $R \cong \End_{B}(P)^\op$ and $Q$ be a progenerator for $C$ with $eRe \cong \End_C(Q)^\op$. Let $G \colon \Mod(B^\ev) \rightarrow \Mod(R^\ev)$ and $H \colon \Mod(C^\ev) \rightarrow \Mod((eRe)^\ev)$ be the equivalences
$$
G(M) = \Hom_{B^\ev}(P \otimes \Hom_B(P,B), M), \quad H(N) = \Hom_{C^\ev}(Q \otimes \Hom_C(Q,C), N) \, .
$$
It suffices to show that the functor $j^\ev$ is equivalent to $H^{-1} \circ e(-)e \circ G$. Indeed, as
\begin{align*}
G(M) & \cong \Hom_{B^\ev}(P \otimes \Hom_B(P,B), B^\ev) \otimes_{B^\ev} M\\
& \cong \big(\Hom_B(P,B) \otimes \Hom_{B^\op}(\Hom_B(P,B), B)\big) \otimes_{B^\ev} M\\
& \cong \Hom_B(P,B) \otimes_B M \otimes_B P \, ,
\end{align*}
\begin{align*}
\Hom_{B}(Pe \otimes_{eRe} \Hom_C(Q,C),B) &\cong \Hom_{eRe}(\Hom_C(Q,C), \Hom_B(Pe,B))\\
& \cong Q \otimes_{eRe} \Hom_B(Pe,B)\\
& \cong Q \otimes_{eRe} \Hom_R(Re,\Hom_B(P,B))\\
& \cong Q \otimes_{eRe} e \Hom_B(P,B)\\
& \cong j(B)
\end{align*}
and $H^{-1}(N') \cong Q \otimes_{eRe} N' \otimes_{eRe} \Hom_C(Q,C)$, we may conclude
\begin{align*}
(H^{-1} \circ e (-)& e \circ G)(M) \\
& \cong Q \otimes_{eRe} e \Hom_B(P,B) \otimes_B M \otimes_B P e \otimes_{eRe} \Hom_C(Q,C)\\
& \cong j(B) \otimes_B M \otimes_B j(B)^\vee
\end{align*}
to finish the proof.
\end{proof}

\begin{nn}
Keep the notation from Proposition \ref{prop:envrecoll}. Note that in general, the category $\Mod(S/SfS)$ will be far away from being equivalent to the category of modules over the enveloping algebra of $\overline{R} = R / ReR$. There is, however, a canonical epimorphism
$$
\pi \colon  \overline S = S/SfS \longrightarrow \overline R \otimes \overline R^\op \cong \left(\frac{(1-e)R(1-e)}{(1-e)ReR(1-e)} \right)^\ev
$$
of algebras, where the righthand isomorphism is induced by
$$
\overline R = \frac{R}{ReR} \xrightarrow{\ \sim \ } \frac{(1-e)R(1-e)}{(1-e)ReR(1-e)}, \, (r + ReR) \mapsto (1-e)r(1-e) + (1-e)ReR(1-e)\,.
$$
The epimorphism $\pi$ fits inside the diagram
\begin{equation}\label{eq:shortpi}\tag{$\dagger$}
\begin{aligned}
\xymatrix@R=14pt@C=14pt{
& 0 \ar[d] & 0 \ar[d] & &\\
& ReR \otimes ReR \ar@{=}[r] \ar[d] & SfS \ar[d] & &\\
0 \ar[r] & (ReR \otimes R) + (R \otimes ReR) \ar[r] \ar[d] & S \ar[r]^-p \ar[d] & \displaystyle\left(\frac{(1-e)R(1-e)}{(1-e)ReR(1-e)} \right)^\ev \ar[r] \ar@{=}[d] & 0\\
0 \ar[r] & \displaystyle\frac{(ReR \otimes R) + (R \otimes ReR)}{ReR \otimes ReR} \ar[r] \ar[d] & \overline{S} \ar[r]^-{\pi} \ar[d] & \displaystyle\left(\frac{(1-e)R(1-e)}{(1-e)ReR(1-e)} \right)^\ev \ar[r] & 0\\
& 0 & 0 & &
}
\end{aligned}
\end{equation}
of $S$-modules -- which has exact rows provided that $R$ and $ReR$ are $K$-flat, and $\Tor_1^K(\overline{R}, \overline{R}) = 0$. Indeed, the commutative diagramm
$$
\xymatrix{
& 0 \ar[d] & 0 \ar[d] & 0 \ar[d] &\\
0 \ar[r] & ReR \otimes ReR \ar[dr]^-{i} \ar@{ >->}[d] \ar@{ >->}[r] & ReR \otimes R^\op \ar@{ >->}[d] \ar[r] & ReR \otimes \overline{R}^\op \ar[d] \ar[r] & 0\\
0 \ar[r] & R \otimes ReR \ar[d] \ar@{ >->}[r] & R \otimes R^\op \ar@/_3pc/[ddd]_-{\coker(i)} \ar[d] \ar[r] \ar[dr]^-p  & R \otimes \overline{R}^\op \ar[d] \ar[r] & 0\\
0 \ar[r] & \overline{R} \otimes ReR \ar@{ >->}[r]|(.575){ \ \ } \ar[d] & \overline{R} \otimes R^\op \ar[d] \ar[r] & \overline{R} \otimes \overline{R}^\op \ar[r] \ar[d] & 0\\
& 0 & 0 & 0 &\\
& & S/SfS \ar@{-->}[uur]_-\pi & &
}
$$
has exact rows and columns by the assumed $K$-flatness and $\Tor$-vanishing, whence the kernel of $p$ is $(ReR \otimes R) + (R \otimes ReR)$ by \cite[Sec.\,II.6, Exer.\,3]{Mac95}. Further, as $p \circ i = 0$, the universal property of the cokernel map induces the (necessarily surjective) dashed arrow as indicated. The following lemma can be read off from (\ref{eq:shortpi}) immediately.
\end{nn}

\begin{lem}\label{lem:PIisoiff}
Assume that $R$ and $ReR$ are $K$-flat, and that $\Tor_1^K(\overline{R}, \overline{R}) = 0$. Then the $K$-algebra
$$
\overline S = \frac{R \otimes R^\op}{(R \otimes R^\op)(e \otimes e^\op)(R \otimes R^\op)} = \frac{R \otimes R^\op}{(ReR) \otimes (R^\op e^\op R^\op)}
$$
is isomorphic to $\overline R^\ev = \overline R \otimes \overline R^\op$, through $\pi$, if, and only if, $(ReR \otimes R) + (R \otimes ReR) = SfS$. \qed
\end{lem}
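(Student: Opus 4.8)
The plan is to extract the equivalence directly from the diagram~(\ref{eq:shortpi}). First I would record that $\pi$ is surjective: it was produced above as the factorisation through $\overline{S} = S/SfS$ of the projection $p\colon S \to \overline{R}\otimes\overline{R}^\op \cong \bigl((1-e)R(1-e)/(1-e)ReR(1-e)\bigr)^\ev$, and a surjection stays surjective after composing with the canonical map $S \twoheadrightarrow \overline{S}$. Consequently $\pi$ is an isomorphism if and only if $\Ker(\pi) = 0$. Moreover, since the codomain of $\pi$ is canonically isomorphic to $\overline{R}^\ev$ via $\overline{R} \cong (1-e)R(1-e)/(1-e)ReR(1-e)$, the statement ``$\overline{S} \cong \overline{R}^\ev$ through $\pi$'' is precisely the statement ``$\pi$ is an isomorphism''.

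Next I would pin down $\Ker(\pi)$. Under the standing hypotheses, namely that $R$ and $ReR$ are $K$-flat and $\Tor_1^K(\overline{R},\overline{R}) = 0$, the rows of~(\ref{eq:shortpi}) are exact, so in particular its bottom row
$$
0 \longrightarrow \frac{(ReR \otimes R) + (R \otimes ReR)}{ReR \otimes ReR} \longrightarrow \overline{S} \xrightarrow{\ \pi\ } \left(\frac{(1-e)R(1-e)}{(1-e)ReR(1-e)}\right)^{\ev} \longrightarrow 0
$$
is exact, which exhibits $\Ker(\pi)$ as the $S$-module $\bigl((ReR \otimes R) + (R \otimes ReR)\bigr)\big/ (ReR \otimes ReR)$. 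Since $SfS = (R\otimes R^\op)(e\otimes e^\op)(R\otimes R^\op) = (ReR) \otimes (R^\op e^\op R^\op)$ and $R^\op e^\op R^\op$ is the same $K$-submodule of $R$ as $ReR$, the denominator appearing here is exactly $SfS$. Hence $\Ker(\pi) = 0$ if and only if $(ReR \otimes R) + (R \otimes ReR) = ReR \otimes ReR = SfS$, and combining this with the previous paragraph gives the assertion.

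I do not anticipate a genuine obstacle: the only substantive input is the exactness of the rows of~(\ref{eq:shortpi}), which is precisely where the flatness and $\Tor$-vanishing hypotheses enter, and that was already settled in the discussion preceding the lemma (through the displayed $3\times 3$ diagram with exact rows and columns, together with \cite[Sec.\,II.6, Exer.\,3]{Mac95}). What is left for the proof itself is only the bookkeeping above: the surjectivity of $\pi$, the identification of the two-sided ideal $SfS$ of $S=R^\ev$ with the tensor product $ReR \otimes ReR$, and the tautology that a surjection with trivial kernel is an isomorphism.
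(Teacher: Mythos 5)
Your proof is correct and follows exactly the route the paper intends: the paper's own justification is the single sentence ``The following lemma can be read off from (\ref{eq:shortpi}) immediately,'' and you read it off the same way, namely that the bottom row of (\ref{eq:shortpi}) exhibits $\Ker(\pi)$ as $\bigl((ReR\otimes R)+(R\otimes ReR)\bigr)/SfS$, so the surjection $\pi$ is an isomorphism precisely when $(ReR\otimes R)+(R\otimes ReR)=SfS$. Your identification of $SfS$ with $ReR\otimes ReR$ and your appeal to the already-established exactness of the rows (where the flatness and $\Tor$-vanishing hypotheses are used) match the paper's argument.
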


\begin{nn}
The lemma tells us that in the case of finite dimensional algebras over a field $K$, $\overline R \otimes \overline R^\op$ being isomorphic to $\overline S$ is equivalent to $ReR \in \{0,R\}$, thus, that it is (almost) never the case.

However, the map $\pi$ is a split surjection of $\overline{S}$-modules in many cases. For instance, if $K$ is an algebraically closed field and $R = KQ$ is the path algebra of the quiver
$$
\xymatrix@R=0pt{
\bullet \ar[r] & \bullet \ar[r] & \cdots \ar[r] & \bullet\\
1 & 2 & & n
} \, ,
$$
i.e., of the Dynkin graph $A_n$ with linear orientation, then the quiver of $R \otimes R^\op $ is given by $Q^\ev = (A_n)^\ev$
$$
\overbrace{
\xymatrix{
\bullet \ar[r]^-x \ar[d]_-y & \bullet \ar[r]^-x \ar[d]^-y & \cdots \ar[r]^-x & \bullet \ar[d]^-y \ar[r]^-x & \bullet \ar[d]^-y\\
\bullet \ar[r]^-x \ar[d]_-y & \bullet \ar[r]^-x \ar[d]^-y & \cdots \ar[r]^-x & \bullet \ar[d]^-y \ar[r]^-x & \bullet \ar[d]^-y \\
\tvdots \ar[d]_-y & \tvdots \ar[d]^-y & & \tvdots \ar[d]^-y & \tvdots \ar[d]^-y \\
\bullet \ar[r]^-x \ar[d]^-y & \bullet \ar[r]^-x \ar[d]^-y & \cdots \ar[r]^-x & \bullet \ar[r]^-x \ar[d]^-y & \bullet \ar[d]^-y\\
\bullet \ar[r]^-x & \bullet \ar[r]^-x & \cdots \ar[r]^-x & \bullet \ar[r]^-x & \bullet 
}}^\text{$n^2$ vertices}
$$
bounded by the commutativity relations $xy = yx$. Considering the primitive idempotent $e = \varepsilon_1$ that corresponds to the vertex $1$, the algebra $\overline S$ comes from the quiver 
\begin{equation}\label{eq:quiverSbar}
\begin{aligned}
\overbrace{
\xymatrix{
\bullet \ar[r]^-x \ar[d]_-y & \bullet \ar[r]^-x \ar[d]^-y & \cdots \ar[r]^-x & \bullet \ar[d]^-y \ar[r]^-x & \bullet \ar[d]^-y\\
\bullet \ar[r]^-x \ar[d]_-y & \bullet \ar[r]^-x \ar[d]^-y & \cdots \ar[r]^-x & \bullet \ar[d]^-y \ar[r]^-x & \bullet \ar[d]^-y \\
\tvdots \ar[d]_-y & \tvdots \ar[d]^-y & & \tvdots \ar[d]^-y & \tvdots \ar[d]^-y \\
\bullet \ar[r]^-x \ar[d]^-y & \bullet \ar[r]^-x \ar[d]^-y & \cdots \ar[r]^-x & \bullet \ar[r]^-x \ar[d]^-y & \bullet\\
\bullet \ar[r]^-x & \bullet \ar[r]^-x & \cdots \ar[r]^-x & \bullet &
\save "4,1"+DL \PATH ~={**@{.}}
       '+<10.2pc,0pc> '+<0pc,9.85pc> '+<-10.5pc,0pc> '+<0pc,-9.85pc> '+<0.3pc,0pc>
\restore
}}^\text{$n^2 - 1$ vertices}
\end{aligned}
\end{equation}
with commutativity relations $xy = yx$, whereas $\overline R \otimes \overline R^\op$ is the path algebra of $(A_{n-1})^\ev$, bounded by the commutativity relations. It is thus apparent that $\pi$ splits, and a right inverse is given by identifying $\overline R \otimes \overline R^\op$ with the submodule of $\overline S$ corresponding to the highlighted piece of the quiver (\ref{eq:quiverSbar}).
\end{nn}

\begin{nn}
As $\Hom_{\overline{S}}(\overline R \otimes \overline R^\op, - )$ is right adjoint to the full and faithful restriction functor $\Mod(\overline R \otimes \overline R^\op) \rightarrow \Mod(\overline S)$ along $\pi$, and $\overline R \otimes \overline R^\op \cong \End_{\overline{S}}(\overline R \otimes \overline R^\op)^\op$, Morita theory yields the following statement.
\end{nn}

\begin{prop}
Assume that $R$ and $ReR$ are $K$-flat, and that $\Tor_1^K(\overline{R}, \overline{R}) = 0$. Then the restriction functor
$$
\pi_\star \colon \Mod(\overline R \otimes \overline R^\op) \longrightarrow \Mod(\overline S)
$$
is an equivalence of categories if, and only if, the $\overline S$-module epimorphism $\pi$ splits and there is a surjective $\overline S$-module homomorphism $(\overline R \otimes \overline R^\op)^n \rightarrow \overline S$ for some integer $n \geqslant 1$. \qed
\end{prop}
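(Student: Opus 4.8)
The plan is to recognise the claim as an instance of the Morita theorem for restriction along a surjective ring homomorphism. Write $\Gamma = \overline R \otimes \overline R^\op$ and let $M = \pi_\star(\Gamma)$ denote $\Gamma$ regarded as an $\overline S$-module through $\pi$. Since $\pi \colon \overline S \to \Gamma$ is a surjective ring homomorphism, $M$ acquires a compatible $\Gamma$-$\overline S$-bimodule structure, and the restriction functor $\pi_\star$ is naturally isomorphic to $(-) \otimes_\Gamma M \colon \Mod(\Gamma) \to \Mod(\overline S)$, because $N \otimes_\Gamma \Gamma \cong N$ with the $\overline S$-action transported along $\pi$. By the Morita theorems (see \cite{AnFu92}), $(-) \otimes_\Gamma M$ is an equivalence of categories if and only if: (i) the canonical ring homomorphism $\Gamma \to \End_{\overline S}(M)$ is an isomorphism; (ii) $M$ is finitely generated and projective as an $\overline S$-module; and (iii) $M$ is a generator of $\Mod(\overline S)$. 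So the task reduces to translating (i)--(iii) into the two conditions in the statement.

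First I would dispose of (i). Restriction of scalars along the surjection $\pi$ is full and faithful, so $\End_{\overline S}(M) \cong \End_\Gamma(\Gamma)$, and the canonical comparison map $\Gamma \to \End_{\overline S}(M)$ is precisely the identification already recorded in the paragraph preceding the statement, where $\Gamma \cong \End_{\overline S}(\Gamma)^\op$ was observed; hence (i) holds unconditionally. Next, $\pi$ exhibits $M = \overline S / \Ker \pi$ as a cyclic $\overline S$-module, so $M$ is automatically finitely generated and the only remaining point in (ii) is projectivity. A cyclic module of the shape $\overline S / \mathfrak a$ is projective precisely when the defining epimorphism $\overline S \twoheadrightarrow \overline S / \mathfrak a$ admits an $\overline S$-linear section (one direction exhibits $M$ as a direct summand of $\overline S$, the other lifts the identity of $M$ along $\pi$ using projectivity); applied to $\pi$, this says exactly that the $\overline S$-module epimorphism $\pi$ splits. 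Finally, for (iii): $M$ is a generator of $\Mod(\overline S)$ if and only if $\overline S$ is an epimorphic image of some direct sum of copies of $M$, and since $\overline S$ is itself cyclic, a finite index set suffices; thus (iii) is equivalent to the existence of a surjective $\overline S$-module homomorphism $\Gamma^n = (\overline R \otimes \overline R^\op)^n \to \overline S$ for some $n \geqslant 1$. Combining (i), (ii) and (iii) yields the stated biconditional.

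I do not anticipate a serious obstacle: once the situation is viewed as Morita theory for a surjective (here, idempotent) quotient, the argument is formal. The only steps that call for a little care are verifying that the canonical comparison map $\Gamma \to \End_{\overline S}(M)$ in the Morita criterion coincides with the isomorphism furnished by full faithfulness of restriction, and the reduction ``$M$ projective $\Longleftrightarrow$ $\pi$ splits'', which hinges on $M$ being cyclic. The standing hypotheses that $R$ and $ReR$ be $K$-flat and $\Tor_1^K(\overline R, \overline R) = 0$ are inherited from the surrounding discussion, where they serve to pin down $\Ker \pi$ via the diagram (\ref{eq:shortpi}); they play no further role in the Morita argument itself.
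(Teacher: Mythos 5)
Your proof is correct and is essentially the paper's argument: the paper simply records that restriction along $\pi$ is full and faithful with right adjoint $\Hom_{\overline S}(\overline R \otimes \overline R^\op,-)$ and that $\overline R \otimes \overline R^\op \cong \End_{\overline S}(\overline R \otimes \overline R^\op)^\op$, and then invokes Morita theory, which is exactly the translation (progenerator $=$ finitely generated projective generator $=$ split cyclic quotient plus finite surjection onto $\overline S$) that you spell out. The only cosmetic quibble is the side convention in writing $\pi_\star \cong (-)\otimes_\Gamma M$ for left modules, which does not affect the substance.
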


The main theorem of this section is the following.

\begin{thm}\label{thm:mainthm}
Let $\mathcal R = \mathcal R(A,B,C)$ be a $K$-linear recollement of module categories. Assume that $B$ is projective as a $K$-module and that 
$$
\Tor^C_1(j(B)^\vee, j(B)) = 0 \, .
$$
The image of $B$ under $j^\ev$ is canonically isomorphic to $C$ $($as a $C^\ev$-module$)$ and the graded map
$$
j^\ev_\ast \colon \HH^\ast(B) \xrightarrow{\ j^\ev \ } \Ext^\ast_{C^\ev}(j^\ev(B), j^\ev(B)) \xrightarrow{\ \sim \ } \HH^\ast(C)
$$
induced by $\mathcal R^\ev$ is a homomorphism of strict Gerstenhaber algebras.
\end{thm}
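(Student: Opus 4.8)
The plan is to exhibit $j^\ev_\ast$ as an instance of the transfer result Theorem~\ref{thm:gersthochcomp}. Concretely, I would take $\mathsf C = \P(B)$, the subcategory of $\Mod(B^\ev)$ of bimodules projective on either side; by the first Example of Section~\ref{sec:prereq} this is an entirely extension closed strong exact monoidal subcategory of $(\Mod(B^\ev),\otimes_B,B)$ containing $B$, so the inclusion gives $\Ext^\ast_{\P(B)}(B,B)\xrightarrow{\ \sim\ }\HH^\ast(B)$. The functor to feed in is the restriction $\mathfrak X = j^\ev|_{\P(B)}\colon (\P(B),\otimes_B,B)\to(\Mod(C^\ev),\otimes_C,C)$. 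Since $B$ is $K$-projective, so is $C$ (by the observations following Theorem~\ref{thm:recmodeq}), hence $\HH^\ast(C)=\Ext^\ast_{C^\ev}(C,C)$ and the target is legitimate; moreover the map that Theorem~\ref{thm:gersthochcomp} produces from $\mathfrak X$ is, on inspection, exactly the composite $j^\ev_\ast$ of the statement. So the whole task is to verify the hypotheses of that theorem for $\mathfrak X$.

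The inputs are as follows. First, $j^\ev$ is exact: by Proposition~\ref{prop:envrecoll} it is the quotient functor of the recollement $\mathcal R^\ev$, and quotient functors in recollements are exact; alternatively, $j(B)$ is finitely generated projective as a right $B$-module (under the equivalence $\mathcal R(A,B,C)\simeq\mathcal R(R,e)$ it corresponds to the direct summand $eR$ of $R$), so $j(B)^\vee=\Hom_{B^\op}(j(B),B)$ is finitely generated projective as a left $B$-module, and both tensor factors of $j^\ev(-)=j(B)\otimes_B(-)\otimes_B j(B)^\vee$ are exact. The same projectivity yields the canonical isomorphism $j^\ev(B)=j(B)\otimes_B B\otimes_B j(B)^\vee\cong j(B)\otimes_B j(B)^\vee\cong\End_{B^\op}(j(B))\cong C$ of $C^\ev$-modules claimed in the statement. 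Next, $\mathfrak X$ carries an almost strong monoidal structure: take $\phi_0\colon C\xrightarrow{\ \sim\ }j^\ev(B)$ to be this isomorphism and $\phi_{X,Y}\colon j^\ev(X)\otimes_C j^\ev(Y)\to j^\ev(X\otimes_B Y)$ to be obtained by inserting the evaluation $\ev\colon j(B)^\vee\otimes_C j(B)\to B$, $\varphi\otimes x\mapsto\varphi(x)$, into the associativity identification $j^\ev(X)\otimes_C j^\ev(Y)\cong j(B)\otimes_B X\otimes_B(j(B)^\vee\otimes_C j(B))\otimes_B Y\otimes_B j(B)^\vee$. The hexagon axiom of Paragraph~\ref{def:monoidalfunc} then reduces to contracting the two copies of $\ev$ in a triple tensor in either order, and the unit axioms to the triangle identities for $\ev$ against the unit isomorphism $C\cong j(B)\otimes_B j(B)^\vee$; this is routine bimodule bookkeeping.

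The crux, and the place where the hypothesis $\Tor^C_1(j(B)^\vee,j(B))=0$ enters, is the exactness condition~(\ref{eq:exactnesscond}) of Theorem~\ref{thm:gersthochcomp}. Given an admissible short exact sequence $0\to L\to M\to N\to 0$ in $\P(B)$ and $X\in\P(B)$, one rewrites $j^\ev(-)\otimes_C j^\ev(X)$ as $j(B)\otimes_B(-)\otimes_B\bigl((j(B)^\vee\otimes_C j(B))\otimes_B X\otimes_B j(B)^\vee\bigr)$; with the outer factors flat on the appropriate side, exactness of the resulting sequence is controlled by the homological behaviour of the mediating bimodule $j(B)^\vee\otimes_C j(B)$, and the vanishing of $\Tor^C_1(j(B)^\vee,j(B))$ — the pseudoflatness of $j$ — is what supplies it (equivalently, it keeps Schwede's total complex $j^\ev(S)\boxtimes_C j^\ev(T)$ acyclic, so that the loop description of the bracket transfers along $j^\ev$). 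With~(\ref{eq:exactnesscond}) in place, Theorem~\ref{thm:gersthochcomp} applies and gives that $j^\ev_\ast$ sends the Gerstenhaber bracket and the squaring map of $\HH^\ast(B)$ to those of $\HH^\ast(C)$, i.e.\ the two diagrams commute. I expect the main obstacle to be precisely this last homological step — verifying~(\ref{eq:exactnesscond}), or equivalently the acyclicity of the $\boxtimes_C$-construction on $j^\ev$-images of extensions over $\P(B)$ — the monoidal coherence and the identification of the transferred map with $j^\ev_\ast$ being formal.
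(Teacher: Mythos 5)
Your overall architecture is the paper's: take $\mathsf C=\P(B)$, restrict $j^\ev$ to it, and feed the result into Theorem~\ref{thm:gersthochcomp}, verifying condition~(\ref{eq:exactnesscond}). Where you differ is in the supporting machinery. The paper never argues directly with the bimodules $j(B)$ and $j(B)^\vee$; it first replaces $\mathcal R(A,B,C)$ by an equivalent standard recollement $\mathcal R(R,e)$, establishes exactness, the identification $j^\ev(B)\cong C$, preservation of one-sided projectivity and the monoidal structure for the elementary functor $e(-)e$ (Propositions~\ref{prop:envrecoll} and~\ref{prop:funcproper} --- your evaluation map $j(B)^\vee\otimes_C j(B)\to B$ specialises there to the multiplication $Re\otimes_{eRe}eR\to R$), and transports everything back through Morita equivalences, shown to be strong monoidal in Proposition~\ref{prop:moritamonoidal}. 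Your direct route --- $j(B)$ finitely generated projective as a right $B$-module, hence $j^\ev$ exact and $j^\ev(B)\cong j(B)\otimes_Bj(B)^\vee\cong\End_{B^\op}(j(B))\cong C$, with the evaluation pairing furnishing the monoidal structure --- is viable and avoids the back-and-forth through $\mathcal R(R,e)$, though the identification $\End_{B^\op}(j(B))\cong C$ and the coherence of your $\phi$ deserve more than a one-line assertion; the paper gets both essentially for free from the Morita-theoretic description.

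The genuine gap is the step you yourself flag as the crux: condition~(\ref{eq:exactnesscond}). You assert that the vanishing of $\Tor^C_1(j(B)^\vee,j(B))$ ``supplies'' the exactness of $0\to j^\ev(L)\otimes_Cj^\ev(X)\to j^\ev(M)\otimes_Cj^\ev(X)\to j^\ev(N)\otimes_Cj^\ev(X)\to 0$, but no argument is given, and the re-association you propose does not by itself exhibit where the hypothesis bites. To close the gap along your lines: choose a resolution $G_\bullet\to j(B)$ by $C\otimes B^\op$-modules that are flat as left $C$-modules; since $L,M,N,X\in\P(B)$ and $j(B)$, $j(B)^\vee$ are $B$-projective on the relevant sides, the functors $j(B)\otimes_BN\otimes_B(-)$ and $(-)\otimes_BX\otimes_Bj(B)^\vee$ are exact and hence commute with homology, giving $\Tor^C_1\bigl(j^\ev(N),j^\ev(X)\bigr)\cong j(B)\otimes_BN\otimes_B\Tor^C_1\bigl(j(B)^\vee,j(B)\bigr)\otimes_BX\otimes_Bj(B)^\vee$, which vanishes by hypothesis; the long exact $\Tor$-sequence then yields~(\ref{eq:exactnesscond}). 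The paper instead reduces to $e(-)e$ on $\P(R)$ and invokes Lemma~\ref{lem:Tor1equ}, which converts the single hypothesis $\Tor^C_1(j(B)^\vee,j(B))=0$ into $\Tor^{eRe}_1(Me,eN)=0$ for all $M,N\in\P(R)$ and thereby kills the connecting map in the relevant $\Tor$-sequence. Either way this is a short but genuine computation --- and it is the only place the pseudoflatness hypothesis enters the proof --- so it cannot be dismissed as bookkeeping.
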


The proof of the theorem is spread over the next couple of statements. We begin by proving a fact that should be well-known, for which, however, we were not able to track down a reference.

\begin{lem}\label{lem:Tor1equ}
Let $\mathcal R(A,B,C)$ be a recollement of module categories being equivalent to a standard recollement $\mathcal R(R,e)$. Then
\begin{align*}
\Tor^C_1(j(B)^\vee,j(L)) = 0& \ \ \text{$($for all $L \in \P(B))$}\\ &\Longleftrightarrow \quad \Tor^C_1(j(B)^\vee, j(B)) = 0\\ 
&\Longleftrightarrow \quad \Tor^{eRe}_1(Re,eR) = 0 \\
&\Longleftrightarrow \quad \Tor^{eRe}_1(Me,eN) = 0 \ \ \text{$($for all $M,N \in \P(R))$}\, .
\end{align*}
\end{lem}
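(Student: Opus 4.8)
The plan is to push all four conditions forward along the equivalence of recollements $\mathcal R(A,B,C) \simeq \mathcal R(R,e)$ and then to reduce the statement to two elementary properties of $\Tor$: it commutes with arbitrary direct sums in each variable, and it carries a pair of direct summands to a direct summand.

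First I would set up the translation. Let $P$ and $Q$ be the progenerators for $B$ and $C$ realising $\mathcal R(A,B,C) \simeq \mathcal R(R,e)$, with $R \cong \End_B(P)^{\op}$ and $eRe \cong \End_C(Q)^{\op}$, so that the quotient functor $j$ corresponds to $e(-) = eR \otimes_R (-)$. Writing $G \colon \Mod(B) \xrightarrow{\sim} \Mod(R)$ and $H \colon \Mod(C) \xrightarrow{\sim} \Mod(eRe)$ for the induced equivalences, we have $G(B) \cong R$ and hence $H(j(B)) \cong eR \otimes_R G(B) \cong eR$; taking $R$-duals (and using that equivalences preserve $\Hom$-sets), $j(B)^{\vee} = \Hom_{B^{\op}}(j(B),B)$ corresponds to $\Hom_{R^{\op}}(eR,R) \cong Re$. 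Passing to the induced equivalences $\Mod(B^{\ev}) \simeq \Mod(R^{\ev})$ and $\Mod(C^{\ev}) \simeq \Mod((eRe)^{\ev})$ from the proof of Proposition~\ref{prop:envrecoll}, the subcategory $\P(B)$ is carried onto $\P(R)$, and $L \in \P(B)$ goes to some $M \in \P(R)$ with $j(L)$ going to $eM = eR \otimes_R M$. Since a Morita equivalence preserves the $\Tor$-groups of corresponding modules, the first condition becomes ``$\Tor^{eRe}_1(Re,eM) = 0$ for all $M \in \P(R)$'', the second becomes $\Tor^{eRe}_1(Re,eR) = 0$, the third is literally this, and the fourth is already phrased over $eRe$. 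In particular the equivalence of the second and third conditions is now immediate, and it suffices to prove the remaining two equivalences over $eRe$.

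For the implication from the third to the (translated) first condition: if $M \in \P(R)$ then $M$ is projective as a left $R$-module, hence a direct summand of a free module $R^{(I)}$, so $eM$ is a direct summand of $(eR)^{(I)}$ as a left $eRe$-module; therefore $\Tor^{eRe}_1(Re,eM)$ is a direct summand of $\Tor^{eRe}_1(Re,(eR)^{(I)}) \cong \Tor^{eRe}_1(Re,eR)^{(I)}$, which vanishes by hypothesis. For the implication from the third to the fourth condition I would run the same argument on both sides: for $M,N \in \P(R)$ the module $Me$ is a direct summand of $(Re)^{(J)}$ as a right $eRe$-module ($M$ being right $R$-projective), and $eN$ is a direct summand of $(eR)^{(I)}$ as a left $eRe$-module, so $\Tor^{eRe}_1(Me,eN)$ is a direct summand of $\Tor^{eRe}_1((Re)^{(J)},(eR)^{(I)}) \cong \bigoplus_{I \times J} \Tor^{eRe}_1(Re,eR) = 0$. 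The two reverse implications are obtained by specialising $M = R$, respectively $M = N = R$, and noting that $R \in \P(R)$. (The equivalence of the first and second conditions can alternatively be seen directly in $\Mod(C)$: by Watt's theorem $j \cong j(B) \otimes_B (-)$, so $j(L)$ is a direct summand of $j(B)^{(I)}$ whenever $L$ is left $B$-projective, while $\Tor^C_1(j(B)^{\vee},j(B)^{(I)}) \cong \Tor^C_1(j(B)^{\vee},j(B))^{(I)}$.)

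I expect the translation step in the second paragraph to be the only delicate part of the argument --- in particular, verifying that the equivalence of recollements really carries $j(B)^{\vee}$ to $Re$ and $\P(B)$ to $\P(R)$, and that the induced equivalences of enveloping-algebra module categories preserve the relevant $\Tor_1$-groups. Once that bookkeeping is in place, the remaining implications are the purely formal direct-sum and direct-summand manipulations displayed above.
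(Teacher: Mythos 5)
Your overall strategy is sound and genuinely different from the paper's: you transport the whole statement along the Morita equivalences and then reduce everything to the facts that $\Tor$ commutes with direct sums and passes to direct summands, invoking ``a Morita equivalence preserves the $\Tor$-groups of corresponding modules'' as a black box. The paper never identifies $j(B)$ with $eR$; it proves the middle equivalence by hand, taking a projective resolution of $eR$ over $eRe$, transporting it through $j\circ(P\otimes_R -)\cong (Q\otimes_{eRe}-)\circ e(-)$, and checking via explicit adjunction isomorphisms that $j(B)^\vee\otimes_C -$ preserves exactness of the resulting sequence. Your route is shorter, but the paper's hand computation is precisely what absorbs the one inaccuracy in your translation step.

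That inaccuracy: $G(B)=\Hom_B(P,B)$ is a progenerator for $R$, not isomorphic to $R$ in general (take $B=K$, $P=K^2$, $R=M_2(K)$: then $G(B)=K^2\ncong R$). Consequently $H(j(B))\cong eG(B)$ is only add-equivalent to $eR$ (each is a direct summand of a finite direct sum of copies of the other), and likewise $j(B)^\vee$ corresponds to $Pe$, which is only add-equivalent to $Re$. This does not sink the argument --- $\Tor_1$-vanishing is insensitive to replacing either argument by an add-equivalent module, and you are already making exactly this kind of direct-summand move everywhere else --- but as written the isomorphisms $G(B)\cong R$, $H(j(B))\cong eR$ and the correspondence $j(B)^\vee\leftrightarrow Re$ are false, and the translation must be rephrased in terms of mutual finite direct summands. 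This is in effect what the paper's closing chain
$$
\Tor^C_1(j(B)^\vee,j(B)) \ \subseteq \ \Tor^C_1\bigl(j(B)^\vee,\, Q\otimes_{eRe}eR\otimes_R\Hom_B(P,P^n)\bigr)
$$
accomplishes. The remaining implications --- the first equivalence via $j$ preserving direct sums, and the last via $Me$ and $eN$ being summands of $(Re)^{(J)}$ and $(eR)^{(I)}$ --- are correct and coincide with the paper's treatment of those two steps.
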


\begin{proof}
Indeed, the first equivalence is due to
$$
\Tor^C_1(j(B)^\vee,j(F)) \cong \bigoplus_{i \in I}\Tor^C_1(j(B)^\vee,j(B))
$$
for each free $B$-module $F = \bigoplus_{i \in I}{B}$ (see Corollary \ref{cor:recolldirectsums}). For the second equivalence, let $P$ and $Q$ be progenerators for $B$ and $C$ giving rise to the equivalence $\mathcal R(A,B,C) \xrightarrow \sim \mathcal R(R,e)$. We start by taking a projective resolution $\mathbb P(eR) \rightarrow eR \rightarrow 0$ of $eR$ over $eRe$ and assume that $\Tor^{eRe}_1(Re,eR) = 0$. The complex
$$
Re \otimes_{eRe} \mathbb P(eR) \longrightarrow Re \otimes_{eRe} eR \longrightarrow 0
$$
consists of projective $R$-modules in degrees $\geqslant 0$ and, moreover, has vanishing homology in degrees $0$ and $1$ (as its $i$-th homology computes as $\Tor^{eRe}_i(Re,eR)$). Thus we obtain an exact sequence
$$
0 \longrightarrow Re \otimes_{eRe} X \longrightarrow Re \otimes_{eRe} P_1 \longrightarrow Re \otimes_{eRe} P_0 \longrightarrow Re \otimes_{eRe} eR \longrightarrow 0 \,,
$$
where $P_0$ and $P_1$ are projective $eRe$-modules. The composition $j \circ (P \otimes_R -)$ of $j$ with the quasi-inverse functor $P \otimes_R -$ of $\Hom_B(P,-)$ takes that sequence to
\begin{equation}\label{eq:sequence}\tag{$\diamond$}
0 \longrightarrow j(Pe \otimes_{eRe} X) \longrightarrow j(Pe \otimes_{eRe} P_1) \longrightarrow j(Pe \otimes_{eRe} P_0) \longrightarrow j(Pe \otimes_{eRe} eR) \longrightarrow 0 \,.
\end{equation}
As $j \circ (P \otimes_R -)$ is naturally isomorphic to $(Q \otimes_{eRe} -) \circ e(-)$, the exact sequence above is (as a complex of $C$-modules) isomorphic to
$$
0 \longrightarrow Q \otimes_{eRe} X \longrightarrow Q \otimes_{eRe} P_1 \longrightarrow Q \otimes_{eRe} P_0 \longrightarrow Q \otimes_{eRe} eR \longrightarrow 0 \,.
$$
In particular, $j(Pe \otimes_{eRe} P_1)$ and $j(Pe \otimes_{eRe} P_0)$ are projective $C$-modules. Let us write $\overline{j}(-) = j(Pe \otimes_{eRe} (-))$ for abbreviation. We show that the exact sequence (\ref{eq:sequence}) remains exact after applying $j(B)^\vee \otimes_C -$, which will imply that $\Tor^C_1(j(B)^\vee, j(Pe \otimes_{eRe} eR)) = 0$. Indeed, as already mentioned above, we have natural $C$-module isomorphisms
$$
\overline{j}(E) = j(Pe \otimes_{eRe} E) \cong Q \otimes_{eRe} E \quad \text{(for all modules $E \in \Mod(eRe)$)}
$$
and the $B$-$C$-bimodule isomorphism
\begin{align*}
j(B)^\vee &\cong \Hom_{B^\op}(Q \otimes_{eRe} eR \otimes_R \Hom_B(P,B), B)\\
& \cong \Hom_{(eRe)^\op}(Q, \Hom_{R^\op}(eR, \Hom_{B^\op}(\Hom_B(P,B),B)))\\
& \cong \Hom_{(eRe)^\op}(Q, Pe)\\
&\cong Pe \otimes_{eRe} \Hom_{(eRe)^\op}(Q,eRe) \,.
\end{align*}
The map $\Hom_{(eRe)^\op}(Q,eRe) \otimes_C Q \xrightarrow{\sim} \Hom_{C}(Q,C) \otimes_C Q \xrightarrow{\ \, } eRe, \, \varphi \otimes q \mapsto \varphi(q)$, see for instance \cite[\S 22]{AnFu92}, is an $eRe$-bimodule isomorphism which gives rise to an isomorphism of complexes,
$$
\xymatrix@C=11.5pt{
0 \ar[r] & j(B)^\vee \otimes_C \overline{j}(X) \ar[r] \ar[d] & j(B)^\vee \otimes_C \overline{j}(eP_1) \ar[r] \ar[d] & j(B)^\vee \otimes_C \overline{j}(eP_0) \ar[r] \ar[d] & j(B)^\vee \otimes_C \overline{j}(eR) \ar[r] \ar[d] & 0\, \,\\
0 \ar[r] & Pe \otimes_{eRe} X \ar[r] & Pe \otimes_{eRe} eP_1 \ar[r] & Pe \otimes_{eRe} eP_0 \ar[r] & Pe \otimes_{eRe} eR \ar[r] & 0\, ,
}
$$
whence the top complex is acyclic as claimed. It thus follows that, for some $n \geqslant 1$,
\begin{align*}
\Tor^C_1(j(B)^\vee,j(B)) &\cong \Tor^C_1(j(B)^\vee, Q \otimes_{eRe} eR \otimes_R \Hom_B(P,B))\\ 
&\subseteq \Tor^C_1(j(B)^\vee, Q \otimes_{eRe} eR \otimes_R \Hom_B(P,P^n))\\
& \cong \bigoplus_{i = 1}^n \Tor^C_1(j(B)^\vee, Q \otimes_{eRe} eR)\\
& \cong \bigoplus_{i = 1}^n \Tor^C_1(j(B)^\vee, j(Pe \otimes_{eRe} eR)) = 0
\end{align*}
whence one implication of the second equivalence follows. Similar arguments apply to deduce the implication
$$
\Tor^C_1(j(B)^\vee, j(B)) = 0 \quad \Longrightarrow \quad \Tor^{eRe}_1(Re,eR) = 0 \,.
$$
Finally, the last equivalence follows from
$$
\Tor^{eRe}_1(F_1 e, eF_2) \cong \bigoplus_{i_1 \in I_1}\bigoplus_{i_2 \in I_2}\Tor^{eRe}_1(Re,eR)
$$
for all free right $R$-modules $F_1 = \bigoplus_{i_1 \in I_1}{R}$ and all free left $R$-modules $F_2 = \bigoplus_{i_2 \in I_2}{R}$.
\end{proof}

\begin{cor}
Let $\mathcal R(A,B,C)$ and $\mathcal R(A',B',C')$ be equivalent recollements of module categories. Then
$$
\Tor^C_1(j(B)^\vee, j(B)) = 0 \quad \Longleftrightarrow \quad \Tor^{C'}_1(j'(B')^\vee, j'(B')) = 0 \,.
$$\qed
\end{cor}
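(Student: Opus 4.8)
The plan is to reduce the statement to Lemma \ref{lem:Tor1equ} by passing to a common standard recollement. First I would record that ``being equivalent'' in the sense of Definition \ref{defn:recequ} is an equivalence relation on recollements of module categories; reflexivity and symmetry are immediate, and the only point that needs an argument is transitivity. If $(F,G,H)\colon \mathcal R(A,B,C) \rightarrow \mathcal R(A',B',C')$ and $(F',G',H')\colon \mathcal R(A',B',C') \rightarrow \mathcal R(A'',B'',C'')$ are equivalences, then $G' \circ G$ and $H' \circ H$ are again equivalences of categories, and pasting the natural isomorphisms $j' \circ G \cong H \circ j$ and $j'' \circ G' \cong H' \circ j'$ yields
$$
j'' \circ (G' \circ G) \cong H' \circ j' \circ G \cong H' \circ H \circ j = (H' \circ H) \circ j \,;
$$
by the discussion in \ref{nn:morrec}, knowing $j'' \circ (G'\circ G) \cong (H'\circ H)\circ j$ already produces an equivalence $\mathcal R(A,B,C) \rightarrow \mathcal R(A'',B'',C'')$, so the relation is transitive.

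Next I would invoke Theorem \ref{thm:recmodeq}: there exist a ring $R$ and an idempotent $e \in R$ such that $\mathcal R(A,B,C)$ is equivalent to the standard recollement $\mathcal R(R,e)$. Combining this with the hypothesis that $\mathcal R(A,B,C)$ and $\mathcal R(A',B',C')$ are equivalent, transitivity gives that $\mathcal R(A',B',C')$ is equivalent to \emph{the same} standard recollement $\mathcal R(R,e)$.

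Finally I would apply Lemma \ref{lem:Tor1equ} to each of the two recollements, both regarded as equivalent to $\mathcal R(R,e)$, obtaining
$$
\Tor^C_1(j(B)^\vee, j(B)) = 0 \quad \Longleftrightarrow \quad \Tor^{eRe}_1(Re,eR) = 0 \quad \Longleftrightarrow \quad \Tor^{C'}_1(j'(B')^\vee, j'(B')) = 0 \,,
$$
which is precisely the asserted equivalence.

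The argument is essentially formal, so I do not expect a serious obstacle; the single point that must be handled with care is that Lemma \ref{lem:Tor1equ} has to be applied with one and the same pair $(R,e)$ for both recollements. This is exactly what transitivity of the equivalence relation secures: the intrinsic criterion $\Tor^{eRe}_1(Re,eR) = 0$ depends on the chosen standard model, and only becomes a statement comparing the two original recollements once that model is pinned down and shared between them.
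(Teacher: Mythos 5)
Your proof is correct and is exactly the argument the paper leaves implicit behind the \qed: both recollements are placed over a common standard model $\mathcal R(R,e)$ via Theorem \ref{thm:recmodeq} and transitivity of equivalence, and Lemma \ref{lem:Tor1equ} is applied twice through the shared criterion $\Tor^{eRe}_1(Re,eR)=0$. Your explicit check of transitivity (pasting the natural isomorphisms $j''\circ G'\circ G \cong H'\circ j'\circ G \cong (H'\circ H)\circ j$) is the one detail the paper does not spell out, and you handle it correctly.
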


For the next proposition only, we let $B$ be a $K$-algebra, $e \in B$ be an idempotent, $C = eBe$ and $\mathfrak A_e$ the functor $\Mod(B^\ev) \rightarrow \Mod(C^\ev)$, $\mathfrak A_e(M) = eMe$.

\begin{prop}\label{prop:funcproper}
The following statements hold true for the functor $$\mathfrak A_e = e(-)e \colon \Mod(B^\ev) \rightarrow \Mod(C^\ev).$$
\begin{enumerate}[\rm(1)]
\item\label{prop:funcproper:1} $\mathfrak A_e$ is exact.
\item\label{prop:funcproper:2} $\mathfrak A_e$ is an almost costrong monoidal functor. It will be a strong monoidal functor, if the restricted multiplication map $\mu_e \colon Be \otimes_{eBe} eB \rightarrow B$ is an isomorphism.
\item\label{prop:funcproper:3} The functor
$$
Be \otimes_C (-) \otimes_C eB \colon \Mod(C^\ev) \longrightarrow \Mod(B^\ev)
$$
is a right invers of $\mathfrak A_e$, that is, $\mathfrak A_e (Be \otimes_C (-) \otimes_C eB) \cong \Id_{\Mod(C^\ev)}$. More precisely, $(Be \otimes_C (-) \otimes_C eB, \mathfrak A_e)$ forms an adjoint pair, whose unit is an isomorphism. For modules $M \in \Mod(C^\ev)$ and $N \in \Mod(B^\ev)$, unit and counit are given by
$$
\eta_M \colon M \longrightarrow eBe \otimes_C M \otimes_C eBe, \ m \mapsto e \otimes m \otimes e,
$$
$$
\varepsilon_N\colon Be \otimes_C e N e \otimes_C eB \longrightarrow N, \ be \otimes ene \otimes eb' \mapsto (be) n (eb').
$$
\item\label{prop:funcproper:4} If $eB$ is projective as a left $C=eBe$-module $($$Be$ is projective as a right $C=eBe$-module$)$, and if $M$ is a $B^\ev$-module which is projective as a left $B$-module $($as a right $B$-module$)$, then $\mathfrak A_e(M)$ is projective as a left $C$-module $($as a right $C$-module$)$.
\end{enumerate}
\end{prop}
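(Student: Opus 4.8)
The plan is to establish the four assertions in turn, all of them built on the descriptions $\mathfrak A_e(M)=eMe\cong eB\otimes_B M\otimes_B Be$ and on the fact that $eB$ (resp.\ $Be$) is a direct summand of $B$ as a right (resp.\ left) $B$-module. For (\ref{prop:funcproper:1}) I would observe that $M\mapsto eM$ is exact on $\Mod(B)$, since $e$ being idempotent splits $M=eM\oplus(1-e)M$ as left $B$-modules, and symmetrically that $M\mapsto Me$ is exact on $\Mod(B^\op)$; hence the composite $\mathfrak A_e=e(-)e$ is exact. (Equivalently, $eB$ and $Be$ are one-sided projective, so $eB\otimes_B(-)\otimes_B Be$ is exact.)

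For (\ref{prop:funcproper:2}) I would equip $\mathfrak A_e$ with its canonical comparison data between $\mathfrak A_e(M\otimes_B N)=e(M\otimes_B N)e$ and $\mathfrak A_e(M)\otimes_C\mathfrak A_e(N)=eMe\otimes_{eBe}eNe$: the tensor-comparison morphism is the one induced by the restricted multiplication $\mu_e\colon Be\otimes_{eBe}eB\to B$ — under the identification $eMe\cong eB\otimes_B M\otimes_B Be$ it is simply $\id_{eB\otimes_B M}\otimes\,\mu_e\otimes\,\id_{N\otimes_B Be}$ — and the unit-comparison morphism is the identity of $C=eBe=\mathfrak A_e(B)$. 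In this form the coherence diagrams of \ref{def:monoidalfunc} reduce to the associativity and unit constraints for $\otimes_B$ together with the elementary compatibilities of $\mu_e$ with multiplication, so I would verify them by a direct computation on elementary tensors. Since the unit-comparison morphism is invertible, this exhibits $\mathfrak A_e$ as an almost costrong monoidal functor; and when $\mu_e$ is an isomorphism, $\id\otimes\mu_e\otimes\id$ is too, the tensor-comparison becomes invertible, and $\mathfrak A_e$ is strong.

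For (\ref{prop:funcproper:3}) I would produce the adjunction $\bigl(Be\otimes_C(-)\otimes_C eB,\ \mathfrak A_e\bigr)$ by assembling the tensor-hom adjunctions $\Hom_B\bigl(Be\otimes_C X,-\bigr)\cong\Hom_C\bigl(X,\Hom_B(Be,-)\bigr)\cong\Hom_C\bigl(X,e(-)\bigr)$ on the left with the symmetric statement on the right, and then passing to $B$-bimodules. I would check that the prescribed $\eta_M\colon M\to eBe\otimes_C M\otimes_C eBe$, $m\mapsto e\otimes m\otimes e$, and $\varepsilon_N\colon Be\otimes_C eNe\otimes_C eB\to N$, $be\otimes ene\otimes eb'\mapsto(be)n(eb')$, satisfy the triangle identities (a short computation on elementary tensors). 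Finally, $eBe\otimes_C M\otimes_C eBe\cong C\otimes_C M\otimes_C C\cong M$ canonically, so $\eta$ is an isomorphism, and therefore $\mathfrak A_e\circ\bigl(Be\otimes_C(-)\otimes_C eB\bigr)\cong\Id_{\Mod(C^\ev)}$.

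For (\ref{prop:funcproper:4}), assume $eB$ is projective as a left $eBe$-module and $M\in\Mod(B^\ev)$ is projective as a left $B$-module. The key point is that $Me$ is a direct summand of $M$ as a left $B$-module — the projection $m\mapsto me$ commutes with the left $B$-action — hence $Me$ is itself left $B$-projective; moreover $e(-)\cong\Hom_B(Be,-)$ is additive and commutes with arbitrary direct sums (because $Be$ is cyclic, hence finitely generated) and sends $B$ to $eB$, so it carries a left $B$-module direct summand of $\bigoplus_I B$ to a left $eBe$-module direct summand of $\bigoplus_I eB$, which is $eBe$-projective by hypothesis. Applying this to $Me$ yields that $\mathfrak A_e(M)=e(Me)=eMe$ is projective over $C=eBe$; the right-handed statement is entirely symmetric (interchange left and right, $eB$ and $Be$, $Me$ and $eM$). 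I expect this last step to be the only genuinely non-formal one — the trick is to split off $Me$ (resp.\ $eM$) as a one-sided summand of $M$ before applying $e(-)$ (resp.\ $(-)e$), so that one-sided projectivity is inherited and then transported along $e(-)=\Hom_B(Be,-)$; by comparison, the heaviest bookkeeping, though conceptually routine, sits in the coherence verification of part (\ref{prop:funcproper:2}).
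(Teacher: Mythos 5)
Your proof is correct and follows essentially the same route as the paper: exactness via $e(-)e\cong eB\otimes_B(-)\otimes_B Be$, the monoidal comparison map built from $\mu_e$ under the identification $eMe\cong eB\otimes_B M\otimes_B Be$, the tensor--hom adjunction for the pair $(Be\otimes_C(-)\otimes_C eB,\ \mathfrak A_e)$, and for item (4) the fact that $e(-)$ carries summands of free left $B$-modules to summands of copies of $eB$. The only (immaterial) divergence is in (4), where the paper first forms $eM$, which is left $C$-projective, and then absorbs the right-hand $e$ via $\Hom_{C}(eM\otimes_B Be,-)\cong\Hom_B(Be,\Hom_{C}(eM,-))$, whereas you first split off $Me$ as a left $B$-module direct summand of $M$ and then apply $e(-)$; both arguments rest on the same two hypotheses and are equally valid.
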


\begin{proof}
As $e(-)e \cong eB \otimes_B (-) \otimes_B Be$, the statement (\ref{prop:funcproper:1}) is obvious. As for the second item, we have to name an isomorphism $t_0\colon C = eBe \rightarrow \mathfrak A_e(B)$ as well as natural homomorphisms
$$
t_{M,N} \colon \mathfrak A_e(M) \otimes_C \mathfrak A_e(N) \longrightarrow \mathfrak A_e(M \otimes_C N) \quad \text{(for all $M, N \in \Mod(B^\ev)$)}.
$$
To begin with, notice that $\mathfrak A_e(B) = eBe = C$. The multiplication map $\mu_e \colon Be \otimes_{eBe} eB \rightarrow B$ gives rise to the homomorphism $t'_{M,N} = eB \otimes_B M \otimes_B \mu_e \otimes_B N \otimes_B Be$,
$$
t'_{M,N} \colon eB \otimes_B M \otimes_B (Be \otimes_{eBe} eB) \otimes_B N \otimes_B Be \longrightarrow eB \otimes_B M \otimes_B B \otimes_B N \otimes_B Be \, ,
$$
which leads to a homomorphism $t_{M,N}$ between
\begin{align*}
\mathfrak A_e(M) \otimes_C \mathfrak A_e(N) 
&= eMe \otimes_{eBe} eNe\\ 
&= (eB \otimes_B M \otimes_B Be) \otimes_{eBe} (eB \otimes_B N \otimes_B Be)\\
&= eB \otimes_B M \otimes_B (Be \otimes_{eBe} eB) \otimes_B N \otimes_B Be
\intertext{and}
eB \otimes_B M \otimes_B B \otimes_B N \otimes_B Be
&\cong eB \otimes_B M \otimes_B N \otimes_B Be\\ 
&\cong eM \otimes_B Ne\\
&= \mathfrak A_e(M \otimes_B N) \, .
\end{align*}
Naturality of the $t_{M,N}$ is easily established. Item (\ref{prop:funcproper:3}) is either a straightforward verification, or a reminder of the fact that $\mathfrak A_e$ actually belongs to the standard recollement $\mathcal R(B^\ev, e \otimes e^\op)$. As for the last item, if $eB$ is $C$-projective, then so is $eM$, and thus
$$
\Hom_{C}(eMe, -) \cong \Hom_{C}(eM \otimes_B Be, -) \cong \Hom_B(Be, \Hom_{C}(eM,-))
$$
is exact as required.
\end{proof}

\begin{prop}\label{prop:moritamonoidal}
Let $R$ be a $K$-algebra, $P$ be a progenerator for $R$ and let $S = \End_R(P)^\op$. Let $\mathfrak A_P$ be the equivalence
$$
\Hom_{R^\ev}(P \otimes \Hom_R(P,R), -) \colon \Mod(R^\ev) \rightarrow \Mod(S^\ev)
$$
of categories. The following statements hold true.
\begin{enumerate}[\rm(1)]
\item $\mathfrak A_P$ is exact.
\item\label{prop:moritamonoidal:2} $\mathfrak A_P$ is a strong monoidal functor.
\item\label{prop:moritamonoidal:3} If $M$ is an $R^\ev$-module that is projective as a left $R$-module $($as a right $R$-module$)$, then $\mathfrak A_P(M)$ is projective as a left $S$-module $($as a right $S$-module$)$.
\end{enumerate}
\end{prop}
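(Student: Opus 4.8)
The plan is to run everything off the concrete description of $\mathfrak A_P$ that already appears in the proof of Proposition~\ref{prop:envrecoll}. Regard $P$ as an $R$-$S$-bimodule, with $S=\End_R(P)^{\op}$ acting on the right, and write $P^\vee=\Hom_R(P,R)$ for its $R$-dual, an $S$-$R$-bimodule. Since a progenerator is finitely generated and projective on either side, $P$ is finitely generated projective as a left $R$-module and $P^\vee$ is finitely generated projective as a right $R$-module (and as a left $S$-module), and the very computation carried out in Proposition~\ref{prop:envrecoll} yields a natural isomorphism $\mathfrak A_P(M)\cong P^\vee\otimes_R M\otimes_R P$ of $S^\ev$-modules, using the double-duality isomorphism $\Hom_{R^{\op}}(P^\vee,R)\cong P$. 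Item~(1) is then immediate, since $P^\vee\otimes_R-$ is exact ($P^\vee$ being flat as a right $R$-module) and $-\otimes_R P$ is exact ($P$ being flat as a left $R$-module), so that $\mathfrak A_P$ is a composite of exact functors.

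For~(2) I would exhibit the structure morphisms explicitly and invoke the Morita context attached to $P$: the evaluation map furnishes a bimodule isomorphism $P^\vee\otimes_R P\xrightarrow{\sim}S$ (an isomorphism because $P$ is finitely generated projective over $R$), and the trace map furnishes a bimodule isomorphism $P\otimes_S P^\vee\xrightarrow{\sim}R$ (an isomorphism because $P$ is a generator of $\Mod(R)$). Let $\phi_0\colon S\to\mathfrak A_P(R)=P^\vee\otimes_R R\otimes_R P$ be the inverse of the evaluation composed with the canonical identification, and let $\phi_{M,N}\colon\mathfrak A_P(M)\otimes_S\mathfrak A_P(N)\to\mathfrak A_P(M\otimes_R N)$ be the morphism obtained by reassociating $(P^\vee\otimes_R M\otimes_R P)\otimes_S(P^\vee\otimes_R N\otimes_R P)$ to $P^\vee\otimes_R M\otimes_R(P\otimes_S P^\vee)\otimes_R N\otimes_R P$ and then applying the trace in the middle slot. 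Both $\phi_0$ and $\phi_{M,N}$ are manifestly isomorphisms and $\phi_{M,N}$ is visibly natural, so it remains only to verify the pentagon diagram and the two unit triangles from Paragraph~\ref{def:monoidalfunc}. After unwinding the associativity and unit constraints of the tensor products involved, each of these reduces to the standard compatibility between evaluation and trace in a Morita context; I expect this diagram bookkeeping to be the main obstacle, though it is purely routine.

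For~(3), suppose $M$ is projective as a left $R$-module. Since $P$ is a direct summand of $R^{n}$ as a left $R$-module, $M\otimes_R P$ is a direct summand of $M\otimes_R R^{n}\cong M^{n}$ as a left $R$-module, hence is itself projective as a left $R$-module. Because $P$ is finitely generated projective as a left $R$-module, there is a natural isomorphism $P^\vee\otimes_R-\cong\Hom_R(P,-)$ of functors $\Mod(R)\to\Mod(S)$; hence, as a left $S$-module, $\mathfrak A_P(M)\cong P^\vee\otimes_R(M\otimes_R P)\cong\Hom_R(P,\,M\otimes_R P)$, which is the image of the projective module $M\otimes_R P$ under the Morita equivalence $\Hom_R(P,-)\colon\Mod(R)\xrightarrow{\sim}\Mod(S)$ and is therefore projective as a left $S$-module. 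The parenthetical right-module statement follows symmetrically, working with $-\otimes_R P\cong\Hom_{R^{\op}}(P^\vee,-)$ and the right $S$-module structure. As an aside, one could alternatively derive (1)--(3) from Proposition~\ref{prop:funcproper} by realising $\mathfrak A_P$ as a composite of corner functors $e(-)e$ for suitable idempotents in $\End_R(P\oplus R)^{\op}$, but the self-contained argument above seems preferable.
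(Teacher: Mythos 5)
Your proposal is correct, but it takes a genuinely different route from the paper's. The paper assembles the Morita context into the ring $B=\left(\begin{smallmatrix} R & P\\ \Hom_R(P,R) & S\end{smallmatrix}\right)$, notes that $Be_RB=B=Be_SB$ so that both corner functors $e_R(-)e_R$ and $e_S(-)e_S$ are exact, strong monoidal, projectivity-preserving equivalences, and writes $\mathfrak A_P$ as the composite $e_S\Hom_{R^\ev}(e_RB\otimes Be_R,-)e_S$, so that all three items fall out of Proposition \ref{prop:funcproper} --- this is precisely the alternative you dismiss in your closing aside. You instead work directly with the identification $\mathfrak A_P(-)\cong P^\vee\otimes_R(-)\otimes_RP$ (correctly extracted from the computation in the proof of Proposition \ref{prop:envrecoll}) and build the monoidal structure by inserting the Morita isomorphism $P\otimes_SP^\vee\xrightarrow{\ \sim\ }R$ into the middle tensor slot, in exact parallel to how $t_{M,N}$ is built from $\mu_e$ in Proposition \ref{prop:funcproper}(2). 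Your attributions are right ($P^\vee\otimes_RP\to S$ is bijective by the dual basis lemma, $P\otimes_SP^\vee\to R$ by the generator property), exactness via flatness of $P$ and $P^\vee$ is fine, and item (3) via $\mathfrak A_P(M)\cong\Hom_R(P,M\otimes_RP)$ and the Morita equivalence is clean. The coherence diagrams you defer do reduce to the associativity identities of the Morita context (the same identities that make $B$ a ring), and this is the level of detail the paper itself allows in Proposition \ref{prop:funcproper}. In short: your route is self-contained with explicit structure maps at the cost of some diagram bookkeeping, while the paper's route gets coherence for free from the already-treated corner-functor case; both are valid.
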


\begin{proof}
We will show that $\mathfrak A_P$ actually may be expressed as the composite of functors arising from certain idempotents, so that all statements will follow directly from Proposition \ref{prop:funcproper}. The matrix
$$
B = \left(\begin{matrix}
R & P\\
\Hom_R(P, R) & \End_R(P)^\op
\end{matrix}\right) = 
\left(\begin{matrix}
R & P\\
\Hom_R(P, R) & S
\end{matrix}\right)
$$
can be understood as an algebra through the following natural bimodule isomorphisms:
$$
P \otimes_S \Hom_R(P,R) \longrightarrow R, \ p \otimes \varphi \mapsto \varphi(p) \,,
$$
$$
\Hom_R(P, R) \otimes_R P \longrightarrow S, \ \varphi \otimes p \mapsto (p' \mapsto \varphi(p')p) \, .
$$
We have two canonical idempotens in $B$, namely
$$
e_R = \left(\begin{matrix}
1 & 0\\
0 & 0
\end{matrix}\right)
\quad \text{and} \quad
e_S = \left(\begin{matrix}
0 & 0\\
0 & 1
\end{matrix}\right) \, .
$$
The quotient algebras $\overline{B}_R = B/B e_R B$ and $\overline{B}_S = B/B e_S B$ both vanish, whence the functors
$$
e_\Sigma (-) e_\Sigma \colon \Mod(B^\ev) \longrightarrow \Mod((e_\Sigma B e_\Sigma)^\ev) = \Mod(\Sigma^\ev) \quad \text{(inside of $\mathcal R(B, e_\Sigma)$)}
$$
are equivalences for $\Sigma \in \{R, S\}$ by Lemma \ref{lem:PIisoiff}. Thus, the right adjoint of $e_R(-)e_R$, given by
$$
\Hom_{R^\ev}(e_R B \otimes B e_R, -) = \Hom_{R^\ev}(\left(\begin{smallmatrix}
R & P\\
0 & 0
\end{smallmatrix}\right) \otimes \left(\begin{smallmatrix}
R & 0\\
\Hom_R(P,R) & 0
\end{smallmatrix}\right), -)
\, ,
$$
is also an isomorphism and, as $e_R (-) e_R$ is strong monoidal, a strong monoidal functor. When composed with $e_S(-)e_S$, we obtain the functor
\begin{align*}
e_S\Hom_{R^\ev}(e_R B \otimes B e_R, -)e_S &\cong \Hom_{R^\ev}((e_R B e_S)\otimes (e_S B e_R), -)\\ &= \Hom_{R^\ev}(P \otimes \Hom_R(P,R), -)\\
&= \mathfrak A_P(-)
\end{align*}
which is a strong monoidal equivalence $(\Mod(R^\ev), \otimes_R, R) \xrightarrow{\sim} (\Mod(S^\ev),\otimes_S, S)$, as desired. It takes left projective modules to left projective modules by Proposition \ref{prop:funcproper}.
\end{proof}

We can now prove our main theorem.

\begin{proof}[Proof of Theorem $\ref{thm:mainthm}$]
The functor $j^\ev\colon \Mod(B^\ev) \rightarrow \Mod(C^\ev)$ restricts to an exact and almost costrong monoidal functor
$$
j^\ev \colon (\P(B), \otimes_B, B) \longrightarrow \Mod(C^\ev)
$$
by Propositions \ref{prop:funcproper} and \ref{prop:moritamonoidal}. In order to apply Theorem \ref{thm:gersthochcomp}, it suffices to show (since $\P(B)$ is closed under kernels of epimorphisms) that if $0 \rightarrow L \rightarrow M \rightarrow N \rightarrow 0$ is an admissible exact sequence in $\P(B)$, then
$$
0 \longrightarrow j^\ev(L) \otimes_C j^\ev(X) \longrightarrow j^\ev(M) \otimes_C j^\ev(X) \longrightarrow j^\ev(N) \otimes_C j^\ev(X) \longrightarrow 0
$$
is exact for all modules $X \in \P(B)$. In view of Proposition \ref{prop:moritamonoidal}(\ref{prop:moritamonoidal:2})+(\ref{prop:moritamonoidal:3}), this is equivalent to showing that if $0 \rightarrow L \rightarrow M \rightarrow N \rightarrow 0$ is an admissible exact sequence in $\P(R)$, then
$$
0 \longrightarrow eLe \otimes_{eRe} eXe \longrightarrow eMe \otimes_{eRe} eXe \longrightarrow eNe \otimes_{eRe} eXe \longrightarrow 0
$$
is exact for all modules $X \in \P(R)$. Indeed, one has the exact sequence
$$
\Tor^{eRe}_1(Ne,eX) \xrightarrow{\ a \ } Le \otimes_{eRe} eX \longrightarrow Me \otimes_{eRe} eX \longrightarrow Ne \otimes_{eRe} eX \longrightarrow 0
$$
wherein the map $a$ vanishes by Lemma \ref{lem:Tor1equ}; thus, applying $e(-)e$ yields the exact sequence
$$
0 \longrightarrow eLe \otimes_{eRe} eXe \longrightarrow eMe \otimes_{eRe} eXe \longrightarrow eNe \otimes_{eRe} eXe \longrightarrow 0
$$
as desired.
\end{proof}

In the following sections, we will illustrate the theorem's potential by a couple of applications. However, let us close this section by observing that a recollement of module categories $\mathcal R(A,B,C)$ gives rise to very nicely behaved exact and monoidal functors if we assume that that $j(B)$ or $\Hom_{B^\op}(j(B),B)$ are flat over $C$.

\begin{nn}\label{nn:exactsubcats}
Let us define a subcategory ${\P}_\lambda(B;e) \subseteq \F_\lambda(B) \subseteq \Mod(B^\ev)$ by letting $M \in \P(B)$ belong to ${\P}_\lambda(B;e)$ if, and only if, $\mathfrak A_e(M)$ belongs to $\F_\lambda(C)$. The subcategory ${\P}_\lambda(B;e)$ is non-empty, as $B$ belongs to it. Define further $\overline{\P}_\lambda(B;e) \subseteq {\P}_\lambda(B;e)$ to be the full subcategory that consists of all modules $M$ in ${\P}_\lambda(B;e)$ such that $M \otimes_B N \in {\P}_\lambda(B;e)$ for all $N \in \P_e(B)$; it is evident, that $B$ is a member of that category. By replacing $\F_\lambda(B)$ by $\F_\varrho(B)$ we obtain, in a similar way, a full subcategory $\overline{\P}_\varrho(B;e)$ of $\Mod(B^\ev)$. Note that ${\P}_\lambda(B;1) = \overline{\P}_\lambda(B;1) = \P_\lambda(B)$ and ${\P}_\varrho(B;1) = \overline{\P}_\varrho(B;1) = \P_\varrho(B)$. The following observation is immediate from Lemma \ref{lem:monoclosure}.
\end{nn}

\begin{lem}
The subcategories $\overline{\P}_\lambda(B;e)$ and $\overline{\P}_\varrho(B;e)$ of $\Mod(B^\ev)$ are closed under taking arbitrary direct sums and direct summands. Furthermore, they are closed under extensions and under taking tensor products over $B$. Thus, $(\overline{\P}_\lambda(B;e), \otimes_B, B)$ and $(\overline{\P}_\varrho(B;e), \otimes_B, B)$ are exact monoidal subcategories of $(\Mod(B^\ev), \otimes_B, B)$. \qed
\end{lem}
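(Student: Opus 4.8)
The plan is to reduce the entire statement to Lemma~\ref{lem:monoclosure}. In the notation of Paragraph~\ref{nn:monoclosure}, take $\C$ to be $\P_\lambda(B)$ with monoidal product $\otimes_B$ and unit $\bbm1 = B$ --- a weak exact monoidal $K$-category, since $\P_\lambda(B)$ is extension closed in $\Mod(B^\ev)$, is closed under $\otimes_B$, contains $B$, and each of its objects $M$ is left $B$-flat so that $-\otimes_B M$ is exact --- and set $\U = \P_\lambda(B;e)$. Then $\overline{\U}$ is precisely $\overline{\P}_\lambda(B;e)$, and symmetrically with $\C = \P_\varrho(B)$ for $\overline{\P}_\varrho(B;e)$. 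So the task splits into (a) checking that $\U = \P_\lambda(B;e)$ is a full, additive, extension closed subcategory of $\P_\lambda(B)$ that contains the unit and is closed under arbitrary direct sums, under direct summands, and --- viewed as an exact category --- under kernels of admissible epimorphisms; and (b) deriving the conclusions about $\overline{\P}_\lambda(B;e)$. Part (b) is then immediate: Lemma~\ref{lem:monoclosure} yields that $\overline{\P}_\lambda(B;e)$ is full, additive, monoidal and extension closed in $\P_\lambda(B)$ --- hence extension closed in $\Mod(B^\ev)$, as $\P_\lambda(B)$ is --- and inherits closedness under kernels of epimorphisms from $\U$; since every object of $\overline{\P}_\lambda(B;e)\subseteq\P(B)$ is projective, hence flat, on both sides, each $-\otimes_B M$ is exact, so $(\overline{\P}_\lambda(B;e),\otimes_B,B)$ is a (strong) exact monoidal subcategory of $(\Mod(B^\ev),\otimes_B,B)$. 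Closure of $\overline{\P}_\lambda(B;e)$ under arbitrary direct sums and direct summands is not part of Lemma~\ref{lem:monoclosure}, but transfers from $\U$ by the usual argument: for $M_i\in\overline{\P}_\lambda(B;e)$ and any $N\in\P_\lambda(B;e)$ one has $\bigl(\bigoplus_i M_i\bigr)\otimes_B N\cong\bigoplus_i(M_i\otimes_B N)\in\P_\lambda(B;e)$, whence $\bigoplus_i M_i\in\overline{\P}_\lambda(B;e)$, and dually for a direct summand $M'$ of some $M\in\overline{\P}_\lambda(B;e)$.

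For part (a) I would combine two ingredients. First, by Proposition~\ref{prop:funcproper} the functor $\mathfrak A_e = e(-)e \cong eB\otimes_B(-)\otimes_B Be$ is exact, and being a tensor functor it commutes with arbitrary direct sums and with direct summands. Second, the class of left $C$-flat modules is closed under arbitrary direct sums, direct summands, extensions, and kernels of epimorphisms between its members; the only point here beyond standard facts about flat modules is the last one, which follows from the long exact $\Tor^C$-sequence attached to $0\to K\to F_1\to F_2\to 0$, giving $\Tor^C_1(-,K)=0$ once $F_1,F_2$ are left $C$-flat. Combining, the condition ``$M\in\P(B)$ and $\mathfrak A_e(M)\in\F_\lambda(C)$'' that defines $\P_\lambda(B;e)$ is stable under all four operations --- using also that $\P(B)$ is closed under direct sums, summands, extensions and kernels of admissible epimorphisms, since a short exact sequence of modules whose final term is projective on one side splits on that side, as noted in Section~\ref{sec:prereq}. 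Finally $B\in\P_\lambda(B;e)$ because $\mathfrak A_e(B)=eBe=C$ is free, hence flat, as a left $C$-module, and the $\varrho$-case is identical upon swapping ``left'' for ``right'' and $\F_\lambda$ for $\F_\varrho$.

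As the statement is flagged as immediate, I do not expect any genuine obstacle; the only step that is not pure bookkeeping is verifying that $\P_\lambda(B;e)$ is closed under kernels of admissible epimorphisms --- that is, that it really is an exact subcategory of $\Mod(B^\ev)$ --- which rests on exactness of $\mathfrak A_e$ together with the homological fact that kernels of epimorphisms between flat modules remain flat. Everything after that amounts to unwinding the definition of $\overline{\U}$ in Paragraph~\ref{nn:monoclosure} and quoting Lemma~\ref{lem:monoclosure}.
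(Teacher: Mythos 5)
Your proposal is correct and takes the same route the paper intends: the paper states this lemma as an immediate consequence of Lemma \ref{lem:monoclosure}, and your argument is exactly that reduction, with the hypotheses on $\U = \P_\lambda(B;e)$ (fullness, extension closure, containment of $B$, closure under arbitrary sums, summands and kernels of epimorphisms) verified via the exactness of $\mathfrak A_e = e(-)e$ and the standard closure properties of flat modules. One remark: your (correct) appeal to the fact that the kernel of an epimorphism between flat modules is again flat is at odds with the paper's own comment on $\F(A)$ at the end of Section \ref{sec:prereq} --- that comment is mistaken about kernels (it is only the cokernel of a monomorphism between flats that can fail to be flat), and in any case this closure property is not needed for the statement as formulated.
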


\begin{lem}\label{lem:projflatidemp}
Let $\mathcal R = \mathcal R(A,B,C)$ be a recollement of module categories, and let $\mathcal R(R,e)$ be a standard recollement that is equivalent to it. Then
\begin{enumerate}[\rm(1)]
\item $j(B)$ is $C$-flat if, and only if, $eR$ is $eRe$-flat,
\item $j(B)^\vee = \Hom_{B^\op}(j(B), B)$ is $C$-flat if, and only if, $Re$ is $eRe$-flat,
\item\label{lem:projflatidemp:3} $j(B)$ is $C$-projective if, and only if, $eR$ is $eRe$-projective, and
\item $j(B)^\vee$ is $C$-projective if, and only if, $Re$ is $eRe$-projective.
\end{enumerate}
\end{lem}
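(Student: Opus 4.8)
The plan is to transport the four conditions along the equivalence of recollements $(G,H)\colon\mathcal R(A,B,C)\xrightarrow{\ \sim\ }\mathcal R(R,e)$ supplied by Theorem \ref{thm:recmodeq}, say realised through a progenerator $P$ for $B$ with $R\cong\End_B(P)^\op$ and a progenerator $Q$ for $C$ with $eRe\cong\End_C(Q)^\op$, so that $G\cong\Hom_B(P,-)$ and $H\cong\Hom_C(Q,-)$. Two soft facts do all the work. First, a Morita equivalence preserves projectivity (an equivalence preserves projective objects) and flatness (the functors $G,H$ and their quasi-inverses are exact, being composites of $\Hom$ with, and $\otimes$ with, a progenerator, so they carry $\Tor$-groups across; equivalently, flat $=$ filtered colimit of finitely generated projectives and both notions are categorical). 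Second, if $T$ is a progenerator for a ring $\Lambda$ then $\add_\Lambda T=\add_\Lambda\Lambda$, hence for any additive functor $F$ on $\Mod(\Lambda)$ one has $\add F(T)=\add F(\Lambda)$; since $eRe$-flatness and $eRe$-projectivity are closed under finite direct sums and direct summands, they pass back and forth between $F(T)$ and $F(\Lambda)$.

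For (1) and (3) I would argue as follows. Since $(G,H)$ is an equivalence of recollements, $H\circ j\cong j'\circ G$, where $j'(-)=e(-)\cong eR\otimes_R-$ is the quotient functor of the standard recollement $\mathcal R(R,e)$; hence $H(j(B))\cong eR\otimes_R G(B)$. Now $G(B)=\Hom_B(P,B)$ is the image under the equivalence $G$ of the progenerator $B$ of $\Mod(B)$, so it is a progenerator of $\Mod(R)$; applying the second soft fact to $F=eR\otimes_R-$ gives $\add_{eRe}H(j(B))=\add_{eRe}eR$. Combining this with the Morita invariance of flatness and projectivity — i.e. $j(B)$ is $C$-flat (resp. $C$-projective) iff $H(j(B))$ is $eRe$-flat (resp. $eRe$-projective) — yields (1) and (3). (Alternatively, one may read off $H(j(B))\cong eR\otimes_R\Hom_B(P,B)$ directly from the computations in the proof of Proposition \ref{prop:envrecoll}.)

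For (2) and (4) the same mechanism applies to the right $C$-module $j(B)^\vee$, now via the right-handed Morita equivalence $\Mod(C^\op)\xrightarrow{\sim}\Mod((eRe)^\op)$. Here I would invoke the chain of isomorphisms already established in the proof of Lemma \ref{lem:Tor1equ}, namely $j(B)^\vee\cong Pe\otimes_{eRe}\Hom_{(eRe)^\op}(Q,eRe)$, together with the observation that $\Hom_{(eRe)^\op}(Q,eRe)$ is the $eRe$-dual of the progenerator $Q$ and hence again a progenerator; the second soft fact then identifies the $\add$-closure of (the transport of) $j(B)^\vee$ with that of the module which for the standard recollement plays the role of $j(B)^\vee$, namely $\Hom_{R^\op}(eR,R)\cong Re$ over $eRe$ — for which (2) and (4) are tautologies. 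I expect the only genuine effort to be bookkeeping: keeping the left/right module and bimodule actions straight through these $\Hom$–$\otimes$ manipulations, in particular making sure that at each stage the factor one strips off is a progenerator on the side over which the tensor product is taken. Once that is in place, all four equivalences collapse to the two soft facts above.
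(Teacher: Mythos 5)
Your argument is correct and follows essentially the same route as the paper: both reduce to the standard recollement via the Morita progenerators $P$ and $Q$, using the isomorphism $j(B)\cong Q\otimes_{eRe}eR\otimes_R\Hom_B(P,B)$ (equivalently your $H(j(B))\cong eR\otimes_R G(B)$) for (1) and (3), and the dual computation from Lemma \ref{lem:Tor1equ} for (2) and (4). The only difference is packaging: the paper handles the two implications separately via the pair of explicit isomorphisms $j(B)\cong Q\otimes_{eRe}eR\otimes_R\Hom_B(P,B)$ and $eR\cong\Hom_C(Q,C)\otimes_C j(P)$, whereas your $\add$-closure observation (progenerators have the same $\add$-class as the ring, and additive functors preserve $\add$-classes) yields both directions at once.
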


\begin{proof}
Let $P$ and $Q$ be progenerators for $B$ and $C$ defining the equivalence $\mathcal R(A,B,C) \xrightarrow \sim \mathcal R(R,e)$. We only take care of the first item in detail, as the remaining items may be proven similarly. As $\mathcal R$ and $\mathcal R(R,e)$ are equivalent, there are progenerators $P$ for $B$ and $Q$ for $C$ with $R \cong \End_{B}(P)^\op$ and $eRe \cong \End_C(Q)^\op$ such that there is a natural isomorphism
\begin{align*}
j(-) \xrightarrow{\, \sim \,} \left( Q \otimes_{eRe} (-) \right)& \circ e(-) \circ \Hom_B(P,-)\\ &= Q \otimes_{eRe} e \Hom_B(P,-) \, .
\end{align*}
It thus follows that 
\begin{equation}\label{eq:isolem1}
\begin{aligned}
j(B) &\cong Q \otimes_{eRe} e \Hom_B(P,B)\\ &\cong Q \otimes_{eRe} eR \otimes_R \Hom_B(P,B)\,.
\end{aligned}
\end{equation}
As the functors $Q \otimes_{eRe} (-)$ and $\Hom_B(P,-)$ are invertible, with quasi-invers functors $\Hom_C(Q,-)$ and $P \otimes_R (-)$, we also have
\begin{align*}
eR \otimes_R (-) & \cong \Hom_C(Q, j(P \otimes_R(-)) \\
& \cong \Hom_C(Q,C) \otimes_C j(P \otimes_R (-)) \,,
\end{align*}
as $Q$ is finitely generated projective over $C$, and hence 
\begin{equation}\label{eq:isolem2}
eR \cong \Hom_C(Q,C) \otimes_C j(P)
\end{equation}
Observe that $- \otimes_C j(B)$ is exact if, and only if, $- \otimes_C j(P)$ is exact. The modules $Q \in \Mod(C)$, $\Hom_B(P,B) \in \Mod(R)$, $P \in \Mod(B)$ and $\Hom_C(Q,C) \in \Mod(eRe)$ are projective; thus, by the equations (\ref{eq:isolem1}) and (\ref{eq:isolem2}), it follows that $- \otimes_C j(B)$ is exact if, and only if, $- \otimes_{eRe} eR$ is exact.
\end{proof}

\begin{nn}
Let $\mathcal R = \mathcal R(A,B,C)$ be a recollement of module categories, being equivalent to a standard recollement $\mathcal R(R,e)$. Substituting $\mathfrak A_e$ in Paragraph \ref{nn:exactsubcats} by $j^\ev$, we obtain full, exact monoidal subcategories $(\overline{\P}_\lambda(\mathcal R), \otimes_B, B)$ and $(\overline{\P}_\varrho(\mathcal R), \otimes_B, B)$ of $(\Mod(B^\ev), \otimes_B, B)$. Note that $B$ indeed belongs to either of these categories, as $j^\ev(B) \cong C$.
\end{nn}

\begin{prop}\label{prop:flatprojrecoll}
Let $\mathcal R = \mathcal R(A,B,C)$ be a recollement of module categories.
\begin{enumerate}[\rm(1)]
\item\label{prop:flatprojrecoll:1} The functor $j^\ev: \Mod(B^\ev) \rightarrow \Mod(C^\ev)$ restricts to an exact and monoidal functor
$$
(\overline{\P}_\lambda(\mathcal R), \otimes_B, B) \longrightarrow (\mathsf F_\lambda(C), \otimes_C, C)
$$
between exact monoidal subcategories. Moreover, the inclusion functor induces an isomorphism
$$
\Ext^\ast_{\overline{\P}_\lambda(\mathcal R)}(B,B) \xrightarrow{\,\sim\,} \Ext^\ast_{B^\ev}(B,B) 
$$
if $j(B)$ is $C$-projective.

\item The functor $j^\ev\colon \Mod(B)^\ev \rightarrow \Mod(C^\ev)$ restricts to an exact and monoidal functor
$$
(\overline{\P}_\varrho(\mathcal R), \otimes_B, B) \longrightarrow (\mathsf F_\varrho(C), \otimes_C, C)
$$
between exact monoidal subcategories. Moreover, the inclusion functor induces an isomorphism
$$
\Ext^\ast_{\overline{\P}_\varrho(\mathcal R)}(B,B) \xrightarrow{\,\sim\,} \Ext^\ast_{B^\ev}(B,B)
$$
if $j(B)^\vee$ is $C$-projective.
\end{enumerate}
Furthermore, if for $? \in \{\lambda, \varrho\}$ the map $\Ext^\ast_{\overline{\P}_?(\mathcal R)}(B,B) \rightarrow \Ext^\ast_{B^\ev}(B,B)$ is an isomorphism, then the induced map
$$
\Ext^\ast_{B^\ev}(B,B) \xrightarrow{\,\sim\,} \Ext^\ast_{\overline{\P}_?(\mathcal R)}(B,B) \xrightarrow{\, \ \,} \Ext^\ast_{\mathsf F_?(C)}(C,C) \xrightarrow{\,\ \,} \Ext^\ast_{C^\ev}(C,C)
$$
agrees with the map $j^\ev_\ast$ as mentioned in the statement of Theorem $\ref{thm:mainthm}$.
\end{prop}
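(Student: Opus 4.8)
The plan is to transport the whole situation to a standard recollement by means of Theorem~\ref{thm:recmodeq} and then to read everything off the structural results of this section. Fix a ring $R$ (a $K$-algebra, since $B$ is one) and an idempotent $e\in R$ with $\mathcal R\simeq\mathcal R(R,e)$, realised by progenerators $P$ for $B$ and $Q$ for $C$. By Proposition~\ref{prop:moritamonoidal} the induced Morita equivalences $G\colon\Mod(B^\ev)\xrightarrow{\ \sim\ }\Mod(R^\ev)$ and $H\colon\Mod(C^\ev)\xrightarrow{\ \sim\ }\Mod((eRe)^\ev)$ are exact, strong monoidal and preserve one-sided projectivity; having exact quasi-inverses they also preserve one-sided flatness, and by Proposition~\ref{prop:envrecoll} they carry $j^\ev$ to $\mathfrak A_e=e(-)e$, $C$ to $eRe$, $j(B)$ to $eR$, $j(B)^\vee$ to $Re$, and the four subcategories $\overline{\P}_\lambda(\mathcal R)$, $\overline{\P}_\varrho(\mathcal R)$, $\mathsf F_\lambda(C)$, $\mathsf F_\varrho(C)$ to $\overline{\P}_\lambda(R;e)$, $\overline{\P}_\varrho(R;e)$, $\mathsf F_\lambda(eRe)$, $\mathsf F_\varrho(eRe)$. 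Hence it suffices to argue for $\mathcal R=\mathcal R(R,e)$, with $B=R$, $C=eRe$ and $j^\ev=\mathfrak A_e$; I shall spell out only the $\lambda$-statement, the $\varrho$-statement being symmetric.

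The first assertion is then immediate: $\mathfrak A_e$ is exact on all of $\Mod(R^\ev)$ by Proposition~\ref{prop:funcproper}(1) and almost costrong monoidal by Proposition~\ref{prop:funcproper}(2); by the very definition of $\overline{\P}_\lambda(R;e)\subseteq{\P}_\lambda(R;e)$ it carries this subcategory into $\mathsf F_\lambda(eRe)$, and by Lemma~\ref{lem:monoclosure} (as already recorded for $\overline{\P}_\lambda(B;e)$ and for $\overline{\P}_\lambda(\mathcal R)$) the source is an exact monoidal subcategory closed under direct sums and summands. So $\mathfrak A_e$ restricts to an exact, (almost costrong) monoidal functor between the two exact monoidal subcategories.

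For the $\Ext$-isomorphism I would show that $\overline{\P}_\lambda(R;e)$ is entirely extension closed in $\Mod(R^\ev)$ whenever $eR$ is projective as a left $eRe$-module --- which, by Lemma~\ref{lem:projflatidemp}(3), is precisely the hypothesis that $j(B)$ is $C$-projective --- by checking the hypotheses of Proposition~\ref{prop:entireextclo} for $\C=\overline{\P}_\lambda(R;e)$ inside $\A=\Mod(R^\ev)$. The category $\A$ has enough projectives; $\C$ is $1$-extension closed, being a full extension closed subcategory; $\C$ is closed under kernels of epimorphisms, because by Lemma~\ref{lem:monoclosure} it is enough to check this for ${\P}_\lambda(R;e)$, and for an exact sequence $0\to L\to M\to N\to0$ in $\Mod(R^\ev)$ with $M,N\in{\P}_\lambda(R;e)$ the module $L$ is a left-$R$-direct summand of $M$ (hence left-$R$-projective), while $eLe$ is the kernel of the surjection $eMe\to eNe$ between left-$eRe$-flat modules ($\mathfrak A_e$ being exact), hence itself left-$eRe$-flat; finally $\Proj(\Mod(R^\ev))\subseteq\C$, which, $\C$ being closed under summands and direct sums, reduces to $R^\ev=R\otimes_K R\in\C$. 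Here $R\otimes_K R$ is left-$R$-projective since $R$ is $K$-projective, $\mathfrak A_e(R^\ev)=eR\otimes_K Re$ is left-$eRe$-flat since $(-)\otimes_{eRe}(eR\otimes_K Re)\cong\bigl((-)\otimes_{eRe}eR\bigr)\otimes_K Re$ with $eR$ left-$eRe$-flat and $Re$ a $K$-module summand of the $K$-projective $R$, and the same computation applied to $R^\ev\otimes_R N\cong R\otimes_K N$ (with $N$ automatically $K$-projective, being a summand of a free left $R$-module) shows $R^\ev\otimes_R N\in{\P}_\lambda(R;e)$ for all $N\in{\P}_\lambda(R;e)$, so indeed $R^\ev\in\overline{\P}_\lambda(R;e)$. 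Proposition~\ref{prop:entireextclo} then gives $\Ext^\ast_{\overline{\P}_\lambda(R;e)}(R,R)\xrightarrow{\ \sim\ }\Ext^\ast_{R^\ev}(R,R)$. I expect this inclusion $\Proj(\Mod(B^\ev))\subseteq\overline{\P}_?(\mathcal R)$ to be the main obstacle: it is the one place where the $K$-projectivity of $B$ is genuinely used, and it is exactly what makes Proposition~\ref{prop:entireextclo} applicable.

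For the last assertion I would first note that $j^\ev$, being (up to natural isomorphism) $H^{-1}\circ\mathfrak A_e\circ G$, is itself an exact functor $\Mod(B^\ev)\to\Mod(C^\ev)$, so that it induces the graded map $[S]\mapsto[j^\ev S]$ on $\Ext$-groups; this is exactly the map $j^\ev$ occurring in Theorem~\ref{thm:mainthm}. When the inclusion induces an isomorphism $\Ext^\ast_{\overline{\P}_?(\mathcal R)}(B,B)\xrightarrow{\ \sim\ }\Ext^\ast_{B^\ev}(B,B)$, every class of $\Ext^\ast_{B^\ev}(B,B)$ admits a representative $S$ with all terms in $\overline{\P}_?(\mathcal R)$, and the composite $\Ext^\ast_{\overline{\P}_?(\mathcal R)}(B,B)\to\Ext^\ast_{\mathsf F_?(C)}(C,C)\to\Ext^\ast_{C^\ev}(C,C)$ sends $[S]$ to $[j^\ev S]$ as well --- the first map being induced by the exact functor $j^\ev\colon\overline{\P}_?(\mathcal R)\to\mathsf F_?(C)$, the second by the inclusion $\mathsf F_?(C)\hookrightarrow\Mod(C^\ev)$. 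Hence the composite in the statement coincides with $j^\ev_\ast$; the remaining points are either immediate from the reduction to $\mathcal R(R,e)$ or routine homological bookkeeping.
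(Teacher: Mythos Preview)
Your proof is correct, and it actually streamlines the paper's argument in one respect. The paper does \emph{not} show directly that $\overline{\P}_\lambda(\mathcal R)$ is entirely extension closed. Instead, under the hypothesis that $j(B)$ is $C$-projective, it introduces an auxiliary subcategory $\overline{\P}^{1}_\lambda(\mathcal R)\subseteq\overline{\P}_\lambda(\mathcal R)$, obtained by replacing $\F_\lambda(C)$ with $\P_\lambda(C)$ in the defining recipe of Paragraph~\ref{nn:exactsubcats}; it then checks that $\overline{\P}^{1}_\lambda(\mathcal R)$ is closed under kernels of epimorphisms (because $\P_\lambda(C)$ is) and contains $\Proj(B^\ev)$ (because $j^\ev(B\otimes B)$ is left-$C$-\emph{projective}), applies Proposition~\ref{prop:entireextclo} to \emph{that} subcategory, and finishes via the commutative triangle
\[
\xymatrix@C=4pt{
& \Ext^\ast_{\overline{\P}^{1}_\lambda(\mathcal R)}(B,B) \ar[dr]^-{\sim} \ar[dl]_-{\sim} & \\
\Ext^\ast_{\overline{\P}_\lambda(\mathcal R)}(B,B) \ar[rr] & & \Ext^\ast_{B^\ev}(B,B)\,.
}
\]
Your route bypasses this detour by observing that $\F_\lambda(eRe)$ is \emph{itself} closed under kernels of epimorphisms: in any short exact sequence $0\to L'\to M'\to N'\to 0$ of left $eRe$-modules with $M'$ and $N'$ flat, the long exact $\Tor$-sequence forces $\Tor_i^{eRe}(X,L')=0$ for all $i\geqslant 1$, hence $L'$ is flat. (This is true despite the contrary-sounding remark about kernels in Example~2.15 of the paper.) With that in hand, $\P_\lambda(R;e)$ and then $\overline{\P}_\lambda(R;e)$ are closed under kernels of epimorphisms, and Proposition~\ref{prop:entireextclo} applies directly. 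A pleasant byproduct is that your verification of $R^\ev\in\overline{\P}_\lambda(R;e)$ uses only that $eR$ is left-$eRe$-\emph{flat}, so your argument in fact establishes the $\Ext$-isomorphism under the weaker hypothesis that $j(B)$ is $C$-flat rather than $C$-projective. The remaining differences---your preliminary reduction to $\mathcal R(R,e)$ via Proposition~\ref{prop:moritamonoidal}, and your treatment of the ``Furthermore'' clause---are cosmetic; the paper works with general $\mathcal R$ throughout and leaves the final identification implicit.
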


\begin{proof}
We only prove item (\ref{prop:flatprojrecoll:1}). The restriction of $j^\ev$ to $(\overline{\P}_\lambda(\mathcal R), \otimes_B, B)$ is of course exact, and also monoidal by Propositions \ref{prop:funcproper} and \ref{prop:moritamonoidal}. It is by definition that $j^\ev$ maps $\overline{\P}_\lambda(\mathcal R)$ to $\F_\lambda(C)$. Now assume that $j(B)$ is $C$-projective. Replacing $\F_\lambda(eBe)$ by $\P_\lambda(C)$, and $\mathfrak A_e$ by $j^\ev$ in Paragraph \ref{nn:exactsubcats}, we obtain extension closed subcategories $\overline{\P}^1_\lambda(\mathcal R) \subseteq {\P}^1_\lambda(\mathcal R) \subseteq \Mod(B^\ev)$, the former being an exact monoidal subcategory of $(\Mod(B^\ev), \otimes_B, B)$. Further, $\overline{\P}^1_\lambda(\mathcal R)$ is closed under kernels of epimorphisms, as $\P_\lambda(C)$ is, and contains $\Proj(B^\ev)$. Indeed, the functor $j^\ev$ sends $B \otimes B$ to
$$
Q \otimes_{eRe} \left(e\Hom_{B^\ev}(P \otimes \Hom_B(P,B), B \otimes B) e\right) \otimes_{eRe} \Hom_C(Q,C) \,.
$$
The module $\Hom_{B^\ev}(P \otimes \Hom_B(P,B), B^\ev)$ is a progenerator for $R^\ev$ which consequently is a direct summand of $(R^\ev)^n$ for an integer $n \geqslant 1$. Further $eR \otimes Re$ is a projective left $eRe$-module by Lemma \ref{lem:projflatidemp}(\ref{lem:projflatidemp:3}), therefore so is
$$
Q \otimes_{eRe} (eR \otimes Re) \otimes_{eRe} \Hom_C(Q,C)
$$
over $C$; thus $j^\ev(B \otimes B)$ is a direct summand of a $C$-projective $C^\ev$-module, hence itself $C$-projective. It follows that $\overline{\P}^1_\lambda(\mathcal R)$ is entirely extension closed by Proposition \ref{prop:entireextclo}, whence the commutative diagram
$$
\xymatrix@C=4pt{
& \Ext^\ast_{\overline{\P}^1_\lambda(\mathcal R)}(B,B) \ar[dr]^-{\sim} \ar[dl]_-{\sim} & \\
\Ext^\ast_{\overline{\P}_\lambda(\mathcal R)}(B,B) \ar[rr] & & \Ext^\ast_{B^\ev}(B,B)
}
$$
induced by the inclusion functors $\overline{\P}^1_\lambda(\mathcal R) \hookrightarrow \overline{\P}_\lambda(\mathcal R)$ and $\overline{\P}^1_\lambda(\mathcal R) \hookrightarrow \Mod(B^\ev)$ yields the claim.
\end{proof}


\section{The long exact sequence of Buchweitz}\label{sec:buchweitz}

\begin{nn}
In this section, we investigate a long exact sequence that has been introduced by Buchweitz in \cite{Bu03}. By $B$ we will denote a $K$-algebra which is projective as a $K$-module. Further, we let $e \in B$ be an idempotent element such that
$$
\Tor^{eBe}_1(Be,eB) = 0
$$
and abbriviate $\overline B = B / BeB$, $C = eBe$. Of course, $C$ is a $K$-projective $K$-algebra as well, whereas $\overline B$ does not have to be. We like to point out that the above $\Tor$-vanishing condition will only be relevant for the proof of Theorem \ref{thm:chigersten} (as one easily believes in light of Theorem \ref{thm:mainthm}), but not for the construction of the map it concerns itself with.
\end{nn}

\begin{nn}\label{nn:buchweitz}
As usual, let $\mathbb B B$ denote the bar resolution of $B$, and let $\mathbb B C$ be the bar resolution for $C$. There is an injection
$$
\tilde{\mu}_e \colon Be \otimes_C \mathbb B C \otimes_C eB \longrightarrow \mathbb B B
$$
of complexes that lifts the multiplication map $\mu_e \colon Be \otimes_C eB \rightarrow B$. It is induced by the canonical isomorphisms
$$
\xymatrix@C=16pt{
Be \otimes_C C^{\otimes (n + 2)} \otimes_C eB \ar[r]^-{\sim} & Be \otimes (eBe)^{\otimes n} \otimes eB \subseteq B^{\otimes(n+2)}
}
$$
for $n \geqslant 0$, which are compatible with the occurring differentials and therefore identify $Be \otimes_C \mathbb B C \otimes_C eB$ with a subcomplex of $\mathbb B B$. We put
$$
\mathbb B(B/C) = \Coker(\tilde{\mu}_e \colon Be \otimes_C \mathbb B C \otimes_C eB \rightarrow \mathbb B B)\,,
$$
and hence obtain a short exact sequence
$$
0 \longrightarrow Be \otimes_C \mathbb B C \otimes_C eB \longrightarrow \mathbb B B \longrightarrow \mathbb B(B/C) \longrightarrow 0
$$
of complexes. It is semi-split, in that in each degree, the $B^\ev$-module homomorphisms are split. Indeed, $Be \otimes (eBe)^{\otimes n} \otimes eB$ is an evident direct summand of 
$$
B^{\otimes (n+2)} = (Be \oplus Be') \otimes (eBe \oplus eBe' \oplus e'Be \oplus e'Be')^{\otimes n} \otimes (eB \oplus e'B)\,,
$$
where $e' = 1 - e$. Since
$$
\Hom_{B^\ev}(Be \otimes_C C^{\otimes(n+2)} \otimes_C eB, M) \cong \Hom_{C^\ev}(C^{\otimes(n+2)}, eMe) \quad (\text{for $n \geqslant 0$})
$$
by adjunction, we obtain a long exact sequence in Hochschild cohomology:
$$
\cdots \longrightarrow \HH^i(B/C,M) \longrightarrow \HH^i(B,M) \longrightarrow \HH^i(C, eMe) \longrightarrow \HH^{i+1}(B/C,M) \longrightarrow \cdots \, ,
$$
where $\HH^\ast(B/C,M)$ denotes the cohomology of $\Hom_{B^\ev}(\mathbb B(B/C),M)$. If we specialise to $M = B$, we in particular get maps
$$
\chi_{B/C}^i = \HH^i(\mu_e,B) \colon \HH^{i}(B) \longrightarrow \HH^i(C) \quad \text{(for $i \geqslant 0$).}
$$
\end{nn}

\begin{thm}[see {\cite[Thm.\,4.7]{Bu03}}]\label{thm:chiringhom}
The graded map $\chi_{B/C} = \chi^\ast_{B/C} \colon \HH^\ast(B) \rightarrow \HH^\ast(C)$ is a homomorphism of graded $K$-algebras.
\end{thm}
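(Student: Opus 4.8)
\textit{Plan.} The idea is to recognise $\chi_{B/C}$ as the map induced on Hochschild cohomology by the exact functor $\mathfrak A_e = e(-)e\colon\Mod(B^\ev)\to\Mod(C^\ev)$ acting on Yoneda self-extensions of $B$, and then to use that any exact functor carrying $B$ to $C$ induces a graded ring homomorphism on Hochschild cohomology, because the cup product there is realised by Yoneda composition. Concretely: $B$ being $K$-projective, so is its Peirce component $C = eBe$ (cf.\ the discussion after Theorem~\ref{thm:recmodeq}), so $\chi^\ast_B\colon\HH^\ast(B)\xrightarrow{\sim}\Ext^\ast_{B^\ev}(B,B)$ and $\chi^\ast_C\colon\HH^\ast(C)\xrightarrow{\sim}\Ext^\ast_{C^\ev}(C,C)$ are ring isomorphisms. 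The functor $\mathfrak A_e$ is exact (Proposition~\ref{prop:funcproper}(1)) and $\mathfrak A_e(B)=eBe=C$ canonically, so it carries an $n$-extension $0\to B\to E_{n-1}\to\cdots\to E_0\to B\to0$ in $\Mod(B^\ev)$ to an $n$-extension $0\to C\to eE_{n-1}e\to\cdots\to eE_0e\to C\to0$ in $\Mod(C^\ev)$; this yields a graded map $\mathfrak A_{e,\ast}\colon\Ext^\ast_{B^\ev}(B,B)\to\Ext^\ast_{C^\ev}(C,C)$, well defined since $\mathfrak A_e$ sends morphisms of extensions to morphisms of extensions.

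\textit{Identifying $\chi_{B/C}$ with $\mathfrak A_{e,\ast}$.} This is the substantive step, to be carried out by unwinding the construction in Paragraph~\ref{nn:buchweitz}. Applying $\mathfrak A_e$ to the inclusion $\tilde\mu_e\colon Be\otimes_C\mathbb B C\otimes_C eB\hookrightarrow\mathbb B B$ and using the canonical isomorphism $\mathfrak A_e\bigl(Be\otimes_C\mathbb B C\otimes_C eB\bigr)\cong\mathbb B C$ together with the fact that $\mathfrak A_e(\mathbb B B)=e\,\mathbb B B\,e$ is a (not necessarily projective) resolution of $\mathfrak A_e(B)=C$, one sees that $\mathfrak A_e(\tilde\mu_e)$ is a chain map $\mathbb B C\to e\,\mathbb B B\,e$ lifting $\mathrm{id}_C$. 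A direct computation with the adjunction isomorphism $\Hom_{B^\ev}(Be\otimes_C Y\otimes_C eB,B)\cong\Hom_{C^\ev}(Y,eBe)$ employed in~\ref{nn:buchweitz} then shows that, at the level of cochain complexes, $\chi_{B/C}$ is given by $\varphi\mapsto(\mathfrak A_e\varphi)\circ\mathfrak A_e(\tilde\mu_e)$. Feeding this into the pushout description of $\chi^\ast$ recalled in Section~\ref{sec:gerstHH} and invoking its naturality — together with exactness of $\mathfrak A_e$, so that $\mathfrak A_e$ applied to the defining pushout square of $\chi^\ast_B(\varphi)$ is precisely the pushout square computing $\mathfrak A_{e,\ast}$ — one obtains $\chi^\ast_C\circ\chi_{B/C}=\mathfrak A_{e,\ast}\circ\chi^\ast_B$. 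In other words, $\chi_{B/C}$ coincides, under the canonical identifications, with $\mathfrak A_{e,\ast}$.

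\textit{Conclusion.} Under $\chi^\ast_B$ and $\chi^\ast_C$, the cup products on $\HH^\ast(B)$ and $\HH^\ast(C)$ correspond to the Yoneda compositions on $\Ext^\ast_{B^\ev}(B,B)$ and $\Ext^\ast_{C^\ev}(C,C)$, this being exactly the statement (recalled in Section~\ref{sec:gerstHH}) that $\chi^\ast$ respects the ring structure. Now $\mathfrak A_e$, being exact, commutes with the splicing of exact sequences, and since $\mathfrak A_e(B)=C$ it sends a Yoneda composite $[S]\#[T]$ to $[\mathfrak A_eS]\#[\mathfrak A_eT]=\mathfrak A_{e,\ast}[S]\#\mathfrak A_{e,\ast}[T]$; being additive and degree preserving as well, $\mathfrak A_{e,\ast}$ — hence $\chi_{B/C}$ — is a homomorphism of graded $K$-algebras. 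No use is made here of $\Tor^{eBe}_1(Be,eB)=0$, in accordance with the remark in Paragraph~\ref{nn:buchweitz}: that hypothesis enters only when one wants to upgrade the statement, via Theorem~\ref{thm:mainthm} (equivalently Theorem~\ref{thm:gersthochcomp} applied to the exact and almost costrong monoidal functor $\mathfrak A_e\mathord{\upharpoonright}_{\P(B)}$, cf.\ Proposition~\ref{prop:funcproper}), to the assertion that $\chi_{B/C}$ is also compatible with the Gerstenhaber bracket and the squaring map — which is Theorem~\ref{thm:chigersten}.

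\textit{Main difficulty.} The only genuinely technical point is the identification step. Its delicate aspect is that $e\,\mathbb B B\,e$ resolves $C$ but \emph{not} by projective $C^\ev$-modules, so one cannot simply compare with the bar resolution $\mathbb B C$ via a lift of $\mathrm{id}_C$ between projective resolutions; instead one routes the comparison through the naturality of the pushout construction of $\chi^\ast$, and must keep careful track of the two distinct units ($1_B$ versus $e=1_C$) appearing in the respective bar differentials.
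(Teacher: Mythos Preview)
Your proposal is correct and follows essentially the same route as the paper: the identification $\chi_{B/C}=\mathfrak A_{e,\ast}$ is exactly Proposition~\ref{prop:Ieischi}, whose proof likewise factors $\chi_{B/C}$ through the adjoint chain map $\tilde\mu_e^\sharp\colon\mathbb B C\to e(\mathbb B B)e$ and then compares pushout presentations of extensions. The paper then concludes multiplicativity in the same way you do, namely from the fact that the exact functor $\mathfrak A_e$ preserves Yoneda splicing (and, as you note, the $\Tor$-vanishing hypothesis is irrelevant here and only enters for Theorem~\ref{thm:chigersten}).
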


The goal of the upcoming considerations is to show that the algebra homomorphism $\chi^\ast_{B/C}$ also preserves the Gerstenhaber bracket and the squaring map.

\begin{prop}\label{prop:Ieischi}
Let $\mathfrak A_e$ be the functor $e(-)e\colon \Mod(B^\ev) \rightarrow \Mod(C^\ev)$ as in Proposition $\ref{prop:funcproper}$. The induced map $$\mathfrak A_e^\ast\colon \HH^\ast(B) \longrightarrow \HH^\ast(C)$$ coincides with the map $\chi_{B/C}$ occurring in the long exact sequence of Paragraph $\ref{nn:buchweitz}$. In particular, $\chi_{B/C}$ is a homomorphism of graded $K$-algebras $($cf.\,Theorem $\ref{thm:chiringhom})$.
\end{prop}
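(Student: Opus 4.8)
The plan is to show that, once all the defining identifications are unwound, \emph{both} $\chi_{B/C}$ and $\mathfrak{A}_e^\ast$ are equal to $H^\ast(\theta)$, where $\theta$ is the morphism of Hochschild cochain complexes
$$
\theta \colon \mathbb C(B,B) \longrightarrow \mathbb C(C,C), \qquad \theta(f)(x_1 \otimes \cdots \otimes x_n) = e\,f(x_1 \otimes \cdots \otimes x_n)\,e \quad (x_1, \dots, x_n \in C = eBe)\,;
$$
that is, $\theta$ restricts a cochain $f \in C^n(B,B) = \Hom_K(B^{\otimes n},B)$ along the inclusion $C^{\otimes n} \hookrightarrow B^{\otimes n}$ and corestricts along $e(-)e \colon B \to eBe = C$. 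A one-line check shows $\theta$ commutes with the Hochschild differentials, so $\theta(f)$ is a cocycle whenever $f$ is. Once this is in place the proposition is immediate, and the final ``in particular'' follows from Buchweitz's Theorem \ref{thm:chiringhom}.

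\emph{Step 1: $\chi_{B/C} = H^\ast(\theta)$.} By the construction recalled in Paragraph \ref{nn:buchweitz}, $\chi_{B/C}$ is $H^\ast$ of the cochain map obtained by applying $\Hom_{B^\ev}(-,B)$ to the inclusion of complexes $\tilde{\mu}_e \colon Be \otimes_C \mathbb B C \otimes_C eB \hookrightarrow \mathbb B B$ and then invoking the adjunction isomorphisms $\Hom_{B^\ev}(Be \otimes_C C^{\otimes(n+2)} \otimes_C eB, B) \cong \Hom_{C^\ev}(C^{\otimes(n+2)}, eBe)$ together with $eBe = C$. I would then identify this cochain map, via the standard isomorphisms $\Hom_{B^\ev}(\mathbb B B, B) \cong \mathbb C(B,B)$ and $\Hom_{C^\ev}(\mathbb B C, C) \cong \mathbb C(C,C)$ (the latter using the unit $e$ of $C$ in place of $1$), with $\theta$; the computation is routine once one uses that $\tilde{\mu}_e$ is, in degree $n$, the composite of the canonical isomorphism $Be \otimes_C C^{\otimes(n+2)} \otimes_C eB \cong Be \otimes (eBe)^{\otimes n} \otimes eB$ with the inclusion into $B^{\otimes(n+2)}$.

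\emph{Step 2: $\mathfrak{A}_e^\ast = H^\ast(\theta)$.} Since $\mathfrak{A}_e = e(-)e = eB \otimes_B (-) \otimes_B Be$ is exact (Proposition \ref{prop:funcproper}(\ref{prop:funcproper:1})) and sends $B$ to $C$, the map $\mathfrak{A}_e^\ast$ is, through the isomorphisms $\chi^\ast \colon \HH^\ast(B) \xrightarrow{\sim} \Ext^\ast_{B^\ev}(B,B)$ and $\chi^\ast \colon \HH^\ast(C) \xrightarrow{\sim} \Ext^\ast_{C^\ev}(C,C)$ (both available since $B$, hence $C = eBe$, is $K$-projective), the Yoneda-functorial map $[S] \mapsto [\mathfrak{A}_e S]$. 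Fix a cocycle $f \in C^n(B,B)$ with associated cocycle $\varphi \in \Hom_{B^\ev}(B_n,B)$ on the bar resolution, so that $\chi^\ast[f] = [S_\varphi]$ for the pushout $n$-extension $S_\varphi$ of $B$ by $B$ attached to $\varphi$, and the lift $\mathbb B B \to S_\varphi$ of $\id_B$ has leftmost component $\varphi$. Applying the exact functor $\mathfrak{A}_e$ yields a chain map $e\mathbb B B e \to eS_\varphi e$ over $\id_C$ with leftmost component $e\varphi e$ (and $eS_\varphi e$ is again an $n$-extension of $C$ by $C$); precomposing it with $e\tilde{\mu}_e e$ — which, after the canonical identification $e(Be \otimes_C \mathbb B C \otimes_C eB)e \cong \mathbb B C$, is a chain map $\mathbb B C \to e\mathbb B B e$ over $\id_C$ — produces a chain map $\mathbb B C \to eS_\varphi e$ over $\id_C$ whose leftmost component is a cocycle representing $(\chi^\ast)^{-1}[eS_\varphi e] = \mathfrak{A}_e^\ast[f]$. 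That leftmost component equals $(e\varphi e) \circ (e\tilde{\mu}_e e)_n$, and the point is that $(e\tilde{\mu}_e e)_n$ is nothing but the inclusion $C^{\otimes(n+2)} = (eBe)^{\otimes(n+2)} \hookrightarrow e\,(B^{\otimes(n+2)})\,e$; translating back through the Hochschild identification for $C$ shows this cocycle is precisely $\theta(f)$. Hence $\mathfrak{A}_e^\ast[f] = [\theta(f)]$.

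\emph{Main obstacle.} There is no analytic or conceptual difficulty here; the work is entirely in the two pieces of bookkeeping flagged above — verifying that the cochain map of Step 1 genuinely is $\theta$ (being careful that $e$, not $1$, is the unit of $C$), and verifying that $e\tilde{\mu}_e e$, after the identification $e(Be \otimes_C \mathbb B C \otimes_C eB)e \cong \mathbb B C$, is a bona fide chain-level comparison map $\mathbb B C \to e\mathbb B B e$ over $\id_C$ with the stated degree-$n$ component. If one prefers to avoid the latter, one can compute $\mathfrak{A}_e^\ast$ entirely with explicit $n$-extensions: $\mathfrak{A}_e$ is exact, so it commutes with the pushout square defining $S_\varphi$, and one then matches $\mathfrak{A}_e(S_\varphi)$ with the $n$-extension of $C$ by $C$ attached to $\theta(f)$ by an explicit morphism of extensions built from $\tilde{\mu}_e$.
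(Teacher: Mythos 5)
Your proposal is correct and follows essentially the same route as the paper: your cochain map $\theta$ is exactly the composite $(-\circ\tilde\mu_e^\sharp)\circ\mathfrak A_e$ that the paper extracts from the commutative square relating $-\circ\tilde\mu_e$, the adjunction isomorphism, and $e(-)e$ on Hom-complexes, and your Step 2 is the same pushout/comparison-map matching of $\mathfrak A_e(S_\varphi)$ with the extension attached to $\theta(f)$ that the paper carries out via an explicit morphism of $n$-extensions. The only cosmetic difference is direction of travel (you start from a cocycle and lift along the bar resolutions, the paper starts from an extension and exhibits the morphism $T\to eSe$), which changes nothing of substance.
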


\begin{proof}
We show a slightly stronger statement. Let $\mathsf B \subseteq \Mod(B^\ev)$ and $\C \subseteq \Mod(C^\ev)$ be entirely extension closed subcategories, respectively containing $B$ and $C$, as well as their enveloping algebras, such that $\mathfrak A_e \mathsf B \subseteq \C$. Then the induced homomorphism
$$
\mathfrak A_e^\ast \colon \Ext^\ast_{\B}(B,B) \longrightarrow \Ext^\ast_\C(C,C)
$$
coincides with $\chi_{B/C}$. To begin with, through the adjunction isomorphism of the adjoint pair $(Be \otimes_C (-) \otimes_C eB, \mathfrak A_e)$, the morphism $\tilde{\mu}_e$ yields an inclusion
\begin{align*}
\tilde{\mu}_e^\sharp \colon \mathbb B C \cong \Hom_{C^\ev}&(C \otimes C, \mathfrak A_e(Be \otimes_C \mathbb B C \otimes_C eB))\\
&\longrightarrow \Hom_{C^\ev}(C \otimes C, \mathfrak A_e\mathbb B B)\\
&\xrightarrow{\hspace*{1.7pt}\sim\hspace*{1.7pt}} \Hom_{B^\ev}(Be \otimes eB, \mathbb B B) \cong e (\mathbb B B) e
\end{align*}
of complexes, which can also be obtained from the fact that $C^{\otimes (n+2)}$ is a direct summand of $e (B^{\otimes(n+2)}) e = eB \otimes B^{\otimes n} \otimes Be$. Observe that $\tilde \mu^\natural_e$ agrees with the map
$$
\xymatrix@C=20pt{
\mathbb B C \ar[r]^-\sim & eBe \otimes_{eBe} \mathbb B C \otimes_{eBe} eBe \ar[r]^-{e(\tilde{\mu}_e) e} & e (\mathbb B B) e
},
$$
that is, the diagram
$$
\xymatrix{
\Hom_{B^\ev}(\mathbb B B, B) \ar[r]^-{- \circ \tilde{\mu}_e} \ar[d]_-{\mathfrak A_e} & \Hom_{B^\ev}(Be \otimes_C \mathbb B C \otimes_C eB, B) \ar[d]^-{\mathrm{adj}} \\
\Hom_{C^\ev}(e(\mathbb B B) e, C) \ar[r]^-{- \circ \tilde\mu_e^\sharp} & \Hom_{C^\ev}(\mathbb B C, C)
}
$$
commutes, where $\mathrm{adj}$ denotes the adjunction isomorphism from above, given by
\begin{align*}
& \ \ \Hom_{B^\ev}(Be \otimes_C \mathbb B C \otimes_C eB, B) \longrightarrow \Hom_{C^\ev}(\mathbb B C, C)\\
& \mathrm{adj}(\varphi)(c_0 \otimes \cdots \otimes c_{n+1}) = e\varphi(e \otimes c_o \otimes \cdots \otimes c_{n+1} \otimes e)e.
\end{align*}
Thus, the $i$-th component of the map $\chi_{B/C}$ is given by $\chi_{B/C}^i = H^i(- \circ \mu^\sharp_e) \circ H^i(\mathfrak A_e(-))$.

Due to the assumptions made, $\mathbb B B$ is a complex in $\B$, and similarly, $\mathbb BC$ is a complex in $\C$. Pick an $n$-extension
$$
\xymatrix@C=20pt{
S & \equiv & 0 \ar[r] & B \ar[r]^-{d_n} & E_{n-1} \ar[r]^-{d_{n-1}} & \cdots \ar[r]^-{d_1} & E_0 \ar[r]^-{d_0} & B \ar[r] & 0
}
$$
in $\B$ along with a comparison map
$$
\xymatrix@C=16pt{
& & \cdots \ar[r] & B^{\otimes(n+2)} \ar[d]_-{\varphi_{n}} \ar[r] & B^{\otimes(n+1)} \ar[d]_-{\varphi_{n-1}} \ar[r] & \cdots \ar[r] & B \otimes B \ar[r] \ar[d]^-{\varphi_0} & B \ar[r] \ar@{=}[d] & 0 \\
S & \equiv & 0 \ar[r] & B \ar[r] & E_{n-1} \ar[r] & \cdots \ar[r] & E_0 \ar[r] & B \ar[r] & 0
}
$$
The map $\mathfrak A_e^n$ sends $S$ to $eSe$,
$$
\xymatrix@C=18pt{
\mathfrak A_e^n S & \equiv & 0 \ar[r] & C \ar[r] & eE_{n-1}e \ar[r] & \cdots \ar[r] & eE_0e \ar[r] & C \ar[r] & 0\,,
}
$$
whereas $\chi_{B/C}^n$ maps it to the lower sequence in the pushout diagram below.
$$
\xymatrix@C=18pt{
\cdots \ar[r] & C^{\otimes(n+2)} \ar[d]_-{e\varphi_{n}e} \ar[r] & C^{\otimes(n+1)} \ar[d] \ar[r] & \cdots \ar[r] & C \otimes C \ar[r] \ar@{=}[d] & C \ar[r] \ar@{=}[d] & 0 \\
0 \ar[r] & C \ar[r] & C \oplus_{C^{\otimes(n+2)}} C^{\otimes(n+1)} \ar[r] & \cdots \ar[r] & C \otimes C \ar[r] & C \ar[r] & 0
}
$$
To make sense out of the map $e(\varphi_n)e\colon C^{\otimes(n+2)} \rightarrow C$, remember that $C^{\otimes(n+2)} = (eBe)^{\otimes(n+2)}$ is a direct summand of $e(B^{\otimes(n+2)})e$ to which any map from $e(B^{\otimes(n+2)})e$ may be restricted to; in fact, $(e \varphi_n e) \mathord{\upharpoonright}_{C^{\otimes(n+2)}}$ is simply $(e \varphi_n e) \circ \mu_e^\sharp$. However, the commutative diagram
$$
\xymatrix@C=16pt{
0 \ar[r] & C \ar[r] \ar@{=}[d] & C \oplus_{C^{\otimes(n+2)}} C^{\otimes(n+1)} \ar[r] \ar@{-->}[d] & C^{\otimes n} \ar[r] \ar[d]_-{e\varphi_{n-2}e} & \cdots \ar[r] & C \otimes C \ar[r] \ar[d]^-{e\varphi_{0}e} & C \ar[r] \ar@{=}[d] & 0 \, \,\\
0 \ar[r] & eBe \ar[r] & eE_{n-1}e \ar[r] & eE_{n-2}e \ar[r] & \cdots \ar[r] & eE_0e \ar[r] & eBe \ar[r] & 0 \, ,
}
$$
with the dashed arrow being induced by the map
$$
C \oplus C^{\otimes(n+1)} \longrightarrow eE_{n-1}e, \ (c, c_0 \otimes \cdots \otimes c_n) \mapsto e\left(d_n(c) + \varphi_{n-1}(c_0 \otimes \cdots \otimes c_n)\right)e\,,
$$
now shows that both sequences define the same equivalence class in $\Ext^n_{C^\ev}(C,C)$ (even in $\Ext_\C^\ast(C,C)$), whence the claim follows.
\end{proof}

The proposition yields the direct consequence below.

\begin{cor}
The map $\chi_{B/C} \colon \HH^\ast(B) \rightarrow \HH^\ast(C)$ is bijective if $\overline B = B / BeB = 0$.\qed
\end{cor}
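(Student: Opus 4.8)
The plan is to deduce the statement from Proposition~\ref{prop:Ieischi}. That proposition identifies $\chi_{B/C}$ with the map $\mathfrak A_e^\ast\colon\HH^\ast(B)\to\HH^\ast(C)$ induced by the functor $\mathfrak A_e=e(-)e\colon\Mod(B^\ev)\to\Mod(C^\ev)$; recall in addition that $\mathfrak A_e(B)=eBe=C$ canonically and that $C$ is $K$-projective, so that $\Ext^\ast_{C^\ev}(C,C)=\HH^\ast(C)$. Hence it suffices to show that the hypothesis $BeB=B$ forces $\mathfrak A_e$ to be an equivalence of categories: an equivalence induces isomorphisms $\Ext^n_{B^\ev}(B,B)\xrightarrow{\ \sim\ }\Ext^n_{C^\ev}(\mathfrak A_eB,\mathfrak A_eB)=\HH^n(C)$ for every $n\geqslant 0$, and these are exactly the components of $\mathfrak A_e^\ast=\chi_{B/C}$.

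To establish the equivalence, I would argue as follows. By Proposition~\ref{prop:funcproper}(\ref{prop:funcproper:3}) the functor $\mathfrak A_e$ is the quotient functor of the standard recollement $\mathcal R(B^\ev,f)$ attached to $B^\ev$ and the idempotent $f=e\otimes e^\op$, whose left-hand term is $\Mod\bigl(B^\ev/B^\ev fB^\ev\bigr)$ and whose left adjoint is $B^\ev f\otimes_{C^\ev}(-)$, with $fB^\ev f=(eBe)\otimes(eBe)^\op=C^\ev$. The crux is that $BeB=B$ makes $f$ a \emph{full} idempotent of $B^\ev$, i.e.\ $B^\ev fB^\ev=B^\ev$: writing $1=\sum_i b_ie c_i$ in $B$, a direct computation in $B^\ev=B\otimes_K B^\op$ gives
$$
1_{B^\ev}=1\otimes 1^\op=\sum_{i,j}(b_i\otimes c_j^\op)\,f\,(c_i\otimes b_j^\op)\ \in\ B^\ev fB^\ev .
$$
Thus $B^\ev/B^\ev fB^\ev=0$, the left term of $\mathcal R(B^\ev,f)$ is trivial, and the recollement collapses: since $i(A)=0$ and $(i\circ i_\lambda)(B^\ev\text{-mod})=0$, the exact sequences attached to the units and counits of the adjunctions (cf.\ item (4) in the list of recollement properties) yield $j_\lambda j\cong\Id$ and $jj_\lambda\cong\Id$, exhibiting $\mathfrak A_e$ and $B^\ev f\otimes_{C^\ev}(-)$ as mutually quasi-inverse. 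Equivalently, this is just the Morita equivalence $\Mod(B^\ev)\simeq\Mod(fB^\ev f)=\Mod(C^\ev)$ determined by the full idempotent $f$. Feeding this back into the first paragraph shows that $\chi_{B/C}$ is bijective.

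I do not expect a genuine obstacle here; the only points needing care are the elementary check that $BeB=B$ forces $f=e\otimes e^\op$ to be full in $B^\ev$ (the displayed identity above), and the bookkeeping that $\mathfrak A_e(B)$ is \emph{canonically} $C$, so that no twist is introduced in the identification $\Ext^\ast_{C^\ev}(C,C)=\HH^\ast(C)$. It is worth remarking that the more naive route through the long exact sequence of Paragraph~\ref{nn:buchweitz} -- namely trying to prove $\HH^\ast(B/C,B)=0$ directly -- is less convenient, because the comparison morphism $\tilde\mu_e$ is not itself an isomorphism unless $e=1$; the Morita-theoretic argument via Proposition~\ref{prop:Ieischi} bypasses this complication completely.
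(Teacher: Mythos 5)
Your argument is correct and is exactly the route the paper intends: the corollary is stated as a direct consequence of Proposition \ref{prop:Ieischi}, the point being that $BeB=B$ makes $e\otimes e^\op$ a full idempotent of $B^\ev$ (your displayed identity, equivalently the description $SfS=(ReR)\otimes(R^\op e^\op R^\op)$ from the discussion around Lemma \ref{lem:PIisoiff}), so the quotient functor $e(-)e$ of the standard recollement $\mathcal R(B^\ev,e\otimes e^\op)$ is a Morita equivalence and induces isomorphisms on all $\Ext$-groups. Your write-up merely makes explicit the details the paper leaves to the reader; nothing is missing.
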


\begin{thm}\label{thm:chigersten}
The graded map $\chi_{B/C} \colon \HH^\ast(B) \rightarrow \HH^\ast(C)$ is a homomorphism of strict Gerstenhaber algebras.
\end{thm}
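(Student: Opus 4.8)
The plan is to reduce the assertion to the comparison results already established for enveloping functors. By Proposition~\ref{prop:Ieischi} the map $\chi_{B/C}$ equals the map $\mathfrak A_e^\ast$ induced by $\mathfrak A_e = e(-)e\colon \Mod(B^\ev) \to \Mod(C^\ev)$, and this functor is nothing but the enveloping functor $j^\ev$ of the standard recollement $\mathcal R(B,e) = \mathcal R(\overline B, B, eBe)$: its quotient functor $j = e(-)\colon \Mod(B) \to \Mod(C)$ has $j(B) = eB$ and $j(B)^\vee = \Hom_{B^\op}(eB,B) \cong Be$, so that $j^\ev(M) = eB \otimes_B M \otimes_B Be \cong eMe$. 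In particular, the pseudoflatness hypothesis of Theorem~\ref{thm:mainthm} reads $\Tor^C_1(j(B)^\vee,j(B)) = \Tor^{eBe}_1(Be,eB) = 0$, which is precisely our standing assumption in this section. Since $B$ is $K$-projective, Theorem~\ref{thm:mainthm} applies verbatim and yields that $j^\ev_\ast = \mathfrak A_e^\ast = \chi_{B/C}$ is a homomorphism of strict Gerstenhaber algebras.

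For a more self-contained argument one runs the underlying mechanism directly. Apply Theorem~\ref{thm:gersthochcomp} to the algebra $B$, the entirely extension closed strong exact monoidal subcategory $\P(B) \subseteq \Mod(B^\ev)$, and the restriction $\mathfrak A_e \mathord{\upharpoonright}_{\P(B)}\colon (\P(B),\otimes_B,B) \to (\Mod(C^\ev),\otimes_C,C)$, which is exact and almost costrong monoidal by Proposition~\ref{prop:funcproper}(\ref{prop:funcproper:1})--(\ref{prop:funcproper:2}) and satisfies $\mathfrak A_e(B) = C$. Since $\P(B)$ is closed under kernels of epimorphisms, this reduces everything to verifying condition (\ref{eq:exactnesscond}): for an admissible exact sequence $0 \to L \to M \to N \to 0$ in $\P(B)$ and any $X \in \P(B)$, the sequence
$$
0 \longrightarrow eLe \otimes_C eXe \longrightarrow eMe \otimes_C eXe \longrightarrow eNe \otimes_C eXe \longrightarrow 0
$$
is exact. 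Applying the exact functor $(-)e$ yields an exact sequence $0 \to Le \to Me \to Ne \to 0$ of right $C$-modules; tensoring with $eX$ over $C$ produces
$$
\Tor^C_1(Ne,eX) \longrightarrow Le \otimes_C eX \longrightarrow Me \otimes_C eX \longrightarrow Ne \otimes_C eX \longrightarrow 0,
$$
and $\Tor^C_1(Ne,eX) = 0$: as $N$ is right $B$-projective, $Ne$ is a direct summand of a sum of copies of $Be$ in $\Mod(C^\op)$, and as $X$ is left $B$-projective, $eX$ is a direct summand of a sum of copies of $eB$ in $\Mod(C)$, so $\Tor^C_1(Ne,eX)$ is a direct summand of a sum of copies of $\Tor^C_1(Be,eB) = \Tor^{eBe}_1(Be,eB) = 0$. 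Applying the exact functor $e(-)e \cong eB \otimes_B (-) \otimes_B Be$ and using the natural identification $e(Ye \otimes_C eX)e \cong eYe \otimes_C eXe$ then gives the required short exact sequence, so (\ref{eq:exactnesscond}) holds; together with Theorem~\ref{thm:chiringhom} (reproved in Proposition~\ref{prop:Ieischi}) this finishes the proof.

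The main obstacle is exactly the $\Tor$-propagation in the second paragraph --- i.e.\ checking that the almost costrong monoidal functor $\mathfrak A_e$, once restricted to $\P(B)$, still satisfies the exactness condition (\ref{eq:exactnesscond}); this is the unique place where the hypothesis $\Tor^{eBe}_1(Be,eB)=0$ enters, and it is the same computation that underlies Lemma~\ref{lem:Tor1equ} in the special case $R = B$. All remaining ingredients --- exactness and almost costrong monoidality of $\mathfrak A_e$, the coincidence $\chi_{B/C} = \mathfrak A_e^\ast$, the fact that $\P(B)$ is entirely extension closed and closed under kernels of epimorphisms, and the ring-homomorphism property --- are already available from the preceding sections.
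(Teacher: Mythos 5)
Your first paragraph is exactly the paper's proof: identify $\chi_{B/C}$ with $\mathfrak A_e^\ast$ via Proposition~\ref{prop:Ieischi}, recognise $\mathfrak A_e = e(-)e$ as the enveloping functor of the standard recollement $\mathcal R(B,e)$, and invoke Theorem~\ref{thm:mainthm} under the standing hypothesis $\Tor^{eBe}_1(Be,eB)=0$. Your second, self-contained argument is also correct, but it is just an unwinding of the proof of Theorem~\ref{thm:mainthm} in the case where the recollement is already standard, so there is nothing genuinely different here.
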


\begin{proof}
By Proposition \ref{prop:Ieischi}, the map $\chi_{B/C}$ is given (on extensions) by applying the functor $e(-)e = eB \otimes_B (-) \otimes_B Be \colon \Mod(B^\ev) \rightarrow \Mod(C^\ev)$, that is, it is induced by the standard recollement $\mathcal R(B^\ev,e \otimes e^\op)$. The assertion now follows from Theorem \ref{thm:mainthm}.
\end{proof}

Recall the following homological invariant of a module (see \cite[Def.\,2.1]{Bu03}).

\begin{defn}
Let $X$ be a module over a ring $R$. The \textit{grade} of $X$ is given by
$$
\grade_R X = \inf\{i \geqslant 0 \mid \Ext^i_R(X,R) \neq 0\} \quad \in \quad \mathbb Z \cup \{\infty\}.
$$
\end{defn}

It is obvious that $\overline B = 0$ implies $\grade_B \overline B = \infty$. However, the converse, in general, does not need to hold true. Several conjectures though predict a strong mutual dependance of $\overline B = 0$ and $\grade_B \overline B = \infty$ for certain pairs $(B,e)$. See \cite[Sec.\,3]{Bu03} for a very clear exposition of those conjectures, and their interplay. 

The following result is \cite[Thm.\,5.5]{Bu03}. It tells us that the grade of $\overline{B}$, to some extend, controls whether $\chi_{B/C}$ is an isomorphism or not. Indeed, its proof utilises the long exact sequence we briefly introduced in Paragraph \ref{nn:buchweitz} and vanishing criteria for its somewhat mysterious members $\HH^i(B/C,B)$ (for $i \geqslant 0$) in terms of $\mathrm{grade}_B \overline B$ (see \cite[Sec.\,5]{Bu03}).

\begin{thm}\label{thm:grade}
Let $\grade_B \overline B = g$ and $E = \Ext^g_C(\overline{B},B)$. Then the maps
$$
\chi_{B/C}^j \colon \HH^{j}(B) \longrightarrow \HH^j(C)
$$
are isomorphisms for $j \leqslant g-2$, and there is an exact sequence
$$
0 \longrightarrow \HH^{g-1}(B) \longrightarrow \HH^{g-1}(C) \longrightarrow \HH^0(B,E) \longrightarrow \HH^g(B) \longrightarrow \HH^g(C).
$$\qed
\end{thm}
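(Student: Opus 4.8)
The plan is to read the assertion off the long exact sequence of Paragraph~\ref{nn:buchweitz}, whose only genuinely unknown ingredients are the ``mysterious'' terms $\HH^\ast(B/C,B)$. Specialising that sequence to $M=B$ and using $eBe=C$ turns it into
$$
\cdots\longrightarrow\HH^i(B/C,B)\longrightarrow\HH^i(B)\xrightarrow{\ \chi^i_{B/C}\ }\HH^i(C)\longrightarrow\HH^{i+1}(B/C,B)\longrightarrow\cdots.
$$
Thus the theorem becomes equivalent to the two claims that $\HH^i(B/C,B)=0$ for $i\leqslant g-1$, and that $\HH^g(B/C,B)\cong\HH^0(B,E)$: granting them, for $i\leqslant g-2$ both neighbours of $\chi^i_{B/C}$ vanish and the map is an isomorphism, while the segment of the sequence around degrees $g-1$ and $g$ reads
$$
0\longrightarrow\HH^{g-1}(B)\longrightarrow\HH^{g-1}(C)\longrightarrow\HH^g(B/C,B)\longrightarrow\HH^g(B)\longrightarrow\HH^g(C),
$$
which after substituting $\HH^g(B/C,B)\cong\HH^0(B,E)$ is exactly the claimed exact sequence.

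To access $\HH^\ast(B/C,B)=H^\ast\Hom_{B^\ev}(\mathbb B(B/C),B)$ I would analyse the complex $\mathbb B(B/C)$ directly. In degree $n$ the submodule $Be\otimes(eBe)^{\otimes n}\otimes eB$ of $B^{\otimes(n+2)}$ is a $K$-direct summand --- hence, the $B^\ev$-action touching only the outermost factors, a $B^\ev$-direct summand --- so each $\mathbb B(B/C)_n$ is a finitely generated projective $B^\ev$-module. The bar augmentations form a commuting square with the restricted multiplication map $\mu_e$, so passing to cokernels of its columns equips $\mathbb B(B/C)$ with an augmentation $\mathbb B(B/C)\to\overline B$. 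Running the long exact homology sequence of the semi-split short exact sequence $0\to Be\otimes_C\mathbb BC\otimes_C eB\to\mathbb BB\to\mathbb B(B/C)\to0$, and using that $\mathbb BB\to B$ and $\mathbb BC\to C$ are projective resolutions (both $B$ and $C$ being $K$-projective) together with $\Tor^C_1(Be,eB)=0$, one computes $H_0\mathbb B(B/C)=\overline B$, $H_1\mathbb B(B/C)=\Ker\mu_e$, $H_2\mathbb B(B/C)=0$, and $H_q\mathbb B(B/C)=\Tor^C_{q-1}(Be,eB)$ for $q\geqslant3$. When $\mu_e$ is injective and $\Tor^C_{\geqslant2}(Be,eB)=0$ this makes $\mathbb B(B/C)\to\overline B$ an honest projective $B^\ev$-resolution, so $\HH^n(B/C,B)=\Ext^n_{B^\ev}(\overline B,B)$; in general one recovers the same conclusion in a range of degrees from the hyper-$\Ext$ spectral sequence of the bounded-below complex of projectives $\mathbb B(B/C)$.

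The grade then enters through a change-of-rings argument. As $\overline B$ is a $B$-bimodule, $\Hom_{B^\ev}(\overline B,-)$ factors as $\HH^0(B,-)\circ\Hom_B(\overline B,-)$, which yields a spectral sequence
$$
E_2^{p,q}=\HH^p\bigl(B,\Ext^q_B(\overline B,B)\bigr)\ \Longrightarrow\ \Ext^{p+q}_{B^\ev}(\overline B,B)
$$
(using $K$-projectivity of $B$). Since $g=\grade_B\overline B$, the coefficient module $\Ext^q_B(\overline B,B)$ is zero for $q<g$, so $E_2^{p,q}=0$ for $q<g$; hence $\Ext^n_{B^\ev}(\overline B,B)=0$ for $n<g$, and in total degree $g$ the only surviving term is the corner $E_2^{0,g}=\HH^0\bigl(B,\Ext^g_B(\overline B,B)\bigr)$, which supports no differentials, so $\Ext^g_{B^\ev}(\overline B,B)\cong\HH^0\bigl(B,\Ext^g_B(\overline B,B)\bigr)$. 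That this last module is the $E=\Ext^g_C(\overline B,B)$ of the statement --- and, more importantly, that $\HH^i(B/C,B)$ really does agree with $\Ext^i_{B^\ev}(\overline B,B)$ throughout degrees $\leqslant g$ --- is part of the analysis pushed through in \cite[Sec.\,5]{Bu03}; granting it, feeding the values of $\HH^i(B/C,B)$ for $i\leqslant g$ back into the long exact sequence of the first paragraph yields the theorem.

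The step I expect to be the main obstacle is exactly the one I have skated over: controlling $\grade_{B^\ev}\overline B$ --- and the auxiliary homology modules $\Ker\mu_e$ and $\Tor^C_{\geqslant2}(Be,eB)$ that occupy the higher rows of the hyper-$\Ext$ spectral sequence --- in terms of $\grade_B\overline B$, so that nothing on those higher rows reaches into the range $\ast\leqslant g$ and so that the connecting module in degree $g$ is precisely $\HH^0(B,E)$. Everything else is formal manipulation of the long exact sequence of Paragraph~\ref{nn:buchweitz}; this last comparison is the genuine technical content, and it is the vanishing analysis for the terms $\HH^i(B/C,B)$ in terms of $\grade_B\overline B$ carried out in \cite[Sec.\,5]{Bu03}.
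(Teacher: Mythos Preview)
Your proposal is correct and matches the paper's own treatment: the paper does not prove this theorem at all but simply cites it as \cite[Thm.\,5.5]{Bu03}, remarking that the proof uses precisely the long exact sequence of Paragraph~\ref{nn:buchweitz} together with vanishing criteria for the terms $\HH^i(B/C,B)$ in terms of $\grade_B\overline B$ established in \cite[Sec.\,5]{Bu03}. You have reconstructed exactly this outline, correctly identified the reduction to controlling $\HH^i(B/C,B)$ for $i\leqslant g$, and honestly flagged that the substantive vanishing analysis is deferred to Buchweitz---which is all the paper does as well.
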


Let $e' = 1 - e$ the complementary idempotent of $e$ in $B$, and $C' = e'Be'$. As a consequence of the Theorems \ref{thm:chigersten} and \ref{thm:grade}, we obtain the following result.

\begin{thm}\label{thm:isogerstengrade}
The following hold true.
\begin{enumerate}[\rm(1)]
\item If $\grade_B \overline B \geqslant 3$, then $\chi_{B/C}^1$ defines an isomorphism
$$
\frac{\Der_K(B)}{\Inn_K(B)} = \HH^1(B) \longrightarrow \HH^1(C) = \frac{\Der_K(C)}{\Inn_K(C)}
$$
of Lie algebras. Thus, if $\Tor^{C'}_1(Be',e'B) = 0$, the map $\chi^1_{B/C'} \circ (\chi^1_{B/C})^{-1}$ defines a $($in general non-trivial$)$ Lie algebra homomorphism $\HH^1(C) \rightarrow \HH^1(C')$.
\item If $\grade_B \overline B = \infty$, then $\chi^\ast_{B/C}$ defines an isomorphism $\HH^\ast(B) \rightarrow \HH^\ast(C)$ of strict Gerstenhaber algebras. Thus, there is a homomorphism $\HH^\ast(C) \rightarrow \HH^\ast(C')$ of strict Gerstenhaber algebras.\qed
\end{enumerate}
\end{thm}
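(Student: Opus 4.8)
The plan is to derive the statement directly from Theorems \ref{thm:chigersten} and \ref{thm:grade}, the only real work being to keep track of degrees and to check that the relevant structure is inherited by inverses.

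For part (1), set $g = \grade_B \overline{B}$ and assume $g \geqslant 3$, so that $g - 2 \geqslant 1$. Then $j = 1$ falls into the range $j \leqslant g - 2$ of Theorem \ref{thm:grade}, which hence yields that $\chi^1_{B/C}\colon \HH^1(B) \to \HH^1(C)$ is bijective. At the same time, the blanket hypothesis $\Tor^{eBe}_1(Be,eB) = 0$ of the section makes Theorem \ref{thm:chigersten} applicable, so $\chi_{B/C}$ is a homomorphism of strict Gerstenhaber algebras; in particular it commutes with the Gerstenhaber bracket. Specialising to cohomological degree $1$, where $\{-,-\}\colon \HH^1 \times \HH^1 \to \HH^{1+1-1} = \HH^1$ is nothing but the commutator bracket of the Lie algebra $\HH^1(B) = \Der_K(B)/\Inn_K(B)$ of outer derivations recalled earlier, we conclude that $\chi^1_{B/C}$ is a bijective homomorphism of Lie algebras, i.e.\ an isomorphism of Lie algebras; hence $(\chi^1_{B/C})^{-1}\colon \HH^1(C) \to \HH^1(B)$ is one as well. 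If now $\Tor^{C'}_1(Be',e'B) = 0$, then Theorem \ref{thm:chigersten}, applied this time with the idempotent $e'$ in place of $e$, shows that $\chi^1_{B/C'}\colon \HH^1(B) \to \HH^1(C')$ is a homomorphism of Lie algebras, and therefore so is the composite $\chi^1_{B/C'} \circ (\chi^1_{B/C})^{-1}$.

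For part (2) the argument is the same with $g = \infty$: every nonnegative integer $j$ then satisfies $j \leqslant g - 2$, so Theorem \ref{thm:grade} gives that $\chi^j_{B/C}$ is bijective for all $j$, i.e.\ $\chi^\ast_{B/C}\colon \HH^\ast(B) \to \HH^\ast(C)$ is a bijection, and by Theorem \ref{thm:chigersten} it is a homomorphism of strict Gerstenhaber algebras. One then notes that the inverse of a bijective homomorphism $f$ of strict Gerstenhaber algebras is again such: compatibility of $f^{-1}$ with the cup product and the bracket is formal, and compatibility with the squaring map follows by applying $f^{-1}$ to the identity $f(sq(x)) = sq(f(x))$ and substituting $x = f^{-1}(a)$. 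Thus $\chi^\ast_{B/C}$ is an isomorphism of strict Gerstenhaber algebras, with inverse $(\chi^\ast_{B/C})^{-1}\colon \HH^\ast(C) \to \HH^\ast(B)$ of the same kind. Finally, assuming (as in part (1)) that $\Tor^{C'}_1(Be',e'B) = 0$, Theorem \ref{thm:chigersten} with the idempotent $e'$ makes $\chi^\ast_{B/C'}\colon \HH^\ast(B) \to \HH^\ast(C')$ a homomorphism of strict Gerstenhaber algebras, and $\chi^\ast_{B/C'} \circ (\chi^\ast_{B/C})^{-1}\colon \HH^\ast(C) \to \HH^\ast(C')$ is the required homomorphism.

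I do not expect a genuine obstacle here: the content is already contained in Theorems \ref{thm:chigersten} (structure preservation) and \ref{thm:grade} (bijectivity controlled by $\grade_B \overline{B}$), both quoted above, and everything else is bookkeeping. The two small points that should be made explicit are that in degree $1$ the Gerstenhaber bracket coincides with the commutator bracket on outer derivations — so that a morphism of (strict) Gerstenhaber algebras restricts in that degree to a morphism of Lie algebras — and that bijective morphisms of strict Gerstenhaber algebras have inverses of the same kind; both are immediate from the definitions.
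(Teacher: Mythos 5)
Your proposal is correct and follows exactly the route the paper intends: the paper states Theorem \ref{thm:isogerstengrade} as an immediate consequence of Theorems \ref{thm:chigersten} and \ref{thm:grade} and gives no further argument, and your write-up supplies precisely the missing bookkeeping (degree $1$ of the bracket is the commutator on outer derivations, and inverses of bijective strict Gerstenhaber homomorphisms are again such). Your explicit insertion of the hypothesis $\Tor^{C'}_1(Be',e'B)=0$ in part (2), which the paper leaves implicit from part (1), is appropriate.
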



\section{The long exact sequence of Cibils, Green--Solberg and Michelena--Platzeck}\label{sec:greensol}

\begin{nn}
In this section, we investigate long exact sequences independently studied and introduced by Cibils, Green--Solberg and Michelena--Platzeck in \cite{Ci00}, \cite{GrSo02} and \cite[Sec.\,2]{MiPl00}; see also \cite{BeGu04}, \cite{GeSch85} and \cite{GoGo01}. The results we will present here are fairly similar in taste to the ones in Section \ref{sec:buchweitz}, however with different outcome, and different applications. Let us fix a $K$-algebra $B$ which we assume to be projective over $K$. Let $e \in B$ be an idempotent, and $e' = 1 - e$ be its complementary idempotent. As usual, we abbreviate $C = eBe$ and $C' = e'Be'$.
\end{nn}

\begin{lem}\label{lem:isoidealtor}
Assume that $\Tor^{eBe}_i(Be,eB) = 0$ for $i > 0$. Then the functor
$$
Be \otimes_{eBe} (-) \otimes_{eBe} eB \colon \Mod((eBe)^\ev) \longrightarrow \Mod(B^\ev)
$$
gives rise to an isomorphism 
$$
\Ext^\ast_{B^\ev}(Be \otimes_{eBe} eB, B) \xrightarrow{\ \sim \ } \Ext^\ast_{(eBe)^\ev}(eBe,eBe) = \HH^\ast(eBe)
$$
of graded $K$-modules.
\end{lem}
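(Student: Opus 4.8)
The plan is to produce the isomorphism by comparing projective resolutions: the functor $\Phi := Be \otimes_C (-) \otimes_C eB \colon \Mod(C^\ev) \to \Mod(B^\ev)$ (with $C = eBe$) is not exact, but the $\Tor$-hypothesis guarantees that it sends a projective resolution of $C$ to a projective resolution of $\Phi(C) = Be \otimes_C eB$, and then the adjunction from Proposition \ref{prop:funcproper} identifies the resulting $\Hom$-complexes. The inputs I would invoke from Proposition \ref{prop:funcproper} are: $(\Phi, \mathfrak A_e)$ is an adjoint pair whose unit is an isomorphism, where $\mathfrak A_e = e(-)e$; $\mathfrak A_e$ is exact; and $\mathfrak A_e(B) = eBe = C$. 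In particular $\Phi(C) = Be \otimes_C eB$ and $\mathfrak A_e \Phi \cong \Id$. Moreover $C$ is $K$-projective, being a $K$-module direct summand of $B$ by the Peirce decomposition, so the bar resolution $\mathbb B C \to C$ is a projective $C^\ev$-resolution.

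First I would fix an arbitrary projective resolution $P_\bullet \to C$ over $C^\ev$ and check that $\Phi(P_\bullet) \to \Phi(C) = Be \otimes_C eB$ is a projective resolution over $B^\ev$. Each $\Phi(P_n)$ is $B^\ev$-projective because $\Phi$, as a left adjoint of the exact functor $\mathfrak A_e$, preserves projectives (concretely $\Phi(C^\ev) \cong Be \otimes_K eB$ is a direct summand of $B^\ev = B \otimes_K B$). That $\Phi(P_\bullet)$ has vanishing homology in positive degrees is the crux: $H_i(\Phi(P_\bullet)) = L_i\Phi(C)$ may be computed equally well from $\mathbb B C$, so it equals $H_i(Be \otimes_C \mathbb B C \otimes_C eB)$. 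Now $\mathbb B C \to C \to 0$ is contractible as a complex of \emph{right} $C$-modules (the homotopy $x \mapsto e \otimes x$ is right $C$-linear, $e$ being the unit of $C$), and each $Be \otimes_C \mathbb B C_n \cong Be \otimes_K C^{\otimes n} \otimes_K C$ is a projective right $C$-module since $Be \otimes_K C^{\otimes n}$ is $K$-projective; hence $Be \otimes_C \mathbb B C \to Be$ is a resolution by right-$C$-projectives and $H_i(Be \otimes_C \mathbb B C \otimes_C eB) \cong \Tor^C_i(Be, eB)$, which vanishes for $i > 0$ by hypothesis. Right-exactness of $\Phi$ gives $H_0(\Phi(P_\bullet)) = \Phi(C)$.

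The isomorphism is then formal: applying $\Hom_{B^\ev}(-, B)$ to $\Phi(P_\bullet)$ and using the natural adjunction isomorphism $\Hom_{B^\ev}(\Phi(-), B) \cong \Hom_{C^\ev}(-, \mathfrak A_e(B)) = \Hom_{C^\ev}(-, C)$ yields an isomorphism of cochain complexes $\Hom_{B^\ev}(\Phi(P_\bullet), B) \cong \Hom_{C^\ev}(P_\bullet, C)$, whence
$$
\Ext^\ast_{B^\ev}(Be \otimes_C eB, B) \;=\; H^\ast\big(\Hom_{B^\ev}(\Phi(P_\bullet), B)\big) \;\cong\; H^\ast\big(\Hom_{C^\ev}(P_\bullet, C)\big) \;=\; \Ext^\ast_{C^\ev}(C,C) = \HH^\ast(C).
$$
One checks this is independent of the chosen $P_\bullet$ and is the isomorphism induced by $\Phi$ (equivalently, on $n$-extensions, by $\mathfrak A_e$).

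The main obstacle is the middle of the second step — the identification $L_i\Phi(C) \cong \Tor^C_i(Be, eB)$ — which is the only place the $\Tor$-hypothesis enters, and where one must be careful that the relevant modules ($C$, and each $Be \otimes_K C^{\otimes n}$) are $K$-projective, so that $\mathbb B C$ really is a projective $C^\ev$-resolution and $Be \otimes_C \mathbb B C$ really consists of projective right $C$-modules. Everything else is bookkeeping with the adjunction furnished by Proposition \ref{prop:funcproper}.
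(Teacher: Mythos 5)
Your proof is correct and follows essentially the same route as the paper's: show that the $\Tor$-vanishing hypothesis makes $Be\otimes_{eBe}\mathbb{P}C\otimes_{eBe}eB$ a $B^\ev$-projective resolution of $Be\otimes_{eBe}eB$ (projectivity of the terms via the adjoint pair $(Be\otimes_{eBe}(-)\otimes_{eBe}eB,\ e(-)e)$ with exact right adjoint) and then transport $\Hom_{B^\ev}(-,B)$ through that adjunction. The only cosmetic difference is in the homology computation: you pass to the bar resolution and tensor with $Be$ first, obtaining a right-$C$-projective resolution of $Be$, while the paper keeps the arbitrary resolution and tensors with $eB$ first, obtaining a left-$C$-flat resolution of $eB$; both identify the homology with $\Tor^{eBe}_i(Be,eB)$.
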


\begin{proof}
Indeed, if $\mathbb PC \rightarrow C \rightarrow 0$ is a projective resolution of $C$ over $C^\ev$, then $Be \otimes_C \mathbb PC \otimes_C eB$ is a complex of projectives, since $e(-)e$ is exact and right adjoint to $Be \otimes_C (-) \otimes_C eB$. Moreover,
$$
H_i (\mathbb PC \otimes_C eB) = \Tor_i^C(C, eB) = 0 \quad \text{(for all $i \neq 0$)}
$$
for $\mathbb PC \rightarrow C \rightarrow 0$ in particular is a resolution of $C$ by flat right $C$-modules ($C \otimes C$ is $C$-flat on both sides since $B$, and therefore $C$ are $K$-flat). Furthermore,
$$
(-) \otimes_C ((C \otimes C) \otimes_C eB) \cong (- \otimes_C C) \otimes (C \otimes_C eB) \cong (-) \otimes eB
$$
as functors $\Mod(C^\op) \rightarrow \Mod(K)$, whence $(C \otimes C) \otimes_C eB$ is a flat left $C$-module. It follows that $\mathbb PC \otimes_C eB$ resolves $eB$ by flat left $C$-modules. From
$$
H_i (Be \otimes_C \mathbb PC \otimes_C eB) = \Tor_i^C(Be, eB) = 0 \quad \text{(for all $i \neq 0$)}
$$
we deduce that $Be \otimes_C \mathbb PC \otimes_C eB \rightarrow Be \otimes_C eB \rightarrow 0$ is a $B^\ev$-projective resolution of $Be \otimes_C eB$. Thus,
\begin{align*}
\Ext^n_{B^\ev}(Be \otimes_{eBe} eB,B) = H^n & \Hom_{B^\ev}(Be \otimes_{eBe} \mathbb P(eBe) \otimes_{eBe} eB, B)\\ &\xrightarrow{\hspace*{1.7pt}\sim\hspace*{1.7pt}} H^n \Hom_{C^\ev}(\mathbb P(eBe), C) = \Ext^n_{(eBe)^\ev}(eBe,eBe) \, ,
\end{align*}
where the isomorphism is by adjunction.
\end{proof}

\begin{nn}\label{nn:fundexact}
We have the following fundamental short exact sequence of $B^\ev$-modules.
\begin{equation}\tag{$\diamond$}\label{eq:fundexactsequ}
0 \longrightarrow \Omega^1_B \longrightarrow (Be \otimes_C eB) \oplus (Be' \otimes_{C'} e'B) \longrightarrow B \longrightarrow 0 \, .
\end{equation}
The second map is induced by the multiplications $\mu_e\colon (Be \otimes_C eB) \rightarrow B$ and $\mu_{e'}\colon Be' \otimes_{C'} e'B \rightarrow B$, where surjectivity follows from
$$
\mu_e(ce \otimes e) + \mu_{e'}(ce' \otimes e') = c \quad \text{(for $c \in B$)}.
$$
The sequence (\ref{eq:fundexactsequ}) gives rise to a long exact sequence (after applying $\Hom_{B^\ev}(-,B)$) which, locally, looks as follows.
$$
\cdots \xrightarrow{\ \ } \Ext^{n}_{B^\ev}(B,B) \xrightarrow{\ \ } \Ext^{n}_{B^\ev}((Be \otimes_C eB) \oplus (Be' \otimes_{C'} e'B),B) \xrightarrow{\ \ } \Ext^{n}_{B^\ev}(\Omega_B^1,B) \xrightarrow{\ \ } \cdots \, .
$$
By Lemma \ref{lem:isoidealtor}, this long exact sequence reads as
$$
\xymatrix@C=14.5pt{
\cdots \ar[r] & \Ext^{n}_{B^\ev}(B,B) \ar[r] & \Ext^n_{C^\ev}(C,C) \oplus \Ext^n_{(C')^\ev}(C',C') \ar[r] & \Ext^n_{B^\ev}(\Omega^1_B,B) \ar[r] & \cdots \, .
}
$$
To summarise, we have obtained a stronger statement than \cite[Thm.\,1.2]{GrSo02}, in the following sense.
\end{nn}

\begin{thm}\label{thm:greensol}
Let $e \in B$ be an idempotent, $e' = 1 - e$ its complementary idempotent and $C = eBe$, $C' = e'Be'$. If
$$
\Tor^{C}_i(Be,eB) = 0 = \Tor^{C'}_i(Be',e'B) \quad \text{$($for all $i > 0)$},
$$
then there is a long exact sequence
$$
\xymatrix@C=14.5pt{
\cdots \ar[r] & \Ext^{n}_{B^\ev}(B,B) \ar[r]^-{g_n} & \Ext^n_{C^\ev}(C,C) \oplus \Ext^n_{(C')^\ev}(C', C') \ar[r] & \Ext^{n}_{B^\ev}(\Omega^1_B,B) \ar[r] & \cdots
}
$$
induced from the canonical short exact sequence
$$
\xymatrix@C=14pt{
0 \ar[r] & {\Omega}^1_B \ar[r] & \displaystyle (Be \otimes_{C} eB) \oplus (Be' \otimes_{C'} e'B) \ar[r] & B \ar[r] & 0
}.
$$
If, moreover, $B$ is $K$-projective, then the map $g_\ast\colon \HH^\ast(B) \rightarrow \HH^\ast(eBe) \times \HH^\ast(e'Be')$ is a homomorphism of strict Gerstenhaber algebras.
\end{thm}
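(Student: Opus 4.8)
The existence of the long exact sequence is established in Paragraph~\ref{nn:fundexact}, the $\Tor$-vanishing hypotheses ensuring that Lemma~\ref{lem:isoidealtor} applies for both $e$ and $e'$; so only the last assertion remains. The plan is to identify $g_\ast$ with the product of the two Buchweitz maps $\chi_{B/C}$ and $\chi_{B/C'}$ from Section~\ref{sec:buchweitz} and then invoke Theorem~\ref{thm:chigersten}.

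First I would unravel $g_\ast$. The second morphism in the fundamental sequence~(\ref{eq:fundexactsequ}) is $(x,y) \mapsto \mu_e(x) + \mu_{e'}(y)$, so applying $\Hom_{B^\ev}(-,B)$ and passing to cohomology sends $\alpha \in \HH^n(B) = \Ext^n_{B^\ev}(B,B)$ to the pair $\big(\Hom_{B^\ev}(\mu_e,B)(\alpha),\, \Hom_{B^\ev}(\mu_{e'},B)(\alpha)\big)$ inside $\Ext^n_{B^\ev}(Be \otimes_C eB,B) \oplus \Ext^n_{B^\ev}(Be' \otimes_{C'} e'B,B)$. Composing each coordinate with the isomorphism of Lemma~\ref{lem:isoidealtor} turns this into the pair $\big(\HH^n(\mu_e,B)(\alpha),\, \HH^n(\mu_{e'},B)(\alpha)\big)$, which by the construction of the Buchweitz maps in Paragraph~\ref{nn:buchweitz} equals $g_n(\alpha) = \big(\chi^n_{B/C}(\alpha),\, \chi^n_{B/C'}(\alpha)\big)$. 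Here one only has to observe that the projective $B^\ev$-resolution $Be \otimes_C \mathbb B C \otimes_C eB \to Be \otimes_C eB \to 0$ appearing in the proof of Lemma~\ref{lem:isoidealtor} is precisely the subcomplex of $\mathbb B B$ cut out by the chain map $\tilde\mu_e$ lifting $\mu_e$; hence applying $\Hom_{B^\ev}(-,B)$ and then the adjunction isomorphism recovers $\HH^\ast(\mu_e,B)$, i.e., $\chi_{B/C}$ --- equivalently, by Proposition~\ref{prop:Ieischi}, the map induced by the standard recollement $\mathcal R(B^\ev, e \otimes e^\op)$ --- and likewise for $e'$.

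Next, because $B$ is $K$-projective, the Peirce decomposition (as recorded in Section~\ref{sec:recoll}) shows that $C = eBe$ and $C' = e'Be'$ are $K$-projective as well, and the hypotheses contain $\Tor^C_1(Be,eB) = 0$ and $\Tor^{C'}_1(Be',e'B) = 0$. Therefore Theorem~\ref{thm:chigersten} applies to both pairs $(B,e)$ and $(B,e')$, so $\chi_{B/C} \colon \HH^\ast(B) \to \HH^\ast(C)$ and $\chi_{B/C'} \colon \HH^\ast(B) \to \HH^\ast(C')$ are homomorphisms of strict Gerstenhaber algebras. Finally, $\HH^\ast(C) \times \HH^\ast(C')$ is a strict Gerstenhaber algebra under the componentwise cup product, bracket and squaring map, and a graded $K$-algebra homomorphism into a product is a homomorphism of strict Gerstenhaber algebras if and only if each of its two components is; applying this to $g_\ast = (\chi_{B/C},\chi_{B/C'})$ yields the claim.

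The only step that is not purely formal is the identification carried out in the second paragraph: one must check that the map extracted from the long exact sequence is genuinely the pair of Buchweitz maps, with no stray sign or automorphism of $\HH^\ast(C)$ (or $\HH^\ast(C')$) creeping in. This is where the bookkeeping must be done with care, and it is settled by tracing the explicit formulas of Paragraph~\ref{nn:buchweitz} and Proposition~\ref{prop:Ieischi}.
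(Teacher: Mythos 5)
Your proposal is correct and follows the same overall strategy as the paper: reduce everything to the identification $g_\ast = (\chi_{B/C}, \chi_{B/C'})$ and then quote Theorem~\ref{thm:chigersten} for each component, using that a graded map into a product of strict Gerstenhaber algebras is a Gerstenhaber homomorphism precisely when its components are. The one place where you diverge is the verification that $g^C_\ast = \chi_{B/C}$: you do it at the cochain level, by choosing $\mathbb{B}C$ as the projective resolution in Lemma~\ref{lem:isoidealtor}, noting that $\tilde\mu_e$ is a lift of $\mu_e$ between the chosen resolutions, and observing that restriction along $\tilde\mu_e$ followed by the adjunction isomorphism is literally the definition of $\chi_{B/C}$ from Paragraph~\ref{nn:buchweitz}. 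The paper instead works with Yoneda extensions: it lifts $\mu_e$ over an arbitrary $C^\ev$-projective resolution, forms the two pushout extensions, and exhibits an explicit morphism of $n$-extensions $T \rightarrow eSe$ showing that $g^C_\ast$ is induced by the functor $e(-)e$, hence equals $\chi_{B/C}$ by Proposition~\ref{prop:Ieischi}. Both verifications are sound; yours is shorter because it exploits the specific resolution for which $\tilde\mu_e$ is an honest inclusion, while the paper's has the mild advantage of being resolution-independent and of landing directly in the extension-level picture that Theorem~\ref{thm:mainthm} operates in. Your concluding remarks about $K$-projectivity of $C$ and $C'$ via the Peirce decomposition and about the absence of stray signs are exactly the right points to check, and they hold.
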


\begin{proof}
It suffices to prove that the component maps $g^C_\ast$ and $g^{C'}_\ast$ of $g_\ast$ coincide with the maps $\chi = \chi_{B/C}$ and $\chi' = \chi_{B/C'}$ that we have studied earlier. We will only take care of $g_n^C = \chi$ as the proof for $g_\ast^{C'} = \chi'$ is identical. Indeed, recall that by Proposition \ref{prop:Ieischi}, the map $\chi$ is given by applying the functor
$$
e(-)e \colon \Mod(B^\ev) \longrightarrow \Mod(C^\ev) \, ,
$$
i.e., it is induced by the standard recollement $\mathcal R(B^\ev,e \otimes e^\op)$. We show that the same holds true for $g_\ast$. To begin with, let $\mathbb PC \rightarrow C \rightarrow 0$ be a $C^\ev$-projective resolution of $C$. By the proof of Lemma \ref{lem:isoidealtor}, $Be \otimes_C \mathbb PC \otimes_C eB \rightarrow Be \otimes_{C} eB \rightarrow 0$ is a projective resolution of $Be \otimes_C eB$ over $B^\ev$. Further, pick a lifting
$$
\small{\xymatrix@C=11pt{
Be \otimes_C \mathbb PC \otimes_C eB \ar[d]_-{\Phi} & \cdots \ar[r] & Be \otimes_C P_n \otimes_{C} eB \ar[d]_-{\varphi_n} \ar[r] & \cdots \ar[r] & Be \otimes_C P_0 \otimes_{C} eB \ar[r] \ar[d]^-{\varphi_0} & Be \otimes_C eB \ar[r] \ar[d]^-{\mu_e} & 0\\
\mathbb B B & \cdots \ar[r] & B^{\otimes(n+2)} \ar[r] & \cdots \ar[r] & B \otimes B \ar[r] & B \ar[r] & 0
}}
$$
of $\mu_e \colon Be \otimes_C eB \rightarrow B$. If now $\alpha \in H^n \Hom_{B^\ev}(\mathbb B B, B)$ is represented by a cocycle $\varphi\colon B^{\otimes(n+2)} \rightarrow B$, the map $g_n^C$ will take $\alpha$ to the equivalence class of the cocycle
$$
\varphi \circ \varphi_n \colon Be \otimes_C P_n \otimes_{C} eB \longrightarrow B 
$$
in $H^n \Hom_{B^\ev}(Be \otimes_C \mathbb PC \otimes_C eB, B)$, which, in turn, will be taken to the map
$$
(e\varphi e) \circ (e \varphi_n e) = e(\varphi \circ \varphi_n)e \colon P_n \xrightarrow{\, \sim \,} eBe \otimes_C P_n \otimes_{C} eBe \xrightarrow{\, \ \, } eBe = C
$$
by the canonical isomorphism $\Hom_{B^\ev}(Be \otimes_C \mathbb PC \otimes_C eB, B) \cong \Hom_{C^\ev}(\mathbb P C,C)$.

On the other hand, $\varphi$ corresponds to the Yoneda extension $S \in \mathcal Ext^n_{B^\ev}(B,B)$ given by the lower row in the pushout diagram
$$
\xymatrix@C=18pt{
\cdots \ar[r] & B^{\otimes(n+2)} \ar[d]_-{\varphi} \ar[r] & B^{\otimes(n+1)} \ar[r] \ar[d] & \cdots \ar[r] & B \otimes B \ar@{=}[d] \ar[r] & B \ar@{=}[d] \ar[r] & 0 \, \, \\
0 \ar[r] & B \ar[r] & B \oplus_{B^{\otimes(n+2)}} B^{\otimes(n+1)} \ar[r] & \cdots \ar[r] & B \otimes B \ar[r] & B \ar[r] & 0 \, ,
}
$$
whereas $\psi = e(\varphi \circ \varphi_n) e$ corresponds to the lower sequence $T$ in 
$$
\xymatrix@C=18pt{
\cdots \ar[r] & P_n \ar[d]_-{\psi} \ar[r] & P_{n-1} \ar[r] \ar[d] & \cdots \ar[r] & P_0 \ar@{=}[d] \ar[r] & C \ar@{=}[d] \ar[r] & 0 \, \, \\
0 \ar[r] & C \ar[r] & C \oplus_{P_n} P_{n-1} \ar[r] & \cdots \ar[r] & P_0 \ar[r] & C \ar[r] & 0 \, .
}
$$
Now the map
$$
C \oplus P_{n-1} \longrightarrow C \oplus e (B^{\otimes(n+1)}) e, \ (c, p) \mapsto (c,e\varphi_{n-1}(p)e)
$$
induces a well-defined map
$$
\tilde{\varphi}_{n-1} \colon C \oplus_{P_n} P_{n-1} \longrightarrow e \left(B \oplus_{B^{\otimes(n+2)}} B^{\otimes(n+1)}\right)e \,.
$$
To conclude, observe that we obtain the commutative diagram
$$
\xymatrix@C=15pt{
T \ar[d] & 0 \ar[r] & C \ar[r] \ar@{=}[d] & C \oplus_{P_n} P_{n-1} \ar[r] \ar[d]_-{\tilde{\varphi}_{n-1}} & \cdots \ar[r] & P_0 \ar[d]^-{e\varphi_0 e} \ar[r] & C \ar[r] \ar@{=}[d] & 0\,\, \\
eSe & 0 \ar[r] & eBe \ar[r] & e\left(B \oplus_{B^{\otimes(n+2)}} B^{\otimes(n+1)}\right)e \ar[r] & \cdots \ar[r] & eB \otimes Be \ar[r] & eBe \ar[r] & 0 \, ,
}
$$
that is, a morphism $T \rightarrow eSe$ of $n$-extensions over $C^\ev$, which finishes the proof.
\end{proof}

\begin{exa}\label{exa:matrixalg}
Let $R$ and $S$ be $K$-algebras, and let $M$ be a $R \otimes S^\op$-module. Assume further, that $R$, $S$ and $M$ are projective as $K$-modules. Then the matrix algebra
$$
B = \left(\begin{matrix}
R & M\\
0 & S
\end{matrix}
\right)
$$
along with the canonical idempotents
$$
e = \left(\begin{matrix}
1 & 0\\
0 & 0
\end{matrix}\right) \quad \text{and} \quad e' = \left(\begin{matrix}
0 & 0\\
0 & 1
\end{matrix}\right)
$$
fulfils the requirements of Theorem \ref{thm:greensol}, because of $\Tor^{eBe}_i(Be,eB) = \Tor^R_i(0,M) = 0$ and $\Tor^{e'Be'}_i(Be',e'B) = \Tor^S_i(M,0) = 0$ for all $i > 0$.
\end{exa}

\begin{cor}\label{cor:tensorvanish}
Let $B$ be a matrix algebra as above. Then there is a long exact sequence
\begin{align*}
\cdots \longrightarrow \HH^n(B) \xrightarrow{\, g_n \, } \HH^n(R) \oplus \HH^n(S) \xrightarrow{\, h_n \, } \Ext^{n}_{R \otimes S^\op}(M,M) \longrightarrow \cdots
\end{align*}
in Hochschild cohomology. The graded map $g_\ast\colon \HH^\ast(B) \rightarrow \HH^\ast(R) \times \HH^\ast(S)$ is a homomorphism of strict Gerstenhaber algebras.
\end{cor}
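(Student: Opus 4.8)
\emph{Proof plan.} The plan is to obtain the corollary as a direct specialisation of Theorem~\ref{thm:greensol}, the only additional input being the identification of the third term of the long exact sequence. By Example~\ref{exa:matrixalg}, the canonical idempotents $e = \smatrix{1&0\\0&0}$ and $e' = 1-e$ of $B$ satisfy $\Tor^{eBe}_i(Be,eB) = 0 = \Tor^{e'Be'}_i(Be',e'B)$ for all $i > 0$; moreover $eBe \cong R$ and $e'Be' \cong S$, and $B$ is $K$-projective because $R$, $S$ and $M$ are. Hence Theorem~\ref{thm:greensol} applies and provides a long exact sequence
$$
\cdots \longrightarrow \HH^n(B) \xrightarrow{\ g_n\ } \HH^n(R) \oplus \HH^n(S) \xrightarrow{\ h_n\ } \Ext^n_{B^\ev}(\Omega^1_B, B) \longrightarrow \cdots
$$
together with the assertion that $g_\ast$ is a homomorphism of strict Gerstenhaber algebras. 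It thus remains to produce a natural isomorphism $\Ext^\ast_{B^\ev}(\Omega^1_B, B) \cong \Ext^\ast_{R \otimes S^\op}(M,M)$, transport $h_n$ along it, and observe that the Gerstenhaber part of the statement has already been supplied by Theorem~\ref{thm:greensol}.

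Next I would compute $\Omega^1_B$ explicitly. Since $Be$ is free of rank one as a right $eBe$-module (indeed $Be = eBe$) and $e'B$ is free of rank one as a left $e'Be'$-module (indeed $e'B = e'Be'$), one has $B^\ev$-module isomorphisms $Be \otimes_{eBe} eB \cong eB$ and $Be' \otimes_{e'Be'} e'B \cong Be'$, under which the surjection in~(\ref{eq:fundexactsequ}) becomes the sum of the inclusions $eB \hookrightarrow B$ and $Be' \hookrightarrow B$. Its kernel is $eB \cap Be' = eBe'$, so $\Omega^1_B \cong eBe'$; and as a $B^\ev$-module $eBe'$ is nothing but $M$ equipped with the $R$-$S$-bimodule structure obtained by restriction along the two quotient maps $B \twoheadrightarrow B/Be'B \cong R$ and $B \twoheadrightarrow B/BeB \cong S$, i.e.\ along the induced surjection $B^\ev \twoheadrightarrow R \otimes S^\op$.

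To compare the $\Ext$-groups I would introduce the functor $F \colon \Mod(R \otimes S^\op) \to \Mod(B^\ev)$, $F(X) = Be \otimes_R X \otimes_S e'B$. It is exact (again because $Be$, resp.\ $e'B$, is free of rank one on the relevant side), it commutes with direct sums, and $F(R \otimes S^\op) \cong Be \otimes_K e'B$ is a direct summand of $B \otimes_K B \cong B^\ev$; hence $F$ carries projective $R \otimes S^\op$-modules to projective $B^\ev$-modules. Furthermore $F$ is left adjoint to $e(-)e' \colon \Mod(B^\ev) \to \Mod(R \otimes S^\op)$, with $\Hom_{B^\ev}(F(X), N) \cong \Hom_{R \otimes S^\op}(X, eNe')$, and one checks $F(M) \cong eBe' \cong \Omega^1_B$. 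Consequently, for a projective resolution $\mathbb P \to M$ over $R \otimes S^\op$ the complex $F(\mathbb P) \to \Omega^1_B$ is a projective resolution over $B^\ev$, and the adjunction (with $N = B$, using $eBe' \cong M$) yields
$$
\Ext^n_{B^\ev}(\Omega^1_B, B) = H^n \Hom_{B^\ev}(F(\mathbb P), B) \cong H^n \Hom_{R \otimes S^\op}(\mathbb P, M) = \Ext^n_{R \otimes S^\op}(M, M)\,,
$$
as desired. I expect the main obstacle to be purely organisational: one must carefully verify the exactness of $F$, the displayed adjunction isomorphism, and the preservation of projectivity by $F$, together with the matrix-algebra bookkeeping that identifies $\Omega^1_B$ with $eBe'$; once these are in place everything else is formal.
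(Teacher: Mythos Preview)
Your proposal is correct and follows essentially the same route as the paper: identify $\Omega^1_B$ with $eBe' \cong M$, then use the exact functor $Be \otimes_R (-) \otimes_S e'B$ (left adjoint to $e(-)e'$) to turn an $R \otimes S^\op$-projective resolution of $M$ into a $B^\ev$-projective resolution and compute $\Ext^n_{B^\ev}(\Omega^1_B,B) \cong \Ext^n_{R\otimes S^\op}(M,M)$ via the adjunction. The paper's argument is more terse but structurally identical, so there is nothing to add.
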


\begin{proof}
Under the given assumptions, the module $\Omega^1_B$ evidently is isomorphic to $\left(\begin{smallmatrix}0 & M\\ 0 & 0 \end{smallmatrix}\right) = M$. Moreover, we have$$
\Ext^{n}_{B^\ev}(M,B) \xrightarrow{\, \sim \, } \Ext^{n}_{R \otimes S^\op}(M,M)
$$
as any projective resolution $\mathbb P M \rightarrow M \rightarrow 0$ over $R \otimes S^\op$ is already a $B^\ev$-projective resolution $\mathbb P M \rightarrow M \rightarrow 0$. Indeed, we have canonical isomorphisms
$$
Be \otimes_R \mathbb P M \otimes_S e'B \cong R \otimes_R \mathbb P M \otimes_S S \cong \mathbb P(M)
$$
which yield the desired identification. Thus, by adjunction,
$$
H^n \Hom_{B^\ev}(\mathbb P M, B) \cong H^n \Hom_{R \otimes S^\op}(\mathbb PM, eB \otimes_B B \otimes_B Be') \cong \Hom_{R \otimes S^\op}(\mathbb PM, M)
$$
and the assertion follows from Theorem $\ref{thm:greensol}$.
\end{proof}

\begin{cor}\label{cor:strexc}
Let $B$ be a matrix algebra as above. 
\begin{enumerate}[\rm(1)]
\item If $d = \pd_{R \otimes S^\op}(M)$ is finite, then $g_n \colon \HH^n(B) \rightarrow \HH^n(R) \times \HH^n(S)$ is an isomorphism for $n > d$.

\item If $M$ is \emph{strongly exceptional} over $R \otimes S^\op$ $($i.e., $\Ext^n_{R \otimes S^\op}(M,M) = 0$ for $n > 0$ and $\End_{R \otimes S^\op}(M) = K)$, then there is the short exact sequence
$$
0 \longrightarrow \HH^0(B) \xrightarrow{\ g_0 \, } \HH^0(R) \oplus \HH^0(S) \longrightarrow K \longrightarrow 0
$$
and $g_{> 0} \colon \HH^{> 0}(B) \rightarrow \HH^{> 0}(R) \times \HH^{> 0}(S)$ is an isomorphism of non-unital strict Gerstenhaber algebras.
\end{enumerate}\qed
\end{cor}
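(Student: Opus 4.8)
The plan is to read both statements off the long exact sequence of Corollary~\ref{cor:tensorvanish},
$$
\cdots \longrightarrow \Ext^{n-1}_{R\otimes S^\op}(M,M) \longrightarrow \HH^n(B) \xrightarrow{\,g_n\,} \HH^n(R)\oplus\HH^n(S) \xrightarrow{\,h_n\,} \Ext^{n}_{R\otimes S^\op}(M,M) \longrightarrow \HH^{n+1}(B) \longrightarrow \cdots,
$$
together with Theorem~\ref{thm:greensol}, which already guarantees that $g_\ast$ is a homomorphism of strict Gerstenhaber algebras; only the bijectivity of the individual maps $g_n$ remains to be controlled.

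Part (1) is then purely formal: with $d=\pd_{R\otimes S^\op}(M)<\infty$ one has $\Ext^i_{R\otimes S^\op}(M,M)=0$ for all $i>d$, so for $n>d$ the two $\Ext$-terms flanking $\HH^n(B)$ vanish and $g_n$ is squeezed between zero maps, hence an isomorphism.

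For part (2) strong exceptionality supplies $\Ext^i_{R\otimes S^\op}(M,M)=0$ for $i>0$ together with $\End_{R\otimes S^\op}(M)=K$. Substituting these in yields isomorphisms $g_n$ for every $n\geq 2$ at once, while in low degrees the sequence becomes
$$
0\longrightarrow \HH^0(B)\xrightarrow{\,g_0\,}\HH^0(R)\oplus\HH^0(S)\xrightarrow{\,h_0\,} K\longrightarrow\HH^1(B)\xrightarrow{\,g_1\,}\HH^1(R)\oplus\HH^1(S)\longrightarrow 0.
$$
The key point is that $h_0$ is \emph{surjective}; granting this, the first four terms of the long exact sequence form the asserted short exact sequence $0\to\HH^0(B)\to\HH^0(R)\oplus\HH^0(S)\to K\to 0$, and the connecting map $K\to\HH^1(B)$ vanishes, so $g_1$ is injective and, being already onto, an isomorphism. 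Hence every $g_n$ with $n\geq 1$ is bijective. For the structural assertion one notes that $\HH^{>0}$ is closed under the cup product, under the bracket (as $m,n\geq 1$ forces $m+n-1\geq 1$) and under the squaring map (as $n\geq 1$ forces $4n-1\geq 3$), so $(\HH^{>0}(-),\smallsmile,\{-,-\},sq)$ is a non-unital strict Gerstenhaber algebra; the graded bijection $g_{>0}$, being the restriction of the strict Gerstenhaber homomorphism $g_\ast$ of Theorem~\ref{thm:greensol}, is an isomorphism of such.

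The remaining, and I expect only non-routine, step is the surjectivity of $h_0$. I would unwind the identifications behind Corollary~\ref{cor:tensorvanish}: in degree $0$, Lemma~\ref{lem:isoidealtor} identifies $\Hom_{B^\ev}\bigl((Be\otimes_{eBe} eB)\oplus(Be'\otimes_{e'Be'} e'B),B\bigr)$ with $Z(R)\oplus Z(S)=\HH^0(R)\oplus\HH^0(S)$ (recall $eBe=R$, $e'Be'=S$) in such a way that $(1,0)$ corresponds to the $B^\ev$-homomorphism $(\mu_e,0)$; on the other side, the degree-$0$ instance of the isomorphism $\Ext^\ast_{B^\ev}(M,B)\cong\Ext^\ast_{R\otimes S^\op}(M,M)$ used in the proof of Corollary~\ref{cor:tensorvanish}, together with the isomorphism $\Omega^1_B\xrightarrow{\sim}M$, $(x,y)\mapsto\mu_e(x)$, identifies the class $h_0(1,0)$ with $\id_M$. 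Since $\id_M$ spans $\End_{R\otimes S^\op}(M)=K$, the map $h_0$ is onto, and the remainder of part (2) follows as indicated above.
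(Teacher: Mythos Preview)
Your approach is exactly what the paper intends---the corollary carries only a \qed, as both parts are meant to be read off the long exact sequence of Corollary~\ref{cor:tensorvanish}, and your explicit verification that $h_0$ is surjective in part~(2) is a welcome addition to that implicit argument.

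There is, however, a small slip in your treatment of part~(1): for $n=d+1$ the term $\Ext^{n-1}_{R\otimes S^\op}(M,M)=\Ext^{d}_{R\otimes S^\op}(M,M)$ need not vanish, so your claim that ``the two $\Ext$-terms flanking $\HH^n(B)$ vanish'' only yields the isomorphism for $n\geqslant d+2$. To conclude that $g_{d+1}$ is bijective from the long exact sequence alone one would also need the connecting homomorphism $\Ext^{d}_{R\otimes S^\op}(M,M)\to\HH^{d+1}(B)$ to vanish, equivalently $h_d$ to be surjective, and neither follows automatically from the hypothesis $\pd_{R\otimes S^\op}(M)=d$. This is arguably an imprecision already present in the stated corollary rather than a flaw in your method, but the argument as you wrote it does not cover the boundary case $n=d+1$.
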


The special case $M = 0$ also yields an interesting consequence.

\begin{cor}
Let $R$ and $S$ be $K$-algebras being projective as $K$-modules. Then
$$
\HH^\ast(R \times S) \cong \HH^\ast(R) \times \HH^\ast(S)
$$
as strict Gerstenhaber algebras; that is, the strict Gerstenhaber structure on Hochschild cohomology commutes with finite products.\qed
\end{cor}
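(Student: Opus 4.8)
The plan is to read the statement off Corollary~\ref{cor:tensorvanish} by specialising to the zero bimodule. First I would apply Corollary~\ref{cor:tensorvanish} with $M = 0$: the relevant algebra is then $B = \smatrix{R & 0 \\ 0 & S}$, which is $K$-projective because $R$ and $S$ are, and which is canonically isomorphic, as a $K$-algebra, to the product $R \times S$; hence $\HH^\ast(R \times S) = \HH^\ast(B)$. Since $M = 0$ is trivially $K$-projective, Corollary~\ref{cor:tensorvanish} supplies a long exact sequence
$$
\cdots \longrightarrow \HH^n(B) \xrightarrow{\ g_n\ } \HH^n(R) \oplus \HH^n(S) \longrightarrow \Ext^n_{R \otimes S^\op}(0,0) \longrightarrow \cdots
$$
and tells us moreover that $g_\ast \colon \HH^\ast(B) \rightarrow \HH^\ast(R) \times \HH^\ast(S)$ is a homomorphism of strict Gerstenhaber algebras. (Alternatively one sees this directly: in the fundamental sequence of Paragraph~\ref{nn:fundexact} one has $\Omega^1_B \cong M = 0$.)

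Next I would observe that $\Ext^n_{R \otimes S^\op}(0,0) = 0$ for every $n \geqslant 0$, so the connecting maps vanish and the long exact sequence collapses into isomorphisms $g_n \colon \HH^n(B) \xrightarrow{\sim} \HH^n(R) \oplus \HH^n(S)$; thus $g_\ast$ is bijective. Finally I would invoke the elementary fact that a homomorphism of strict Gerstenhaber algebras which is bijective on underlying graded modules is automatically an isomorphism of strict Gerstenhaber algebras, the inverse $g_\ast^{-1}$ again being compatible with the cup product, the bracket $\{-,-\}$ and the squaring map $sq$ by transport of structure, e.g.
$$
g_\ast^{-1}(\xi \smallsmile \eta) = g_\ast^{-1}\bigl(g_\ast(g_\ast^{-1}\xi) \smallsmile g_\ast(g_\ast^{-1}\eta)\bigr) = g_\ast^{-1}g_\ast\bigl(g_\ast^{-1}\xi \smallsmile g_\ast^{-1}\eta\bigr) = g_\ast^{-1}\xi \smallsmile g_\ast^{-1}\eta,
$$
and analogously for $\{-,-\}$ and $sq$. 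This yields $\HH^\ast(R \times S) \cong \HH^\ast(R) \times \HH^\ast(S)$ as strict Gerstenhaber algebras, and the case of an arbitrary finite product then follows by an immediate induction on the number of factors.

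I do not anticipate a real obstacle: everything reduces to the previous corollary, and the only points deserving a line of justification are the degeneration of the long exact sequence (clear, since the connecting $\Ext$-groups are those of the zero module) and the promotion of a bijective Gerstenhaber homomorphism to an isomorphism of strict Gerstenhaber algebras (purely formal).
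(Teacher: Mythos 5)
Your proposal is correct and follows exactly the route the paper intends: the corollary is stated with \qed precisely because it is the special case $M=0$ of Corollary \ref{cor:tensorvanish}, where $\Ext^\ast_{R\otimes S^\op}(0,0)=0$ forces $g_\ast$ to be a bijection, and a bijective homomorphism of strict Gerstenhaber algebras is formally an isomorphism. Nothing further is needed.
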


We continue with an analysis of the map $\HH^\ast(R) \times \HH^\ast(S) \rightarrow \Ext^\ast_{R \otimes S^\op}(M,M)$.

\begin{lem}\label{lem:grsolfunctor}
Let $B$ be a matrix algebra as in Example $\ref{exa:matrixalg}$. The respective restrictions of the functors $- \otimes_R M \colon \Mod(R^\ev) \rightarrow \Mod(R \otimes S^\op)$ and $M \otimes_S -\colon \Mod(S^\ev) \rightarrow \Mod(R \otimes S^\op)$ to $\P(R)$ and $\P(S)$ are exact. Moreover, the following statements hold true.
\begin{enumerate}[\rm(1)]
\item\label{lem:grsolfunctor:1a} Assume that $M = eBe'$ is a projective left $R = eBe$-module. Then the functors $- \otimes_R M \colon \Mod(R^\ev) \rightarrow \Mod(R \otimes S^\op)$ and $M \otimes_S - \colon \Mod(S^\ev) \rightarrow \Mod(R \otimes S^\op)$ restrict to exact functors
$$
- \otimes_R M \colon \mathsf P(R) \longrightarrow \mathsf P_R(R,S), \quad M \otimes_S - \colon \P(S) \longrightarrow \mathsf P_R(R,S)
$$
where $\mathsf P_R(R,S) \subseteq \Mod(R \otimes S^\op)$ denotes the full subcategory of those modules that are projective as left $R$-modules.

\item\label{lem:grsolfunctor:1b} Assume that $M = eBe'$ is a projective right $S = e'Be'$-module. Then the functors $- \otimes_R M \colon \Mod(R^\ev) \rightarrow \Mod(R \otimes S^\op)$ and $M \otimes_S - \colon \Mod(S^\ev) \rightarrow \Mod(R \otimes S^\op)$ restrict to exact functors
$$
- \otimes_R M \colon \mathsf P(R) \longrightarrow \mathsf P_S(R,S), \quad M \otimes_S - \colon \P(S) \longrightarrow \mathsf P_S(R,S)
$$
where $\mathsf P_S(R,S) \subseteq \Mod(R \otimes S^\op)$ denotes the full subcategory of those modules that are projective as right $S$-modules.

\item\label{lem:grsolfunctor:2} The induced maps
$$
\HH^\ast(R) \cong \Ext^\ast_{\P(R)}(R,R) \xrightarrow{\ \ } \Ext^\ast_{\mathsf P_S(R,S)}(M, M) \xrightarrow{\, \sim \, } \Ext^\ast_{R \otimes S^\op}(M,M)
$$
and
$$
\HH^\ast(S) \cong \Ext^\ast_{\P(S)}(S,S) \xrightarrow{\ \ } \Ext^\ast_{\mathsf P_R(R,S)}(M, M) \xrightarrow{\, \sim \, } \Ext^\ast_{R \otimes S^\op}(M,M)
$$
are the component maps in the long exact sequence of Corollary $\ref{cor:tensorvanish}$ $($up to the sign $(-1)^{\delta_{\Sigma,S}}$, where $\delta_{\Sigma,S}$ is the Kronecker delta for $\Sigma \in \{R, S\})$. In particular, those $($signed$)$ component maps are homomorphisms of graded $K$-algebras.
\end{enumerate}
\end{lem}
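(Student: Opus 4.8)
The plan is to treat the three parts in turn: the exactness statements and parts (1a), (1b) reduce to tensor--hom adjunctions, while (2) is obtained by transporting the fundamental sequence of Paragraph \ref{nn:fundexact} along the exact functor $e(-)e'$.

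\emph{Exactness of the restrictions and parts (1a), (1b).} Every admissible exact sequence in $\P(R)$ has its right end projective as a right $R$-module, hence splits as a sequence of right $R$-modules; since $-\otimes_R M$ only involves the right $R$-action on its argument, it preserves exactness on $\P(R)$, and dually $M\otimes_S-$ is exact on $\P(S)$. For (1a) I would invoke the adjunction isomorphisms of functors on left $R$-modules
\[
\Hom_R(N\otimes_R M,-)\cong\Hom_R\!\big(M,\Hom_R(N,-)\big),\qquad \Hom_R(M\otimes_S L,-)\cong\Hom_S\!\big(L,\Hom_R(M,-)\big),
\]
where $\Hom_R(N,-)$ carries its left $R$-structure via the right $R$-action on $N$, and similarly $\Hom_R(M,-)$ is an $S$-module via the right $S$-action on $M$. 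For $N\in\P(R)$ and $L\in\P(S)$ the functors $\Hom_R(N,-)$ and $\Hom_S(L,-)$ are exact, and the hypothesis that $M$ be left $R$-projective makes $\Hom_R(M,-)$ exact; hence $N\otimes_R M$ and $M\otimes_S L$ lie in $\mathsf P_R(R,S)$. Part (1b) is the mirror statement, obtained by replacing ``left $R$'' with ``right $S$'' throughout and using $\Hom_{S^\op}(N\otimes_R M,-)\cong\Hom_{R^\op}(N,\Hom_{S^\op}(M,-))$ and $\Hom_{S^\op}(M\otimes_S L,-)\cong\Hom_{S^\op}(M,\Hom_{S^\op}(L,-))$. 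Since $R$ and $S$ are $K$-projective, $R\otimes S^\op$ is projective both as a left $R$- and as a right $S$-module, so $\mathsf P_R(R,S)$ and $\mathsf P_S(R,S)$ are entirely extension closed in $\Mod(R\otimes S^\op)$ by Proposition \ref{prop:entireextclo} (exactly as for $\P_\lambda(A)$); this legitimises the $\Ext$-identifications occurring in (2) whenever $M$ lies in the relevant subcategory.

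\emph{Part (2).} I would first record the factorisations of functors
\[
-\otimes_R M\;\cong\;e(-)e'\circ\big(Be\otimes_R(-)\otimes_R eB\big),\qquad M\otimes_S-\;\cong\;e(-)e'\circ\big(Be'\otimes_S(-)\otimes_S e'B\big),
\]
which follow from the canonical isomorphisms $eB\otimes_B Be\cong eBe=R$, $e'B\otimes_B Be'\cong e'Be'=S$ and $eB\otimes_B Be'\cong eBe'=M$. The functor $e(-)e'=eB\otimes_B(-)\otimes_B Be'$ is exact and carries $B$ to $M$, and — the clean point — it realises, via application to extensions, both the isomorphism $\Ext^\ast_{B^\ev}(Be\otimes_R eB,B)\cong\HH^\ast(R)$ of Lemma \ref{lem:isoidealtor} (after postcomposing that identification with $e(-)e'$) and the isomorphism $\Ext^\ast_{B^\ev}(M,B)\cong\Ext^\ast_{R\otimes S^\op}(M,M)$ of Corollary \ref{cor:tensorvanish}; one checks $e\mu_e e=\id_R$ and $e\mu_e e'=\id_M$ to see this. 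Applying $e(-)e'$ to the long exact $\Hom_{B^\ev}(-,B)$-sequence of $0\to M\to (Be\otimes_R eB)\oplus(Be'\otimes_S e'B)\to B\to 0$, and using that the kernel inclusion $\iota$ has, because $\Ker(\mu_e+\mu_{e'})=\{(\mu_e^{-1}(m),-\mu_{e'}^{-1}(m)):m\in M\}$, corner components $\iota_1,\iota_2$ with $e\iota_1 e'=\id_M$ and $e\iota_2 e'=-\id_M$, I would conclude that the component map $\HH^n(R)\to\Ext^n_{R\otimes S^\op}(M,M)$ of Corollary \ref{cor:tensorvanish} equals $(-\otimes_R M)_\ast$ and the component map $\HH^n(S)\to\Ext^n_{R\otimes S^\op}(M,M)$ equals $-(M\otimes_S-)_\ast$ — precisely the assertion, up to the sign $(-1)^{\delta_{\Sigma,S}}$. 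Concretely one runs a $\P(R)$- (resp.\ $\P(S)$-) projective resolution through the factorisations above and chases the resulting square of identifications. The final sentence then follows at once, since $-\otimes_R M$ and $M\otimes_S-$ are exact on $\P(R)$, $\P(S)$ and send $R$, $S$ to $M$, hence take Yoneda splices to Yoneda splices, so the induced (signed) maps on $\Ext^\ast$ are homomorphisms of graded $K$-algebras.

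\emph{Expected obstacle.} Everything except the sign is routine bookkeeping. The delicate point is to fix the conventions in the two identifications $\Ext^\ast_{B^\ev}(M,B)\cong\Ext^\ast_{R\otimes S^\op}(M,M)$ and $\Ext^\ast_{B^\ev}(Be'\otimes_S e'B,B)\cong\HH^\ast(S)$, and to carry the asymmetry between the map $\mu_e+\mu_{e'}$ on $(Be\otimes_R eB)\oplus(Be'\otimes_S e'B)$ and the kernel embedding $(\mu_e^{-1},-\mu_{e'}^{-1})$ through the connecting homomorphism, so as to see that it is exactly the factor $(-1)^{\delta_{\Sigma,S}}$ that must be inserted for the $S$-component map — which is otherwise only $-1$ times a ring homomorphism — to become multiplicative.
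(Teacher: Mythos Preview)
Your argument for the exactness and for (1a)–(1b) is the same as the paper's: both reduce to tensor–hom adjunctions.

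For (2) the two proofs share the same core idea — transport everything through the exact functor $e(-)e'$ — but execute it differently. The paper works at the level of cocycles: it takes $\varphi\colon R^{\otimes(n+2)}\to R$, pushes it through the adjunction of Lemma~\ref{lem:isoidealtor} to a matrix-form cocycle $\psi(\varphi)$ on $Be\otimes_R\mathbb BR\otimes_R eB$, forms the pushout/pullback diagram explicitly in $2\times2$ matrices, and then computes directly that $eP(\psi)e'\cong P(\varphi)\otimes_R M$, i.e.\ that the resulting extension is $(-\otimes_R M)$ applied to the original one. You instead package this as the functor factorisation $-\otimes_R M\cong e(-)e'\circ\bigl(Be\otimes_R(-)\otimes_R eB\bigr)$ together with the observations $e\iota_1e'=\id_M$, $e\iota_2e'=-\id_M$, which is precisely the conceptual reason \emph{why} the paper's matrix computation comes out as it does. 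Your approach is shorter and explains the sign more transparently; the paper's is more explicit and leaves nothing to ``one checks''. What you defer (pinning down that the isomorphisms of Lemma~\ref{lem:isoidealtor} and Corollary~\ref{cor:tensorvanish} are, on extensions, given by $(Be\otimes_R(-)\otimes_R eB)_\ast$ followed by pushout along $\mu_e$, respectively by $e(-)e'$) is exactly the bookkeeping the paper carries out with the bar resolution.

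One phrasing to tighten: ``applying $e(-)e'$ to the long exact $\Hom_{B^\ev}(-,B)$-sequence'' is not literally meaningful, since that sequence lives in $K$-modules; what you do (and what the paper does) is apply $e(-)e'$ to the representing extensions and to the structure maps $\iota_1,\iota_2,\mu_e,\mu_{e'}$ of the fundamental short exact sequence.
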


\begin{proof}
Exactness of the restricted functors is automatic, as they are additive and the bimodules they are defined on are projective on either side. If $X$ is a module in $\mathsf P(R)$, then
$$
\Hom_{R}(X \otimes_R M, -) \cong \Hom_{R}(M, \Hom_{R}(X,-))
$$
is exact by assumption; thus $X \otimes_R M \in \mathsf P_R(R,S)$. Further, for $Y \in \mathsf P(S)$,
$$
\Hom_{R}(M \otimes_S Y,-) \cong \Hom_{S^\op}(Y, \Hom_{R}(M,-)),
$$
so that $M \otimes_S Y \in \P_R(R,S)$, whence (\ref{lem:grsolfunctor:1a}) follows. Similar arguments apply to deduce (\ref{lem:grsolfunctor:1b}). As for item (\ref{lem:grsolfunctor:2}), notice that the bar resolution $\mathbb B R \rightarrow R \rightarrow 0$ gives rise to a projective resolution $Be \otimes_R \mathbb B R \otimes_R eB \rightarrow Be \otimes_R eB \rightarrow 0$ over $R \otimes S^\op$ (indeed, as $M$ is projective as a right $S$-module, the functor $\Hom_{R \otimes S^\op}((R \otimes R) \otimes_R M, -) \cong \Hom_R(R, \Hom_{S^\op}(M,-))$ is exact). The latter resolution may be expressed as
$$
\xymatrix{
Be \otimes_R \mathbb B R \otimes_R eB \ar[r] \ar@{=}[d] & Be \otimes_R eB \ar[r] \ar@{=}[d] & 0\, \,\\
\displaystyle {\left(\begin{matrix}
\mathbb B R & \mathbb B R \otimes_R M\\
0 & 0
\end{matrix}\right)} \ar[r] & \displaystyle {\left(\begin{matrix}
R & M\\
0 & 0
\end{matrix}\right)} \ar[r] & 0 \, .
}
$$
The adjunction isomorphism yields, for $n \geqslant 0$,
\begin{align*}
H^n \Hom_{R^\ev}(\mathbb B R, R) \xrightarrow{\, \sim \, } H^n \Hom_{B^\ev}&(Be \otimes_R \mathbb B R \otimes_R eB, B)\\ &= H^n \Hom_{B^\ev}\left({\left(\begin{matrix}
\mathbb B R & \mathbb B R \otimes_R M\\
0 & 0
\end{matrix}\right)}, B\right)
\end{align*}
which sends the equivalence class of a cocylce $\varphi \colon R^{\otimes(n + 2)} \rightarrow R$ to the map $\psi(\varphi)$ given by
$$
\psi(\varphi){\left(\begin{matrix}
r & r' \otimes m\\
0 & 0
\end{matrix}\right)} = \left(\begin{matrix}
\varphi(r) & \varphi(r')m\\
0 & 0
\end{matrix}\right) \ \in \ B \
\quad (\text{for $r, r' \in R^{\otimes(n + 2)}$ and $m \in M$}).
$$
The cocycle $\psi = \psi(\varphi)$ corresponds to the middle sequence in the diagram with exact rows below, whereas the rightmost sequence represents its image (under $h^R_n$) in $\Ext^n_{B^\ev}(M,B)$.
$$
\xymatrix@C=24pt{
\ar[d] \tvdots & 0 \ar[d] & 0 \ar[d]\\
{\left(\begin{matrix}
R^{\otimes(n+2)} & R^{\otimes(n+2)} \otimes_R M\\
0 & 0
\end{matrix}\right)} \ar[r]^-{\psi} \ar[d] & B \ar[d] & \ar@{=}[l] \ar[d] B\\
{\left(\begin{matrix}
R^{\otimes(n+1)} & R^{\otimes(n+1)} \otimes_R M\\
0 & 0
\end{matrix}\right)} \ar[r] \ar[d] & P(\psi) \ar[d] & P(\psi) \ar[d] \ar@{=}[l] \\
\tvdots \ar[d] & \tvdots \ar[d] & \tvdots \ar[d] \\
{\left(\begin{matrix}
R \otimes R & (R \otimes R) \otimes_R M\\
0 & 0
\end{matrix}\right)} \ar@{=}[r] \ar[d] & {\left(\begin{matrix}
R \otimes R & (R \otimes R) \otimes_R M\\
0 & 0
\end{matrix}\right)} \ar[d] & \ar[l] Q(\mathrm{inc}) \ar[d]\\
{\left(\begin{matrix}
R & M\\
0 & 0
\end{matrix}\right)} \ar@{=}[r] \ar[d] & {\left(\begin{matrix}
R & M\\
0 & 0
\end{matrix}\right)} \ar[d] & M \ar@{ >->}[l]_-{\mathrm{inc}} \ar[d]\\
0 & 0 & 0
}
$$
Here $P(\psi)$ denotes the pushout and $Q(\mathrm{inc})$ denotes the pullback of the obvious diagrams. Observe that $Q = Q(\mathrm{inc})$ is canonically isomorphic to $(R \otimes R) \otimes_R M \cong R \otimes M$. Further, if $\beta_n \colon R^{\otimes(n+2)} \rightarrow R^{\otimes(n+1)}$ denotes the $n$-th differential in $\mathbb B R$ and $\tilde{\beta}_n$ is the map
$$
{\left(\begin{matrix}
\beta_n & \beta_n \otimes_R M\\
0 & 0
\end{matrix}\right)} \colon {\left(\begin{matrix}
R^{\otimes(n+2)} & R^{\otimes(n+2)} \otimes_R M\\
0 & 0
\end{matrix}\right)} \longrightarrow {\left(\begin{matrix}
R^{\otimes(n+1)} & R^{\otimes(n+1)} \otimes_R M\\
0 & 0
\end{matrix}\right)} \, ,
$$
the module $P(\psi)$ is given by the cokernel in the exact sequence
$$
\xymatrix@C=24pt{
{\left(\begin{matrix}
R^{\otimes(n+2)} & R^{\otimes(n+2)} \otimes_R M\\
0 & 0
\end{matrix}\right)} \ar[r]^-{
\left(
\begin{smallmatrix}
\psi\\
- \tilde{\beta}_n
\end{smallmatrix}
\right)
} & B \oplus {\left(\begin{matrix}
R^{\otimes(n+1)} & R^{\otimes(n+1)} \otimes_R M\\
0 & 0
\end{matrix}\right)} \ar@{->>}[r] & \Coker{\left(\begin{smallmatrix}
\psi\\
- \tilde{\beta}_n
\end{smallmatrix}\right)}
}
$$
and therefore applying the exact functor $e(-)e' \cong eB \otimes_B (-) \otimes_B Be'$ yields the exact sequence
$$
\small{
\xymatrix@C=10pt{
e{\left(\begin{matrix}
R^{\otimes(n+2)} & R^{\otimes(n+2)} \otimes_R M\\
0 & 0
\end{matrix}\right)}e' \ar[r] \ar@{=}[d] & eBe' \oplus e{\left(\begin{matrix}
R^{\otimes(n+1)} & R^{\otimes(n+1)} \otimes_R M\\
0 & 0
\end{matrix}\right)}e' \ar@{->>}[r] \ar@{=}[d] & e\Coker{\left(\begin{smallmatrix}
\psi\\
- \tilde{\beta}_n
\end{smallmatrix}\right)}e' \ar[d]^-\cong\\
R^{\otimes(n+2)} \otimes_R M \ar[r]^-{\left(\begin{smallmatrix}
\varphi \otimes_R M\\
- {\beta}_n \otimes_R M
\end{smallmatrix}\right)} & M \oplus R^{\otimes(n+1)} \otimes_R M \ar@{->>}[r] & \Coker{\left(\begin{smallmatrix}
\varphi \otimes_R M\\
- {\beta}_n \otimes_R M
\end{smallmatrix}\right)}
}}
$$
As ${\left(\begin{smallmatrix}
\varphi \otimes_R M\\
- {\beta}_n \otimes_R M
\end{smallmatrix}\right)} = {\left(\begin{smallmatrix}
\varphi\\
- {\beta}_n
\end{smallmatrix}\right) \otimes_R M}$, it thus follows that $eP(\psi)e'$ is the module $P(\varphi) \otimes_R M$, where $P(\varphi)$ denotes $\Coker{\left(\begin{smallmatrix}
\varphi\\
- {\beta}_n
\end{smallmatrix}\right)}$, i.e., the pushout
$$
\xymatrix{
R^{\otimes(n+2)} \ar[r]^-{\beta_n} \ar[d]_-{\varphi} & R^{\otimes(n+1)} \, \, \ar[d] \\
R \ar[r] & P(\varphi) \, .
}
$$
Thus, under the isomorphism $\Ext^n_{B^\ev}(M,B) \cong \Ext^n_{R \otimes S^\op}(M,M)$, the sequence that corresponds to $\psi$ is taken to
$$
\small{
\xymatrix@C=10pt@R=20pt{
0 \ar[r] & eBe' \ar@{=}[d] \ar[r] & eP(\psi)e' \ar@{=}[d] \ar[r] & e{\left(\begin{matrix}
R^{\otimes n} & R^{\otimes n} \otimes_R M\\
0 & 0
\end{matrix}\right)}e' \ar@{=}[d] \ar[r] & \cdots \ar[r] & e Q e' \ar[r] \ar@{=}[d] & eMe' \ar@{=}[d] \ar[r] & 0\\
0 \ar[r] & M \ar[r] & P(\varphi)\otimes_R M \ar[r] & R^{\otimes n} \otimes_R M \ar[r] & \cdots \ar[r] & (R \otimes R) \otimes_R M \ar[r] & M \ar[r] & 0
}}
$$
which is precisely the image under $- \otimes_R M$ of the sequence that corresponds to $\varphi$. The second assertion follows similarly.
\end{proof}

\begin{nn}
Assume that $R = S$ and $M = R$; then $\P_R(R,S) = \P_\lambda(R)$, and the functors
$$
- \otimes_R R,\, R \otimes_R - \colon \P(R) = \P_\lambda(R) \cap \P_\varrho(R) \longrightarrow \P_\lambda(R)
$$
are isomorphic to the inclusion functor. In particular, these are very strong monoidal functors
$$
(\P(R), \otimes_R, R) \longrightarrow (\P_\lambda(R), \otimes_R, R) \, .
$$
Thus Lemma \ref{lem:grsolfunctor} implies, when combined with Theorem \ref{thm:gersthochcomp} and Theorem \ref{thm:greensol}, the following statement.
\end{nn}

\begin{cor}\label{cor:tripleR}
Let $R$ be a $K$-algebra which is projective as a $K$-module. Further, let $B$ be the matrix algebra
$$
B = \left(\begin{matrix}
R & R\\
0 & R
\end{matrix}
\right) \, .
$$
Then the long exact sequence
\begin{align*}
\cdots \longrightarrow \HH^n(B) \xrightarrow{\, g_n \, } \HH^n(R) \oplus \HH^n(R) \xrightarrow{\, h_n \, } \Ext^{n}_{R \otimes R^\op}(R,R) = \HH^n(R) \longrightarrow \cdots
\end{align*}
as stated in Theorem $\ref{thm:greensol}$, or Corollary $\ref{cor:tensorvanish}$, decomposes into short exact sequences
$$
0 \longrightarrow \HH^n(B) \xrightarrow{\, g_n \, } \HH^n(R) \oplus \HH^n(R) \xrightarrow{\, h_n \, } \HH^n(R) \longrightarrow 0\,,
$$
wherein the graded map $g_\ast \colon \HH^\ast(B) \rightarrow \HH^\ast(R) \times \HH^\ast(R)$ and the component maps $\HH^\ast(R) \rightarrow \Ext^\ast_{R^\ev}(R,R) \cong \HH^\ast(R)$, $\HH^\ast(R) \rightarrow \Ext^\ast_{R^\ev}(R,R) \cong \HH^\ast(R)$ of $h_\ast$ are homomorphisms of strict Gerstenhaber algebras.\qed
\end{cor}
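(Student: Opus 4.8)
The plan is to read off Corollary \ref{cor:tripleR} as the specialisation of Corollary \ref{cor:tensorvanish} to $R = S$, $M = R$, combined with the explicit description of the connecting maps provided by Lemma \ref{lem:grsolfunctor}. First I would note that $B = \left(\begin{smallmatrix} R & R\\ 0 & R\end{smallmatrix}\right)$ is $K$-projective and that $M = R$ is a $K$-projective $R \otimes R^\op$-module, so Corollary \ref{cor:tensorvanish} applies and yields the long exact sequence
$$
\cdots \longrightarrow \HH^n(B) \xrightarrow{\ g_n\ } \HH^n(R) \oplus \HH^n(R) \xrightarrow{\ h_n\ } \Ext^n_{R \otimes R^\op}(R,R) \cong \HH^n(R) \longrightarrow \cdots
$$
together with the statement that $g_\ast$ is a homomorphism of strict Gerstenhaber algebras. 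It therefore remains to establish (a) surjectivity of each $h_n$, so that the sequence decomposes into the asserted short exact sequences, and (b) that the two component maps of $h_\ast$ are homomorphisms of strict Gerstenhaber algebras.

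For (a) I would use Lemma \ref{lem:grsolfunctor}(2): with $R = S$, $M = R$ its two component maps are, up to the sign $(-1)^{\delta_{\Sigma,S}}$, the maps
$$
\HH^\ast(R) \cong \Ext^\ast_{\P(R)}(R,R) \longrightarrow \Ext^\ast_{\P_?(R)}(R,R) \xrightarrow{\ \sim\ } \Ext^\ast_{R^\ev}(R,R) \cong \HH^\ast(R)
$$
induced by $-\otimes_R R$ and $R \otimes_R -$, where $\P_?(R)$ is $\P_\varrho(R)$, respectively $\P_\lambda(R)$. As noted in the paragraph preceding the corollary, these functors are naturally isomorphic to the inclusion $\P(R) \hookrightarrow \P_?(R)$; since $\P(R)$ and $\P_?(R)$ are entirely extension closed in $\Mod(R^\ev)$ (the example in Section \ref{sec:prereq} on $\P_\lambda, \P_\varrho, \P$), the displayed composite becomes, under the canonical identifications, the identity of $\HH^\ast(R)$. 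Hence each component map of $h_\ast$ is $\pm\,\id$, so already the restriction of $h_n$ to the first summand is surjective; the connecting homomorphisms of the long exact sequence therefore vanish and it breaks into the short exact sequences as claimed.

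For (b) I would feed the functors $-\otimes_R R,\ R \otimes_R - \colon (\P(R), \otimes_R, R) \to (\P_\lambda(R), \otimes_R, R) \subseteq (\Mod(R^\ev), \otimes_R, R)$ into Theorem \ref{thm:gersthochcomp}. They are exact (being restrictions to $R^\ev$-modules projective on both sides), strong monoidal via the canonical isomorphisms, and send the tensor unit $R$ to $R$; the exactness condition (\ref{eq:exactnesscond}) holds because every object $X$ of $\P(R)$ is flat as a left $R$-module, so $-\otimes_R X$ is exact on $\Mod(R^\ev)$ — and symmetrically for (\ref{eq:exactnesscondprime}) using right flatness. Theorem \ref{thm:gersthochcomp} then gives that the induced maps $\HH^\ast(R) \to \HH^\ast(R)$ preserve the Gerstenhaber bracket and the squaring map; by Lemma \ref{lem:grsolfunctor}(2) these are (up to the sign $(-1)^{\delta_{\Sigma,S}}$, which does not affect the conclusion) exactly the component maps of $h_\ast$. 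This finishes the proof.

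No deep obstacle is expected: the corollary is a clean amalgamation of Theorems \ref{thm:greensol} and \ref{thm:gersthochcomp} with Lemma \ref{lem:grsolfunctor}. The only point that warrants care is the identification of the connecting maps of the long exact sequence with the maps induced by the very strong monoidal functors $-\otimes_R R$ and $R \otimes_R -$ — but this is precisely Lemma \ref{lem:grsolfunctor}(2), so no new bookkeeping is needed — together with verifying the exactness hypothesis of Theorem \ref{thm:gersthochcomp} for these concrete functors, which collapses to the two-sided flatness of the objects of $\P(R)$.
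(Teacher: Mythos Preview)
Your proposal is correct and follows exactly the approach indicated by the paper: the corollary is stated with a \qed and the preceding paragraph explains that it follows by combining Lemma \ref{lem:grsolfunctor} with Theorem \ref{thm:gersthochcomp} and Theorem \ref{thm:greensol} (equivalently Corollary \ref{cor:tensorvanish}), using that for $R=S$ and $M=R$ the functors $-\otimes_R R$ and $R\otimes_R -$ are isomorphic to the inclusion $\P(R)\hookrightarrow \P_\lambda(R)$ and hence very strong monoidal. Your write-up simply spells out these steps in more detail, including the verification of hypothesis (\ref{eq:exactnesscond}) for Theorem \ref{thm:gersthochcomp}.
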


\subsection{The long exact sequence of Happel}\label{subsec:happel}
By specialising the long exact sequence in Theorem \ref{thm:greensol} to a matrix algebra $B = \left(\begin{smallmatrix} R & M\\ 0 & K \end{smallmatrix}\right)$, where $M$ is an $R$-module, and $B = R[M]$ is thus the \textit{one-point-extension} of $R$ by $M$, we arrive at a long exact sequence
$$
\xymatrix@C=24pt@R=6pt{
0 \ar[r] & \HH^0(B) \ar[r]^-{g_0} & \HH^0(R) \oplus K \ar[r]^-{(h_0^R \, h_0^K)} & \Hom_R(M,M) \ar[r] & \HH^{1}(B) \ar[r] & \cdots\\
\cdots \ar[r] & \HH^n(B) \ar[r]^-{g_n} & \HH^n(R) \ar[r]^-{h_n^R} & \Ext^{n}_R(M,M) \ar[r] & \HH^{n+1}(B) \ar[r] & \cdots
}
$$
first discovered by Happel in \cite{Ha89}, and picked up for further analysis by Green-Marcos-Snashall in \cite{GrMaSn03}, resulting in the insight, that the maps $g_\ast$ and $h_\ast^R$ define homomorphisms of graded $K$-algebras. By the results established previously, we immediately deduce the following.

\begin{prop}\label{prop:happel}
Let $R$ be a $K$-algebra and $M$ be an $R$-module. Assume further, that $R$ and $M$ are projective as $K$-modules. Then the one-point-extension $B = R[M]$ of $R$ by $M$ fits inside a long exact sequence
$$
\xymatrix@C=20pt{
\cdots \ar[r] & \HH^n(B) \ar[r]^-{g_n} & \HH^n(R) \ar[r]^-{h_n^R} & \Ext^{n}_R(M,M) \ar[r] & \HH^{n+1}(B) \ar[r] & \cdots
}
$$
such that the map $g_\ast$ is a homomorphism of strict Gerstenhaber algebras, and $h^R_\ast$ is a homomorphism of graded $K$-algebras.\qed
\end{prop}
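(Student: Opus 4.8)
The plan is to deduce Proposition~\ref{prop:happel} as the special case $S = K$ of the matrix-algebra situation treated in Example~\ref{exa:matrixalg}, Corollary~\ref{cor:tensorvanish} and Lemma~\ref{lem:grsolfunctor}. First I would note that the one-point-extension $B = R[M]$ is, by definition, the matrix algebra $\smatrix{R & M \\ 0 & K}$, with canonical idempotents $e = \smatrix{1 & 0 \\ 0 & 0}$ and $e' = \smatrix{0 & 0 \\ 0 & 1}$, so that $eBe = R$, $e'Be' = K$ and $M = eBe'$. Since $R$, $M$ and $K$ are all projective over $K$, Example~\ref{exa:matrixalg} shows that $\Tor^{eBe}_i(Be,eB) = \Tor^R_i(0,M) = 0$ and $\Tor^{e'Be'}_i(Be',e'B) = \Tor^K_i(M,0) = 0$ for all $i > 0$; hence Corollary~\ref{cor:tensorvanish} applies and yields a long exact sequence
$$
\cdots \longrightarrow \HH^n(B) \xrightarrow{\ g_n\ } \HH^n(R) \oplus \HH^n(K) \xrightarrow{\ h_n\ } \Ext^n_{R \otimes K^\op}(M,M) \longrightarrow \HH^{n+1}(B) \longrightarrow \cdots
$$
in which $g_\ast\colon \HH^\ast(B) \to \HH^\ast(R) \times \HH^\ast(K)$ is a homomorphism of strict Gerstenhaber algebras, while the components of $h_\ast$ are homomorphisms of graded $K$-algebras.

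Next I would carry out the elementary identifications that turn this into the sequence of the statement. Since $K^\ev = K \otimes K^\op = K$ and $K$ is free over itself, $\HH^\ast(K) = \Ext^\ast_K(K,K) = K$ is concentrated in degree $0$; and $R \otimes K^\op \cong R$ gives $\Ext^\ast_{R \otimes K^\op}(M,M) \cong \Ext^\ast_R(M,M)$. After these substitutions the sequence above is, in every degree $n \geqslant 1$, precisely the one displayed in Proposition~\ref{prop:happel}; its terms near degree $0$ reproduce the full Happel sequence as recalled in Subsection~\ref{subsec:happel}, the extra summand $\HH^0(K) = K$ surviving only there.

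It then remains to transport the two structural assertions. For $g_\ast$: the map $g_\ast\colon \HH^\ast(B) \to \HH^\ast(R)$ of the statement is obtained from the map $\HH^\ast(B) \to \HH^\ast(R) \times \HH^\ast(K)$ of Corollary~\ref{cor:tensorvanish} by composing with the projection onto the first factor; as the bracket and the squaring map on a finite product of strict Gerstenhaber algebras are computed componentwise, this projection --- and therefore $g_\ast$ --- is a homomorphism of strict Gerstenhaber algebras (in degrees $\geqslant 1$ it is even literally the map of Corollary~\ref{cor:tensorvanish}, because $\HH^{\geqslant 1}(K) = 0$). For $h^R_\ast$: as $M$ is $K$-projective it is in particular projective as a right $S = K$-module, so the hypothesis of Lemma~\ref{lem:grsolfunctor}(\ref{lem:grsolfunctor:1b}) is satisfied, and Lemma~\ref{lem:grsolfunctor}(\ref{lem:grsolfunctor:2}) then identifies $h^R_\ast$ with the (signed) map on Yoneda $\Ext$ induced by the monoidal functor $- \otimes_R M \colon \P(R) \to \mathsf P_K(R,K)$ and records it as a homomorphism of graded $K$-algebras, which is exactly the claim.

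I do not expect any genuine difficulty here: the substance was already developed in Theorem~\ref{thm:greensol}, Corollary~\ref{cor:tensorvanish} and Lemma~\ref{lem:grsolfunctor}, and this step is purely a matter of specialisation and bookkeeping. The one point deserving care is why $g_\ast$ preserves the full strict Gerstenhaber structure whereas $h^R_\ast$ is asserted to be only a graded-algebra map: the target $\Ext^\ast_R(M,M)$ of $h^R_\ast$ is the Yoneda algebra of a mere $R$-module, not the Hochschild cohomology of an algebra, and so carries no intrinsic Gerstenhaber bracket in general --- in contrast to the situation $R = S$, $M = R$ of Corollary~\ref{cor:tripleR}, where the corresponding map does preserve the bracket.
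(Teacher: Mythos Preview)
The proposal is correct and follows exactly the route the paper intends: the proposition is stated with a bare \qed{} because it is the specialisation $S = K$ of Corollary~\ref{cor:tensorvanish} and Lemma~\ref{lem:grsolfunctor}, as the preamble to Subsection~\ref{subsec:happel} makes explicit. Your write-up simply spells out the identifications $\HH^\ast(K) = K$, $\Ext^\ast_{R \otimes K^\op}(M,M) \cong \Ext^\ast_R(M,M)$ and the fact that projection onto a factor preserves the strict Gerstenhaber structure, which is precisely the intended bookkeeping.
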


\begin{rem}
Happel's sequence comes in handy, when examining \textit{accessible algebras}, as it has been done in work by Lenzing and de la Pe{\~n}a (see \cite{LedlP08} and also \cite{BaLe03}, \cite{Gei02}). Recall that, for a $K$-algebra $B$, a $B$-module $E$ is called \textit{exceptional} if $\End_B(E) \cong K$ and $\Ext^1_B(E,E) = 0$. The $K$-algebra $B$ is called \textit{accessible} if there is an \textit{accessible tower}
$$
K = B_0 , B_1, \dots, B_n = B \,;
$$
that is, a sequence of $K$-algebras as indicated above, such that $B_{t} \cong B_{t-1}[E_{t-1}]$ is the one-point-extension of $B_{t-1}$ by some exceptional $(B_{t-1})$-module $E_{t-1}$ (for $1 \leqslant t \leqslant n$). Thus, the Gerstenhaber algebra structure on $\HH^\ast(B)$ may be understood via the same for the Hochschild cohomology of its \textit{steps} $B_t$. For instance, we immediately deduce from Happel's sequence, and the exceptionality of the $E_t$, that there are short exact sequences
$$
0 \longrightarrow Z(B_t) \longrightarrow Z(B_{t-1}) \oplus K\longrightarrow K \longrightarrow 0 \quad \text{(for $1 \leqslant t \leqslant n$)}
$$
which induce algebra isomorphisms $Z(B_t) \cong Z(B_{t-1})$, and Lie algebra isomorphisms
$$
\HH^1(B_{t}) = \Out_K(B_t) \xrightarrow{\, \sim \, } \Out_K(B_{t-1}) = \HH^1(B_{t-1}) \quad \text{(for $1 \leqslant t \leqslant n$)}.
$$
Compare this with Corollary \ref{cor:strexc}.
\end{rem}

\begin{nn}
Let us specialise further. Let $R$ be a $K$-projective $K$-algebra and consider
$$
B = \left(\begin{matrix}
R \otimes R^\op & R\\
0 & K
\end{matrix}\right)\, ,
$$
that is, the one-point-extension $B = U[R]$ of $U = R^\ev$ by $R$. By Proposition \ref{prop:happel}, we already know that the map $g_\ast$ defines a homomorphism of strict Gerstenhaber algebras. In the following, we take the map $h_\ast$ under further analysis.
\end{nn}

\begin{nn}
To this end, consider the full subcategory $\overline{\P}(U)$ of $\P(U)$ consisting of all objects $X \in \P(U)$ such that $X \otimes_U R$ belongs to $\P(R)$. Of course, $U$ and $U \otimes U$ belong to $\overline{\P}(U)$. Further, let $\tilde{\P}(U)$ be the full subcategory of $\overline{\P}(U)$ consisting of all objects $X \in \overline{\P}(U)$ such that $X \otimes_U Y$ belongs to $\overline{\P}(U)$ for all objects $Y \in \overline{\P}(U)$. We will argue that $\tilde{\P}(U)$ defines an exact monoidal subcategory of $(\Mod(U^\ev), \otimes_U, U)$.

Firstly, $\tilde{\P}(U)$ is non-empty as $U$ and $U \otimes U$ belong to $\tilde{\P}(U)$. Indeed, for $Y \in \overline{\P}(U)$,
$$
U \otimes_U Y \cong Y \ \in \ \overline{\P}(U)
\quad
\text{and}
\quad
(U \otimes U) \otimes_U Y \cong U \otimes Y \ \in \ \overline{\P}(U)\,.
$$
The subcategory $\tilde{\P}(U)$ is closed under arbitrary direct sums and summands (taken in $\Mod(U^\ev)$). Moreover, $\tilde{\P}(U)$ is extension closed and closed under kernels of epimorphisms, as any admissible short exact sequence $0 \rightarrow X'' \rightarrow X \rightarrow X' \rightarrow 0$ in $\P(U)$ with $X' \in \tilde{\P}(U)$ gives rise to a split exact sequence
$$
0 \longrightarrow (X'' \otimes_U Y) \otimes_U R \longrightarrow (X \otimes_U Y) \otimes_U R \longrightarrow (X' \otimes_U Y) \otimes_U R \longrightarrow 0 \quad \text{(for $Y \in \overline{\P}(U)$)}
$$
of left and right $R$-modules. It follows from Proposition \ref{prop:entireextclo} that $\tilde{\P}(U)$ is entirely extension closed in $\Mod(U^\ev)$. Finally, $\tilde{\P}(U)$ is closed under taking tensor products over $U$, as $(X \otimes_U X') \otimes_U Y \cong X \otimes_U (X' \otimes_U Y) \in \overline{P}(U)$ for all $X, X' \in \tilde{P}(U)$ and $Y \in \overline{P}(U)$.
\end{nn}

\begin{lem}
The functor $- \otimes_{U} R \colon \Mod(U^\ev) \rightarrow \Mod(U) = \Mod(R^\ev)$ restricts to an exact functor
$$
- \otimes_{U} R \colon (\tilde{\P}(U), \otimes_U, U) \longrightarrow (\P(R), \otimes_R, R),
$$
taking $U$ to $R$. The induced map
$$
\HH^\ast(U) \cong \Ext^\ast_{\tilde{\P}(U)}(U,U) \longrightarrow \Ext^\ast_{\P(R)}(R,R) \cong \HH^\ast(R)
$$
is a split surjection $($in the category of graded $K$-algebras$)$ and agrees with $h_\ast^U \colon \HH^\ast(U) \rightarrow \HH^\ast(R)$.
\end{lem}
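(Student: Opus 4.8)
The plan is to recognise the asserted map as the one induced by $-\otimes_U R$ between the relevant $\Ext$-algebras, to match it with $h_\ast^U$ by comparison with Lemma \ref{lem:grsolfunctor}, and then to produce an explicit graded section. First I would dispose of the formal points. An admissible short exact sequence in $\tilde{\P}(U)\subseteq\P(U)$ has all three terms projective as right $U$-modules, hence splits over $U$ on the right; as $-\otimes_U R$ uses only the right $U$-structure, it is exact on $\tilde{\P}(U)$ (equivalently $\Tor^U_1(-,R)$ vanishes on those terms). By the definition of $\overline{\P}(U)\supseteq\tilde{\P}(U)$ the functor lands in $\P(R)$, and $U\otimes_U R\cong R\in\P(R)$. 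Since $\tilde{\P}(U)$ is entirely extension closed in $\Mod(U^\ev)$ and contains $U$, while $\P(R)$ is entirely extension closed in $\Mod(U)=\Mod(R^\ev)$ and contains $R$, the inclusions induce $\Ext^\ast_{\tilde{\P}(U)}(U,U)\cong\HH^\ast(U)$ and $\Ext^\ast_{\P(R)}(R,R)\cong\HH^\ast(R)$, so $-\otimes_U R$ does induce a graded map $\HH^\ast(U)\to\HH^\ast(R)$.

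To identify it with $h_\ast^U$, I would apply Lemma \ref{lem:grsolfunctor} to the matrix algebra $B=U[R]=\smatrix{U & R\\ 0 & K}$, in which $R$, $U=R^\ev$ and $K$ are all $K$-projective and $R$ is projective as a right $K$-module. Part (1b) of that lemma then gives an exact restriction $-\otimes_U R\colon\P(U)\to\mathsf P_K(U,K)$, where $\mathsf P_K(U,K)\subseteq\Mod(R^\ev)$ denotes the $K$-projective $R^\ev$-modules; this subcategory is entirely extension closed by Proposition \ref{prop:entireextclo}, as it contains $\Proj(U)$ (because $U$ is $K$-projective), is $1$-extension closed, and is closed under kernels of epimorphisms (a short exact sequence ending in a $K$-projective module splits over $K$), and it contains $R$. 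By part (2) of the lemma — with $\Sigma=U\neq K=S$, so $\delta_{\Sigma,S}=0$ and no sign occurs — the map $h_\ast^U$ is precisely the composite $\HH^\ast(U)\cong\Ext^\ast_{\P(U)}(U,U)\xrightarrow{\,-\otimes_U R\,}\Ext^\ast_{\mathsf P_K(U,K)}(R,R)\cong\HH^\ast(R)$. Now $\tilde{\P}(U)\hookrightarrow\P(U)$ and $\P(R)\hookrightarrow\mathsf P_K(U,K)$ (the latter legitimate because a left-$R$-projective $R^\ev$-module is $K$-projective) are inclusions of entirely extension closed subcategories, and together with the two copies of $-\otimes_U R$ they form a commuting square of exact functors; passing to $\Ext$-groups of the distinguished objects makes all four horizontal comparison maps isomorphisms, so the two vertical maps coincide under the identifications with $\HH^\ast(U)$ and $\HH^\ast(R)$. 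Hence the map of the lemma equals $h_\ast^U$, and is in particular a homomorphism of graded $K$-algebras.

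For the splitting I would build a section functorially. Put
$$
\Phi\colon \Mod(R^\ev)\longrightarrow \Mod(U^\ev),\qquad \Phi(N)=N\otimes_K R^\op,
$$
with the $U$-bimodule (i.e. $U^\ev$-module) structure $(a\otimes b)\cdot(n\otimes s)\cdot(c\otimes d)=anc\otimes dsb$ on $N\otimes_K R^\op$ (for $a,b,c,d\in R$, $n\in N$, $s\in R^\op$). Then $\Phi$ is exact ($R^\op$ being $K$-projective), $\Phi(R)\cong R^\ev=U$ as $U^\ev$-modules, a direct identification of $\Phi(N)$ as a one-sided $U$-module with $N\otimes_K R$ for the appropriate structures shows that $\Phi$ restricts to an exact functor $\P(R)\to\P(U)$, and $\Phi$ is strong monoidal with $\Phi(N)\otimes_U\Phi(N')\cong\Phi(N\otimes_R N')$ naturally. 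Moreover $(-\otimes_U R)\circ\Phi\cong\Id_{\Mod(R^\ev)}$, since $(N\otimes_K R^\op)\otimes_{R^\ev}R\cong N$ (the defining relations of the tensor product let one absorb the $R^\op$- and $R$-slots into $N$). Being exact and strong monoidal, $\Phi$ induces a homomorphism of graded $K$-algebras $\Phi_\ast\colon\HH^\ast(R)=\Ext^\ast_{\P(R)}(R,R)\to\Ext^\ast_{\P(U)}(U,U)=\HH^\ast(U)$; and $h_\ast^U\circ\Phi_\ast$ is the map induced by the composite functor $(-\otimes_U R)\circ\Phi|_{\P(R)}$, which is naturally isomorphic to the inclusion $\P(R)\hookrightarrow\mathsf P_K(U,K)$ and so induces the canonical identity of $\HH^\ast(R)$. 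Therefore $\Phi_\ast$ is a graded $K$-algebra section of $h_\ast^U$, i.e. the map of the lemma is a split surjection.

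The delicate step will be the construction and verification of $\Phi$: checking that the four $R$-actions are associative and unital, producing the natural isomorphisms $\Phi(R)\cong U$, $\Phi(N)\otimes_U R\cong N$ and the monoidal structure maps, and keeping straight which side of each module the two factors of $R\otimes R^\op$ and of $(R^\ev)^\ev$ act on — this ``op''-bookkeeping is the only genuine obstacle. Everything else is assembled from results already available, chiefly Lemma \ref{lem:grsolfunctor}, Proposition \ref{prop:entireextclo}, and the compatibility of the several entirely extension closed identifications.
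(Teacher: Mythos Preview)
Your proof is correct and follows the same strategy as the paper's: both identify the induced map with $h_\ast^U$ via Lemma~\ref{lem:grsolfunctor} (the paper by invoking ``a similar argument as in the proof of Lemma~\ref{lem:grsolfunctor}(\ref{lem:grsolfunctor:2})'', you by applying the lemma directly and comparing through a commuting square of entirely extension closed subcategories), and both produce the section via the functor $N \mapsto N \otimes_K R$ (the paper citing \cite[Thm.~6.3.12]{He14b} for this, you writing out the $U$-bimodule structure on $N \otimes_K R^\op$ explicitly). Your functor $\Phi$ is exactly the paper's $-\otimes_R U = -\otimes_K R$, so the two arguments coincide up to the level of detail supplied.
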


\begin{proof}
By definition of $\tilde{\P}(U)$ it is apparent, that $- \otimes_U R$ restricts as claimed; since $\tilde{\P}(U) \subseteq \P(U)$, the restriction will also be exact. Moreover, we have the natural isomorphism
$$
R \xrightarrow{\ \sim \ } U \otimes_U R, \ r \mapsto 1 \otimes r \, .
$$
The exact functor
$$
- \otimes_R U = - \otimes_K R \colon (\P(R), \otimes_R, R) \longrightarrow (\Mod(U^\ev), \otimes_U, U)
$$
gives rise to an algebra homomorphism $\HH^\ast(R) \cong \Ext^\ast_{\P(R)}(R,R) \rightarrow \Ext^\ast_{U^\ev}(U,U) \cong \HH^\ast(U)$ which is right inverse to the one induced by $- \otimes_U R$ (see \cite[Thm.\,6.3.12]{He14b}). Finally, that we retain the map $h_\ast$ follows by a similar argument as in the proof of Lemma \ref{lem:grsolfunctor}(\ref{lem:grsolfunctor:2}).
\end{proof}

\begin{cor}
In the long exact sequence of Proposition $\ref{prop:happel}$ applied to $B = U[R]$,
$$
\xymatrix@C=20pt{
\cdots \ar[r] & \HH^n(B) \ar[r]^-{g_n} & \HH^n(U) \ar[r]^-{h_n} & \Ext^{n}_U(R,R) \ar[r] & \HH^{n+1}(B) \ar[r] & \cdots \, ,
}
$$
the graded map $g_\ast\colon \HH^\ast(U[R]) \rightarrow \HH^\ast(U) \times K$ is a homomorphisms of strict Gerstenhaber algebras and the graded map $h_\ast^U \colon \HH^\ast(U) \rightarrow \Ext^\ast_U(R,R) = \HH^\ast(R)$ is a homomorphism of graded $K$-algebras. Furthermore, the map $h_\ast^U$ is surjective, so that the long exact sequence decomposes into short exact sequences
$$
0 \longrightarrow Z(B) \longrightarrow Z(U) \oplus K \longrightarrow Z(R) \longrightarrow 0
$$
and
$$
0 \longrightarrow \HH^n(B) \longrightarrow \HH^n(U) \longrightarrow \HH^n(R) \longrightarrow 0
$$
for each integer $n \geqslant 1$. \qed
\end{cor}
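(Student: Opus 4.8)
The plan is to derive the Corollary as a purely formal consequence of Proposition~\ref{prop:happel} and of the Lemma immediately preceding it (the one describing the restricted functor $-\otimes_U R$); essentially no new computation is required, only a short diagram chase in the long exact sequence.

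First I would specialise Proposition~\ref{prop:happel} to the one-point extension $B = U[R]$, i.e.\ to the algebra $R$ there replaced by $U = R^\ev$ and the module $M$ there replaced by $R$ itself (a legitimate choice since $R$ is $K$-projective, hence so is $U$, and $R$ is a $K$-projective $U$-module). This produces the long exact sequence in the statement and hands me two of the claimed assertions directly: that $g_\ast\colon \HH^\ast(U[R]) \to \HH^\ast(U)\times K$ is a homomorphism of strict Gerstenhaber algebras, and that $h^U_\ast\colon \HH^\ast(U) \to \Ext^\ast_U(R,R)$ is a homomorphism of graded $K$-algebras; under the identification $\Ext^\ast_U(R,R) = \Ext^\ast_{R^\ev}(R,R) = \HH^\ast(R)$ the latter map is the $h^U_\ast$ of the statement.

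Next I would invoke the preceding Lemma, which asserts that $h^U_\ast$ coincides with the map on $\Ext$-algebras induced by restricting $-\otimes_U R$ to the entirely extension closed, exact monoidal subcategory $\tilde{\P}(U)$, and that this map is a \emph{split} surjection of graded $K$-algebras, a right inverse being induced by $-\otimes_R U = -\otimes_K R$. In particular every homogeneous component $h^U_n$ is surjective, including $n = 0$, where $h^U_0$ is a split surjection $Z(U) \twoheadrightarrow Z(R)$ (using $\HH^0(U) = Z(U)$, $\HH^0(R) = Z(R)$).

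Finally I would feed this surjectivity into the long exact sequence: since each $h^U_n$ is onto, every connecting homomorphism $\Ext^n_U(R,R) \to \HH^{n+1}(B)$ has full kernel, hence vanishes, so each $g_{n+1}$ is injective; together with exactness at $\HH^n(U)$ this splits the sequence into $0 \to \HH^n(B) \xrightarrow{g_n} \HH^n(U) \xrightarrow{h^U_n} \HH^n(R) \to 0$ for $n \geqslant 1$. For $n = 0$ one recalls that Happel's sequence begins $0 \to \HH^0(B) \to \HH^0(U)\oplus K \to \Hom_U(R,R) \to \HH^1(B) \to \cdots$ with $\Hom_U(R,R) = Z(R)$, $\HH^0(U) = Z(U)$, $\HH^0(B) = Z(B)$; as the restriction of $\HH^0(U)\oplus K \to Z(R)$ to the summand $Z(U)$ is $h^U_0$ and already surjective, the whole map is surjective, the connecting map $Z(R) \to \HH^1(B)$ vanishes, and one obtains $0 \to Z(B) \to Z(U)\oplus K \to Z(R) \to 0$. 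The only thing needing care is the bookkeeping — checking that the identifications ($U = R^\ev$, $\Ext^\ast_U(R,R) = \HH^\ast(R)$, $\Hom_U(R,R) = Z(R)$) are compatible with the maps produced upstream, and that surjectivity genuinely holds in degree $0$ so that the $Z$-sequence, not merely its positive-degree analogue, is short exact — but I do not expect a real obstacle.
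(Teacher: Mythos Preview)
Your proposal is correct and matches the paper's approach: the Corollary is stated with a bare \qed, so it is meant as an immediate formal consequence of Proposition~\ref{prop:happel} together with the preceding Lemma, exactly as you outline. Your handling of the degree-zero case and the splitting of the long exact sequence via surjectivity of $h^U_\ast$ is precisely the intended argument.
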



\section{Homological epimorphisms}\label{sec:homepi}

\begin{nn}
The notion of a homological epimorphism has been introduced by Geigle--Lenzing in \cite{GeLe91}. Recall that a ring homomorphism $\pi \colon B \rightarrow A$ is an epimorphism in the category of rings, if $f \circ \pi = g \circ \pi$ for some ring homomorphisms $f, g\colon A \rightarrow R$ implies that $f = g$. Note that such homomorphisms are in general far from being surjective. For example, the canonical embedding $\mathbb Z \rightarrow \mathbb Q$ is an epimorphism in the above sense, but certainly not surjective. More generally, if $B$ is commutative and $\Sigma \subseteq B$ is a multiplicative subset, then the canonical map $B \rightarrow B[\Sigma^{-1}]$ is an epimorphism. However, any surjective ring homomorphism will, of course, be an epimorphism.
\end{nn}

\begin{nn}\label{nn:equhomepi}
By \cite[Prop.\,1.1]{Si67}, a ring homomorphism $\pi\colon B \rightarrow A$ is a ring epimorphism if, and only if, the multiplication map $A \otimes_B A \rightarrow A$ is an isomorphism, which is if, and only if, the restriction functor $\pi_\star = \mathrm{Res}(\pi)\colon \Mod(A) \rightarrow \Mod(B)$ is full and faithful.  A ring epimorphism $\pi\colon B \rightarrow A$ is \textit{homological} if $\Tor_i^B(A,A) = 0$ for $i > 0$. This is equivalent to the functor
$$
\mathbf D^b(\pi_\star) {\colon} \mathbf D^b(\Mod(A)) \longrightarrow \mathbf D^b(\Mod(B))
$$
being full and faithful, see \cite[Thm.\,4.4]{GeLe91}.
\end{nn}

In what follows, we will assume that $A$ and $B$ are $K$-algebras, and $\pi {\colon} B \rightarrow A$ is a $K$-linear homological epimorphism. We will further assume that $A$ and $B$ are \textit{projective} as $K$-modules.

\begin{lem}\label{lem:piev}
The $K$-algebra homomorphism $\pi^\ev = \pi \otimes \pi^\op {\colon} B^\ev \rightarrow A^\ev$ is a homological epimorphism.
\end{lem}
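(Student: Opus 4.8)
The plan is to reduce the statement to the corresponding properties of $\pi$ itself. By Paragraph~\ref{nn:equhomepi} it suffices to check that the multiplication map $A^\ev \otimes_{B^\ev} A^\ev \to A^\ev$ is bijective and that $\Tor_i^{B^\ev}(A^\ev,A^\ev) = 0$ for all $i > 0$, where $A^\ev$ carries the $B^\ev$-module structure obtained by restriction along $\pi^\ev$. The central tool will be the base change isomorphism
$$
(M \otimes_K N) \otimes_{B \otimes_K B^\op} (M' \otimes_K N') \ \xrightarrow{\ \sim\ }\ (M \otimes_B M') \otimes_K (N \otimes_{B^\op} N'),\qquad (m \otimes n)\otimes(m'\otimes n')\longmapsto (m\otimes m')\otimes(n\otimes n'),
$$
valid for a right $B$-module $M$, a left $B$-module $M'$, a right $B^\op$-module $N$ and a left $B^\op$-module $N'$; since $A$ and $B$ are $K$-projective, every module that will be substituted into it is $K$-flat, so this identification causes no difficulty. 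Note also that $\pi^\op\colon B^\op \to A^\op$ is again a homological epimorphism, the two defining conditions being left--right symmetric.

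First I would fix a resolution $P_\bullet \to A$ of $A$ by projective right $B$-modules and a resolution $P'_\bullet \to A$ of $A$ by projective right $B^\op$-modules (with $A$ regarded as a module via $\pi$, resp.\ $\pi^\op$). As $B$ is $K$-projective, each term of $P_\bullet$ and of $P'_\bullet$ is $K$-projective, hence $K$-flat, and each $P_i \otimes_K P'_j$ is a projective right $B^\ev$-module. The Künneth formula, applied using $\Tor_1^K(A,A) = 0$ (which holds because $A$ is $K$-projective), then shows that $P_\bullet \otimes_K P'_\bullet$ has homology $A \otimes_K A^\op = A^\ev$ concentrated in degree $0$; hence $P_\bullet \otimes_K P'_\bullet \to A^\ev \to 0$ is a $B^\ev$-projective resolution of $A^\ev$.

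Next I would compute $\Tor_\ast^{B^\ev}(A^\ev,A^\ev)$ as the homology of $(P_\bullet \otimes_K P'_\bullet)\otimes_{B^\ev} A^\ev$, and identify this complex, via the base change isomorphism above, with $(P_\bullet \otimes_B A)\otimes_K (P'_\bullet \otimes_{B^\op} A^\op)$. The complex $P_\bullet \otimes_B A$ computes $\Tor_\ast^B(A,A)$, which equals $A$ in degree $0$ and vanishes in positive degrees precisely because $\pi$ is a homological epimorphism; similarly $P'_\bullet \otimes_{B^\op} A^\op$ computes $\Tor_\ast^{B^\op}(A^\op,A^\op)\cong \Tor_\ast^B(A,A)$, equal to $A^\op$ in degree $0$ and zero above. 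Both complexes again consist of $K$-flat modules (their terms are summands of copies of the $K$-flat module $A$) and their homologies are $K$-projective, so one more application of the Künneth formula gives $\Tor_i^{B^\ev}(A^\ev,A^\ev) = 0$ for $i > 0$ and $\Tor_0^{B^\ev}(A^\ev,A^\ev) = A \otimes_K A^\op = A^\ev$. Since $\Tor_0^{B^\ev}(A^\ev,A^\ev) = A^\ev \otimes_{B^\ev} A^\ev$ and the identification just produced is, after tracing through the natural isomorphisms, the multiplication map, this settles both requirements.

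I do not anticipate a substantial obstacle: the argument rests on two standard devices---base change for tensor products of $K$-algebras and the Künneth formula---and the only thing demanding care is keeping track of the various left and right $B$- and $B^\op$-module structures, together with noticing that the hypotheses ``$A$ and $B$ are $K$-projective'' are exactly what is needed to make the base change maps isomorphisms and to kill the $\Tor_1^K$-correction terms in both invocations of Künneth. One could alternatively factor $\pi^\ev = (A \otimes_K \pi^\op)\circ(\pi \otimes_K B^\op)$ and argue that each factor is a flat base change of a homological epimorphism and that homological epimorphisms---equivalently, fully faithful inclusions of bounded derived categories, cf.\ Paragraph~\ref{nn:equhomepi}---compose; the direct computation above is, however, self-contained.
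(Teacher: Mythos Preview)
Your argument is correct and in spirit close to the paper's, but the execution differs enough to be worth a remark. The paper verifies the epimorphism condition directly from the categorical definition: if $f,g\colon A^\ev\to R$ agree after precomposition with $\pi^\ev$, then $f(-\otimes 1)=g(-\otimes 1)$ and $f(1\otimes-)=g(1\otimes-)$ since $\pi$ and $\pi^\op$ are epimorphisms, and these determine $f$ and $g$. For the $\Tor$-vanishing the paper simply cites Cartan--Eilenberg's change-of-rings theorem \cite[Chap.~IX, Thm.~2.8]{CaEi56} together with \cite[Thm.~4.4]{GeLe91} to obtain $\Tor^{B\otimes B^\op}_i(A\otimes A^\op,A\otimes A^\op)\cong\Tor^B_i(A,A\otimes A)=0$ in one line. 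What you do is unpack essentially the same mechanism by hand: you build an explicit $B^\ev$-projective resolution of $A^\ev$ as the tensor product of one-sided resolutions and then run the K\"unneth formula twice, which also delivers the multiplication-map criterion for the epimorphism as a byproduct of the degree-zero calculation. Your route is more self-contained and makes the role of $K$-projectivity transparent; the paper's is shorter but relies on recognising that the cited black boxes apply. The alternative factorisation you sketch at the end (writing $\pi^\ev$ as a composite of base-changed homological epimorphisms) would work too and is perhaps the slickest conceptual packaging.
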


\begin{proof}
Indeed, one has
$$
\Tor^{B \otimes B^\op}_i(A \otimes A^\op, A \otimes A^\op) \cong \Tor^B_i(A, A \otimes A) = 0 \quad \text{(for $i > 0$)}
$$
by \cite[Chap.\,IX, Thm.\,2.8]{CaEi56} and \cite[Thm.\,4.4]{GeLe91}. Furthermore, if $f, g {\colon} A^\ev \rightarrow R$ are ring homomorphisms with $f \circ \pi^\ev = g \circ \pi^\ev$, they must fulfil $f(- \otimes 1) = g(- \otimes 1)$ and $f(1 \otimes - ) = g(1 \otimes -)$ as $\pi$ and $\pi^\op$ are epimorphisms. Thus $f = g$.
\end{proof}

We state an immediate consequence of Lemma \ref{lem:piev} and Paragraph \ref{nn:equhomepi}.

\begin{cor}
The graded $K$-algebras $\HH^\ast(A)$ and $\Ext^\ast_{B^\ev}(A,A)$ are isomorphic. \qed
\end{cor}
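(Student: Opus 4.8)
The plan is to read off the isomorphism from the fully faithfulness of the derived restriction functor attached to the homological epimorphism $\pi^\ev$.

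First I would invoke Lemma~\ref{lem:piev}: $\pi^\ev\colon B^\ev \to A^\ev$ is a homological epimorphism, so by the criterion recalled in Paragraph~\ref{nn:equhomepi} the functor $\mathbf D^b(\pi^\ev_\star)\colon \mathbf D^b(\Mod(A^\ev)) \to \mathbf D^b(\Mod(B^\ev))$ is full and faithful. Being in addition triangulated, it induces for every $n \geq 0$ a bijection
$$
\Hom_{\mathbf D^b(\Mod(A^\ev))}(A, A[n]) \xrightarrow{\ \sim\ } \Hom_{\mathbf D^b(\Mod(B^\ev))}(A, A[n]),
$$
where on the right $A$ carries the $B^\ev$-module structure obtained by restriction along $\pi^\ev$, and these bijections are compatible with shifts and with composition. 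Next I would recall that, because $A$ is $K$-projective, the bar resolution $\mathbb B A$ is a projective resolution of $A$ over $A^\ev$, so the comparison map $\chi^\ast_A$ is an isomorphism of graded $K$-algebras $\HH^\ast(A) \xrightarrow{\sim} \Ext^\ast_{A^\ev}(A,A)$; under the standard identification $\Ext^n_{A^\ev}(A,A) = \Hom_{\mathbf D^b(\Mod(A^\ev))}(A, A[n])$ the Yoneda (equivalently, cup) product corresponds to composition of morphisms, and the same holds verbatim over $B^\ev$. Assembling the bijections above over all $n$ then yields a graded $K$-algebra isomorphism $\Ext^\ast_{A^\ev}(A,A) \cong \Ext^\ast_{B^\ev}(A,A)$, and composing with $\chi^\ast_A$ gives $\HH^\ast(A) \cong \Ext^\ast_{B^\ev}(A,A)$.

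The one point that deserves a second look — the reason the word ``immediate'' is justified rather than obvious — is the compatibility of the Yoneda product with the composition product in $\mathbf D^b$, which is what forces a fully faithful triangulated functor to be a ring homomorphism on these $\Ext$-algebras; this is classical (via the pullback/pushout description of the Yoneda product, matching composition of roofs) and presents no genuine obstacle. Alternatively, one may avoid derived categories entirely: since $\pi^\ev$ is a homological epimorphism one has $\Tor^{B^\ev}_i(A^\ev, A) = 0$ for $i>0$ and $A^\ev \otimes_{B^\ev} A \cong A$, so applying $A^\ev \otimes_{B^\ev} -$ to a projective $B^\ev$-resolution $\mathbf Q \to A$ produces a projective $A^\ev$-resolution $A^\ev \otimes_{B^\ev} \mathbf Q \to A$, and the adjunction isomorphism $\Hom_{A^\ev}(A^\ev \otimes_{B^\ev} \mathbf Q, A) \cong \Hom_{B^\ev}(\mathbf Q, A)$ is an isomorphism of DG $K$-algebras, whence the claim follows on passing to cohomology.
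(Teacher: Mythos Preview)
Your proof is correct and follows precisely the route the paper intends: the corollary is stated as an immediate consequence of Lemma~\ref{lem:piev} and Paragraph~\ref{nn:equhomepi}, and your argument spells out exactly how full faithfulness of $\mathbf D^b(\pi^\ev_\star)$ yields the graded algebra isomorphism. One small quibble: in your alternative argument, $\Hom_{B^\ev}(\mathbf Q, A)$ is not literally a DG algebra for an arbitrary projective resolution $\mathbf Q$ (one would need $\End_{B^\ev}(\mathbf Q)$ or a resolution with a diagonal), but this does not affect the main derived-category argument, which is complete as written.
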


\begin{nn}
The restriction functor along $\pi^\ev$ has a left adjoint, given by
$$
A^\ev \otimes_{B^\ev} (-) \cong A \otimes_B (-) \otimes_B A.
$$
The following theorem states, that it gives rise to a very well behaved map between the Hochschild cohomology algebras of $B$ and $A$.
\end{nn}

\begin{thm}\label{thm:homepi}
The graded $K$-algebra map
$$
A \otimes_B^{\mathbf L} (-) \otimes_B^{\mathbf L} A {\colon} \HH^\ast(B) = \Hom_{\mathbf D^b(B^\ev)}(B,B[\ast]) \longrightarrow \Hom_{\mathbf D^b(A^\ev)}(A,A[\ast]) = \HH^\ast(A)
$$
is a homomorphism of strict Gerstenhaber algebras; that is, it renders the diagrams
$$
\xymatrix@C=30pt{
\HH^m(B) \times \HH^n(B) \ar[r]^-{\{-,-\}_B} \ar[d] & \HH^{m+n-1}(B) \ar[d] \\ 
\HH^m(A) \times \HH^n(A) \ar[r]^-{\{-,-\}_A} & \HH^{m+n-1}(A)
}\quad\quad
\xymatrix{
\HH^{2m}(B) \ar[r]^-{sq_B} \ar[d] & \HH^{4m-1}(B) \ar[d] \\ 
\HH^{2m}(A) \ar[r]^-{sq_A} & \HH^{4m-1}(A)
}
$$
commutative for all integers $m, n \geqslant 0$.
\end{thm}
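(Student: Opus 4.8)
The plan is to derive the claim from Theorem~\ref{thm:gersthochcomp}, applied with the roles of $A$ and $B$ there interchanged and with the functor $\mathfrak X$ taken to be a suitable restriction of $A \otimes_B (-) \otimes_B A \colon \Mod(B^\ev) \to \Mod(A^\ev)$. I would begin by recording the structural facts that are needed. Since $\pi$ is a ring epimorphism, the multiplication $A \otimes_B A \to A$ is an isomorphism (see Paragraph~\ref{nn:equhomepi}); hence $A \otimes_B B \otimes_B A = A \otimes_B A \cong A$, which already proves the first assertion of the theorem. Since $\pi^\ev$ is a homological epimorphism by Lemma~\ref{lem:piev}, and $\mathbb B B$ is a projective resolution of $B$ over $B^\ev$ (here $B$ is $K$-projective), the complex $A^\ev \otimes_{B^\ev} \mathbb B B = A \otimes_B \mathbb B B \otimes_B A$ consists of projective $A^\ev$-modules and resolves $A$ — its homology is $\Tor^B_\ast(A,A)$, which vanishes in positive degrees. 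Thus $A^\ev \otimes^{\mathbf L}_{B^\ev} B \cong A$ in $\mathbf D^b(\Mod(A^\ev))$, the $\Hom_{\mathbf D^b}$-descriptions in the statement are legitimate, and $\pi_\ast = A \otimes^{\mathbf L}_B (-) \otimes^{\mathbf L}_B A$ is a graded $K$-algebra homomorphism.

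The technical core is to manufacture the subcategory $\C$. I propose to let $\C_0 \subseteq \P(B)$ be the full subcategory of those modules $M$ for which $A \otimes_B M \otimes_B A$ lies in $\P(A)$ and $\Tor^B_i(A, M \otimes_B A) = 0$ for all $i > 0$. Using that every admissible short exact sequence in $\P(B)$ splits over $B$ on either side, that projective modules are flat, and that $\Tor^B_{>0}(A,A)$ vanishes, one checks that $\C_0$ is extension closed and closed under kernels of epimorphisms in $\Mod(B^\ev)$, and that it contains $B$, $B \otimes_K B$ and their direct summands, hence all of $\Proj(\Mod(B^\ev))$; by Proposition~\ref{prop:entireextclo} it is therefore entirely extension closed in $\Mod(B^\ev)$. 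Passing to the monoidal closure $\C = \overline{\C_0}$ in the sense of Paragraph~\ref{nn:monoclosure} and Lemma~\ref{lem:monoclosure} (with $\P(B)$ as the ambient strong exact monoidal category) yields a full, additive, extension closed, monoidal subcategory $(\overline{\C_0}, \otimes_B, B)$ that is still closed under kernels of epimorphisms; since $B \otimes_K N \in \C_0$ whenever $N \in \C_0$ (the module $N \otimes_B A$ is $K$-projective because $N$ is a right-$B$-summand of a free module), $\overline{\C_0}$ still contains $\Proj(\Mod(B^\ev))$ and is hence entirely extension closed. In particular the inclusion induces $\Ext^\ast_{\overline{\C_0}}(B,B) \xrightarrow{\sim} \Ext^\ast_{B^\ev}(B,B) = \HH^\ast(B)$. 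On $\overline{\C_0}$ the functor $A \otimes_B (-) \otimes_B A$ is exact: given an admissible sequence $0 \to L \to M \to N \to 0$ there, $(-) \otimes_B A$ keeps it exact because $N$ is projective as a right $B$-module, and then $A \otimes_B (-)$ keeps it exact because $\Tor^B_1(A, N \otimes_B A) = 0$ — equivalently, $A \otimes^{\mathbf L}_B (-) \otimes^{\mathbf L}_B A$ is concentrated in degree $0$ on $\overline{\C_0}$. Moreover this functor carries $\overline{\C_0}$ into $\P(A)$.

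With $\C = \overline{\C_0}$ in hand, I would verify the remaining hypotheses of Theorem~\ref{thm:gersthochcomp}. The functor $\mathfrak X = A \otimes_B (-) \otimes_B A \colon (\overline{\C_0}, \otimes_B, B) \to (\Mod(A^\ev), \otimes_A, A)$ is almost costrong monoidal: after the contraction $A \otimes_A A \cong A$ one has $\mathfrak X(M) \otimes_A \mathfrak X(N) \cong A \otimes_B M \otimes_B A \otimes_B N \otimes_B A$, and the comparison $\psi_{M,N} \colon \mathfrak X(M \otimes_B N) \to \mathfrak X(M) \otimes_A \mathfrak X(N)$ is the map induced by $1_B \mapsto 1_A$, i.e.\ obtained by applying $A \otimes_B (-) \otimes_B A$ to $M \otimes_B N = M \otimes_B B \otimes_B N \to M \otimes_B A \otimes_B N$; the unit constraint $\psi_0 \colon \mathfrak X(B) = A \otimes_B A \xrightarrow{\sim} A$ is invertible, and the coherence axioms follow from the fact that $\pi$ is a ring homomorphism together with associativity and unitality of the tensor products. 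Because $\mathfrak X(X) \in \P(A)$ is projective, hence flat, on both sides, the functor $\mathfrak X(X) \otimes_A (-)$ is exact, so the exactness condition \eqref{eq:exactnesscondprime} of Theorem~\ref{thm:gersthochcomp} holds automatically. That theorem then gives that the map $\HH^\ast(B) \xrightarrow{\sim} \Ext^\ast_{\overline{\C_0}}(B,B) \to \Ext^\ast_{A^\ev}(A,A) = \HH^\ast(A)$ — which sends the class of an $n$-extension $0 \to B \to E_{n-1} \to \cdots \to E_0 \to B \to 0$ with terms in $\overline{\C_0}$ to the class of its image $0 \to A \to \mathfrak X(E_{n-1}) \to \cdots \to \mathfrak X(E_0) \to A \to 0$ — commutes with the Gerstenhaber bracket and the squaring map for all $m,n \geqslant 0$. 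Finally, because the plain functor $\mathfrak X$ agrees with the derived functor $A \otimes^{\mathbf L}_B (-) \otimes^{\mathbf L}_B A$ on $\overline{\C_0}$, this Yoneda-theoretic map coincides with the map $A \otimes^{\mathbf L}_B (-) \otimes^{\mathbf L}_B A \colon \Hom_{\mathbf D^b(B^\ev)}(B, B[\ast]) \to \Hom_{\mathbf D^b(A^\ev)}(A, A[\ast])$ of the statement, completing the argument.

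I expect the main obstacle to be the middle step: isolating a subcategory $\C$ that is at once entirely extension closed, monoidal, contains the $B^\ev$-projectives, is mapped into $\P(A)$ by $A \otimes_B (-) \otimes_B A$, and on which that functor is exact. This is precisely where the homological epimorphism hypothesis is indispensable — in the recollement situation of Theorem~\ref{thm:mainthm} one could lean on genuine projectivity of $j(B)$ over $C$, whereas here only $\Tor^B_{>0}(A,A) = 0$ and $A \otimes_B A \cong A$ are available, so the failure of $A$ to be projective, or even flat, over $B$ has to be absorbed into the choice of $\C$; verifying that $\overline{\C_0}$ is simultaneously large enough and stable enough, which repeatedly uses that members of $\P(B)$ are $K$-projective and split short exact sequences on both sides, is the delicate part.
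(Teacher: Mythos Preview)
Your proposal is correct and follows essentially the same strategy as the paper: construct an entirely extension closed strong exact monoidal subcategory $\C \subseteq \P(B)$ on which $A \otimes_B (-) \otimes_B A$ is exact, lands in $\P(A)$, and is almost costrong monoidal, then invoke Theorem~\ref{thm:gersthochcomp}. The paper carries this out via Lemma~\ref{lem:funcexactres}, Proposition~\ref{prop:exactmono}, Lemma~\ref{lem:barexact} and Lemma~\ref{lem:mapsequal}; your $\C_0$ is exactly the paper's $\P_\infty(B) \cap \{M : A \otimes_B M \otimes_B A \in \P(A)\}$, and your single monoidal closure $\overline{\C_0}$ replaces the paper's three-step tower $\C_1 \supseteq \C_2 \supseteq \C_3$ in Proposition~\ref{prop:exactmono}, which is a harmless and slightly more economical reorganisation. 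The one place where the paper is noticeably more careful than your sketch is the identification of the Yoneda-theoretic map with the derived-functor map: the paper's Lemma~\ref{lem:mapsequal} spells out the roof comparison in $\mathbf D^-(B^\ev)$ explicitly, whereas you compress this to ``the plain functor agrees with the derived functor on $\overline{\C_0}$'', which is the right idea but hides the chain-level argument.
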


The proof requires several preliminary observations whose main goal is to construct an exact and entirely extension closed monoidal subcategory $\mathsf C$ of $(\Mod(B^\ev), \otimes_B, B)$, such that $(A \otimes_B (-) \otimes_B A)\mathord{\upharpoonright}_{\C}$ fulfils the requirements of Theorem \ref{thm:gersthochcomp}.

\begin{nn}\label{nn:tinftycinfty}
Recall that $\P(B) = \P_\lambda(B) \cap \P_\varrho(B)$ denotes the full, exact monoidal subcategory of $(\Mod(B^\ev), \otimes_B, B)$ of those $B^\ev$-modules which are projective when considered as left and right $B$-modules. Let $\mathsf T_\infty(B)$ be the full subcategory of $\Mod(B^\ev)$ consisting of all $B^\ev$-modules $M$ with
$$
\Tor^{B \otimes B^\op}_i(A \otimes A^\op, M) = 0 \quad (\text{for all $i > 0$}).
$$
So, in different words, $\mathsf T_\infty(B) = \Ker \Tor^{B \otimes B^\op}_{> 0}(A \otimes A^\op,-)$. Further, we let $\P_\infty(B)$ be the full subcategory $\P(B) \cap \mathsf T_\infty(B)$ of $\Mod(B^\ev)$. As one has $\Tor^K_i(A,A) = 0 = \Tor^{B^\op}_i(A,X) = \Tor^B_i(X,A)$ for all $X \in \P(B)$ and $i > 0$, we may apply \cite[Chap.\,IX, Thm.\,2.8]{CaEi56} to conclude that
$$
\Tor^{B \otimes B^\op}_i(A \otimes A^\op, X) \cong \Tor^B_i(A, X \otimes_{B} A) \quad \text{(for $i > 0$)}.
$$
Thus, for $X \in \P(B)$,
$$
X \in \P_\infty(B) \quad \Longleftrightarrow \quad \Tor^{B}_i(A,X \otimes_B A) = 0 \quad (\text{for all $i > 0$}),
$$
and consequently $\P_\infty(B) = \P(B) \cap \Ker \Tor^B_{>0}(A, - \otimes_B A)$.
\end{nn}

\begin{lem}
The subcategory $\P_\infty(B)$ is extension closed in $\Mod(B^\ev)$, and contains $B$ and $B \otimes B$. Moreover, it is closed under taking direct summands and arbitrary direct sums in $\Mod(B^\ev)$. In particular, the subcategory $\P_\infty(B)$ is entirely extension closed in the sense of Paragraph $\ref{nn:extclosed} $. Lastly, the subcategory $\P_\infty(B)$ is, viewed as an exact category, closed under kernels of epimorphisms, that is, if $f{\colon} M \rightarrow N$ is a surjective map in $\P_\infty(B)$, then $\Ker(f)$ belongs to $\P_\infty(B)$.
\end{lem}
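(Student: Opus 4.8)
The plan is to exploit the description $\P_\infty(B) = \P(B) \cap \mathsf T_\infty(B)$, where $\mathsf T_\infty(B) = \Ker\Tor^{B^\ev}_{>0}(A^\ev,-)$, and to verify the asserted closure properties for the two factors separately; since each of the properties at stake (closure under extensions, arbitrary direct sums, direct summands, and kernels of epimorphisms) is preserved by intersection of full subcategories, this handles everything but the last clause. For $\P(B)$ the required properties are exactly those already recorded for $\P_\lambda(B)$ and $\P_\varrho(B)$ (short exact sequences with projective cokernel split over $B$ on either side), so the content lies entirely in analysing $\mathsf T_\infty(B)$.

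First I would establish the stability of $\mathsf T_\infty(B)$. Since $A^\ev$ is a right $B^\ev$-module, the functors $\Tor^{B^\ev}_i(A^\ev,-)$ commute with arbitrary direct sums and carry direct summands to direct summands, giving closure of $\mathsf T_\infty(B)$ under direct sums and summands. For a short exact sequence $0 \to M' \to M \to M'' \to 0$ of $B^\ev$-modules, the long exact $\Tor^{B^\ev}(A^\ev,-)$-sequence shows both that $M',M'' \in \mathsf T_\infty(B)$ forces $M \in \mathsf T_\infty(B)$ (extension closure) and that $M,M'' \in \mathsf T_\infty(B)$ forces $M' \in \mathsf T_\infty(B)$: for $i>0$ the neighbouring terms $\Tor^{B^\ev}_{i+1}(A^\ev,M'')$ and $\Tor^{B^\ev}_i(A^\ev,M)$ vanish, pinching $\Tor^{B^\ev}_i(A^\ev,M')$ to zero. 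The first statement endows $\mathsf T_\infty(B)$ with an exact structure, and the second is precisely the assertion that $\mathsf T_\infty(B)$ --- hence $\P_\infty(B)$ --- is closed under kernels of epimorphisms. For the distinguished objects: $B\otimes B \cong B^\ev$ is free over $B^\ev$, so it lies in $\mathsf T_\infty(B)$ trivially, whereas $B \in \mathsf T_\infty(B)$ because $\Tor^{B^\ev}_i(A^\ev,B) \cong \Tor^B_i(A,A)$ for $i>0$ --- the computation of Paragraph \ref{nn:tinftycinfty} with $X=B$ --- and the right-hand side vanishes since $\pi$ is a homological epimorphism; as $B$ and $B\otimes B$ plainly belong to $\P(B)$, they belong to $\P_\infty(B)$.

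It remains to deduce that $\P_\infty(B)$ is entirely extension closed, and for this I would simply verify the hypotheses of Proposition \ref{prop:entireextclo}: $\P_\infty(B)$ is extension closed, hence $1$-extension closed in the sense of Paragraph \ref{nn:extclosed}; the category $\Mod(B^\ev)$ has enough projectives; every projective $B^\ev$-module is a direct summand of a direct sum of copies of $B^\ev = B\otimes B \in \P_\infty(B)$, so $\Proj(\Mod(B^\ev)) \subseteq \P_\infty(B)$ by the closure under direct sums and summands; and $\P_\infty(B)$ is closed under kernels of epimorphisms. Proposition \ref{prop:entireextclo} then applies and finishes the proof.

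I do not anticipate a genuine obstacle: the single computational input is the isomorphism $\Tor^{B^\ev}_i(A^\ev,B) \cong \Tor^B_i(A,A)$, which is the change-of-rings result \cite[Ch.\,IX, Thm.\,2.8]{CaEi56} combined with the vanishing of the relevant $K$- and one-sided $B$-$\Tor$'s forced by the $K$-projectivity of $A$ and $B$, exactly as prepared in Paragraph \ref{nn:tinftycinfty}. The only point that deserves a moment's care is to note that ``closed under kernels of epimorphisms'' is meant in the exact-category sense and coincides with the two-out-of-three property for $\mathsf T_\infty(B)$ established above, rather than with any automatic statement about admissibility of monomorphisms.
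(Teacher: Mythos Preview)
Your proof is correct and follows essentially the same strategy as the paper's: reduce to $\mathsf T_\infty(B)$ via the intersection $\P_\infty(B)=\P(B)\cap\mathsf T_\infty(B)$, then use additivity of $\Tor$ for sums/summands, the long exact $\Tor$-sequence for extension closure and kernels of epimorphisms, and finally invoke Proposition~\ref{prop:entireextclo}. The only cosmetic difference is that you show $B\otimes B\in\mathsf T_\infty(B)$ by noting it is free over $B^\ev$, whereas the paper uses the equivalent description $\P_\infty(B)=\P(B)\cap\Ker\Tor^B_{>0}(A,-\otimes_B A)$ to argue that $(B\otimes B)\otimes_B A\cong B\otimes A$ is $B$-flat; both are immediate.
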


\begin{proof}
As the subcategory $\P(B)$ fulfils all of the properties stated above, it suffices to show that $\mathsf T_\infty(B)$ does as well. To begin with, $\Hom_B(B \otimes A, -) \cong \Hom_K(A,-)$ tells, that the module $(B \otimes B) \otimes_B A \cong B \otimes A$ is in particular $B$-flat, and thus
$$
\Tor^B_i(A,B \otimes_B A) \cong \Tor^B_i(A,A) = 0 = \Tor^B_i(A,(B \otimes B) \otimes_B A) \quad (\text{for $i > 0$}).
$$
Consequently $B$ and $B \otimes B$ belong to $\mathsf{T}_\infty(B)$. As $\Tor^{B \otimes B^\op}_i(A \otimes A^\op, -)$ commutes with (arbitrary) direct sums, $\mathsf{T}_\infty(B)$ is closed under taking summands and sums. Next, the long exact homology sequence
$$
\xymatrix@C=11.5pt@R=4pt{
\cdots \ar[r] & \Tor^{B^\ev}_{i+1}(A^\ev, N) \ar[r] & \Tor^{B^\ev}_i(A^\ev, L) \ar[r] & \Tor^{B^\ev}_i(A^\ev,M) \ar[r] & \Tor^{B^\ev}_i(A^\ev,N) \ar[r] & \cdots & \\
\cdots \ar[r] & \Tor^{B^\ev}_1(A^\ev,N) \ar[r] & A \otimes_B L \otimes_B A \ar[r] &  A \otimes_B M \otimes_B A \ar[r] & A \otimes_B N \otimes_B A \ar[r] & 0
}
$$
for some short exact sequence $0 \rightarrow L \rightarrow M \rightarrow N \rightarrow 0$ in $\Mod(B^\ev)$, shows that if $L$ and $N$ (or $M$ and $N$) belong to $\mathsf{T}_\infty(B)$, then so does $M$ (or, respectively, $L$). Finally, we conclude that $\mathsf T_\infty(B)$ is entirely extension closed in $\Mod(B^\ev)$ by Proposition \ref{prop:entireextclo}.
\end{proof}

\begin{lem}\label{lem:funcexactres}
The functor $A \otimes_B (-) \otimes_B A {\colon} \Mod(B^\ev) \rightarrow \Mod(A^\ev)$ gives rise to an almost costrong monoidal functor
$$
A \otimes_B (-) \otimes_B A {\colon} (\Mod(B^\ev), \otimes_B, B) \longrightarrow (\Mod(A^\ev), \otimes_A, A).
$$
Moreover, when restricted to $\P_\infty(B)$, it defines an exact functor $\P_\infty(B) \rightarrow \Mod(A^\ev)$.
\end{lem}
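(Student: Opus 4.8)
The plan is to treat the two claims separately, relying throughout on the identification $\mathfrak X := A \otimes_B (-) \otimes_B A \cong A^\ev \otimes_{B^\ev} (-)$ noted just before the statement; in particular $\mathfrak X$ is right exact, preserves arbitrary direct sums, and takes values in $\Mod(A^\ev)$.

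For the monoidal structure, I would first write down the structure data. On the unit, set $\psi_0 \colon \mathfrak X(B) = A \otimes_B B \otimes_B A \xrightarrow{\ \sim\ } A \otimes_B A \xrightarrow{\ \mu_A\ } A$; the map $\mu_A$ is an isomorphism exactly because $\pi$ is a ring epimorphism (Paragraph \ref{nn:equhomepi}), so $\psi_0$ is invertible, as required for an almost costrong functor. On tensor products, use the canonical identification
$$
\mathfrak X(M) \otimes_A \mathfrak X(N) = (A \otimes_B M \otimes_B A) \otimes_A (A \otimes_B N \otimes_B A) \cong A \otimes_B M \otimes_B A \otimes_B N \otimes_B A ,
$$
in which the inner copy of $A \otimes_A A$ is absorbed by unitality of $\otimes_A$, and define
$$
\psi_{M,N} \colon \mathfrak X(M \otimes_B N) = A \otimes_B M \otimes_B B \otimes_B N \otimes_B A \longrightarrow A \otimes_B M \otimes_B A \otimes_B N \otimes_B A
$$
to be $\id_A \otimes \id_M \otimes \pi \otimes \id_N \otimes \id_A$, i.e.\ the morphism induced by applying the $B$-bimodule map $\pi \colon B \to A$ in the central tensor factor. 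Naturality in $M$ and $N$ is immediate. It then remains to check that $(\mathfrak X^\op, \psi, \psi_0)$ satisfies the coherence diagrams of Paragraph \ref{def:monoidalfunc}: the associativity hexagon and the two unit triangles.

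I expect this last check to be the most laborious step, though routine: every arrow occurring is built from the associativity and unitality constraints of $\otimes_B$ and $\otimes_A$, the absorption isomorphism $A \otimes_A A \cong A$, the multiplication $\mu_A$, and $\pi$, and after rewriting everything as iterated tensor products $A \otimes_B (-) \otimes_B \cdots \otimes_B (-) \otimes_B A$ the required commutativities reduce to the associativity and unit axioms of the $K$-algebra $A$ and to Mac Lane coherence for the ambient monoidal categories. Conceptually this is the assertion that restriction along the ring epimorphism $\pi$ is strong monoidal and that its left adjoint $\mathfrak X$ carries the dual oplax structure; cf.\ the discussion in \cite[Sec.\,6.3]{He14b}. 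I would either spell this out in the notation above or quote it from there.

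For the second claim, let $0 \to L \to M \to N \to 0$ be an admissible short exact sequence in $\P_\infty(B)$, i.e.\ a short exact sequence in $\Mod(B^\ev)$ with $L,M,N \in \P_\infty(B)$; this makes sense since, by the preceding lemma, $\P_\infty(B)$ is extension closed in $\Mod(B^\ev)$. As $\mathfrak X \cong A^\ev \otimes_{B^\ev} (-)$ is right exact with left-derived functors $\Tor^{B^\ev}_i(A^\ev,-)$, the long exact $\Tor$-sequence shows that $0 \to \mathfrak X(L) \to \mathfrak X(M) \to \mathfrak X(N) \to 0$ is exact in $\Mod(A^\ev)$ provided $\Tor^{B^\ev}_1(A^\ev,N) = 0$. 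But $\P_\infty(B) \subseteq \mathsf T_\infty(B) = \Ker \Tor^{B \otimes B^\op}_{>0}(A \otimes A^\op, -)$ and $A^\ev = A \otimes A^\op$, so $\Tor^{B^\ev}_i(A^\ev,N) = 0$ for all $i > 0$; in particular the connecting morphism $\Tor^{B^\ev}_1(A^\ev,N) \to \mathfrak X(L)$ vanishes. Hence $\mathfrak X$ restricts to an exact functor $\P_\infty(B) \to \Mod(A^\ev)$, which finishes the proof.
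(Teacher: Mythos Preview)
Your proposal is correct and follows essentially the same approach as the paper. For the monoidal structure you write down exactly the same $\psi_0$ and $\psi_{M,N}$ (the paper phrases the latter as inserting $B \cong M \otimes_B B \otimes_B N$ and then applying the canonical map induced by $\pi$, which is your $\id \otimes \id \otimes \pi \otimes \id \otimes \id$), and like the paper you leave the coherence verification as routine. For exactness, both you and the paper invoke the long exact $\Tor^{B^\ev}_\ast(A^\ev,-)$-sequence together with the defining vanishing $\Tor^{B^\ev}_{>0}(A^\ev,N)=0$ for $N \in \P_\infty(B) \subseteq \mathsf T_\infty(B)$; the paper additionally records the equivalent form $\Tor^B_i(A, X \otimes_B A)=0$ from Paragraph~\ref{nn:tinftycinfty}, but this is not needed for the argument.
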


\begin{proof}
Let $M$ and $N$ be arbitrary $B^\ev$-modules. It is a straightforward verification that the natural homomorphisms
$$
\psi_0 {\colon} A \otimes_B B \otimes_B A \xrightarrow{\, \sim \, } A \otimes_B A \xrightarrow{\, \sim \, } A, \ a \otimes b \otimes a' \mapsto aba'
$$
and
\begin{align*}
\psi_{M,N} {\colon} A \otimes_B (M \otimes_B N) \otimes_B A & \xrightarrow{\ \sim \, } A \otimes_B (M \otimes_B B \otimes_B N) \otimes_B A\\ & \xrightarrow{\text{can}} A \otimes_B (M \otimes_B A \otimes_B N) \otimes_B A\\ & \xrightarrow{\ \sim \, } (A \otimes_B M \otimes_B A) \otimes_A (A \otimes_B N \otimes_B A)
\end{align*}
turn $A \otimes_B (-) \otimes_B A$ into an almost costrong monoidal functor $(\Mod(B^\ev), \otimes_B, B) \rightarrow \Mod(A^\ev, \otimes_A, A)$. As for the exactness, each choice of $X \in \P_\infty(B)$ and $i > 0$ satisfies the equality
$$
\Tor^{B \otimes B^\op}_i(A \otimes A^\op, X) = 0 = \Tor^B_i(A, X \otimes_{B} A) \quad \text{(for $i > 0$)}
$$
by Paragraph \ref{nn:tinftycinfty}. Therefore, the sequence
$$
0 \longrightarrow A \otimes_B L \otimes_B A \longrightarrow A \otimes_B M \otimes_B A \longrightarrow A \otimes_B N \otimes_B A \longrightarrow 0
$$
is exact for each admissible short exact sequence $0 \rightarrow L \rightarrow M \rightarrow N \rightarrow 0$ in $\P_\infty(B)$, whence the exactness of the restriction of $A \otimes_B (-) \otimes_B A$ to $\P_\infty(B)$ follows. 
\end{proof}

\begin{prop}\label{prop:exactmono}
There is an exact and entirely extension closed monoidal subcategory $(\C, \otimes_B, B)$ of $(\Mod(B^\ev), \otimes_B, B)$ such that
\begin{enumerate}[\rm(1)]
\item $\C$ is a full and exact subcategory of $\P_\infty(B)$ which is closed under taking direct summands and arbitrary direct sums, and
\item\label{prop:exactmono:2} the functor $A \otimes_B (-) \otimes_B A$ restricts to an exact and almost costrong monoidal functor
$$
\mathfrak A = (A \otimes_B (-) \otimes_B A)\mathord{\upharpoonright}_{\C} {\colon} (\C, \otimes_B, B) \longrightarrow (\P(A), \otimes_A, A),
$$
that is, one has the following commutative diagram of almost costrong monoidal functors.
$$
\xymatrix@C=55pt{
\C \ar[r]^-{\mathfrak A} \ar@{ >->}[d]_-{\mathrm{inc}} & \P(A) \ar@{ >->}[d]^-{\mathrm{inc}} \\
\Mod(B^\ev) \ar[r]^-{A \otimes_B (-) \otimes_B A} & \Mod(A^\ev)
}
$$
\end{enumerate}
\end{prop}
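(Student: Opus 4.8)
The plan is to construct $\C$ in two steps, imitating the monoidal closure construction of Paragraph~\ref{nn:monoclosure} and Lemma~\ref{lem:monoclosure}, and to apply Proposition~\ref{prop:entireextclo} at the end of each step.

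First I would let $\C_0$ be the full subcategory of $\P_\infty(B)$ consisting of those modules $M$ with $A \otimes_B M \otimes_B A \in \P(A)$. Since $A \otimes_B (-) \otimes_B A$ is exact on $\P_\infty(B)$ by Lemma~\ref{lem:funcexactres}, and since both $\P_\infty(B)$ and $\P(A)$ are extension closed, closed under kernels of epimorphisms, direct summands and arbitrary direct sums (the former by the lemma preceding Lemma~\ref{lem:funcexactres}, the latter as recorded for $\P(A)$ in Section~\ref{sec:prereq}), a routine diagram chase shows that $\C_0$ is closed under extensions, kernels of epimorphisms, direct summands and arbitrary direct sums; and $B, B \otimes B \in \C_0$, because $A \otimes_B B \otimes_B A \cong A$ and $A \otimes_B (B \otimes B) \otimes_B A \cong A \otimes_K A$ both lie in $\P(A)$ ($A$ being $K$-projective). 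Hence $\Proj(B^\ev) \subseteq \C_0$, Proposition~\ref{prop:entireextclo} applies, $\C_0$ is entirely extension closed in $\Mod(B^\ev)$, and $A \otimes_B (-) \otimes_B A$ restricts to an exact functor $\C_0 \to \P(A)$.

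Next I would pass to $\C = \overline{\C_0}$, the monoidal closure of $\C_0$ inside the strong exact monoidal category $(\P(B), \otimes_B, B)$, i.e.\ the full subcategory of all $M \in \C_0$ with $M \otimes_B N \in \C_0$ for every $N \in \C_0$. Lemma~\ref{lem:monoclosure} shows that $(\C, \otimes_B, B)$ is a full, additive, extension closed and monoidal subcategory of $(\P(B), \otimes_B, B)$, closed under kernels of epimorphisms since $\C_0$ is. To run Proposition~\ref{prop:entireextclo} once more I must still check that $\C$ is closed under direct summands and direct sums — which it is, since $\C_0$ is and $(-) \otimes_B N$ preserves both — and that $B \otimes B \in \C$. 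This last point is the one requiring care: for $N \in \C_0$ one has $(B \otimes B) \otimes_B N \cong B \otimes_K N$ and $A \otimes_B (B \otimes_K N) \otimes_B A \cong A \otimes_K (N \otimes_B A)$, and using that every object of $\P(B)$ is $K$-projective, that $N \otimes_B A$ is then $K$-projective, and the identity $\Tor^{B^\ev}_i(A^\ev, X) \cong \Tor^B_i(A, X \otimes_B A)$ of Paragraph~\ref{nn:tinftycinfty}, one verifies that $B \otimes_K N \in \P_\infty(B)$ and $A \otimes_K (N \otimes_B A) \in \P(A)$, i.e.\ $B \otimes_K N \in \C_0$. Thus $\Proj(B^\ev) \subseteq \C$ and $\C$ is entirely extension closed in $\Mod(B^\ev)$.

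Finally, I would set $\mathfrak A = (A \otimes_B (-) \otimes_B A)\mathord{\upharpoonright}_{\C}$: it is exact because $\C \subseteq \P_\infty(B)$, it lands in $\P(A)$ by construction of $\C \subseteq \C_0$, and — with the structure morphisms $\psi_0$, $\psi_{M,N}$ of Lemma~\ref{lem:funcexactres}, which are morphisms of $\P(A)$ as soon as $M$, $N$ and $M \otimes_B N$ lie in $\C$, and with $\mathfrak A(B) \cong A$ the monoidal unit of $(\P(A), \otimes_A, A)$ — it is an exact, almost costrong monoidal functor $(\C, \otimes_B, B) \to (\P(A), \otimes_A, A)$; the displayed square then commutes by construction, the vertical inclusions being monoidal embeddings. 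I expect the only genuine obstacle to be the verification in the second step that the monoidal closure $\overline{\C_0}$ still contains $B \otimes B$, and hence enough $B^\ev$-projectives: this is exactly what licenses the second use of Proposition~\ref{prop:entireextclo}, and it forces one to keep track of how $A \otimes_B (-) \otimes_B A$ interacts with $\otimes_B$ on induced modules, together with the ensuing $K$-, $B$- and $A$-projectivity bookkeeping.
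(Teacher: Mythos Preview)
Your argument is correct and follows the same iterated monoidal-closure strategy as the paper's own proof. The only difference is cosmetic: the paper carries it out in three steps (first $\C_1 = \overline{\P_\infty(B)}$, then the preimage $\C_2 \subseteq \C_1$ of $\P(A)$ under $A\otimes_B(-)\otimes_B A$, then $\C = \overline{\C_2}$), whereas you compress to two by taking the preimage $\C_0$ of $\P(A)$ directly inside $\P_\infty(B)$ before passing to $\C = \overline{\C_0}$; the key verification that $B\otimes B$ survives the monoidal closure is the same computation in both.
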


\begin{proof}
Observe that as soon as we have named an appropriate category $\C \subseteq \P_\infty(B)$ with $A \otimes_B \C \otimes_B A \subseteq \P(A)$, the exactness of $\mathfrak A$, as well as the fact that it is almost costrong monoidal, will follow from Lemma \ref{lem:funcexactres} immediately. In order to find $\C$, we proceed in several steps.

\medskip

\textbf{Step 1}. Let $\C_1$ be the full subcategory of $\P_\infty(B)$ defined as follows: An object $M \in \P_\infty(B)$ belongs to $\C_1$ if, and only if, $M \otimes_B X$ belongs to $\P_\infty(B)$ for all $X \in \P_\infty(B)$. Apperently, $\C_1$ is extension closed in $\P_\infty(B)$. It is further closed under direct summands and arbitrary direct sums, as well as under kernels of epimorphisms. Furthermore, for any $X \in \P_\infty(B)$, $B \otimes_B X \cong X$ so that $B \in \C_1$. Also, the module $(B \otimes B) \otimes_B X \cong B \otimes X$ is $B$-projective on either side. Moreover, $(-) \otimes_B (B \otimes B) \otimes_B X \otimes_B A \cong (-) \otimes_B (B \otimes X) \otimes_B A \cong (-) \otimes (X \otimes_B A)$ is an exact functor, since $X \otimes_B A$ is $K$-projective, and hence
$$
\Tor^B_i(A, (B \otimes B) \otimes_B X \otimes_B A) = 0 \quad \text{(for $i > 0$)}.
$$
It follows that $B \otimes B$ belongs to $\C_1$. From Proposition \ref{prop:entireextclo} we deduce that $\C_1$ is entirely extension closed in $\Mod(B^\ev)$. Finally, $\C_1$ is a monoidal subcategory of $\Mod(B^\ev)$ as, for all $M, N \in \C_1$ and $X \in \P_\infty(B)$,
$$
(M \otimes_B N) \otimes_B X \cong M \otimes_B (N \otimes_B X) \ \in \ \P_\infty(B) \, .
$$
\medskip

\textbf{Step 2.} Let $\C_2$ be the full subcategory of $\C_1$ which is defined as follows. An object $M \in \C_1$ belongs to $\C_2$ if, and only if, $A \otimes_B M \otimes_B A$ belongs to $\P(A)$. As $A \otimes_B B \otimes_B A \cong A$ and $A \otimes_B (B \otimes B) \otimes_B A \cong A \otimes A$, the modules $B$ and $B \otimes B$ belong to $\C_2$. Except for being monoidal, the subcategory $\C_2 \subseteq \Mod(B^\ev)$ has the same properties as $\C_1$, that is, it is extension closed, it is closed under direct summands and arbitrary direct sums and it is closed under kernels of epimorphisms. Yet again, these properties combined with $B \otimes B \in \C_2$ yield that $\C_2$ is entirely extension closed. We therefore have enforced the existence of an exact functor $A \otimes_B (-) \otimes_B A {\colon} \C_1 \rightarrow \P(A)$.
\medskip

\textbf{Step 3.} In the third and conclusive step, we define a full subcategory $\C = \C_3$ of $\C_2$ as follows. A module $M \in \C_2$ belongs to $\C$ if, and only if, $M \otimes_B X$ belongs to $\C_2$ for all $X \in \C_2$. Evidently, $B$ has that property, thus $B \in \C$. Let $X \in \C_2$; by adjunction, $\Hom_A(A \otimes X \otimes_B A, -) \cong \Hom_K(X \otimes_B A, -)$ and thus $A \otimes_B (B \otimes B) \otimes_B X \otimes_B A \cong A \otimes X \otimes_B A$ is projective as a left $A$-module. On the other hand, $\Hom_{A^\op}(A \otimes X \otimes_B A, -) \cong \Hom_{B^\op}(A \otimes X, -)$. Since $A \otimes X$ is a projective right $B$-module, $A \otimes X \otimes_B A$ is projective as a right $A$-module. It thus follows that $B \otimes B \in \C$.

The subcategory $\C$ inherits all other properties that $\C_2$ has, that is, it is extension closed, it is closed under direct summands and arbitrary direct sums and it is closed under kernels of epimorphisms. Naturally, it is entirely extension closed as well. Moreover, it is a monoidal subcategory of $\Mod(B^\ev)$, as for all $M, N \in \C$ and $X \in \C_2$
$$
A \otimes_B (M \otimes_B N) \otimes_B X \otimes_B A \cong A \otimes_B M \otimes_B (N \otimes_B X) \otimes_B A  \ \in \ \P(A) \, .
$$
Thus the proof of the proposition is established.
\end{proof}

\begin{rem}
Let $\mathfrak A {\colon} \Mod(B^\ev) \rightarrow \Mod(A^\ev)$ be a $K$-linear right exact monoidal functor that commutes with arbitrary direct sums. Assuming that the left derived functors $L_i \mathfrak A$ (for $i > 0$) vanish on $B$, one can construct an exact monoidal subcategory $\C(\mathfrak A) \subseteq \P(B)$ of $(\Mod(B^\ev), \otimes_B, B)$ so that $\mathfrak A$ restricts to an exact functor $(\C(\mathfrak A), \otimes_B, B) \rightarrow (\P(A), \otimes_A, A)$.
\end{rem}

\begin{rem}
Observe that the categories $\C_1$ and $\C = \C_3$ in the above proof agree with $\overline{P}_\infty(B)$ and $\overline \C_2$, as introduced in Paragraph \ref{nn:monoclosure}, and thus are extension closed and monoidal by Lemma \ref{lem:monoclosure}. Further, $\C_2$ is an exact category that arises in the way as described in Lemma \ref{lem:smallexactfunc}.
\end{rem}

\begin{lem}\label{lem:barexact}
If $\mathbb B B$ denotes the bar resolution for $B$, then the complex
$$
\mathbb B B \otimes_B A \longrightarrow B \otimes_B A \cong A \longrightarrow 0
$$
is a flat resolution of $A$ over $B$ and, moreover,
$$
A \otimes_B \mathbb B B \otimes_B A \longrightarrow A \otimes_B A \cong A \longrightarrow 0
$$
is a projective resolution of $A$ over $A^\ev$.
\end{lem}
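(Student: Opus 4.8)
The plan is to leverage the standard contracting homotopy of the bar resolution together with the two defining properties of a (ring) homological epimorphism: that the multiplication map $A \otimes_B A \to A$ is an isomorphism, and that $\Tor^B_i(A,A) = 0$ for $i > 0$.

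First I would treat the left-$B$-module statement. Recall that $\mathbb B B = (B_\ast, \beta_\ast)$ with $B_n = B^{\otimes(n+2)}$ carries the classical contracting homotopy $s_{-1}\colon B \to B_0$, $b \mapsto 1 \otimes b$, and $s_n\colon B_n \to B_{n+1}$, $a_0 \otimes \cdots \otimes a_{n+1} \mapsto 1 \otimes a_0 \otimes \cdots \otimes a_{n+1}$, which is right $B$-linear since it only inserts a factor on the left. Hence the augmented complex $\mathbb B B \to B \to 0$ is contractible as a complex of right $B$-modules, and therefore stays contractible --- in particular exact --- after applying the additive functor $(-) \otimes_B A$. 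Moreover, using the leftmost tensor factor one has a left $B$-module isomorphism $B_n \otimes_B A \cong B \otimes_K (B^{\otimes n} \otimes_K A)$; since $A$ and $B$ are $K$-projective, $B^{\otimes n} \otimes_K A$ is $K$-projective, so $B_n \otimes_B A$ is a direct summand of a free left $B$-module, hence projective (a fortiori flat). This settles that $\mathbb B B \otimes_B A \to A \to 0$ is a flat (indeed projective) resolution of $A$ over $B$.

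For the second statement, write $P_\bullet = \mathbb B B \otimes_B A$ for the resolution just obtained, so that $\coker(P_1 \to P_0) \cong A$ as left $B$-modules. Applying the right exact functor $A \otimes_B (-)$ gives $\coker(A \otimes_B P_1 \to A \otimes_B P_0) \cong A \otimes_B A \cong A$, the last isomorphism by Lemma \ref{lem:piev} and Paragraph \ref{nn:equhomepi}; and in positive degrees the homology of $A \otimes_B P_\bullet$ computes $\Tor^B_i(A,A)$, which vanishes for $i > 0$ because $\pi$ is a homological epimorphism. Hence $A \otimes_B \mathbb B B \otimes_B A \to A \otimes_B A \cong A \to 0$ is exact. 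Finally, using the outermost tensor factors, $A \otimes_B B_n \otimes_B A \cong A \otimes_K B^{\otimes n} \otimes_K A \cong A^\ev \otimes_K B^{\otimes n}$ as $A^\ev$-modules, which is projective over $A^\ev$ by $K$-projectivity of $B^{\otimes n}$. Thus the complex is a projective resolution of $A$ over $A^\ev$, as claimed.

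The only genuinely delicate point is bookkeeping: one must keep track of which of the two $B$-actions on $\mathbb B B$ (and later on $A$) is used in each tensor contraction, so that the identifications $B^{\otimes(n+2)} \otimes_B A \cong B^{\otimes(n+1)} \otimes_K A$ and $A \otimes_B B^{\otimes(n+2)} \otimes_B A \cong A \otimes_K B^{\otimes n} \otimes_K A$, together with their compatibility with the differentials $\beta_\ast$, are set up on the correct sides. Everything else is formal.
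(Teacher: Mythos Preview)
Your proof is correct and follows essentially the same approach as the paper: both exploit the right $B$-linear contracting homotopy (equivalently, the degreewise splitting as right $B$-modules) to get exactness after $(-)\otimes_B A$, identify the terms as flat left $B$-modules, and then invoke $\Tor^B_{>0}(A,A)=0$ for exactness of $A\otimes_B \mathbb B B\otimes_B A$. The only cosmetic difference is that the paper deduces projectivity of $A\otimes_B B^{\otimes(n+2)}\otimes_B A$ over $A^{\mathrm{ev}}$ from the adjunction $(A\otimes_B(-)\otimes_B A,\ \pi^{\mathrm{ev}}_\star)$, whereas you write down the explicit isomorphism $A\otimes_B B^{\otimes(n+2)}\otimes_B A\cong A^{\mathrm{ev}}\otimes_K B^{\otimes n}$; both arguments are equivalent.
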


\begin{proof}
First of all, $A \otimes_B (-) \otimes_B A$ is left-adjoint to the restriction functor $\Mod(A^\ev) \rightarrow \Mod(B^\ev)$ along the epimorphism $\pi^\ev {\colon} B \otimes B^\op \rightarrow A \otimes A^\op$ (cf.\,Lemma \ref{lem:piev}) and thus takes projective $B^\ev$-modules to projective $A^\ev$-modules. When considered as a sequence of right $B$-modules, the bar resolution $\mathbb B B \rightarrow B \rightarrow 0$ splits in every degree (in that each degree is the direct sum of the kernel and the image of the bordering homomorphisms). It follows that $\mathbb B B \otimes_B A \rightarrow B \otimes_B A \cong A \rightarrow 0$ is an exact resolution of $A$ by left $B$-modules. Since
$$
(-) \otimes_B B^{\otimes n} \otimes_B A \cong (-) \otimes B^{\otimes(n-2)} \otimes A \quad \text{(for all $n \geqslant 2$)}
$$
this is even a resolution by flat left $B$-modules. Thus,
$$
H_i(A \otimes_B \mathbb B B \otimes_B A) = \Tor^B_i(A,A) = 0 \quad \text{(for all $i > 0$)},
$$
and, consequently, $A \otimes_B \mathbb B B \otimes_B A \rightarrow A \otimes_B A \cong A \rightarrow 0$ is a projective resolution of $A$ over $A^\ev$.
\end{proof}

\begin{lem}\label{lem:mapsequal}
Let $\C$ be as in the statement of Proposition $\ref{prop:exactmono}$. The maps
$$
A \otimes_B^{\mathbf L} (-) \otimes_B^{\mathbf L} A {\colon} \Hom_{\mathbf D^-(B^\ev)}(B,B[\ast]) \longrightarrow \Hom_{\mathbf D^-(A^\ev)}(A,A[\ast])
$$
and
$$
\HH^\ast(B) \cong \Ext^\ast_\C(B,B) \xrightarrow{\, \mathfrak A_\ast \,} \Ext^\ast_{\mathsf P(A)}(A,A) \cong \HH^\ast(A)
$$
identify under the canonical isomorphisms between $\Hom$ in the derived category and $\Ext$ in the module category.
\end{lem}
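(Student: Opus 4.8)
The plan is to run a fixed cohomology class through both constructions and compare the outputs on the common bar resolution. Fix an integer $n\geqslant 0$ and a class $\alpha\in\HH^n(B)$. Since $\C$ is entirely extension closed and contains $B$ and $B\otimes B$ (and hence also $B^{\otimes(n+2)}\cong(B\otimes B)^{\otimes_B(n+1)}$, as $\C$ is monoidal), the inclusion induces an isomorphism $\Ext^\ast_\C(B,B)\cong\HH^\ast(B)$, so I may choose an $n$-extension
\[
S\quad\equiv\quad 0\longrightarrow B\longrightarrow E_{n-1}\longrightarrow\cdots\longrightarrow E_0\longrightarrow B\longrightarrow 0
\]
with all $E_i$ in $\C$ representing $\alpha$. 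The complex $0\to B\to E_{n-1}\to\cdots\to E_0\to 0$ (placed in degrees $n,\dots,0$) has homology $B$ concentrated in degree $0$, and $\mathbb B B$ is a bounded-above complex of projective $B^\ev$-modules, so there is a morphism of complexes $c\colon\mathbb B B\to S$ lifting $\id_B$. Its top component $\varphi=c_n\colon B^{\otimes(n+2)}\to B$ is a cocycle of $\mathbb C(B,B)\cong\Hom_{B^\ev}(\mathbb B B,B)$ representing $\alpha$, equivalently a chain map $\varphi\colon\mathbb B B\to B[n]$.

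Next I would compute $\mathfrak A_\ast(\alpha)$. By definition it is the class of $\mathfrak A(S)$, and by Proposition \ref{prop:exactmono}(\ref{prop:exactmono:2}) the functor $\mathfrak A=(A\otimes_B(-)\otimes_B A)\mathord{\upharpoonright}_\C$ is exact, so $\mathfrak A(S)$ is a genuine $n$-extension of $A$ by $A$ with terms in $\P(A)$, representing the corresponding class in $\Ext^n_{\P(A)}(A,A)\cong\HH^n(A)$. Applying $A\otimes_B(-)\otimes_B A$ to $c$ yields a morphism of complexes $\mathfrak A(c)\colon A\otimes_B\mathbb B B\otimes_B A\to\mathfrak A(S)$ lifting $\id_A$, and by Lemma \ref{lem:barexact} the source $A\otimes_B\mathbb B B\otimes_B A\to A$ is a projective resolution of $A$ over $A^\ev$; hence $[\mathfrak A(S)]$ is represented, with respect to \emph{this} resolution, by the top component of $\mathfrak A(c)$, namely the cocycle $A\otimes_B\varphi\otimes_B A\colon A\otimes_B B^{\otimes(n+2)}\otimes_B A\to A$.

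On the other hand, the derived map $A\otimes_B^{\mathbf L}(-)\otimes_B^{\mathbf L}A$ is computed by applying $A\otimes_B(-)\otimes_B A$ to the projective resolution $\mathbb B B$ of $B$: the morphism $A\otimes_B^{\mathbf L}\alpha\otimes_B^{\mathbf L}A$ in $\Hom_{\mathbf D^-(A^\ev)}(A,A[n])$ is represented by $A\otimes_B\varphi\otimes_B A\colon A\otimes_B\mathbb B B\otimes_B A\to A\otimes_B B[n]\otimes_B A\cong A[n]$, computed again with respect to the resolution $A\otimes_B\mathbb B B\otimes_B A\to A$ (a resolution by Lemma \ref{lem:barexact}). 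This is literally the same cocycle produced in the previous paragraph, so under the canonical identifications of $\Hom$ in the derived category with Yoneda $\Ext$ the two graded maps coincide, as asserted.

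The main obstacle I expect is not any single computation but the bookkeeping of the identifications in play — the isomorphism $\Ext^\ast_\C(B,B)\cong\HH^\ast(B)$ from entire extension closure, the comparison $\HH^\ast(A)\cong\Ext^\ast_{A^\ev}(A,A)$, and the $\Hom$-versus-$\Ext$ dictionary — together with the fact that all three must be taken with respect to compatible resolutions. The two inputs that legitimise the comparison are that $\mathfrak A(S)$ is exact, so that $\mathfrak A(c)$ really is a chain map onto a resolution (this is exactly why $S$ was chosen inside $\C$ and Proposition \ref{prop:exactmono}(\ref{prop:exactmono:2}) is needed), and that $A\otimes_B\mathbb B B\otimes_B A$ is still a projective resolution of $A$ over $A^\ev$ (Lemma \ref{lem:barexact}); once these are granted, matching the cocycle $A\otimes_B\varphi\otimes_B A$ on both sides is formal.
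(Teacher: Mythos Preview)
Your proof is correct and follows essentially the same strategy as the paper: both represent a class in $\HH^n(B)$ by an admissible $n$-extension $S$ in $\C$ together with a comparison map from the bar resolution, and then identify both maps with the cocycle $A\otimes_B\varphi\otimes_B A$ on the projective resolution $A\otimes_B\mathbb B B\otimes_B A$ of $A$, invoking Proposition~\ref{prop:exactmono}(\ref{prop:exactmono:2}) and Lemma~\ref{lem:barexact} at the same points. The only difference is presentational: the paper carries out the derived-category computation explicitly via roofs and an auxiliary lift $\Psi\colon\mathbb B B\to\mathbb B B[n]$, whereas you shortcut directly to the cocycle representation, which is legitimate precisely because $A\otimes_B\mathbb B B\otimes_B A\to A$ is still a quasi-isomorphism.
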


\begin{proof}
By Lemma \ref{lem:barexact}, $(A \otimes_B^{\mathbf L} (-) \otimes_B^{\mathbf L} A)(B)$ computes as $A \otimes_B \mathbb B B \otimes_B A$ which is, of course, quasi-isomorphic to $A$. If $\mathbb X = (X_\ast, \partial_\ast)$ is a complex in $\Mod(B^\ev)$, we will denote by $\mathbb X^\natural$ the complex obtained from $\mathbb X$ by replacing $X_n$ by $0$ for $n \leqslant -1$; that is, $\mathbb X^\natural$ is the truncation of $\mathbb X$ in negative degrees. Fix an integer $n \geqslant 1$ and an admissible $n$-extension
$$
\xymatrix@C=16pt{
S & \equiv & 0 \ar[r] & B \ar[r] & E_{n-1} \ar[r] & \cdots \ar[r] & E_0 \ar[r] & B \ar[r] & 0
}
$$
in $\C$. Moreover, fix a comparison map
$$
\xymatrix@C=16pt{
(\mathbb B B \rightarrow B) \ar[d]_-\Phi & & \cdots \ar[r] & B^{\otimes(n+2)} \ar[r]^-{\beta_n} \ar[d]_-{\varphi_{n}} & B^{\otimes (n+1)} \ar[d]_-{\varphi_{n-1}} \ar[r] & \cdots \ar[r] & B \otimes B \ar[d]^-{\varphi_{0}} \ar[r]^-{\beta_0} & B \ar[r] \ar@{=}[d] & 0 \,\,\\
S & & 0 \ar[r] & B \ar[r] & E_{n-1} \ar[r] & \cdots \ar[r] & E_0 \ar[r] & B \ar[r] & 0 \, .
}
$$
The map $\Phi^\natural {\colon} \mathbb B B \rightarrow S^\natural$ is a quasi-isomorphism. Recall from, for instance, \cite{HiSt97} or \cite{Wei94}, that the canonical isomorphism $\Ext^n_\C(B,B) \xrightarrow{\sim} \HH^n(B) \xrightarrow{\sim} \Hom_{\mathbf D^-(B)}(B,B[n])$ (the first being due to the fact that $\C$ is entirely extension closed) sends the equivalence class of $S$ to the equivalence class $f_S$ of the roof
$$
\xymatrix@!C=18pt@R=22pt{
B & \cdots \ar[r] & 0 \ar[r] & 0 \ar[r]^-{} & 0 \ar[r]^-{} & \cdots \ar[r]^{} & B \ar[r] & 0 \ar[r] & \cdots \\
S^\natural \ar[u]^-{\text{qis}} \ar[d]  & \cdots \ar[r] & 0 \ar[r] \ar[u] \ar[d] & B \ar[r] \ar@{=}[d] \ar[u] & E_{n-1} \ar[d] \ar[u] \ar[r] & \cdots \ar[r] & E_0 \ar[u] \ar[r] \ar[d] & 0 \ar[r] \ar[d] \ar[u] & \cdots \\
B[n] & \cdots \ar[r] & 0 \ar[r] & B \ar[r]^-{} & 0 \ar[r]^-{} & \cdots \ar[r]^{} & 0 \ar[r] & 0 \ar[r] & \cdots
}
$$
in $\Hom_{\mathbf D^-(B)}(B,B[n])$. We claim that there is a morphism $\Psi {\colon} \mathbb B B \rightarrow \mathbb B B[n]$ of chain complexes in $\Mod(B^\ev)$, fitting inside the commutative diagram below.
\begin{equation}\label{eq:diagramqis}
\begin{aligned}
\xymatrix{
\mathbb B B \ar[d]_-{\text{qis}} & \mathbb B B \ar@{=}[l] \ar[r]^-\Psi \ar[d]^-{\text{qis}}_-{\Phi^\natural} & \mathbb B B[n] \ar[d]^-{\text{qis}}\\
B & S^\natural \ar[r] \ar[l]_-{\text{qis}} & B[n]
}
\end{aligned}
\end{equation}
Indeed, $\varphi_n$ uniquely factors through $\Im(\beta_n)$ as $\varphi_n \circ \beta_{n+1} = 0$. Further, there is some $\overline{\varphi}_n {\colon} B^{\otimes(n+2)} \rightarrow B \otimes B$ with $(B \otimes B \twoheadrightarrow B) \circ \overline{\varphi}_n = \varphi_n$ since $B^{\otimes(n+2)}$ is a projective $B^\ev$-module. A lifting
$$
\xymatrix@C=16pt{
\cdots \ar[r] & B^{\otimes (n+4)} \ar[r] \ar[d]_-{\psi_{n+2}} & B^{\otimes (n+3)} \ar[d]_-{\psi_{n+1}} \ar[r] & B^{\otimes(n+2)} \ar[r] \ar[d]_-{\psi_{n}}|-{=}^-{\overline{\varphi}_n} & \Im(\beta_n) \ar[r] \ar[d] & 0\\
\cdots \ar[r] & B^{\otimes 4} \ar[r] & B^{\otimes 3} \ar[r] & B \otimes B \ar[r] & B \ar[r] & 0
}
$$
of $\overline{\varphi}_n$ gives rise to the desired morphism $\Psi$. It thus follows that $A \otimes_B^{\mathbf L} f_S \otimes_B^{\mathbf L} A$ is represented by the roof $(\id_{\mathbb BB}, A \otimes_B \Psi \otimes_B A)$ conjugated by the canonical quasi-isomorphism $\mathbb B B \rightarrow B$.

As the diagram (\ref{eq:diagramqis}) lives in $\C$, all the quasi-isomorphisms in it will remain so if we apply $A \otimes_B (-) \otimes_B A$ to it. After identifying $A$ with $A \otimes_B B \otimes_B A$ (which we are allowed to do as $\pi$ is homological; see Paragraph \ref{nn:equhomepi}), we arrive at the commutative diagram
$$
\xymatrix@!C=60pt{
A \otimes_B \mathbb B B \otimes_B A \ar[d]_-{\text{qis}} & A \otimes_B \mathbb B B \otimes_B A \ar@{=}[d] \ar[r] \ar@{=}[l] & A \otimes_B \mathbb B B \otimes_B A[n] \ar[d]^-{\text{qis}}\\
A & A \otimes_B \mathbb B B \otimes_B A \ar[d]^-{\text{qis}} \ar[r]^-{\text{qis}} \ar[l]_-{\text{qis}} & A[n]\\
& (A \otimes_B S \otimes_B A)^\natural \ar[ur] \ar[ul]^-{\text{qis}} &
}
$$
which tells us that $A \otimes_B^{\mathbf L} f_S \otimes_B^{\mathbf L} A$ is represented by the roof $A \leftarrow (A \otimes_B S \otimes_B A)^\natural \rightarrow A[n]$. To summarise, we have shown that within the diagram
$$
\xymatrix@C=18pt{
\Hom_{\mathbf D^-(B^\ev)}(B,B[\ast]) \ar[rrr]^-{A \otimes_B^{\mathbf L} (-) \otimes_B^{\mathbf L} A} & & &  \Hom_{\mathbf D^-(A^\ev)}(A,A[\ast])\\
& \Ext^\ast_\C(B,B) \ar@{-->}[ul]_-\cong \ar[dl]_-\cong \ar[r]^-{\mathfrak U_\ast} & \Ext^\ast_{\P(A)}(A,A) \ar[dr]^-\cong \ar@{-->}[ur]^-\cong &\\
\HH^\ast(B) \ar[uu]^-\cong \ar@{-->}[rrr] & & & \HH^\ast(A) \ar[uu]_-\cong
}
$$
the sub-diagram given by the solid arrows commutes, whence the result follows.
\end{proof}

After all preparations made, the proof of Theorem \ref{thm:homepi} is now a single-liner. Recall that it asserted that the functor $A \otimes_B^{\mathbf L} (-) \otimes_B^{\mathbf L} A$ defines a homomorphism $\HH^\ast(B) \rightarrow \HH^\ast(A)$ of strict Gerstenhaber algebras.

\begin{proof}[Proof of Theorem $\ref{thm:homepi}$]
Use Proposition \ref{prop:exactmono}(\ref{prop:exactmono:2}) and Lemma \ref{lem:mapsequal} with Theorem \ref{thm:gersthochcomp}.
\end{proof}

We will apply the insights acquired in this and previous sections to long exact sequences by Koenig--Nagase.


\section{The long exact sequences of Koenig--Nagase}\label{sec:koenagase}

\begin{nn}
In the case of \textit{stratifying ideals}, the long exact Hochschild cohomology sequences stated in this paragraph are due to Koenig--Nagase; see \cite{KoNa09}. In those sequences, we will identify certain homomorphisms as such of strict Gerstenhaber algebras. Proofs for them being multiplicative can, for $I$ being stratifying, already be found in \cite{KoNa09}. In the following, we let $B$ be a $K$-algebra, $I \subseteq B$ a two-sided ideal, $A = B/I$ and $\pi {\colon} B \rightarrow A$ the canonical surjection. We further assume that $B$ and $A$ are \textit{projective} as $K$-modules. Observe that one has the following commutative diagrams with exact rows:
\begin{equation*}
\begin{aligned}
\xymatrix@C=18pt@R=18pt{
0 \ar[r] & \Tor_1^B(I,A) \ar[d]_-\cong \ar[r] & I \otimes_B I \ar[r] \ar@{=}[d] & I \otimes_B B \ar[r] \ar[d]^-\cong & I \otimes_B A \ar[r] \ar[d]^-\cong & 0\, \ \\
0 \ar[r] & \Tor_2^B(A,A) \ar[r] & I \otimes_B I \ar[r]^-{\mu_I} & I \ar[r] & I/I^2 \ar[r] & 0 \, ,
}
\end{aligned}
\end{equation*}
where $\mu_I{\colon} I \otimes_B I \rightarrow I$ denotes the (restricted) multiplication map, and
\begin{equation*}
\begin{aligned}
\xymatrix@C=18pt@R=18pt{
0 \ar[r] & \Tor_1^B(A,A) \ar@{=}[d] \ar[r] & I \otimes_B A \ar[r]^-0 \ar[d]_-\cong & B \otimes_B A \ar[r] \ar[d]^-\cong & A \otimes_B A \ar[r] \ar@{=}[d] & 0\, \ \\
0 \ar[r] & \Tor_1^B(A,A) \ar[r] & I/I^2 \ar[r] & A \ar[r] & A \otimes_B A \ar[r] & 0 \, .
}
\end{aligned}
\end{equation*}
Thus, $\Tor_1^B(A,A) \cong I/I^2$ and $A \cong A \otimes_B A$ as $A^\ev$-modules. It follows that $\Tor_1^B(A,A) = 0 = \Tor_2^B(A,A)$ holds true if, and only if, the map $\mu_I{\colon} I \otimes_B I \rightarrow I$ is an isomorphism and $I = I^2$. See \cite{APT92} for a thorough study of ideals with such properties, and \cite{GeLe91} for some related observations. 
\end{nn}

\begin{lem}\label{lem:ideal}
If $\Tor^B_i(A,A) = 0$ for $i > 0$, then the functor $$A \otimes_B (-) \otimes_B A {\colon} \Mod(B^\ev) \longrightarrow \Mod(A^\ev)$$ gives rise to an isomorphism
$$
\HH^\ast(B,A) = \Ext^\ast_{B^\ev}(B,A) \xrightarrow{\ \sim \ } \Ext^\ast_{A^\ev}(A,A) = \HH^\ast(A)
$$
of graded modules over $\HH^\ast(A)$.
\end{lem}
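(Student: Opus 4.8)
The plan is to compute both graded $\Ext$-algebras by means of the bar resolution of $B$ and to identify the comparison map with an adjunction isomorphism. Since $\pi\colon B \rightarrow A = B/I$ is surjective it is a ring epimorphism, so the hypothesis $\Tor^B_i(A,A) = 0$ for $i>0$ says precisely that $\pi$ is a homological epimorphism in the sense of Paragraph~\ref{nn:equhomepi}. Thus Lemma~\ref{lem:piev} applies: $\pi^\ev\colon B^\ev \rightarrow A^\ev$ is again a homological epimorphism, and consequently Lemma~\ref{lem:barexact} shows that $A \otimes_B \mathbb B B \otimes_B A \rightarrow A \otimes_B A \cong A \rightarrow 0$ is a projective resolution of $A$ over $A^\ev$. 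On the other hand, since $B$ is $K$-projective, $\mathbb B B \rightarrow B \rightarrow 0$ is a projective resolution of $B$ over $B^\ev$.

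First I would use that $A \otimes_B (-) \otimes_B A \cong A^\ev \otimes_{B^\ev}(-)$ is left adjoint to the restriction functor $\pi^\ev_\star\colon \Mod(A^\ev) \rightarrow \Mod(B^\ev)$. Applying the natural isomorphism $\Hom_{A^\ev}(A \otimes_B M \otimes_B A, A) \cong \Hom_{B^\ev}(M, A)$ degreewise to the complex $M = \mathbb B B$ yields an isomorphism of cocomplexes $\Hom_{A^\ev}(A \otimes_B \mathbb B B \otimes_B A, A) \cong \Hom_{B^\ev}(\mathbb B B, A)$; passing to cohomology and invoking the two resolutions above identifies $\HH^\ast(A) = \Ext^\ast_{A^\ev}(A,A)$ with $\Ext^\ast_{B^\ev}(B,A) = \HH^\ast(B,A)$. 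It remains to check that this isomorphism is the one induced by $A \otimes_B (-) \otimes_B A$, or equivalently by the derived functor $A \otimes^{\mathbf L}_B (-) \otimes^{\mathbf L}_B A$. This is the same bookkeeping as in the proof of Lemma~\ref{lem:mapsequal}: on the bar resolution, $A \otimes^{\mathbf L}_B f \otimes^{\mathbf L}_B A$ sends a cocycle $f\colon B^{\otimes(n+2)} \rightarrow A$ to $\mathrm{id}_A \otimes f \otimes \mathrm{id}_A$ followed by the multiplication isomorphism $A \otimes_B A \otimes_B A \cong A$ (here one uses $A \otimes^{\mathbf L}_B A \simeq A$, again a consequence of the hypothesis), and under the adjunction above this corresponds to $f$ itself.

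Finally I would treat the module structures. As $\pi^\ev$ is a homological epimorphism, restriction induces a graded ring isomorphism $\Ext^\ast_{B^\ev}(A,A) \cong \Ext^\ast_{A^\ev}(A,A) = \HH^\ast(A)$ (the corollary following Lemma~\ref{lem:piev}), through which $\HH^\ast(B,A) = \Ext^\ast_{B^\ev}(B,A)$ becomes a graded $\HH^\ast(A)$-module via Yoneda composition $\Ext^\ast_{B^\ev}(A,A) \times \Ext^\ast_{B^\ev}(B,A) \rightarrow \Ext^\ast_{B^\ev}(B,A)$, while $\HH^\ast(A)$ is a module over itself; that the comparison isomorphism respects these structures follows because $A \otimes^{\mathbf L}_B (-) \otimes^{\mathbf L}_B A$ is a functor on the derived categories, hence compatible with Yoneda composites after trivialising $A \otimes^{\mathbf L}_B A$, and restricts to the canonical isomorphism on $\Ext^\ast_{B^\ev}(A,A)$. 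The only delicate point is the identification in the middle step of the abstract adjunction isomorphism with the functor-induced map; with the resolutions of Lemma~\ref{lem:barexact} available this is a routine diagram chase parallel to Lemma~\ref{lem:mapsequal}, after which $\HH^\ast(A)$-linearity is formal.
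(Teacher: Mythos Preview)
Your proposal is correct and follows essentially the same route as the paper: invoke Lemma~\ref{lem:barexact} to turn $A \otimes_B \mathbb B B \otimes_B A$ into an $A^\ev$-projective resolution of $A$, and then use the adjunction $\Hom_{A^\ev}(A \otimes_B (-) \otimes_B A, A) \cong \Hom_{B^\ev}(-,A)$ degreewise. The paper's proof stops after the three-line chain of isomorphisms and does not spell out why the resulting bijection is the one induced by $A \otimes_B (-) \otimes_B A$, nor why it is $\HH^\ast(A)$-linear; your additional paragraphs address precisely these points, so if anything your argument is more complete than the paper's.
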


\begin{proof}
By Lemma \ref{lem:barexact}, $A \otimes_B \mathbb B B \otimes_B A \rightarrow A \otimes_B A \cong A \rightarrow 0$ is a projective resolution of $A$ over $A^\ev$. Therefore, for all $n \geqslant 0$,
\begin{align*}
\Ext^n_{B^\ev}(B,A) & \cong H^n \Hom_{B^\ev}(\mathbb B B, A) && \text{(by definiton)}\\
&\cong H^n \Hom_{A^\ev}(A \otimes_B \mathbb B B \otimes_B A, A) && \text{(by adjunction)}\\
&\cong \Ext^n_{A^\ev}(A,A) && \text{(by the remark above)},
\end{align*}
whence the claim follows.
\end{proof}

The following definition is taken from \cite[Sec.\,2.1]{ClPaSc96}.

\begin{defn}
Let $\Lambda$ be a ring. An idempotent element $e \in \Lambda$ is a \textit{stratifying idempotent} if the multiplication map $\Lambda e \otimes_{e\Lambda e} e\Lambda \rightarrow \Lambda e \Lambda$ is an isomorphism and $\Tor^{e\Lambda e}_i(\Lambda e, e\Lambda) = 0$ for $i > 0$. A twosided ideal $J \subseteq \Lambda$ is a \textit{stratifying ideal} if there is a stratifying idempotent $e \in \Lambda$ that generates $J$, i.e., $J = \Lambda e \Lambda$.
\end{defn}

\begin{exas}
Let $R$ and $S$ be $K$-algebras, and $M$ an $R \otimes S^\op$-module. 
\begin{enumerate}[\rm(1)]
\item Consider the matrix algebra
$$
B = \left(\begin{matrix}
R & M\\
0 & S
\end{matrix}
\right)
$$
along with the canonical idempotents
$$
e = \left(\begin{matrix}
1 & 0\\
0 & 0
\end{matrix}\right) \quad \text{and} \quad e' = \left(\begin{matrix}
0 & 0\\
0 & 1
\end{matrix}\right).
$$
Clearly $I = BeB$ is isomorphic to $eB$, and $R = Be = eBe$. Hence $R \otimes_{R} eB = Be \otimes_{eBe} eB \rightarrow BeB = eB$ is an isomorphism and
$$
\Tor^{eBe}_i(Be, eB) = \Tor^{eBe}_i(eBe, eB) = 0 \quad \text{(for $i > 0$)}.
$$
Consequently, $e$ is a stratifying idempotent in $B$, and $I = BeB$ is a stratifying ideal. By similar arguments, $e'$ is a stratifying idempotent as well, and thus, $I' = Be'B$ is stratifying. 
\item The above example can be extended. Let $N$ be a $S \otimes R^\op$-module and $\phi \colon N \otimes_R M \rightarrow S$, $\psi \colon M \otimes_S N \rightarrow R$ be bimodule homomorphisms. These additional data give rise to a \textit{generalised matrix algebra}:
$$
B = \left(\begin{matrix}
R & M\\
N & S
\end{matrix}
\right)\,.
$$
By \cite[Prop.\,4.1]{GaPs15}, the idempotent $e$ is stratifying in $B$ if, and only if, $\phi$ is a monomorphism and $\Tor^{eBe}_i(Be,eB) = 0$ for all $i > 0$. Similarly, $e'$ is stratifying in $B$ if, and only if, $\psi$ is a monomorphism and $\Tor^{e'Be'}_i(Be',e'B) = 0$ for all $i > 0$.
\end{enumerate}
\end{exas}

\begin{rem}
A great deal of examples of algebras that are controlled by certain stratifying ideals is given by \textit{quasi-hereditary algebras}, as studied in, for instance, 
the very clear note \cite{DlRi89b}. Let $\Lambda$ be a ring and $\mathrm{Rad}(\Lambda)$ its Jacobsen radical. Recall that a twosided ideal $J$ of $\Lambda$ is called a \textit{heredity ideal}, if $J = \Lambda e \Lambda$ for some idepotent element $0 \neq e \in \Lambda$, $J \mathrm{Rad}(\Lambda) J = 0$ and $J$ is a projective right $\Lambda$-module. By \cite[Appendix, Statement 6]{DlRi89b} every idempotent ideal is of the form $\Lambda e \Lambda$ for some idempotennt $e \in \Lambda$, if $\Lambda$ is \textit{semiprimary} (that is, $\mathrm{Rad}(\Lambda)$ is nilpotent and $\Lambda / \mathrm{Rad}(\Lambda)$ is semisimple artinian; therefore, every finite dimensional algebra over a field is semiprimary), so that in this situation the first requirement translates to $J = J^2$.

Let $J = \Lambda e \Lambda \subseteq \Lambda$ be a heredity ideal. It follows from \cite[Appendix, Statement 7]{DlRi89b} that the multiplication map $\Lambda e \otimes_{e\Lambda e} e\Lambda \rightarrow J$ defines a bimodule isomorphism. Moreover, as $J$ is a projective right $\Lambda$-module,
$$
\Hom_{(e\Lambda e)^\op}(\Lambda e, (-)e) \cong \Hom_{\Lambda^\op}(\Lambda e \otimes_{e\Lambda e} e\Lambda, -) \cong \Hom_{\Lambda^\op}(J,-)
$$
is exact, thus $\Lambda e$ is a projective right $e\Lambda e$-module and, in particular, $J$ is a stratifying ideal in $\Lambda$. The ring $\Lambda$ is called \textit{quasi-hereditary}, if there is a \textit{heredity chain}
$$
0 = J_0 \subsetneq J_1 \subsetneq \cdots \subsetneq J_{n-1} \subsetneq J_n = \Lambda \, ,
$$
that is, a chain of ideals in $\Lambda$ such that $J_t/J_{t-1}$ is a heredity ideal in $\Lambda/J_{t-1}$ (for $1 \leqslant t \leqslant n$). We thus obtain canonical short exact sequences
$$
0 \longrightarrow J_t/J_{t-1} \longrightarrow \Lambda / J_{t-1} \longrightarrow \Lambda / J_t \longrightarrow 0 \quad \text{(for $1 \leqslant t \leqslant n$)},
$$
where the leftmost term in particular is a stratifying ideal of the middle term. Every semiprimary ring of global dimension $2$ is quasi-hereditary (see \cite[Thm.\,2]{DlRi89b}). Moreover, any semiprimary algebra $R$ may be realised as $e \Lambda e$ for some quasi-hereditary algebra $\Lambda$ and some idempotent $e \in \Lambda$ (see \cite{DlRi89a}). Over fields, the Brauer algebras $B(r,\delta)$ and the partition algebras $P(r,\delta)$ are quasi-hereditary for many choices of the parameters $r \geqslant 1$ and $\delta \in K$ (see \cite[Thm.\,1.3 and Thm.\,1.4]{KoXi99}).
\end{rem}

\begin{lem}\label{lem:homepi}
If the ideal $I$ is a stratifying ideal, then $\Tor^B_i(A,A) = 0$ for $i > 0$, that is, $\pi{\colon} B \rightarrow A$ is a homological epimorphism.
\end{lem}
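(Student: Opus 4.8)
The plan is to reduce the claim to the single elementary observation $eB\subseteq I$, and then read off the vanishing of $\Tor^B_i(A,A)$ from the long exact sequence attached to $0\to I\to B\to A\to 0$. Throughout write $C=eBe$, and recall that $e$ being a \emph{stratifying idempotent} means precisely that the restricted multiplication $\mu\colon Be\otimes_{C}eB\to BeB=I$ is an isomorphism of $B$-bimodules and that $\Tor^{C}_i(Be,eB)=0$ for all $i>0$. The first thing I would record is that for every $b\in B$ one has $eb=1\cdot e\cdot b\in BeB=I$, so $eB\subseteq I$ and hence $eA=e(B/I)=0$; since $eB$ is a direct summand of $B$ as a right $B$-module (namely $B_B=eB\oplus(1-e)B$), this is the same as saying $eB\otimes_B A=0$.

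Next I would construct a projective resolution of the right $B$-module $I$, imitating the one-sided analogue of the resolution built in the proof of Lemma~\ref{lem:isoidealtor}. Choose a projective resolution $Q_\bullet\to Be\to 0$ of $Be$ over $C$ as right $C$-modules, and apply $(-)\otimes_{C}eB$. The complex $Q_\bullet\otimes_{C}eB\to Be\otimes_{C}eB=I\to 0$ has homology $\Tor^{C}_i(Be,eB)$ in degree $i$, which vanishes for $i>0$ by the stratifying hypothesis, so it is a resolution of $I$; and each $Q_n$ is a direct summand of a free right $C$-module, whence $Q_n\otimes_{C}eB$ is a direct summand of a direct sum of copies of $eB$, hence projective as a right $B$-module because $eB$ is. Therefore, using associativity of the tensor product and $eB\otimes_B A=0$,
$$
\Tor^B_i(I,A)=H_i\big(Q_\bullet\otimes_{C}eB\otimes_B A\big)=H_i\big(Q_\bullet\otimes_{C}(eB\otimes_B A)\big)=0\qquad\text{for all } i\ge 0 .
$$

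Finally I would apply $(-)\otimes_B A$ to the short exact sequence $0\to I\to B\to A\to 0$ of right $B$-modules. Since $B$ is free over itself, $\Tor^B_i(B,A)=0$ for $i>0$, so the long exact $\Tor$-sequence collapses to isomorphisms $\Tor^B_{i+1}(A,A)\cong\Tor^B_i(I,A)$ for $i\ge 1$ together with an injection $\Tor^B_1(A,A)\hookrightarrow I\otimes_B A=\Tor^B_0(I,A)$; both right-hand sides vanish by the preceding step, so $\Tor^B_i(A,A)=0$ for all $i>0$. As $\pi\colon B\to A$ is surjective it is in particular a ring epimorphism, hence by Paragraph~\ref{nn:equhomepi} it is a homological epimorphism. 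I do not anticipate a genuine obstacle — the computation is short — the only points requiring care are the bookkeeping of sides (using $eB$ as a projective \emph{right} $B$-module and $Be$ as a \emph{right} $C$-module) and the idea of computing $\Tor^B_\ast(I,A)$ rather than attacking $\Tor^B_\ast(A,A)$ directly; an equivalent but more conceptual route is to use the triangle $Be\otimes_{C}^{\mathbf L}eB\to B\to A$ in $\mathbf D^b(B^\ev)$ (the left term being concentrated in degree $0$ because $e$ is stratifying) and apply $(-)\otimes_B^{\mathbf L}A$, noting $eB\otimes_B^{\mathbf L}A\simeq eB\otimes_B A=0$ because $eB$ is $B$-projective.
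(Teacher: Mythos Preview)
Your proof is correct and follows essentially the same strategy as the paper: show $\Tor^B_i(I,A)=0$ via the observation $eA=0$, then feed this into the long exact sequence for $0\to I\to B\to A\to 0$. The only cosmetic differences are that the paper invokes \cite[Chap.\,IX, Thm.\,2.8]{CaEi56} for the isomorphism $\Tor^B_i(Be\otimes_{eBe}eB,A)\cong\Tor^{eBe}_i(Be,eA)$ where you build the projective resolution of $I$ by hand, and the paper handles $i=1$ via $\Tor^B_1(A,A)\cong I/I^2=0$ whereas you use $\Tor^B_1(A,A)\hookrightarrow I\otimes_B A=0$; both are equivalent unpackings of the same argument.
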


\begin{proof}
Observe that for $i > 0$,
$$
\Tor_i^B(I,A) \cong \Tor^B_i(Be \otimes_{eBe} eB, A) \cong \Tor^{eBe}_i(Be, eB \otimes_B A) \cong \Tor^{eBe}_i(Be, eA) = 0
$$
by \cite[Chap.\,IX, Thm.\,2.8]{CaEi56} and, moreover,
$$
\Tor^B_1(A,A) = I/I^2 = 0, \quad \Tor_{i+1}^B(A,A) \cong \Tor_i^B(I,A)
$$
by the long exact homology sequence arsing from applying $- \otimes_B A$ to the canonical short exact sequence $0 \rightarrow I \rightarrow B \rightarrow A \rightarrow 0$.
\end{proof}

\begin{rem}\label{rem:isoextstr}
The lemma implies that the restriction functor induces isomorphisms
$$
\Ext^\ast_{A}(M,N) \xrightarrow{\, \sim \, } \Ext^\ast_B(M,N)\, ,
$$
for all $A$-modules $M$ and $N$, as $\mathbf D^b(\pi_\star)$ is full and faithful by Paragraph \ref{nn:equhomepi}. The following statement can also be found in \cite{GeLe91}. We still give the (short) proof for completeness.
\end{rem}

\begin{lem}\label{lem:strattorvan}
If the ideal $I$ is a stratifying ideal, then $\Tor^B_i(A,M) = 0$ for each $A$-module $M$ and each integer $i > 0$.
\end{lem}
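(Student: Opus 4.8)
The plan is to exploit that the idempotent $e$ generating the stratifying ideal already lies in $I$, which forces $eM = 0$ for every $A$-module $M$, and then to transport this vanishing through the Cartan--Eilenberg change-of-rings isomorphism that the stratifying data makes available.

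First I would note that $e = e\cdot e\cdot e \in BeB = I$, so that $eM \subseteq IM = 0$ and hence $eB \otimes_B M \cong eM = 0$ for every $A = B/I$-module $M$. Next I would record the two consequences of $e$ being a \emph{stratifying} idempotent that are relevant here: the multiplication map induces a $B^\ev$-isomorphism $Be \otimes_{eBe} eB \xrightarrow{\ \sim\ } BeB = I$, and $\Tor^{eBe}_i(Be, eB) = 0$ for $i > 0$. Together with the (automatic) flatness of $eB$ as a right $B$-module -- it is a direct summand of $B$ -- these are exactly the hypotheses under which \cite[Chap.\,IX, Thm.\,2.8]{CaEi56} applies, giving for every $i \geqslant 0$ a natural isomorphism
$$
\Tor^B_i(I,M)\ \cong\ \Tor^B_i\bigl(Be \otimes_{eBe} eB,\, M\bigr)\ \cong\ \Tor^{eBe}_i\bigl(Be,\, eB \otimes_B M\bigr)\ =\ \Tor^{eBe}_i\bigl(Be,\, eM\bigr).
$$
Since $eM = 0$, the right-hand side vanishes, so $\Tor^B_i(I,M) = 0$ for all $i \geqslant 0$; in particular $I \otimes_B M = 0$.

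It then remains to run the long exact homology sequence obtained by applying $- \otimes_B M$ to the short exact sequence $0 \to I \to B \to A \to 0$. For $i \geqslant 2$ the two neighbouring terms $\Tor^B_i(B,M)$ and $\Tor^B_{i-1}(I,M)$ vanish, whence $\Tor^B_i(A,M) = 0$; for $i = 1$ one has $\Tor^B_1(B,M) = 0$ and $\Tor^B_1(A,M)$ embedded in $\Tor^B_0(I,M) = I \otimes_B M = 0$. This yields $\Tor^B_i(A,M) = 0$ for all $i > 0$, as asserted. No step here is genuinely hard: the one point deserving attention is checking that the vanishing $\Tor^{eBe}_i(Be, eB) = 0$ (paired with the $B$-flatness of $eB$) is precisely what legitimises the spectral-sequence collapse behind the displayed isomorphism -- this is where \emph{stratifying}, as opposed to merely \emph{idempotent}, enters -- after which the triviality $eM = 0$ does the rest. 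This also recovers the statement announced in \cite{GeLe91}.
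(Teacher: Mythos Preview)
Your proof is correct, but it takes a genuinely different route from the paper's.

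The paper argues via duality: since $I$ stratifying implies $\pi\colon B \to A$ is a homological epimorphism (Lemma~\ref{lem:homepi}), restriction gives $\Ext^i_A(A,-) \cong \Ext^i_B(A,-)$ on $A$-modules (Remark~\ref{rem:isoextstr}); applying this to $\Hom_{\mathbb Z}(M,\mathbb Q/\mathbb Z)$ and invoking the Ext--Tor duality $\Ext^i_B(A,\Hom_{\mathbb Z}(M,\mathbb Q/\mathbb Z)) \cong \Hom_{\mathbb Z}(\Tor^B_i(A,M),\mathbb Q/\mathbb Z)$ of \cite[Chap.\,VI, Prop.\,5.1]{CaEi56}, the injective cogenerator property of $\mathbb Q/\mathbb Z$ finishes the argument.

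Your approach instead reruns the computation from the proof of Lemma~\ref{lem:homepi} with an arbitrary $A$-module $M$ in place of $A$ itself: the only feature of $A$ used there is $eA = 0$, and you correctly observe that $eM = 0$ holds for every $A$-module, so the same change-of-rings isomorphism and long exact sequence go through verbatim. This is more direct and more elementary --- no $\mathbb Q/\mathbb Z$ trick --- and it makes transparent that Lemma~\ref{lem:strattorvan} is really Lemma~\ref{lem:homepi} with the parameter $A$ replaced by $M$. The paper's argument, on the other hand, shows something slightly stronger in spirit: its proof uses only that $\pi$ is a homological epimorphism, so the conclusion $\Tor^B_i(A,M) = 0$ in fact holds for \emph{any} homological epimorphism $B \to A$, not merely those arising from stratifying ideals. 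This is the generality alluded to in the reference to \cite{GeLe91}.
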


\begin{proof}
Indeed, for all $i > 0$,
\begin{align*}
0 & = \Ext^i_A(A,\Hom_{\mathbb Z}(M, \mathbb Q / \mathbb Z))&& \text{(as $A$ is $A$-pojective)}\\
&\cong \Ext^i_B(A,\Hom_{\mathbb Z}(M, \mathbb Q / \mathbb Z)) && \text{(by Remark \ref{rem:isoextstr})}\\
&\cong \Hom_{\mathbb Z}(\Tor^B_i(A,M), \mathbb Q/ \mathbb Z) && \text{(by \cite[Chap.\,VI, Prop.\,5.1]{CaEi56})}.
\end{align*}
As $\mathbb Q/\mathbb Z$ is an injective cogenerator for $\mathbb Z$,
$$
\Hom_{\mathbb Z}(\Tor^B_i(A,M), \mathbb Q/ \mathbb Z) = 0 \quad \Longleftrightarrow \quad \Tor^B_i(A,M) = 0 \quad \text{(for $i > 0$)},
$$
thus the claim follows.
\end{proof}

The following observation is an immediate consequence of the above lemma, and Remark \ref{rem:isoextstr}.

\begin{lem}[see {\cite[Prop.\,3.3]{KoNa09}}]\label{lem:isostrat}
Assume that the ideal $I$ is a stratifying ideal, and let $e \in B$ be the stratifying idem\-po\-tent that generates $I$, i.e., $I = BeB$. If $M$ is a $B^\ev$-module, then
$$
\Ext^\ast_{B^\ev}(I,M) \xrightarrow{\ \sim \ } \Ext^\ast_{(eBe)^\ev}(eBe, eMe) = \HH^\ast(eBe, eMe)
$$
as graded $K$-modules.
\end{lem}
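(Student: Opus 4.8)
The plan is to copy the argument of Lemma \ref{lem:isoidealtor} almost verbatim, with the arbitrary coefficient module $M$ in place of $B$. Write $C = eBe$. Since $e$ is a stratifying idempotent, two facts are available at once: the multiplication map $\mu_e\colon Be \otimes_C eB \to BeB = I$ is an isomorphism of $B^\ev$-modules, and $\Tor^C_i(Be,eB) = 0$ for all $i > 0$. Hence, as a $B^\ev$-module, $I$ is simply $Be \otimes_C eB$, the object treated in Lemma \ref{lem:isoidealtor}.

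First I would fix a projective resolution $\mathbb{P}C \to C \to 0$ over $C^\ev$ and check — word for word as in the proof of Lemma \ref{lem:isoidealtor} — that
$$
Be \otimes_C \mathbb{P}C \otimes_C eB \longrightarrow Be \otimes_C eB \cong I \longrightarrow 0
$$
is a projective resolution of $I$ over $B^\ev$: the terms are projective because $Be \otimes_C (-) \otimes_C eB$ is left adjoint to the exact functor $e(-)e$ (Proposition \ref{prop:funcproper}(\ref{prop:funcproper:1}),(\ref{prop:funcproper:3})) and so preserves projectives, while the vanishing $\Tor^C_{>0}(Be,eB) = 0$ — combined with the observation that $\mathbb{P}C$ resolves $eB$, resp.\ $Be$, by flat modules on the appropriate side (here one uses that $C$ is $K$-flat, being a Peirce summand of the $K$-projective algebra $B$, so $C \otimes C$ is $C$-flat on either side) — forces the homology in positive degrees to vanish. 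Then I would apply $\Hom_{B^\ev}(-,M)$ to this resolution and invoke the adjunction isomorphism $\Hom_{B^\ev}(Be \otimes_C N \otimes_C eB, M) \cong \Hom_{C^\ev}(N, eMe)$ of Proposition \ref{prop:funcproper}(\ref{prop:funcproper:3}), natural in $N$, to identify the $\Hom$-cocomplex with $\Hom_{C^\ev}(\mathbb{P}C, eMe)$. Passing to cohomology gives
$$
\Ext^\ast_{B^\ev}(I,M) \;\cong\; \Ext^\ast_{C^\ev}(C, eMe) \;=\; \HH^\ast(eBe, eMe)
$$
as graded $K$-modules, which is the assertion.

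One could also package the middle step more abstractly — the left derived functors of $Be \otimes_C (-) \otimes_C eB$ vanish on $C$, its value there being $I$, so the derived adjunction against the exact functor $e(-)e$ delivers the isomorphism at once — or deduce the statement from $\pi\colon B \to A$ being a homological epimorphism (Lemma \ref{lem:homepi}) together with Lemma \ref{lem:strattorvan} and Remark \ref{rem:isoextstr}. In any case there is no real obstacle: the content is entirely a rerun of Lemma \ref{lem:isoidealtor}. The only point deserving a moment's care is the naturality of the adjunction isomorphism in the module variable, which is what makes the identification of the $\Hom$-cocomplexes — and hence of their cohomologies — legitimate.
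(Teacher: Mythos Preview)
Your proposal is correct and follows essentially the same approach as the paper: the paper's proof simply observes that $I \cong Be \otimes_{eBe} eB$ and then invokes Lemma~\ref{lem:isoidealtor}, implicitly using that its proof (the construction of the $B^\ev$-projective resolution $Be \otimes_C \mathbb{P}C \otimes_C eB$ of $Be \otimes_C eB$ and the adjunction) works verbatim with any coefficient module $M$ in place of $B$. You have spelled out exactly this extension, so there is nothing to add.
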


\begin{proof}
Since $I \cong Be \otimes_{eBe} eB$ as $B^\ev$-modules, we conclude
$$
\Ext^\ast_{B^\ev}(I,M) \xrightarrow{\ \sim \ } \Ext^\ast_{B^\ev}(Be \otimes_{eBe} eB, M) \xrightarrow{\ \sim \ } \HH^\ast(eBe, eMe)
$$
by Lemma \ref{lem:isoidealtor}.
\end{proof}

\begin{nn}
By applying $\Hom_{B^\ev}(B,-)$ and $\Hom_{B^\ev}(-,B)$ to the canonical short exact sequence $0 \rightarrow I \rightarrow B \rightarrow A \rightarrow 0$ we obtain long exact sequences
$$
\xymatrix@C=16pt{
\cdots \ar[r] & \HH^n(B,I) \ar[r] & \HH^n(B) \ar[r] & \HH^n(B,A) \ar[r] & \HH^{n+1}(B,I) \ar[r] & \cdots
}
$$
and
$$
\xymatrix@C=16pt{
\cdots \ar[r] & \Ext^n_{B^\ev}(A,B) \ar[r] & \HH^n(B) \ar[r] & \Ext^n_{B^\ev}(I,B) \ar[r] & \Ext^{n+1}_{B^\ev}(A,B) \ar[r] & \cdots
}\, .
$$
The following is our main observation in this section.
\end{nn}

\begin{thm}\label{thm:koenignagase}
\begin{enumerate}[\rm(1)]
\item\label{thm:koenignagase:1} If $\Tor_i^B(A,A) = 0$ for all $i > 0$, then the canonical short exact sequence $0 \rightarrow I \rightarrow B \rightarrow A \rightarrow 0$ gives rise to a long exact sequence
$$
\xymatrix@C=18pt{
\cdots \ar[r] & \HH^n(B,I) \ar[r] & \HH^n(B) \ar[r]^-{k_n} & \HH^n(A) \ar[r] & \HH^{n+1}(B,I) \ar[r] & \cdots
}
$$
in Hochschild cohomology. The map $k_\ast {\colon} \HH^\ast(B) \rightarrow \HH^\ast(A)$ is a homomorphism of strict Gerstenhaber algebras, that is, $k_\ast$ is a homomorphism of graded $K$-algebras and
$$
\xymatrix@C=35pt{
\HH^m(B) \times \HH^n(B) \ar[r]^-{\{-,-\}_B} \ar[d]_-{k_m \times k_n} & \HH^{m+n-1}(B) \ar[d]^-{k_{m + n -1}} \\
\HH^{m}(A) \times \HH^n(A) \ar[r]^-{\{-,-\}_A} & \HH^{m+n-1}(A)
}
\ \quad\quad \
\xymatrix@C=35pt{
\HH^{2n}(B) \ar[r]^{sq_B} \ar[d]_-{k_{2n}} & \HH^{4n-1}(B) \ar[d]^-{k_{4n-1}}\\
\HH^{2n}(A) \ar[r]^{sq_A} & \HH^{4n-1}(A)
}
$$
commute.

\item\label{thm:koenignagase:2} If the ideal $I$ is a stratifying ideal, with corresponding stratifying idempotent $e \in B$, then the short exact sequence $0 \rightarrow I \rightarrow B \rightarrow A \rightarrow 0$ gives rise to a long exact sequence
$$
\xymatrix@C=18pt{
\cdots \ar[r] & \Ext_{B^\ev}^n(A,B) \ar[r] & \HH^n(B) \ar[r]^-{l_n} & \HH^n(eBe) \ar[r] & \Ext_{B^\ev}^{n+1}(A,B) \ar[r] & \cdots
}
$$
in Hochschild cohomology. The map $l_\ast {\colon} \HH^\ast(B) \rightarrow \HH^\ast(eBe)$ is a homomorphism of strict Gerstenhaber algebras, that is, $l_\ast$ is a homomorphism of graded $K$-algebras and
$$
\xymatrix@C=30pt{
\HH^m(B) \times \HH^n(B) \ar[r]^-{\{-,-\}_B} \ar[d]_-{l_m \times l_n} & \HH^{m+n-1}(B) \ar[d]^-{l_{m + n -1}} \\
\HH^{m}(eBe) \times \HH^n(eBe) \ar[r]^-{\{-,-\}_{eBe}} & \HH^{m+n-1}(eBe)
}
\quad\quad
\xymatrix@C=30pt{
\HH^{2n}(B) \ar[r]^{sq_B} \ar[d]_-{l_{2n}} & \HH^{4n-1}(B) \ar[d]^-{l_{4n-1}}\\
\HH^{2n}(eBe) \ar[r]^{sq_{eBe}} & \HH^{4n-1}(eBe)
}
$$
commute. 
\end{enumerate}
\end{thm}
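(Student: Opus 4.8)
The plan is to reduce each of the two statements to a map whose Gerstenhaber-compatibility has already been proved: for part~\ref{thm:koenignagase:1} to the map $A \otimes_B^{\mathbf L}(-)\otimes_B^{\mathbf L}A$ of Theorem~\ref{thm:homepi}, and for part~\ref{thm:koenignagase:2} to the map $\chi_{B/eBe} = \mathfrak A_e^\ast$ of Theorem~\ref{thm:chigersten}. First I would set up the two long exact sequences in the usual way: apply $\Hom_{B^\ev}(B,-)$ (resp.\ $\Hom_{B^\ev}(-,B)$) to the canonical short exact sequence $0 \to I \to B \to A \to 0$ of $B^\ev$-modules, and then substitute $\HH^\ast(B,A) \cong \HH^\ast(A)$ from Lemma~\ref{lem:ideal} (resp.\ $\Ext^\ast_{B^\ev}(I,B) \cong \HH^\ast(eBe)$ from Lemma~\ref{lem:isostrat}). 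By construction $k_\ast$ and $l_\ast$ are then the composites $\HH^\ast(B) \to \HH^\ast(B,A) \xrightarrow{\sim} \HH^\ast(A)$ and $\HH^\ast(B) \to \Ext^\ast_{B^\ev}(I,B) \xrightarrow{\sim} \HH^\ast(eBe)$, with the first arrow in each case induced by $\pi\colon B \to A$ resp.\ by the inclusion $I \hookrightarrow B$. Long-exactness itself is standard; the content is the identification of $k_\ast$ and $l_\ast$ with the two functor-induced maps.

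For part~\ref{thm:koenignagase:1}, observe that $\pi\colon B \to A$ is a surjective ring epimorphism and that $\Tor^B_i(A,A) = 0$ for $i>0$ means exactly that it is a homological epimorphism (Paragraph~\ref{nn:equhomepi}); since $A$ and $B$ are $K$-projective, Theorem~\ref{thm:homepi} applies. It remains to check $k_\ast = A\otimes_B^{\mathbf L}(-)\otimes_B^{\mathbf L}A$. Unwinding Lemma~\ref{lem:ideal}: on the bar complex the map $\HH^n(B) \to \HH^n(B,A)$ sends a cocycle $\varphi\colon B^{\otimes(n+2)} \to B$ to $\pi\circ\varphi$, and the adjunction isomorphism $\Hom_{B^\ev}(\mathbb B B, A) \cong \Hom_{A^\ev}(A\otimes_B\mathbb B B\otimes_B A, A)$ of Lemma~\ref{lem:ideal} --- valid because $A \otimes_B \mathbb B B \otimes_B A \to A$ is an $A^\ev$-projective resolution by Lemma~\ref{lem:barexact} --- carries $\pi\circ\varphi$ to $A\otimes_B\varphi\otimes_B A$ followed by $A\otimes_B B\otimes_B A \xrightarrow{\sim} A\otimes_B A \xrightarrow{\sim} A$. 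This is precisely the image of $\varphi$ under $A\otimes_B(-)\otimes_B A$, and passing to Yoneda $n$-extensions identifies $k_\ast$ with the map of Lemma~\ref{lem:mapsequal}. Theorem~\ref{thm:homepi} then gives the commutativity of both diagrams.

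For part~\ref{thm:koenignagase:2}, since $e$ is a stratifying idempotent we have $\Tor^{eBe}_i(Be,eB) = 0$ for $i>0$ and $\mu_e\colon Be\otimes_{eBe}eB \to BeB = I$ an isomorphism (and $\pi$ is a homological epimorphism by Lemma~\ref{lem:homepi}); in particular the hypotheses of Section~\ref{sec:buchweitz} hold with $C = eBe$, so $\chi_{B/C}$ is a homomorphism of strict Gerstenhaber algebras by Theorem~\ref{thm:chigersten}. I claim $l_\ast = \chi_{B/C}$. Recall from Proposition~\ref{prop:Ieischi} that $\chi_{B/C}^n$ is computed by restricting a cocycle along the lift $\tilde\mu_e\colon Be\otimes_C\mathbb B C\otimes_C eB \to \mathbb B B$ of $\mu_e$ and then applying the adjunction $\Hom_{B^\ev}(Be\otimes_C\mathbb B C\otimes_C eB, B)\cong\Hom_{C^\ev}(\mathbb B C, C)$; on the other hand $l_n$ restricts the same cocycle along a lift of $I \hookrightarrow B$ --- equivalently, since $\mu_e$ identifies $Be\otimes_C eB$ with $I$, along a lift of $\mu_e$ viewed as a map into $B$ --- followed by the adjunction underlying Lemma~\ref{lem:isoidealtor} and Lemma~\ref{lem:isostrat}. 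Hence the two recipes agree up to the usual chain-homotopy ambiguity of comparison maps, so $l_\ast = \chi_{B/eBe} = \mathfrak A_e^\ast$ and the diagrams commute by Theorem~\ref{thm:chigersten} (alternatively by Theorem~\ref{thm:mainthm} applied to the standard recollement $\mathcal R(B^\ev, e\otimes e^\op)$, which is pseudoflat precisely because $\Tor^{eBe}_1(Be,eB)=0$).

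The routine parts --- long-exactness and multiplicativity of the connecting maps, the latter already in~\cite{KoNa09} for stratifying $I$ --- I would treat briefly. The main obstacle is the diagram chase in part~\ref{thm:koenignagase:2}: one must carefully compare the $B^\ev$-projective resolution $Be\otimes_C\mathbb B C\otimes_C eB \to I \to 0$ used to compute $\Ext^\ast_{B^\ev}(I,B)$ in Lemmas~\ref{lem:isoidealtor} and~\ref{lem:isostrat} with the subcomplex $\tilde\mu_e(Be\otimes_C\mathbb B C\otimes_C eB)\subseteq \mathbb B B$ of Paragraph~\ref{nn:buchweitz}, and verify that the isomorphism $\Ext^\ast_{B^\ev}(I,B)\cong\HH^\ast(eBe)$ induced by $I\cong Be\otimes_C eB$ is exactly the adjunction intertwining $l_\ast$ with $\chi_{B/C}$. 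Once this is in place, everything follows from the machinery of Sections~\ref{sec:recoll}, \ref{sec:buchweitz} and~\ref{sec:homepi}.
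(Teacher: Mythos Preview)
Your proposal is correct and follows essentially the same approach as the paper: for part~(\ref{thm:koenignagase:1}) you identify $k_\ast$ with the derived-functor map of Theorem~\ref{thm:homepi} via the bar resolution and adjunction (the paper does the same, phrasing it as commutativity of the square involving $\mathfrak A_\ast$ and invoking Lemma~\ref{lem:mapsequal}), and for part~(\ref{thm:koenignagase:2}) you identify $l_\ast$ with $\chi_{B/eBe} = \mathfrak A_e^\ast$ by comparing lifts of $\mu_e$ --- which is exactly what the paper means by ``arguments similar to the proof of Theorem~\ref{thm:greensol}'' before applying Theorem~\ref{thm:mainthm}. The only cosmetic differences are that the paper treats part~(\ref{thm:koenignagase:2}) first and cites Theorem~\ref{thm:mainthm} directly rather than routing through Theorem~\ref{thm:chigersten}, but since the latter is itself proved via Theorem~\ref{thm:mainthm} this is the same argument.
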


We will turn ourselves to the proof after a remark and an immediate corollary of the theorem.

\begin{rem}
Let $e \in B$ be a stratifying idempotent, $I = BeB$ and $A = B/I$. In \cite{KoNa09}, the authors also construct a long exact sequence
$$
\xymatrix@C=14pt{
\cdots \ar[r] & \Ext^n_{B^\ev}(A,I) \ar[r] & \HH^n(B) \ar[r] & \HH^n(A) \times \HH^n(eBe) \ar[r] & \Ext^{n+1}_{B^\ev}(A,I) \ar[r] & \cdots
} \, ,
$$
where the component maps of $\HH^\ast(B) \rightarrow \HH^\ast(A) \times \HH^\ast(eBe)$ precisely are $k_\ast$ and $l_\ast$ as introduced in Theorem \ref{thm:koenignagase}. Thus the map $\HH^\ast(B) \rightarrow \HH^\ast(A) \times \HH^\ast(eBe)$ in that sequence is a homomorphism of strict Gerstenhaber algebras as well. The first part of the corollary below makes use of this additional long exact sequence.
\end{rem}

\begin{cor}
Assume that the ideal $I$ is a stratifying ideal, with corresponding stratifying idempotent $e \in B$.
\begin{enumerate}[\rm(1)]
\item If $d = \id_{B^\ev}(I)$ is finite, then $k_n {\colon} \HH^n(B) \rightarrow \HH^n(A)$ is an isomorphism for $n > d$. Thus, $\HH^{>d}(B) \cong \HH^{>d}(A)$ as non-unital strict Gerstenhaber algebras. In particular, $\HH^{>d}(eBe) = 0$.
\item If $d' = \pd_{B^\ev}(A)$ is finite, then $l_n {\colon} \HH^n(B) \rightarrow \HH^n(eBe)$ is an isomorphism for $n > d'$. Thus, $\HH^{>d'}(B) \cong \HH^{>d'}(eBe)$ as non-unital strict Gerstenhaber algebras.
\qed
\end{enumerate}
\end{cor}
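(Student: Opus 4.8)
The plan is to produce the two long exact sequences in the routine way and then to recognise the ``middle'' maps $k_\ast$ and $l_\ast$ as maps already shown to preserve the strict Gerstenhaber structure. Applying $\Hom_{B^\ev}(B,-) = \Ext^\ast_{B^\ev}(B,-)$ to the canonical sequence $0 \to I \to B \to A \to 0$ gives a long exact sequence with $\HH^\ast(B,A) = \Ext^\ast_{B^\ev}(B,A)$ in place of $\HH^\ast(A)$; since $\Tor^B_i(A,A)=0$ for $i>0$, Lemma \ref{lem:ideal} identifies $\Ext^\ast_{B^\ev}(B,A) \cong \Ext^\ast_{A^\ev}(A,A) = \HH^\ast(A)$, and $k_\ast$ then becomes the composite of the change-of-coefficients map $\HH^\ast(B) = \HH^\ast(B,B) \to \HH^\ast(B,A)$ induced by $\pi$ with this isomorphism. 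Dually, applying $\Hom_{B^\ev}(-,B)$ to $0 \to I \to B \to A \to 0$ gives a long exact sequence with $\Ext^\ast_{B^\ev}(I,B)$ in place of $\HH^\ast(eBe)$, and since $I$ is a stratifying ideal, Lemma \ref{lem:isostrat} identifies $\Ext^\ast_{B^\ev}(I,B) \cong \HH^\ast(eBe)$; here $l_\ast$ becomes the composite of the restriction $\HH^\ast(B) = \Ext^\ast_{B^\ev}(B,B) \to \Ext^\ast_{B^\ev}(I,B)$ along $I \hookrightarrow B$ with this isomorphism. Note that by Lemma \ref{lem:homepi} the hypothesis of part \emph{(1)} is satisfied whenever $I$ is stratifying, so part \emph{(1)} applies in the setting of part \emph{(2)} as well.

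The substance lies in matching these composites with functorial maps. For $k_\ast$: $\pi$ is surjective, hence a ring epimorphism, and $\Tor^B_i(A,A)=0$ makes it a homological epimorphism, so Theorem \ref{thm:homepi} supplies the strict Gerstenhaber homomorphism $A\otimes_B^{\mathbf L}(-)\otimes_B^{\mathbf L}A\colon \HH^\ast(B) \to \HH^\ast(A)$. I would identify $k_\ast$ with this map by chasing the bar resolution: a cocycle $\psi\colon B^{\otimes(n+2)}\to B$ on $\mathbb B B$ is sent by $\pi$ to $\pi\circ\psi\colon B^{\otimes(n+2)}\to A$, and under the adjunction used in the proof of Lemma \ref{lem:ideal} its class in $\Ext^n_{B^\ev}(B,A)$ corresponds to the class of $A\otimes_B\psi\otimes_B A$ on the complex $A\otimes_B\mathbb B B\otimes_B A$, which by Lemma \ref{lem:barexact} is a projective resolution of $A$ over $A^\ev$; by Lemma \ref{lem:mapsequal} this is precisely $A\otimes_B^{\mathbf L}(-)\otimes_B^{\mathbf L}A$. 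Hence the two Gerstenhaber squares for $k_\ast$ follow from Theorem \ref{thm:homepi}. For $l_\ast$: writing $C = eBe$ and using $I \cong Be\otimes_C eB$, the restriction along $I\hookrightarrow B$ is computed via the inclusion $\tilde\mu_e\colon Be\otimes_C\mathbb B C\otimes_C eB \hookrightarrow \mathbb B B$, and the very same computation as in the proof of Proposition \ref{prop:Ieischi} shows that, after applying the adjunction isomorphism $\Hom_{B^\ev}(Be\otimes_C\mathbb B C\otimes_C eB, B) \cong \Hom_{C^\ev}(\mathbb B C, C)$, $l_\ast$ coincides with the Buchweitz map $\chi_{B/C} = \mathfrak A_e^\ast$ induced by $\mathfrak A_e = e(-)e$. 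Since $I$ stratifying forces $\Tor^{C}_1(Be,eB)=0$, Theorem \ref{thm:chigersten} (equivalently, Theorem \ref{thm:mainthm} for the standard recollement $\mathcal R(B^\ev, e\otimes e^\op)$) shows $l_\ast$ is a homomorphism of strict Gerstenhaber algebras.

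The hard part will not be the appeal to Theorems \ref{thm:homepi}, \ref{thm:chigersten} and \ref{thm:mainthm} — those are ready for use — but pinning down, with correct signs and the correct orientation of the adjunction isomorphisms, that the connecting-map descriptions of $k_\ast$ and $l_\ast$ coming out of the two long exact sequences really agree on the nose with $A\otimes_B^{\mathbf L}(-)\otimes_B^{\mathbf L}A$ and with $e(-)e$. Once this bookkeeping is carried out at the level of $\mathbb B B$ and of $Be\otimes_C\mathbb B C\otimes_C eB$, the commuting squares for $\{-,-\}$ and $sq$ in both parts are immediate. As a byproduct one also obtains that the combined map $\HH^\ast(B) \to \HH^\ast(A)\times\HH^\ast(eBe)$ appearing in the remaining Koenig--Nagase sequence is a strict Gerstenhaber homomorphism.
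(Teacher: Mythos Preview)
You have written a proof of the wrong statement. Your proposal is a (correct, and essentially paper-matching) outline of the proof of Theorem~\ref{thm:koenignagase}, which establishes the two long exact sequences and identifies $k_\ast$ and $l_\ast$ as strict Gerstenhaber algebra homomorphisms. But the statement you were asked to prove is the \emph{corollary} that follows it, which the paper marks with a bare \qed\ because it is immediate once the theorem (and the preceding Remark) is in hand.

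What is actually needed is this. For part~(1), the hypothesis $\id_{B^\ev}(I)=d<\infty$ forces $\Ext^n_{B^\ev}(B,I)=0$ for all $n>d$, so in the long exact sequence of Theorem~\ref{thm:koenignagase}(\ref{thm:koenignagase:1}) both the term to the left of $\HH^n(B)$ and the term to the right of $\HH^n(A)$ vanish for $n>d$; hence $k_n$ is an isomorphism. Since $k_\ast$ is already a homomorphism of strict Gerstenhaber algebras by the theorem, its restriction to degrees $>d$ is an isomorphism of non-unital strict Gerstenhaber algebras. The clause ``in particular $\HH^{>d}(eBe)=0$'' uses the additional Koenig--Nagase sequence recorded in the preceding Remark: the same hypothesis kills $\Ext^n_{B^\ev}(A,I)$ for $n>d$, so that sequence gives $\HH^n(B)\cong\HH^n(A)\times\HH^n(eBe)$ for $n>d$, and combining with $k_n$ being an isomorphism forces $\HH^n(eBe)=0$. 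Part~(2) is the contravariant analogue: $\pd_{B^\ev}(A)=d'<\infty$ kills $\Ext^n_{B^\ev}(A,B)$ for $n>d'$, and the sequence of Theorem~\ref{thm:koenignagase}(\ref{thm:koenignagase:2}) then shows $l_n$ is an isomorphism for $n>d'$. None of this requires re-identifying $k_\ast$ or $l_\ast$ with derived or idempotent functors; you may take Theorem~\ref{thm:koenignagase} as given.
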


\begin{proof}[Proof of Theorem $\ref{thm:koenignagase}$]
Let us first prove item (\ref{thm:koenignagase:2}). As $\Ext^\ast_{B^\ev}(I,B) \xrightarrow \sim \HH(eBe)$ by Lemma \ref{lem:isostrat}, the existence of the exact sequence is imminent. By arguments being similar to those in the proof of Thereom \ref{thm:greensol}, one shows that the map $l_\ast$ is induced by the recollement $\mathcal R(B,e)$, thus, the assertion follows from Theorem \ref{thm:mainthm}. 

Thanks to Lemma \ref{lem:ideal}, we obtain a long exact sequence as claimed in (\ref{thm:koenignagase:1}). Let us conclude that $k_\ast$ is a homomorphism of strict Gerstenhaber algebras. In the following, we let $\pi: B \rightarrow A$ be the canonical surjection. By Proposition \ref{prop:exactmono}, one has the following diagram of graded maps.
$$
\xymatrix{
\Ext^\ast_\C(B,B) \ar[d]_-\cong \ar[r]^-{\mathfrak{A}_\ast} & \Ext^\ast_{\C(A)}(A,A) \ar[d]^-{\cong}\\
\HH^\ast(B) \ar[r]^-{k_\ast} & \HH^\ast(A)
}
$$
We are going to prove that this square commutes -- and thus an application of Theorem \ref{thm:homepi} yields the desired result, since the map $\mathfrak{A}_\ast$ coincides with the one discussed in there (see Lemma \ref{lem:mapsequal}). Let $\varphi{\colon} B^{\otimes(n+2)} \rightarrow B$ be a cocycle representing an element $\alpha \in \HH^n(B)$. The preimage of $\alpha$ in $\Ext^n_\C(B,B)$, under the vertical isomorphism, is given by the equivalence class of the lower sequence in the following pushout diagram.
$$
\xymatrix@C=18pt@R=18pt{
\cdots \ar[r] & B^{\otimes(n+2)} \ar[r]^-{\beta^B_n} \ar[d]_-{\varphi} & B^{\otimes(n+1)} \ar[r]^-{\beta^B_{n-1}} \ar[d] & \cdots \ar[r]^-{\beta^B_1} & B \otimes B \ar[r]^-{\beta^B_0} \ar@{=}[d] & B \ar[r] \ar@{=}[d] & 0\\
0 \ar[r] & B \ar[r] & B \oplus_{B^{\otimes(n+2)}} B^{\otimes(n+1)} \ar[r] & \cdots \ar[r] & B \otimes B \ar[r] & B \ar[r] & 0
}
$$
Put $P = B \oplus_{B^{\otimes(n+2)}} B^{\otimes(n+1)}$. Identifying $A$ with $A \otimes_B B \otimes_B A$, the homomorphism $\mathfrak A_n$ maps this equivalence class, by definition, to the equivalence class of the admissible $n$-extension
\begin{equation}\label{eq:imageA}\tag{$\diamond$}
0 \longrightarrow A \longrightarrow A \otimes_B P \otimes_B A \longrightarrow \cdots \longrightarrow A \otimes_B (B \otimes B) \otimes_B A \longrightarrow A \longrightarrow 0 \, ,
\end{equation}
which, thanks to the vertical isomorphism on the right (= the inclusion), sits inside $\HH^n(A)$. We show that this class coincides with $k_n(\alpha)$. Indeed, if $j_1 {\colon} A \otimes_B A \otimes_B A \xrightarrow{\sim} A$ and $j_2{\colon} A \otimes_B B \otimes_B A \xrightarrow{\sim} A$ denote the canonical isomorphisms, then $k_n$ maps $\alpha$ to the equivalence class of the cocycle $\tilde{\varphi} = j_1 \circ (A \otimes_B (\pi \circ \varphi) \otimes_B A) = j_2 \circ (A \otimes_B \varphi \otimes_B A)$,
$$
\tilde{\varphi}{\colon} A \otimes B^{\otimes n} \otimes A \longrightarrow A, \ \tilde{\varphi}(a \otimes b_1 \otimes \cdots \otimes b_n \otimes a') = a \cdot (\pi \circ \varphi)(1 \otimes b_1 \otimes \cdots \otimes b_n \otimes 1) \cdot a'\, ,
$$
in $\Hom_{A^\ev}(A \otimes_B \mathbb B B \otimes_B A, A)$. As $A \otimes_B B^{\otimes(m+2)} \otimes_B A \cong A \otimes B^{\otimes m} \otimes A$ belongs to $\P(A)$ for all $m \geqslant 0$, a preimage of the equivalence class of $\tilde{\varphi}$ in $\Ext^n_{\P(A)}(A,A)$ is given by the equivalence class of the lower sequence in the pushout diagram
$$
\xymatrix@C=18pt@R=18pt{
\cdots \ar[r] & A \otimes B^{\otimes n} \otimes A \ar[r] \ar[d]_-{\tilde{\varphi}} & A \otimes B^{\otimes(n-1)} \otimes A \ar[r] \ar[d] & \cdots \ar[r] & A \otimes A \ar[r] \ar@{=}[d] & A \ar[r] \ar@{=}[d] & 0\\
0 \ar[r] & A \ar[r] & Q \ar[r] & \cdots \ar[r] & A \otimes A \ar[r] & A \ar[r] & 0
}
$$
which, as $\mathfrak A$ is exact and thus commutes with pushouts along admissible epimorphisms, coincides with the equivalence class of the sequence (\ref{eq:imageA}).
\end{proof}

\subsection{The long exact sequence of Michelena--Platzeck} We review yet another long exact sequence which is also due to Michelena--Platzeck; see \cite[Sec.\,1]{MiPl00} for its original construction which, however, is given in a far more restrictive setting.

In what follows, we will consider upper triangular matrix algebras of the form
$$
B =
\left(
\begin{matrix}
R & M\\
0 & S
\end{matrix}
\right) \,,
$$
where, as in the previous sections, $R$ and $S$ are algebras over the fixed commutative base ring $K$ and $M$ is an $R \otimes S^\op$-module such that $R$, $S$ and $M$ are projective as $K$-modules. The canonical idempotents $e$ and $e' = 1 - e$ in $B$, associated to $R$ and $S$ respectively, will once again play a key role.

\begin{nn}
To begin with, notice that
$$
Be' = Be'B = \left(
\begin{matrix}
0 & M\\
0 & S
\end{matrix}
\right) \quad \text{and} \quad
eB = BeB = \left(
\begin{matrix}
R & M\\
0 & 0
\end{matrix}
\right)\,.
$$
As we have already observed, these are stratifying ideals in $B$. There are four algebra homomorphisms, relating $R$, $S$ and $B$, namely
$$
R \longrightarrow B, \ r \mapsto \left(
\begin{matrix}
r & 0\\
0 & 1_S
\end{matrix}
\right)\, \qquad B \longrightarrow R, \ \left(
\begin{matrix}
r & m\\
0 & s
\end{matrix}
\right) \mapsto r
$$
and
$$
S \longrightarrow B, \ s \mapsto \left(
\begin{matrix}
1_R & 0\\
0 & s
\end{matrix}
\right)\, \qquad B \longrightarrow S, \ \left(
\begin{matrix}
r & m\\
0 & s
\end{matrix}
\right) \mapsto s \,.
$$
Note that, for instance, the right $B$-module structure of $Be'$ coincides with the one given by restricting the right $S$-module structure along $B \rightarrow S$. We now deduce the following consequence of Theorem \ref{thm:koenignagase}, generalising the statement \cite[Thm.\,1.14]{MiPl00}.
\end{nn}

\begin{cor}\label{cor:mpla}
Put $I = Be'B = Be'$. Then there is a long exact sequence
$$
\xymatrix@C=18pt@R=6pt{
0 \ar[r] & \mathrm{Ann}_S(M) \cap Z(S) \ar[r] & \HH^0(B) \ar[r]^-{m_0} & \HH^0(R) \ar[r] & \Ext^{1}_{B^\op \otimes S}(S,I) \ar[r] & \cdots\\
\cdots \ar[r] & \Ext^{n}_{B^\op \otimes S}(S,I) \ar[r] & \HH^n(B) \ar[r]^-{m_n} & \HH^n(R) \ar[r] & \Ext^{n + 1}_{B^\op \otimes S}(S,I) \ar[r] & \cdots
}
$$
wherein the graded map $m_\ast{\colon} \HH^\ast(B) \rightarrow \HH^\ast(R)$ is a homomorphisms of strict Gerstenhaber algebras.
\end{cor}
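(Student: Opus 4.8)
The plan is to read off both the long exact sequence and the strict Gerstenhaber property of $m_\ast$ from Theorem~\ref{thm:koenignagase}(\ref{thm:koenignagase:1}), applied to the ideal $I = Be'B = Be'$ with quotient $A = B/I$, and then to rewrite the ``relative'' cohomology groups $\HH^n(B,I) = \Ext^n_{B^\ev}(B,I)$ that occur there. First I would collect the elementary inputs: by the examples following the definition of a stratifying ideal, $e'$ is a stratifying idempotent of the triangular matrix algebra $B$, so $I$ is a stratifying ideal and $\Tor^B_i(A,A) = 0$ for $i > 0$ by Lemma~\ref{lem:homepi}; a one-line matrix computation gives a $K$-algebra isomorphism $A = B/Be' \cong R$; and $A$, like $B$, is $K$-projective. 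Theorem~\ref{thm:koenignagase}(\ref{thm:koenignagase:1}) then supplies a long exact sequence
$$
\cdots \longrightarrow \HH^n(B,I) \longrightarrow \HH^n(B) \xrightarrow{\ m_n\ } \HH^n(R) \longrightarrow \HH^{n+1}(B,I) \longrightarrow \cdots
$$
(using Lemma~\ref{lem:ideal} to identify $\HH^\ast(B,A)$ with $\HH^\ast(A) = \HH^\ast(R)$), and it already asserts that $m_\ast$ is a homomorphism of strict Gerstenhaber algebras; so the remaining task is to compute $\HH^n(B,I)$.

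The decisive observation is that, since $e'Be = 0$ in $B$, the right $B$-action on $I = Be'$ factors through the canonical surjection $\rho \colon B \to S$, $b \mapsto e'be'$; hence the $B^\ev$-module $I$ is the restriction $\phi_\star(I)$, along $\phi = \id_B \otimes \rho^\op \colon B^\ev \to B \otimes S^\op$, of $I$ regarded as a $B$-$S$-bimodule. The left adjoint of $\phi_\star$ is $(B\otimes S^\op)\otimes_{B^\ev}(-)$, and I would next check that it has no higher left derived functors on the diagonal bimodule $B$: unwinding the tensor products and using that the bar resolution $\mathbb B B$ is split in each degree as a complex of right $B$-modules (as in the proof of Lemma~\ref{lem:barexact}), one finds that $(B\otimes S^\op)\otimes_{B^\ev}\mathbb B B$ is a resolution of $S$, so $\Tor^{B^\ev}_i(B\otimes S^\op, B) = 0$ for $i > 0$ and $(B\otimes S^\op)\otimes_{B^\ev}B \cong B/BeB \cong S$. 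The derived adjunction along $\phi$ then gives
$$
\HH^\ast(B,I) = \Ext^\ast_{B^\ev}(B, \phi_\star I) \;\cong\; \Ext^\ast_{B\otimes S^\op}\bigl((B\otimes S^\op)\otimes^{\mathbf L}_{B^\ev}B,\ I\bigr) \;=\; \Ext^\ast_{B^\op\otimes S}(S,I).
$$

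Finally I would pin down the low-degree end directly: $\HH^0(B,I) = (Be')^{B}$, and an element $\smatrix{0&m\\0&s}$ of $Be'$ commuting with all of $B$ is forced to satisfy $m = 0$, $s \in Z(S)$ and $Ms = 0$, so $\HH^0(B,I) = \Ann_S(M)\cap Z(S)$ (in agreement with $\Hom_{B^\op\otimes S}(S,I)$). Substituting these identifications into the sequence above yields exactly the statement, and $m_\ast$ is a homomorphism of strict Gerstenhaber algebras by Theorem~\ref{thm:koenignagase}(\ref{thm:koenignagase:1}). The routine parts are the bookkeeping in the first paragraph and the degree-$0$ computation; the step that needs care is verifying that $(B\otimes S^\op)\otimes^{\mathbf L}_{B^\ev}B$ is concentrated in degree $0$ with homology $S$, and keeping the various one-sided module structures straight so that the adjunction deposits $S$ and $I$ on the correct sides of $\Ext_{B^\op\otimes S}$. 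As a consistency check, $\phi$ is itself a homological epimorphism: $\rho$ is the quotient by the stratifying ideal $BeB$, hence homological, and the argument of Lemma~\ref{lem:piev} then applies.
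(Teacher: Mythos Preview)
Your proposal is correct and follows the same overall strategy as the paper: invoke Theorem~\ref{thm:koenignagase}(\ref{thm:koenignagase:1}) for the stratifying ideal $I = Be'B = Be'$ to obtain both the long exact sequence and the strict Gerstenhaber property of $m_\ast$, then identify $\HH^n(B,I) = \Ext^n_{B^\ev}(B,I)$ with $\Ext^n_{B^\op\otimes S}(S,I)$, and finally compute the degree-zero term by hand as $\Ann_S(M)\cap Z(S)$.

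The only difference lies in how the identification $\Ext^\ast_{B^\ev}(B,I)\cong\Ext^\ast_{B^\op\otimes S}(S,I)$ is carried out. The paper observes that $I\cong\Hom_{S^\op}(S,I)$ as $B^\ev$-modules and then applies \cite[Chap.~IX, Thm.~2.8a]{CaEi56} directly, using the trivial vanishing $\Tor^B_i(B,S)=0=\Ext^i_{S^\op}(S,I)$ for $i>0$. You instead argue via the derived adjunction along $\phi=\id_B\otimes\rho^\op$, computing $(B\otimes S^\op)\otimes^{\mathbf L}_{B^\ev}B\simeq S$ from the right-$B$-split of the bar resolution. Both routes encode the same change-of-rings principle; yours is a touch more self-contained, while the paper's is a one-line citation. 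Your closing remark that $\phi$ is a homological epimorphism is a nice consistency check that the paper does not spell out.
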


\begin{proof}
Indeed, by Theorem \ref{thm:koenignagase}(\ref{thm:koenignagase:1}), we are given a long exact sequence which is induced by the short exact sequence
	$$
	0 \longrightarrow Be' = Be'B \longrightarrow B \longrightarrow B/Be' = R \longrightarrow 0
	$$
of $B^\ev$-modules, connecting $\HH^\ast(B)$ and $\HH^\ast(R)$ by a homomorphism of strict Gerstenhaber algebras. It thus remains to be shown, that $\Hom_{B^\ev}(B,I) \cong \mathrm{Ann}_S(M) \cap Z(S)$ and $\Ext^{n}_{B^\ev}(B,I) \cong \Ext^n_{B^\op \otimes S}(S,I)$ for all $n \geqslant 1$. If $f \in \Hom_{B^\ev}(B,I)$, then $f$ is determined by the image
	$$
	f(1_B) = \left(\begin{matrix}
0 & m\\
0 & s
\end{matrix}\right)\,.
	$$
	As
	$$
	\left(\begin{matrix}
0 & 0\\
0 & s's
\end{matrix}\right) = \left(\begin{matrix}
1 & 0\\
0 & s'
\end{matrix}\right) \cdot f(1_B) = f(1_B) \cdot \left(\begin{matrix}
1 & 0\\
0 & s'
\end{matrix}\right) = \left(\begin{matrix}
0 & ms'\\
0 & ss'
\end{matrix}\right)
$$
must hold for all $s' \in S$, it follows that $m = 0$ and $s \in Z(S)$. 	Further, the equation
	$$
	\left(\begin{matrix}
0 & m's\\
0 & 0
\end{matrix}\right) = \left(\begin{matrix}
0 & m'\\
0 & 0
\end{matrix}\right) \cdot f(1_B) = f(1_B) \cdot \left(\begin{matrix}
0 & m'\\
0 & 0
\end{matrix}\right) = \left(\begin{matrix}
0 & 0\\
0 & 0
\end{matrix}\right)\,,
$$
which holds for all $m' \in M$, implies that $s \in \mathrm{Ann}_S(M)$, so that 
	$$
	\Hom_{B^\ev}(B,I) \longrightarrow \mathrm{Ann}_S(M) \cap Z(S), \ f \mapsto f(1_B)
	$$
	is a $K$-linear isomorphism. As $\Tor^B_i(B, S) = 0 = \Ext^i_{S^\op}(S, I)$ for all $i > 0$, we may apply \cite[Chap.\,IX, Thm.\,2.8a]{CaEi56} to deduce
	\begin{align*}
	\Ext^n_{B^\ev}(B, \Hom_{S^\op}(S,I)) &\cong \Ext^n_{B^\op \otimes S}(B \otimes_B S, I)\\ &\cong \Ext^n_{B^\op \otimes S}(S, I)
	\end{align*}
	for $n > 0$. To conclude, we observe that $I \cong \Hom_{S^\op}(S,I)$ as $B \otimes S^\op$-modules, hence also as $B^\ev$-modules.
\end{proof}

\begin{rem}
Notice that by replacing $e$ by $e'$, and thus $R$ by $S$, we obtain a similar long exact sequence relating $\Ext^\ast_{B \otimes R^\op}(B,eB)$, $\HH^\ast(B)$ and $\HH^\ast(S)$ with the map $\HH^\ast(B) \rightarrow \HH^\ast(S)$ being a homomorphism of strict Gerstenhaber algebras.
\end{rem}

\begin{nn}
Let us close this section by summarising the observations made so far. Let $e \in B$ be a stratifying idempotent, $e' = 1 - e$ and $I = BeB$. The standard recollement
\begin{equation*}
\begin{aligned}
\xymatrix{\\\mathcal R(B,e) \qquad \equiv \qquad\\}
\xymatrix@!C=50pt{\\ \Mod(B/I) \ar[r]^-{\mathrm{res}} & \Mod(B) \ar[r]^-{e(-)} \ar@/^2pc/[l]^-{\Hom_{B}(B/I,-)} \ar@/_2pc/[l]_-{(B/I) \otimes_{B} -} & \Mod(eBe) \ar@/^2pc/[l]^-{\Hom_{eBe}(eB,-)} \ar@/_2pc/[l]_-{Be \otimes_{eBe} -}\\}
\end{aligned}
\end{equation*}
gives rise to two homomorphisms of strict Gerstenhaber algebras:
$$
\HH^\ast(B) \longrightarrow \HH^\ast(eBe), \ \quad \ \HH^\ast(B) \longrightarrow \HH^\ast(B/I).
$$
The first one is given by applying the functor $e(-)e$, whereas the latter comes from deriving $(B/I) \otimes_B (-) \otimes_B (B/I)$. These homomorphisms fit into several long exact sequences.
\begin{gather*}
\xymatrix@C=18pt{
\cdots \ar[r] & \HH^n(B,I) \ar[r] & \HH^n(B) \ar[r] & \HH^n(A) \ar[r] & \HH^{n+1}(B,I) \ar[r] & \cdots
}\\
\xymatrix@C=18pt{
\cdots \ar[r] & \Ext_{B^\ev}^n(A,B) \ar[r] & \HH^n(B) \ar[r] & \HH^n(eBe) \ar[r] & \Ext_{B^\ev}^{n+1}(A,B) \ar[r] & \cdots
}\\
\xymatrix@C=14pt{
\cdots \ar[r] & \Ext^n_{B^\ev}(A,I) \ar[r] & \HH^n(B) \ar[r] & \HH^n(A) \oplus \HH^n(eBe) \ar[r] & \Ext^{n+1}_{B^\ev}(A,I) \ar[r] & \cdots
}
\end{gather*}
and, provided that $\Tor^{e'Be'}_i(Be',e'B) = 0$ for $i > 0$,
\begin{align*}
\cdots \xrightarrow{\ \ } \HH^n(B) \xrightarrow{\ \ } \HH^n(eBe) \oplus \HH^n(e'Be') \xrightarrow{\ \ }  \Ext^n_{B^\ev}(\Omega_B^1, B) \xrightarrow{\ \ } \HH^{n+1}(B) \xrightarrow{\ \ } \cdots \, .
\end{align*}
Any upper or lower triangular matrix algebra, with canonical idempotents, is an example that gives rise to such a situation, and in that case, the first exact sequence simplifies to the one in Corollary \ref{cor:mpla}.
\end{nn}


\section{The finite generation conjecture of Snashall--Solberg}\label{sec:snashsol}

\begin{nn}
The Hochschild cohomology ring of an algebra is usually far away from being finitely generated. This changes, if one factors out the ideal generated by all homogeneous nilpotent elements. The resulting quotient still has very nice properties, to some extend even nicer ones than the ring we started with, since it will be a commutative ring (no signs involved!) if one works over a field. Also this quotient is finitely generated in many cases, but, however, not always, as it was proven by example in \cite{Xu08}, and then even further generalised in \cite{Sn09}, contrasting the initial conjecture by Snashall--Solberg; see \cite{SnSo04}.

However, it seems completely unknown and unstudied, whether or when the quotient by the (strict) Gerstenhaber ideal generated by the homogeneous nilpotent elements is finitely generated.
\end{nn}

\begin{nn}
For a $K$-algebra $B$ and a subset $S \subseteq \HH^\ast(B)$ of homogeneous elements, we will denote by
\begin{enumerate}[\rm(1)]
\item $I(S)$ the graded ideal of $\HH^\ast(B)$ generated by $S$.
	\item $\mathcal I(S)$ the graded Lie ideal of $\HH^\ast(B)$ generated by $S$.
	\item $G(S)$ the \textit{weak Gerstenhaber ideal} generated by $S$, that is, the smallest graded Lie subalgebra of $\HH^\ast(B)$ containing $S$ and being an ideal with respect to $\cup$.
	\item $\mathcal G(S)$ the \textit{Gerstenhaber ideal generated by $S$}, that is, the smallest subset of $\HH^\ast(B)$ containing $S$ and being both an ideal with respect to $\cup$ and $\{-,-\}_B$.
\end{enumerate}
Clearly, we have $\mathcal I(S) \subseteq \mathcal G(S)$ and $I(S) \subseteq G(S) \subseteq \mathcal G(S)$, whereas there does not need to be any relation between $\mathcal I(S)$ and $G(S)$. If $K$ is a field and $S$ is the set of homogeneous nilpotent elements, then $\HH^\ast(B)/G(S)$ and $\HH^\ast(B)/\mathcal G(S)$ are \textit{commutative} algebras (in the strict sense). In the following, we will review a couple of examples.
\end{nn}

\begin{nn}\label{nn:elementab}
For the remainder of the section, and in fact of this article, assume that $K$ is a field of characteristic $p > 0$. Let $\mathbb Z_ p = \mathbb Z/ p\mathbb Z$ and $\mathbb Z_p^r$ be the finite elementary abelian $p$-group of rank $r > 0$, with corresponding group algebra $\Gamma = K \mathbb Z_p^r$. Let $\{-,-\}$ be the Gerstenhaber bracket on $\HH^\ast(\Gamma)$. By \cite{LeZh13}, the Gerstenhaber algebra structure of $\HH^\ast(\Gamma)$ is given as described in the following two paragraphs. 
\end{nn}

\begin{nn}
If $p = 2$, then
$$
\HH^\ast(\Gamma) \cong \frac{K[x_1, \dots, x_r, y_1, \dots, y_r]}{(x_1^2, \dots, x_r^2)}\,, \quad \abs{x_i} = 0, \, \abs{y_i} = 1
$$
and
$$
\{x_i,x_j\} = \{y_i,y_j\} = 0\, , \quad \{x_i, y_j\} = \delta_{ij}
$$
for all $i,j = 1, \dots, r$. The ideal $I(N) \subseteq \HH^\ast(\Gamma)$ generated by the homogeneous nilpotent elements $N$ in $\HH^\ast(\Gamma)$ is precisely the ideal $(x_1, \dots, x_r)$, which also coincides with $G(N)$, whereas $\mathcal I(N) = K \oplus \mathrm{Span}_K\{x_iy_j^{2v} \mid 1 \leq i,j \leq r, v \in \mathbb Z_{\geq 0}\}$ and $\mathcal G(N) = \HH^\ast(\Gamma)$ as $\{x_i, y_i^n\} = n y_i^{n-1}$ for $n \geq 1$. It thus follows that
$$
\HH^\ast(\Gamma)/I(N) = \HH^\ast(\Gamma)/G(N) \cong K[y_1, \dots, y_r] \quad \text{and} \quad \HH^\ast(\Gamma)/\mathcal G(N) \cong 0 \, .
$$
A squaring map for $\{-,-\}$ is given by $sq(\alpha) = \alpha^2$ (for $\alpha$ of even degree). 
\end{nn}

\begin{nn}
If $p \neq 2$, one has
$$
\HH^\ast(\Gamma) \cong \frac{K[x_1, \dots, x_r, z_1, \dots, z_r]}{(x_1^p, \dots, x_r^p)} \otimes \Lambda(y_1, \dots, y_r)\,, \quad \abs{x_i} = 0, \, \abs{y_i} = 1, \, \abs{z_i} = 2
$$
and
$$
\{x_i,x_j\} = \{y_i,y_j\} = 0\, , \quad \{x_i, y_j\} = \delta_{ij}\,, \quad \{z_i,-\} = 0
$$
for all $i,j = 1, \dots, r$. Denoting $\{a, a+1, \dots, b\}$ by $[a,b]$ for $b-a \in \mathbb N$, we get $I(N) = (x_1, \dots, x_r, y_1, \dots, y_r)$, $\mathcal I(N) = \mathrm{Span}_K\{x_i^u y_j^{v} \mid i,j \in [1,r], u \in [0,p-1], v \in [0,1]\} \oplus \mathrm{Span}_K\{x_i^u z_j^w \mid i,j \in [1,r], u \in [0,p-1], w \in \mathbb N\}$ and $G(N) = \mathcal G(N) = \HH^\ast(\Gamma)$, whence
$$
\HH^\ast(\Gamma)/I(N) \cong K[z_1, \dots, z_r] \quad \text{and} \quad \HH^\ast(\Gamma)/G(N) = \HH^\ast(\Gamma)/\mathcal G(N) \cong 0 \, .
$$
As we assumed that $p \neq 2$, \textit{the} squaring map for $\{-,-\}$ is given by $sq(\alpha) = 2^{-1} \{\alpha, \alpha\}$ (for $\alpha$ of even degree). It sends the $x_i, y_i^2$ and $z_i$ to zero, but $x_i + y_i^{2n}$ to $n y_i^{2n-1} \neq 0$ (for $p \nmid n$).
\end{nn}

\begin{nn}\label{nn:E_0}
Let $\C$ be a \textit{finite EI-category}, that is, a category with $\abs{\Mor(\C)} < \infty$ and $\End_\C(X) = \Aut_\C(X)$ for all $X \in \Ob(\C)$. Finite groups and finite acyclic quivers are examples of such categories. The \textit{category algebra of $\C$} (over $K$) has $K \Mor(\C)$ as underlying $K$-module, and the product $f \cdot g$ of morphisms $f, g$ in $\C$ is $f \circ g$, whenever this makes sense, and $0$ otherwise. If $\C$ is a finite group or a finite acylcic quiver, $K\C$ agrees with the corresponding group or path algebra. For an example that is neither a group algebra nor a path algebra of a quiver, consider the finite EI-category $\E_0$
$$
\xymatrix@C=1pt{
{\ X\ } \ar@(u,l)_h \ar@(l,d)_g \ar@(d,r)_{gh} \ar@(r,u)_{\id_X} & & \ar@<3pt>[rrrrrr]^-\alpha
\ar@<-3pt>[rrrrrr]_-\beta & &&&&& Y \ar@(ur,dr)^{\id_Y}
}
$$
subject to the relations
\begin{equation*}
g^2 = \id_X = h^2, \ gh = hg, \ \alpha h= \beta g = \alpha, \ \alpha g = \beta h = \beta.
\end{equation*}

Let $N \subseteq \HH^\ast(K\E_0)$ denote the set of all homogeneous nilpotent elements, and $I(N)$ the graded ideal generated by $N$. It is one of the main results of \cite{Xu08}, that $\HH^\ast(K\E_0) / I(N)$ is not finitely generated as a graded $K$-algebra if $K$ is a field of characteristic $2$. In this setting, one has the following obvious interpretation of the algebra $K\E_0$.
\end{nn}

\begin{lem}\label{lem:isoonepoint}
Assume that $\mathrm{char}(K) = 2$. The category algebra $K\E_0$ is isomorphic to the one-point-extension $B = \Gamma[M] = \left( \begin{smallmatrix} K (\mathbb Z_2 \times \mathbb Z_2) & K\mathbb Z_2\\ 0 & K \end{smallmatrix} \right)$ of $\Gamma = K(\mathbb Z_2 \times \mathbb Z_2)$ by $M = K\mathbb Z_2$. \qed
\end{lem}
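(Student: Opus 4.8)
The plan is to make $K\E_0$ completely explicit by means of the two orthogonal idempotents $\mathrm{id}_X$ and $\mathrm{id}_Y$, and then to recognise the resulting block decomposition as the asserted one-point extension.

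First I would record the morphisms of $\E_0$. Since $\E_0$ is a finite EI-category with $\Hom_{\E_0}(Y,X)=\emptyset$, its morphism set is $\Aut_{\E_0}(X)\sqcup\Hom_{\E_0}(X,Y)\sqcup\Aut_{\E_0}(Y)$; the relations $g^2=\mathrm{id}_X=h^2$, $gh=hg$ give $\Aut_{\E_0}(X)=\{\mathrm{id}_X,g,h,gh\}\cong\mathbb Z_2\times\mathbb Z_2$, while $\Aut_{\E_0}(Y)=\{\mathrm{id}_Y\}$, and the relations on $\alpha,\beta$ show $\Hom_{\E_0}(X,Y)=\{\alpha,\beta\}$. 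Writing $e=\mathrm{id}_X$ and $e'=\mathrm{id}_Y$, so that $e+e'=1$ in $K\E_0$, the category-algebra multiplication yields $e(K\E_0)e=K\Aut_{\E_0}(X)$, $e'(K\E_0)e'=K\Aut_{\E_0}(Y)$, $e(K\E_0)e'=K\Hom_{\E_0}(X,Y)$ and $e'(K\E_0)e=K\Hom_{\E_0}(Y,X)=0$; hence
$$
K\E_0\;\cong\;\begin{pmatrix}K\Aut_{\E_0}(X)&K\Hom_{\E_0}(X,Y)\\0&K\Aut_{\E_0}(Y)\end{pmatrix}\;\cong\;\begin{pmatrix}K(\mathbb Z_2\times\mathbb Z_2)&K\Hom_{\E_0}(X,Y)\\0&K\end{pmatrix},
$$
which is the one-point extension of $\Gamma=K(\mathbb Z_2\times\mathbb Z_2)$ by the $\Gamma$-module $K\Hom_{\E_0}(X,Y)$.

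It then remains to identify $K\Hom_{\E_0}(X,Y)$ with $M=K\mathbb Z_2$ as a $\Gamma$-module. Under the identification $K\Aut_{\E_0}(X)\cong\Gamma$ sending $g,h$ to the two standard generators of $\mathbb Z_2\times\mathbb Z_2$, the relations $\alpha h=\alpha$, $\beta h=\beta$, $\alpha g=\beta$, $\beta g=\alpha$ say that $h$ acts on the basis $\{\alpha,\beta\}$ of $K\Hom_{\E_0}(X,Y)$ as the identity and $g$ as the transposition. Thus $\mathbb Z_2\times\mathbb Z_2$ acts on this module through the quotient $\mathbb Z_2\times\mathbb Z_2/\langle h\rangle\cong\mathbb Z_2$ via the regular representation, which is precisely the inflation to $\Gamma$ of $K\mathbb Z_2$, i.e.\ the module $M$. (The three surjections $\mathbb Z_2\times\mathbb Z_2\twoheadrightarrow\mathbb Z_2$ are permuted transitively by $\Aut(\mathbb Z_2\times\mathbb Z_2)=\GL_2(\mathbb F_2)$, so this description of $M$ is independent, up to isomorphism, of the chosen surjection.) Combining the two identifications gives $K\E_0\cong\Gamma[M]=B$.

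I do not expect a genuine obstacle: the statement is a direct verification. The only point that needs care is the bookkeeping around the category-algebra multiplication convention — one must use the convention under which a morphism $\alpha\colon X\to Y$ satisfies $\alpha=\mathrm{id}_X\cdot\alpha\cdot\mathrm{id}_Y$ (equivalently, the one for which $K\C$ restricted to a quiver recovers its path algebra with composites read from left to right), since this is exactly what places $K\Hom_{\E_0}(X,Y)$ in the upper-right corner as a \emph{left} $\Gamma$-module; the opposite convention would instead produce $(K\E_0)^{\mathrm{op}}$. I would also note that, although the lemma is stated for $\mathrm{char}(K)=2$ in order to match Paragraph~\ref{nn:E_0}, the isomorphism $K\E_0\cong\Gamma[M]$ itself holds over an arbitrary field; the characteristic hypothesis is what makes $\Gamma=K(\mathbb Z_2\times\mathbb Z_2)$ the local algebra relevant to the Snashall--Solberg discussion.
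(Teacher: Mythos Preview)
Your verification is correct and supplies exactly the details the paper omits (the lemma carries only a \qed).

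Your convention remark is worth sharpening, because it is not mere bookkeeping but actually decides the truth of the statement. Under the product $f\cdot g=f\circ g$ stated in Paragraph~\ref{nn:E_0} (with $\circ$ read in the usual way), one has $\alpha=\id_Y\cdot\alpha\cdot\id_X$, hence $K\Hom_{\E_0}(X,Y)=e'(K\E_0)e$ for $e=\id_X$, $e'=\id_Y$, and the Peirce decomposition yields
\[
K\E_0\;\cong\;\begin{pmatrix}K & K\mathbb Z_2\\ 0 & \Gamma\end{pmatrix}\;\cong\;B^{\op}.
\]
This is \emph{genuinely} different from $B$: the indecomposable projective left modules have dimensions $\{1,6\}$ for $K\E_0$ (paper's convention) versus $\{3,4\}$ for $B$, so no isomorphism exists. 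The lemma therefore holds as written only under the left-to-right convention you adopt---which is also the reading forced by the paper's accompanying claim that $K\C$ for a quiver recovers the usual path algebra. For the intended Hochschild-cohomological application this is harmless, since $\HH^\ast(B)\cong\HH^\ast(B^{\op})$ as strict Gerstenhaber algebras.
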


The following questions shall occupy us in future work.

\begin{quest}\label{conj:snashsol}
Let $A$ be a finite dimensional algebra over a field $K$. If $N$ denotes the set of all homogeneous nilpotent elements in $\HH^\ast(A)$, is the commutative $K$-algebra
$$
\HH^\ast_G(A) = \HH^\ast(A) / G(N)
$$
finitely generated? More specifically, can we use (any) of the long exact sequences discussed in this article, along with \ref{nn:elementab}, to deduce that
$$
\HH^\ast_G(K\E_0) = \HH^\ast(K\E_0) / G(N)
$$
is finitely generated (where $\E_0$ denotes Xu's EI-category reviewed above and $\mathrm{char}(K) = 2$)? Especially the results obtained in Subsection \ref{subsec:happel} should be of help.
\end{quest}

\begin{quest}
Under the assumptions of the above question, does the commutative algebra $\HH^\ast_G(A)$ possess enough structure to establish a sensible support variety theory for (special classes of) $A$-modules, in a similar way as described in \cite{SnSo04}?
\end{quest}


\end{document}